\documentclass[reqno,12pt]{amsart}
\usepackage[pagebackref]{hyperref}
\usepackage[a4paper,margin=2.5cm]{geometry}
\usepackage{amsmath, amscd}
\usepackage{amssymb}
\usepackage{xfrac}
\usepackage{tikz}

\newcommand{\R}{\mathbb{R}}

\def\vp{\varphi}

\newcommand{\II}{\mathbf{1}}

\newcommand{\scalar}[1]{\langle {#1} \rangle}

\newcommand{\biggscalar}[1]{\bigg\langle {#1} \bigg\rangle}

\newcommand{\Id}{\textup{Id}}
\newcommand{\id}{\textup{id}}
\newcommand{\tr}{\textup{tr}}
\newcommand*{\ind}[1]{\mathbf{1}_{\left\{#1\right\}}}
\newcommand*{\Ind}[1]{\mathbf{1}_{#1}}

\newcommand{\supp}{\textup{supp}}

\newcommand{\I}{\textup{I}}
\newcommand{\dist}{\textup{dist}}
\newcommand{\diag}{\textup{diag}}

\DeclareMathOperator{\rad}{rad}
\DeclareMathOperator{\re}{Re}
\DeclareMathOperator{\im}{Im}
\DeclareMathOperator{\Hess}{Hess}
\DeclareMathOperator{\dom}{dom}
\DeclareMathOperator{\interior}{int}
\DeclareMathOperator{\cl}{cl}
\DeclareMathOperator{\bd}{bd}

\newtheorem{theorem}{Theorem}
\newtheorem{lemma}[theorem]{Lemma}
\newtheorem{fact}[theorem]{Fact}
\newtheorem{prop}[theorem]{Proposition}
\newtheorem{corollary}[theorem]{Corollary}

\theoremstyle{definition}
\newtheorem{dfn}[theorem]{Definition}
\newtheorem{remark}[theorem]{Remark}
\newtheorem{example}[theorem]{Example}

\numberwithin{theorem}{section}
\numberwithin{equation}{section}

\begin{document}

\title{Positive Gaussian kernels also have Gaussian minimizers}

\author{Franck Barthe}
\address[F.~Barthe and P.~Wolff]{Institut de Math{\'e}matiques de Toulouse; UMR 5219, Universit{\'e} de Toulouse; CNRS, France}
\email[F.~Barthe]{franck.barthe@math.univ-toulouse.fr}
\thanks{2010 Mathematics Subject Classification: 26D15, 47G10} \thanks{This paper is partially based upon work supported by the National Science Foundation under Grant No. DMS-1440140 while the first named author was in residence at the Mathematical Sciences Research Institute in Berkeley, California, during the Fall 2017 semester. }

\author{Pawe{\l} Wolff}
\email[P.~Wolff]{pwolff@mimuw.edu.pl}
\thanks{Research of the second named author was partially supported by ANR-11-LABX-0040-CIMI within the program ANR-11-IDEX-0002-02
and by the National Science Center, Poland project number 2015/18/A/ST1/00553.}

\begin{abstract}
We study lower bounds on multilinear operators with Gaussian kernels acting on Lebesgue spaces, with exponents below one.  We put forward natural conditions when the optimal constant can be computed by inspecting centered Gaussian functions only, and we give necessary and sufficient conditions for this constant to be positive. Our work provides a counterpart to Lieb's results on maximizers of multilinear operators with real Gaussian kernels, also known as the multidimensional Brascamp-Lieb inequality. It unifies and extends several inverse inequalities.
\end{abstract}

\maketitle

\section{Introduction}

\subsection{Background and motivation}
Our title hints at Lieb's article ``Gaussian kernels have only Gaussian maximizers'' \cite{lieb-1990}. This remarkable work studies operators with Gaussian kernels between Lebesgue spaces $L^p(\R^n,\mathbb C)$, with norm $\|f\|_p=\left(\int_{\R^n}
|f(y)|^p \, dy \right)^{1/p}$. These operators are of the following form: for any integrable $f \colon \R^n\to\mathbb C$,
$$ Gf(x)=\int_{\R^n} e^{-\mathcal Q(x,y)} f(y)\, dy, \qquad x\in\R^m,$$
where $\mathcal Q \colon \R^m\times \R^n\to\mathbb C$ is such that $\re(\mathcal Q) \colon \R^m\times \R^n\to\R$
is a semi-definite positive quadratic form and  $\im(\mathcal Q)\colon\R^m\times \R^n\to\R$ is a quadratic form. Lieb has given conditions which ensure that the operator  norm of $G \colon L^p(\R^n,\mathbb C)\to
L^q(\R^m,\mathbb C)$ can be computed by inspecting centered Gaussian functions only, i.e. functions of the form $f(y)=e^{-q(y)}$ where $\re(q)$ is $q$ positive definite quadratic form and $\im(q)$ is a quadratic form (when $q$ is real-valued, $f$ is a real-valued Gaussian function).
Here is a simplified version of his result:

\begin{theorem}[\cite{lieb-1990}]
	With the above notation, the relationship
	$$ \|G\|_{L_p\to L_q}=\sup\left\{\frac{\|Gf\|_q}{\|f\|_p};\; g \; \mathrm{centered}\; \mathrm{Gaussian} \right\} $$
	holds in any of the following cases
	\begin{itemize}
		\item $1<p\le q<+\infty$,
		\item $1<p, q<+\infty$ and the Gaussian kernel is real (i.e. $\mathcal{Q}$ is a quadratic form). In this case it is enough to consider real-valued Gaussian functions.
	\end{itemize}
\end{theorem}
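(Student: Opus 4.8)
Since the supremum over centered Gaussians is visibly at most $\|G\|_{L^p\to L^q}$, the whole content is the reverse inequality. Write $C=\|G\|_{L^p\to L^q}$ and $C_g$ for the Gaussian supremum; the goal is $C_g\ge C$. Two preliminary observations organize the proof. First, if $f(y)=e^{-q(y)}$ with $\re q$ positive definite, then $Gf(x)=\int e^{-\mathcal Q(x,y)-q(y)}\,dy$ is, by completing the square in the Gaussian integral, again a constant multiple of a centered Gaussian; hence $C_g$ is the supremum of an explicit (ratio of determinants) function of the covariance of $q$. Second, the validity of the conclusion is unaffected by composing $G$ on either side with an invertible linear change of variables or with multiplication by a Gaussian weight $e^{-\ell}$, and running these reductions one may assume $G$ is in a normal form in which $\re\mathcal Q$ is nondegenerate and $G$, after normalizing $C=1$, is a contraction that is moreover compact once one quotients by the natural symmetry group (dilations and the linear symmetries of $\mathcal Q$).

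The engine is tensorization plus an orthogonal invariance of Gaussian kernels. For $k\ge1$, $G^{\otimes k}\colon L^p(\R^{nk})\to L^q(\R^{mk})$ has kernel $\exp(-\sum_{i=1}^k\mathcal Q(x_i,y_i))$ and one shows $\|G^{\otimes k}\|_{L^p\to L^q}=C^k$, the inequality $\le$ being exactly where the hypothesis $p\le q$ (via iterated Minkowski integral inequalities), or in the real case positivity, is used. Moreover, for every $R\in O(k)$ the change of variables $x\mapsto(R\otimes\Id_m)x$ and $y\mapsto(R\otimes\Id_n)y$ preserves $\sum_i\mathcal Q(x_i,y_i)$ --- a one-line check because $\mathcal Q$ is a quadratic form --- and, having unit Jacobian, preserves all $L^p$ norms, so $G^{\otimes k}$ is unitarily conjugate to itself by such rotations. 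With $C=1$, I would first prove that a maximizer $f_*$ of $\|Gf\|_q/\|f\|_p$ exists (concentration--compactness in the normal form above, quotienting by dilations and the linear symmetries of $\mathcal Q$; in the real case replace $f$ by $|f|$ first). Then $f_*\otimes f_*$ maximizes $G\otimes G$, and by the invariance above so does its image under the $45^\circ$ rotation, namely $(u,v)\mapsto f_*(\tfrac{u+v}{\sqrt2})f_*(\tfrac{u-v}{\sqrt2})$. Since the maximizer of $G\otimes G$ is unique up to the symmetries of $G\otimes G$ and those respect the product structure, this function must factor as $g(u)h(v)$; the classical Cauchy-type functional equation $\log f_*(u+v)+\log f_*(u-v)=a(u)+b(v)$ then forces $f_*$ to be Gaussian, and the absence of linear terms in the normalized $\mathcal Q$ lets one take it centered. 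A hands-on variant that bypasses the compactness step is to tensor a near-maximizer $k$ times, apply the rotation carrying the diagonal of $\R^k$ to a coordinate axis, and invoke the central limit theorem to replace the relevant marginal of the input by a Gaussian as $k\to\infty$, the same rotation-invariance argument keeping the ratio $\|G^{\otimes k}\cdot\|_q/\|\cdot\|_p$ unchanged along the way.

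I expect the real work to lie in this Gaussianization. Establishing the tensor-power multiplicativity and the compactness of maximizing sequences --- ruling out escape of mass to infinity or to oscillation --- is precisely where $p\le q$ is needed, through Minkowski's integral inequality and the smoothing it provides; in the real-kernel case the substitute is the positivity $|Gf|\le G|f|$ together with a rearrangement-type argument, which is why real-valued test functions suffice there and why $p,q>1$ alone (without $p\le q$) is still enough. The second delicate point is pinning down the symmetry group of $G$, and hence of $G\otimes G$, precisely enough that ``unique up to symmetry'' genuinely yields the product factorization of the $45^\circ$-rotated maximizer; any extra, non-obvious symmetry would break the last step. By contrast the Gaussian-integral computations, the linear-algebra normalizations, and the functional equation are routine.
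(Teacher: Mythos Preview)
The paper does not prove this theorem: it is stated as a background result from Lieb \cite{lieb-1990}, with only a one-paragraph description of Lieb's method in the surrounding text (compactness of $G$ in the nondegenerate case, existence of maximizers via weak topology, and the tensor-product argument on $G\otimes G$). There is therefore no proof in the paper to compare your proposal against.

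That said, your outline matches the strategy the paper attributes to Lieb and is broadly faithful to the original argument. A few caveats on your sketch as a self-contained proof: the claim that $\|G^{\otimes k}\|_{L^p\to L^q}=C^k$ relies on Minkowski's integral inequality and genuinely needs $p\le q$; in the real-kernel case with $q<p$ Lieb's argument is different and does not proceed via tensor-power multiplicativity of the norm, so your ``positivity substitute'' remark is too quick. Also, the uniqueness-up-to-symmetry step for maximizers of $G\otimes G$ is where most of the analytic work hides (it is not merely ``unique up to the symmetries of $G\otimes G$''; one must characterize those symmetries and prove uniqueness), and your CLT variant, while in the spirit of Beckner's approach, would need a careful tightness argument to be made rigorous. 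These are points where a full proof requires real effort, but the architecture you describe is the right one.
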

An important step of the proof consists in the study of the non-degenerate case when $\re(\mathcal Q)$ is definite positive: the operator is shown to be compact, weak topology arguments yield the existence of maximizers of the ratio $\|Gf\|_q/\|f\|_p$, and a careful study of the product 
operator $G\otimes G$  with kernel $e^{-\mathcal Q(x,y)}e^{-\mathcal Q(x',y')}$ allows to show that they are Gaussian. For further comparison, let us emphasize that
these arguments use Banach space techniques, which only apply when $p\ge 1$.

Lieb's theorem extends and unifies many important analytic results.
By considering the kernel $e^{i\langle x,y\rangle}$,
it recovers the calculation of the norm of the Fourier transform from $L^p$ to $L^{p'}$ for $p\in(1,2)$,  which was first achieved by Beckner \cite{beckner-1975}.  It also encompasses Nelson's  theorem for the Ornstein-Uhlenbeck semigroup:
$$ P_tf(x)=\int_{\mathbb R^n} f\big( e^{-t}x+\sqrt{1-e^{-2t}}y\big) \, d\gamma_n(x),$$
where $\gamma_n$ is the standard Gaussian measure on $\R^n$, $d\gamma_n(x)= (2\pi)^{-n/2}\exp(-|x|^2/2) \, dx$.  Nelson's theorem \cite{N} asserts that this operator is hypercontractive: for $p,q>1$ satisfying $e^{2t}\ge \frac{q-1}{p-1}$, and every measurable function $f$,
$$\|P_tf \|_{L^q(\gamma_n)}\le \|f\|_{L^p(\gamma_n)}.$$ 
Lieb's article also addresses multilinear operators with Gaussian kernels, and features an extension of the latter theorem in the case of real-valued kernels and functions.
(From now on we only consider real-valued functions).

\begin{theorem}[\cite{lieb-1990}]\label{theo:lieb-multi}
Let $m\ge 1$ and for $i=1,\ldots m$ let $p_i\ge 1$ and let $B_i \colon \R^n \to \R^{n_i}$
be a linear surjective map. Let $\mathcal Q$ be a semi-definite positive quadratic form  on $\mathbb R^n$. For non-identically zero functions $f_i\in L^{p_i} (\R^{n_i},\R)$, let 
$$H(f_1,\ldots, f_m)=\frac{\int_{\R^n} e^{-\mathcal Q(x)}\prod_{i=1}^m f_i(B_i x) \, dx}{\prod_{i=1}^m \|f_i\|_{p_i} }.$$
Then the supremum of $H$ over all such functions is equal to its supremum over centered Gaussian functions only.
\end{theorem}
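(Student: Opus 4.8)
The plan is to follow the strategy of Lieb's operator-theoretic argument, adapted to the multilinear setting. Write $c$ for the supremum of $H$ over all admissible tuples and $c_G$ for its supremum over centered Gaussians; since centered Gaussians are admissible, $c\ge c_G$ is immediate, and everything reduces to proving $c\le c_G$ (this also yields $c<\infty$ once $c_G<\infty$; if $c_G=+\infty$ there is nothing to prove). I would first record two harmless reductions. Replacing $f_i$ by $|f_i|$ does not decrease $H$ and preserves $\|f_i\|_{p_i}$, so one may assume $f_i\ge 0$. And one may assume $\mathcal Q$ positive definite: setting $\mathcal Q_\ve:=\mathcal Q+\ve|\cdot|^2$, the weights $e^{-\mathcal Q_\ve}$ increase to $e^{-\mathcal Q}$ as $\ve\downarrow 0$, so by monotone convergence $H_\ve(f)\uparrow H(f)$ for every fixed admissible $f$; taking suprema (which commute), both the global constant and the Gaussian constant for $\mathcal Q_\ve$ increase to those for $\mathcal Q$, and the statement for the positive definite forms $\mathcal Q_\ve$ implies it for $\mathcal Q$.

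Assume now $\mathcal Q$ positive definite. The first step is the existence of a maximizer. The weight $e^{-\mathcal Q(x)}$ decays in every direction of $\R^n$, so it confines the problem: along a maximizing sequence the masses of the $f_i$ cannot escape to infinity (keeping $H$ bounded away from $0$ forces $x$ with $\mathcal Q(x)$ small, hence $B_i x$ bounded), and after normalizing away the residual dilation freedom a concentration--compactness argument — using weak compactness of bounded sets in $L^{p_i}$ together with the continuity of the multilinear integral against the Gaussian weight — produces admissible $f_1^*,\dots,f_m^*$ with $H(f^*)=c$. This is where $p_i\ge 1$ is used, exactly as in Lieb's treatment of the operator case; the endpoint exponents $p_i=1$ are more delicate, since a maximizing sequence may then peak at a point and the weak-$*$ limit is only a measure — such an index can be optimized out first (replacing $f_i$ by a point mass and passing to an affine slice, on which the Gaussian weight remains Gaussian), reducing to the case $p_i>1$ for all $i$.

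The second step promotes $f^*$ to a Gaussian by tensorization. On $(\R^n)^2$ introduce $\widetilde{\mathcal Q}(x,x')=\mathcal Q(x)+\mathcal Q(x')$, the surjections $\widetilde B_i(x,x')=(B_ix,B_ix')$, and the associated functional $\widetilde H$. Testing on tensor products gives $\widetilde H(g_1\otimes g_1,\dots,g_m\otimes g_m)=H(g_1,\dots,g_m)^2$, whence $\sup\widetilde H\ge c^2$; the reverse inequality $\sup\widetilde H=c^2$ (multiplicativity of the constant) is a standard slicing computation — write $\widetilde H$ as an iterated integral over $x'$ and then $x$, and apply Minkowski's integral inequality together with the definition of $c$. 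Hence $f_i^*\otimes f_i^*$ is a maximizer of $\widetilde H$. Now the crucial point: by the parallelogram identity both $\widetilde{\mathcal Q}$ and the $\widetilde B_i$ are invariant under the orthogonal map $\rho(x,x')=\tfrac1{\sqrt2}(x+x',x-x')$, and $\widetilde B_i$ intertwines $\rho$ with the analogous rotation $\rho_i$ on $(\R^{n_i})^2$, which preserves the $L^{p_i}$ norm; therefore precomposing $f_i^*\otimes f_i^*$ with $\rho_i$ gives another maximizer of $\widetilde H$. Following Lieb, one shows the nonnegative maximizer of $\widetilde H$ is unique up to scalars — the strict convexity of the norms $L^{p_i}$, $p_i>1$, being the decisive input — and uniqueness forces, for each $i$,
$$ f_i^*\Big(\tfrac{u+v}{\sqrt2}\Big)\,f_i^*\Big(\tfrac{u-v}{\sqrt2}\Big)=\lambda_i\,f_i^*(u)\,f_i^*(v),\qquad u,v\in\R^{n_i}, $$
for some $\lambda_i>0$. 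Writing $f_i^*=e^{-q_i}$ and matching quadratic, linear and constant parts on both sides, $q_i$ is seen to be a quadratic polynomial with \emph{vanishing} linear part (the linear terms cannot be reconciled under $\rho_i$), so $f_i^*$ is a centered Gaussian. Therefore $c=H(f^*)\le c_G$, and with the reduction above the proof is complete.

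The main obstacle, as usual in this circle of results, is the second step in the multilinear setting: transcribing Lieb's operator-theoretic compactness and uniqueness arguments to merely surjective $B_i$ and to the endpoint exponents $p_i=1$, i.e. showing that (after reducing away the $p_i=1$ indices) a maximizer exists and that the nonnegative maximizer is unique up to scaling. Once these two facts are secured, the tensorization/rotation mechanism and the resulting Gaussian functional equation are essentially formal.
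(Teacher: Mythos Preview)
This theorem is not proved in the present paper; it is quoted from Lieb's 1990 article as background for the reverse inequalities that form the paper's actual subject. So there is no ``paper's own proof'' to compare against. That said, your sketch does follow the broad architecture of Lieb's original argument (reduce to a nondegenerate kernel, obtain a maximizer by compactness, then use the tensor-product/rotation trick to force the maximizer to satisfy a Gaussian functional equation), and the reduction $\mathcal Q_\ve=\mathcal Q+\ve|\cdot|^2$ with monotone convergence is fine.

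There is, however, a real soft spot in your second step. You invoke ``uniqueness of the nonnegative maximizer of $\widetilde H$ up to scalars,'' citing strict convexity of $L^{p_i}$. In the multilinear setting this global uniqueness is neither obvious nor what Lieb actually uses. The correct mechanism is to track the \emph{equality cases} in the two-step slicing argument that proves $\sup\widetilde H\le c^2$: if $(f_i^\ast\otimes f_i^\ast)\circ\rho_i$ is an extremizer for $\widetilde H$, then for almost every $x'$ the $x$-slice must already saturate the one-copy bound, which (together with the analogous statement with the roles of $x$ and $x'$ swapped) yields the functional equation directly. Asserting uniqueness of maximizers instead short-circuits this and would itself require a nontrivial argument you have not supplied. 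Your compactness step is also only a heuristic as written: the claim that ``masses cannot escape to infinity'' needs a genuine tightness argument, and the $p_i=1$ reduction by ``passing to an affine slice'' deserves a precise statement (the slice inherits a Gaussian weight and surjective maps, but one must check the resulting problem is of the same type with one fewer function). These are exactly the points you flag as the ``main obstacle,'' so you have correctly located where the work lies; the sketch would need those two ingredients made rigorous to constitute a proof.
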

Setting $c_i=1/p_i$ and replacing $f_i$ with $f_i^{c_i}$ gives an analogous result
for the following functional on integrable functions:
$$ I(f_1,\ldots, f_m)=\frac{\int_{\R^n} e^{-\mathcal Q(x)}\prod_{i=1}^m f_i(B_i x)^{c_i} \, dx}{\prod_{i=1}^m \left( \int_{\R^{n_i}}f_i\right)^{c_i}}.$$

The above theorem is a far-reaching extension of H\"older's inequalities.
In the case when $\mathcal Q=0$ and the maps $B_i$ are linear forms (i.e. $n_i=1$),
it recovers a celebrated inequality of Brascamp and Lieb \cite{B-L-1976}, which allowed these authors to compute the optimal constants in Young's convolution inequality, independently of Beckner \cite{beckner-1975}. Indeed using duality
$$ \|f*g\|_r \le C \|f\|_p \|g\|_q $$
can be rewritten as 
$$ \int_{\R^{2n}} f(x-y)g(y)h(x) \, dx dy\le C \|f\|_p \|g\|_q \|h\|_{r'}.$$
The classical Loomis-Whitney inequality \cite{L-W-1949} and its extension by Finner \cite{finner} are also particular cases of Theorem \ref{theo:lieb-multi}.
The Brascamp-Lieb inequalities found striking applications in convex geometry thanks to the work
of K. Ball, see e.g.  \cite{ball-1,ball-3,ball-handbook}. He put forward a  situation where the optimal constant  is 1, and  could use it in order to derive various sharp upper bounds on volumes of  convex sets.  The ``geometric'' Brascamp-Lieb inequality reads as follows: if $u_1,\ldots, u_m$ are unit vectors in $\R^n$ and if $c_1,\ldots, c_m\ge 0$ verify
\begin{equation}\label{eq:decomposition-identity-1d}
\sum_{i=1}^m c_i u_i\otimes u_i=\Id_{\R^n},
\end{equation}
where $u_i\otimes u_i$ is the orthogonal  projection onto $\R u_i$ and $\Id_{\R^n}$ is the identity of $\R^n$, then for all integrable functions $f_i \colon \R\to \R^+$,
$$
\int_{\R^n} \prod_{i=1}^m f_i\big(\langle x,u_i\rangle \big)^{c_i} dx \le \prod_{i=1}^m \left( \int_{\R} f_i\right)^{c_i}.
$$
Observe that when $m=n$ and $(u_i)_{i=1}^n$ is an orthonormal basis, the inequality becomes an equality by Fubini's theorem. So, in some sense, 
 the geometric Brascamp-Lieb
inequality describes an extremal property of orthonormal bases among sets of vectors which decompose the identity as in \eqref{eq:decomposition-identity-1d}.
An extension to functions of several variables appears in \cite{barthe-inventiones}.

\medskip
Over the years, several related inverse inequalities appeared in the literature.
A first and very simple example is the inverse H\"older inequality (obviously, H\"older's inequalities are a particular case of Lieb's theorem): if $\lambda\ge 1$ 
and $f,g\colon\R^n\to \R^+$  then
$$ \int_{\R^n} f^\lambda g^{1-\lambda} \ge \left(\int_{\R^n} f\right)^\lambda
 \left(\int_{\R^n} g\right)^{1-\lambda},$$
 provided $\int g>0$ and with the convention that $0 \cdot \infty=0$.
By rearranging the terms, this is easily deduced from the usual inequality. 
The inverse H\"older inequality can also be rewritten as a sort of duality for the (non-normed) spaces $L^p$  when  $p\in (-\infty,0)\cup(0,1)$: for $f,g\colon\R^n\to \R^+$,
$$ \int_{\R^n} fg \ge \|f\|_p \|g\|_{p'},$$
where $p'\in (-\infty,0)\cup(0,1)$ is still defined by $p^{-1}+(p')^{-1}=1$. Since
the latter inequality is sharp, it follows that for $f\ge 0$,
\begin{equation}\label{eq:dual-p<1}
\|f\|_p=\inf_{g\ge 0} \frac{\int fg}{\|g\|_{p'}} .
\end{equation}

Another instance appears in the article of Brascamp and Lieb \cite{B-L-1976}, where a sharp inverse Young inequality is proved:  given $p,q,r\in (0,1]$ with $1+1/r=1/p+1/q$, the best constant  $C$ such for all positive functions  
$$ \|f*g\|_{r}\ge C \|f\|_p \|g\|_q $$
is described, and is achieved by Gaussian functions. Observe that thanks to \eqref{eq:dual-p<1}, the latter can be rewritten as 
$$ \int_{\R^{2n}} f(x-y)g(y)h(x) \, dx dy\ge C \|f\|_p \|g\|_q \|h\|_{r'}.$$

Later on, Borell \cite{borell-1982} proved a reverse form of Nelson's hypercontractivity: if $p,q\in (-\infty,1)$ and $e^{2t}\ge \frac{1-q}{1-p}$ then for all positive functions $f$ on $\R^n$:
$$\|P_tf \|_{L^q(\gamma_n)}\ge \|f\|_{L^p(\gamma_n)}.$$ 
This bound shows that the Ornstein-Uhlenbeck semigroup improves the positivity 
of functions (for $p<0$, $\|f\|_p=1/\|1/f\|_{|p|}$ and $q\le p$).

Among the examples of reverse inequalities are the Pr\'ekopa-Leindler inequalities
\cite{prekopa,leindler}: for all $\lambda\in (0,1)$ and all  $f,g\colon\mathbb R^n\to \R^+$,
\[ \int^*_{\R^n} \sup_{z=\lambda x+(1-\lambda)y} f(x)^{\lambda}g(y) ^{1-\lambda} \; dz\ge 
\left(\int_{\R^n} f\right)^\lambda
 \left(\int_{\R^n} g\right)^{1-\lambda},\]
 where the left hand side term is an outer integral and the supremum is over all $(x,y)\in (\mathbb R^n)^2$ verifying $z=\lambda x+(1-\lambda)y$. This functional version of the Brunn-Minkowski inequality is actually a particular case of the more general reverse Brascamp-Lieb inequalities proved by the first-named author \cite{barthe-cras,barthe-inventiones}. For shortness, we only state the rank one geometric version of the reverse Brascamp-Lieb inequalities (and refer to Section \ref{sec:dual-inverse} for more details): given unit vectors $u_1,\ldots, u_m$ in $\R^n$ and $c_1,\ldots,c_m\ge 0$ satisfying the decomposition of identity \eqref{eq:decomposition-identity-1d}, for all integrable functions $f_i\colon\R \to \R^+$, it holds
$$ \int_{\R^n}^* \sup_{x=\sum_i c_i \theta_i u_i }
\prod_{i=1}^m f_i(\theta_i)^{c_i} \, dx \ge \prod_{i=1}^m \left( \int_{\R} f_i\right)^{c_i} .$$
This inequality allows to derive geometric properties which are dual to the ones that can be proved using the Brascamp-Lieb inequality (which was the motivation of K. Ball's conjecture of the reverse inequality). See \cite{barthe-inventiones,barthe-simplex} for the first results in this direction. 
Again, centered Gaussian functions achieve (or almost achieve) the optimal constant.
The reader may object that the latter inequality seems rather different from the other ones.
Nevertheless, the supremum being an $L^\infty$ norm can be viewed as a limit of $L^p$ norms. Building on this idea, Brascamp and Lieb \cite{B-L-1976} were able to deduce the Pr\'ekopa-Leindler inequality as a limit case of their inverse Young inequality. In the same way, the first-named author proved in \cite{bart-these} an extension of the inverse Young inequality which recovers as a limit the geometric reverse Brascamp-Lieb inequalities. See \cite{chen-dafnis-paouris} for further results in this direction. Actually,
in view of the content of the present paper and of the dual features of their applications,  a better name for reverse Brascamp-Lieb inequalities could be dual Brascamp-Lieb inequalities.

\medskip It is natural to ask for a general principle that would unify and explain these reverse inequalities. They definitely share some common features: they provide lower bounds on integrals involving products of positive functions in terms of their $L^p$ norms, often with $p<1$, and Gaussian functions play a special role. A significant progress in this direction was recently made by 
Chen, Dafnis and Paouris. 
One of their main results is stated in probabilistic terms:
\begin{theorem}[\cite{chen-dafnis-paouris}]\label{theo:CDP}
Let $n = n_1 + n_2 + \ldots + n_m$ be positive integers and $(X_1,\ldots,X_m)$ be a Gaussian random vector in $\R^n$ (where $X_i \in \R^{n_i}$), with covariance matrix $\Sigma$.  For each $i$, let $\Sigma_i$ denote the covariance matrix of $X_i$
(which is a diagonal block of $\Sigma$). Let $p_1,\ldots,p_m \in \R\setminus\{0\}$ and consider the block diagonal matrix $P=\mathrm{diag}(p_1\Sigma_1,\ldots,p_m\Sigma_m)$. Then for all positive functions $f_1, \ldots, f_m$,
\begin{align*}
\mathrm{if} \; \Sigma\le P, \;\mathrm{then} \quad &\mathbb E\left(
 \prod_{i=1}^m f_i(X_i)\right) \le \prod_{i=1}^m\mathbb E \left( f_i(X_i)^{p_i} \right)^{\frac{1}{p_i}},\\
 \mathrm{if} \; \Sigma\ge P, \;\mathrm{then} \quad &\mathbb E\left(
 \prod_{i=1}^m f_i(X_i)\right) \ge \prod_{i=1}^m\mathbb E \left( f_i(X_i)^{p_i} \right)^{\frac{1}{p_i}},
\end{align*}
Here the order on matrices is for the cone of semi-definite positive matrices.
\end{theorem}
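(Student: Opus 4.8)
The plan is to prove both inequalities simultaneously by a martingale/It\^o-calculus argument; this route handles negative exponents and non-smooth $f_i$ uniformly and extracts the full operator conditions $\Sigma\le P$, $\Sigma\ge P$ in a single computation. By homogeneity, the conventions for $0\cdot\infty$ and for $\|\cdot\|_p$ with $p<0$, and a routine truncation, I would first reduce to the case where every $f_i$ is bounded and bounded below by a positive constant. Indeed, in the degenerate cases the claim is trivial --- e.g.\ if some $p_i<0$ and $\E f_i(X_i)^{p_i}=+\infty$ then the right-hand side of the reverse inequality is $0$ --- and otherwise one approximates $f_i$ by $(f_i\wedge N)\vee N^{-1}$ and lets $N\to\infty$, using Fatou's lemma on one side and dominated convergence on the other. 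One may also discard every index with $\Sigma_i=0$ (then $X_i$ is a.s.\ constant and $f_i(X_i)$ factors out of both sides), so that restricting the quadratic forms to the coordinate subspace $V_i:=\{x\in\R^n:\pi_jx=0\text{ for all }j\ne i\}$ shows that $\Sigma\le P$ forces $p_i\ge1$ for all $i$ and $\Sigma\ge P$ forces $p_i\le1$ for all $i$. Here $\pi_i\colon\R^n\to\R^{n_i}$ is the coordinate projection and $X_i=\pi_iX$.

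Next I would realize $X$ as the endpoint of a Gaussian path: let $W_t=\Sigma^{1/2}B_t$, $t\in[0,1]$, where $B$ is a standard Brownian motion in $\R^n$, so $W_1\sim N(0,\Sigma)$ has the law of $X$ and $d\langle W\rangle_t=\Sigma\,dt$. Put $g_i:=f_i^{p_i}$, which after the reduction is bounded above and below by positive constants, and consider for each $i$ the bounded martingale
$$M^i_t:=\E\big[\,g_i(\pi_iW_1)\,\big|\,\mathcal F_t\,\big],\qquad t\in[0,1],$$
in the Brownian filtration $(\mathcal F_t)$. Because $W_1-W_t$ is independent of $\mathcal F_t$, one sees that $M^i_t$ is a function of $\pi_iW_t$ alone, smooth for $t<1$; hence its It\^o representation $dM^i_t=\scalar{\rho^i_t,dW_t}$ has integrand $\rho^i_t$ taking values in $V_i$. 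Observe that $M^i_0=\E g_i(X_i)=\E f_i(X_i)^{p_i}$ and $M^i_1=g_i(X_i)$, so that $\prod_i(M^i_0)^{1/p_i}=\prod_i\big(\E f_i(X_i)^{p_i}\big)^{1/p_i}$ while $\prod_i(M^i_1)^{1/p_i}=\prod_i f_i(X_i)$.

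The heart of the matter is It\^o's formula for $N_t:=\prod_{i=1}^m(M^i_t)^{1/p_i}$, which is legitimate because each $M^i_t$ stays in a fixed compact subinterval of $(0,\infty)$, where $x\mapsto x^{1/p_i}$ is smooth. Writing $a^i_t:=\rho^i_t/(p_iM^i_t)\in V_i$ and $v_t:=\sum_i a^i_t\in\R^n$, and using $d\langle M^i,M^j\rangle_t=p_ip_jM^i_tM^j_t\,\scalar{a^i_t,\Sigma a^j_t}\,dt$, a direct computation gives that the drift of $dN_t$ equals
$$\tfrac12 N_t\Big(\,\sum_{i\ne j}\scalar{a^i_t,\Sigma a^j_t}+\sum_i(1-p_i)\scalar{a^i_t,\Sigma a^i_t}\,\Big)dt=\tfrac12 N_t\,\scalar{v_t,(\Sigma-P)v_t}\,dt .$$
The last equality uses crucially that $a^i_t\in V_i$: then $\sum_{i,j}\scalar{a^i_t,\Sigma a^j_t}=\scalar{v_t,\Sigma v_t}$, while $\sum_i p_i\scalar{a^i_t,\Sigma a^i_t}=\sum_i p_i\scalar{\pi_ia^i_t,\Sigma_i\pi_ia^i_t}=\scalar{v_t,Pv_t}$ since $P=\mathrm{diag}(p_1\Sigma_1,\dots,p_m\Sigma_m)$ and $v_t$ has $i$-th coordinate block $\pi_ia^i_t$. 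Therefore, if $\Sigma\le P$ the drift is $\le0$, so $N_t$ is a supermartingale, $\E N_1\le N_0$, and this is exactly $\E\prod_i f_i(X_i)\le\prod_i\big(\E f_i(X_i)^{p_i}\big)^{1/p_i}$; if $\Sigma\ge P$ the drift is $\ge0$, $N_t$ is a submartingale, and the reverse inequality follows.

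The step I expect to cost the most work is making the above fully rigorous: checking that $M^i$ is a genuine (bounded) martingale with the claimed representation in $V_i$, that the stochastic integrals arising in It\^o's formula are true martingales and not merely local ones, and that $N_t$ is well behaved up to and including $t=1$. All of this is exactly what the reduction to $f_i$ bounded above and below provides: there the $M^i_t$ never approach $0$ or $\infty$, the functions $x\mapsto x^{1/p_i}$ are smooth and Lipschitz on the relevant range, the needed integrability is automatic, and the convergence $M^i_t\to M^i_1$ as $t\to1$ (hence $N_t\to N_1$) is elementary; the general case is then recovered by the approximation described in the first paragraph. (An alternative, less self-contained route for the upper bound: writing $\E\prod_i f_i(X_i)=K\int_{\R^n}\exp\!\big(-\tfrac12\scalar{x,(\Sigma^{-1}-P^{-1})x}\big)\prod_i h_i(\pi_ix)^{1/p_i}\,dx$, where $K>0$ and $h_i(y)\,dy=f_i(y)^{p_i}\,d\mu_i(y)$ for $\mu_i$ the law of $X_i$, one sees that the kernel is a bona fide Gaussian precisely when $\Sigma\le P$; since then $p_i\ge1$, Theorem~\ref{theo:lieb-multi} applied with $c_i=1/p_i\in(0,1]$ reduces the inequality to centered Gaussian $f_i$, where it becomes the elementary determinant inequality $\det(I+\Sigma A)\ge\prod_i\det(I_{n_i}+p_i\Sigma_iA_i)^{1/p_i}$; the lower bound can be treated symmetrically via the reverse Brascamp--Lieb inequalities of \cite{barthe-inventiones,bart-these}.)
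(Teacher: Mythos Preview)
Your It\^o-calculus argument is correct and complete in outline: the drift computation
\[
\tfrac12 N_t\Big(\sum_{i\ne j}\scalar{a^i_t,\Sigma a^j_t}+\sum_i(1-p_i)\scalar{a^i_t,\Sigma a^i_t}\Big)=\tfrac12 N_t\,\scalar{v_t,(\Sigma-P)v_t}
\]
is exactly right (the key point being $a^i_t\in V_i$, so that $\sum_i p_i\scalar{a^i_t,\Sigma a^i_t}=\scalar{v_t,Pv_t}$), and the reduction to bounded $f_i$ makes $N_t$ uniformly bounded, so a nonnegative (resp.\ nonpositive) drift yields a genuine sub- (resp.\ super-) martingale on $[0,1)$; bounded convergence as $t\uparrow1$ finishes.

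This is, however, a genuinely different route from the paper's. The paper does not prove Theorem~\ref{theo:CDP} by a direct probabilistic argument; rather, it derives the reverse inequality (the $\Sigma\ge P$ case) as an instance of its geometric inverse Brascamp--Lieb inequality (Theorem~\ref{th:geometric-IBL}). That theorem is obtained from the full machinery of the paper: the Gaussian-minimizers principle (Theorem~\ref{theo:gauss-mini}), proved by monotone transport, together with the identification $\inf_{\mathcal{CG}}J=1$ in the geometric situation via the concavity of $\log\det$ (Proposition~\ref{prop:concavity-of-log-det} and Theorem~\ref{thm:D-in-extremisable-case}). Section~\ref{sec:geometric} then checks that the hypothesis $\Sigma\ge P$ (equivalently $BB^*\ge C^{-1}$) implies the non-degeneracy condition $\dim H\ge s^+(Q)+\sum_{i\le m^+}\dim H_i$ via the Schur-complement Lemma~\ref{lem:equivalence-of-positive-definiteness}. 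The $\Sigma\le P$ case is dismissed in one line as a consequence of Lieb's multilinear theorem.

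What each buys: your stochastic proof is short, self-contained, and handles both directions and all signs of $p_i$ in a single computation; indeed the paper itself remarks that the original Chen--Dafnis--Paouris semigroup proof of the geometric inequality is simpler than the transport one. The paper's route, on the other hand, embeds the result in a much broader framework (arbitrary quadratic kernels $\mathcal Q$, arbitrary surjections $B_k$) and delivers structural information your argument does not: a full characterization of when $\inf J>0$, equality conditions via~\eqref{eq:generalized-geometric-condition}, and the identification of the geometric case as exactly the extremizable one up to linear changes of coordinates. One caveat on your parenthetical alternative: the cited ``reverse Brascamp--Lieb inequalities'' of \cite{barthe-inventiones,bart-these} are the \emph{dual} inequalities (involving a supremum, as in Section~\ref{sec:dual-inverse}), not the inverse inequalities with negative exponents needed for the $\Sigma\ge P$ case, so that sketch would not go through as written; the inverse inequality requires either your martingale argument or the machinery developed in this paper.
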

The first part of the theorem is actually a direct consequence of the general Brascamp-Lieb inequality. The second part is a very neat reverse inequality.
Observe that the condition $\Sigma\ge P$ implies, by restriction to diagonal 
blocks, that $1\ge p_i$. The functional inequality can be rewritten as a lower bound on a multilinear
operator with a generalized Gaussian kernel (i.e. the exponential of a quadratic form,
without sign condition). Chen, Dafnis and Paouris use 
transformations of this inequality by linear changes of variables in order to get more,
and doing so they succeed to recover most of the above mentioned reverse inequalities. Nevertheless, their results do not have full generality and involve conditions which are sometimes difficult to check. 
 In the note \cite{barthe-wolff-cras},
we have announced a general result on the optimal constant in inequalities
of the form 
$$ 
\int_H e^{-\mathcal Q(x)} \prod_{k=1}^m f_k^{c_k}(B_k x) \, dx\ge C \prod_{k=1}^m \left(\int_{H_k} f_k \right)^{c_k}
$$
for all positive functions $f_i$. The goal of the present paper is to give a full proof of the results of \cite{barthe-wolff-cras}, and to provide an extensive answer to the following questions: when is it possible to calculate the best constant $C$ 
by considering only  Gaussian functions? or only centered Gaussian functions? when
is there a non-trivial inequality ($C>0$)?

\subsection{Notation and main results}
Here is a description of the setup of this article.
Let $0 \le m^+ \le m$ be integers and $H$, $H_1, \ldots, H_m$ be Euclidean spaces endowed with the usual Lebesgue measure. For $k=1,\ldots, m$ let $c_k$ be a real number satisfying $c_i > 0$ for $i \le m^+$ and $c_j < 0$ for $j > m^+$,
and let $B_k \colon H \to H_k$ be a surjective linear map.
Further, let $\mathcal Q\colon H\to \R$ be a quadratic form with signature $\big(s^+(\mathcal Q), s^-(\mathcal Q)\big)$.
 For measurable functions $f_k \colon H_k \to [0,+\infty]$ satisfying $0 < \int_{H_k} f_k < +\infty$ define
\begin{align}\label{def:J}
  J(f_1, \ldots, f_m) = \frac{\int_H e^{-\mathcal Q(x)} \prod_{k=1}^m f_k^{c_k}(B_k x) \, dx}{\prod_{k=1}^m \left(\int_{H_k} f_k \right)^{c_k}}
\end{align}
and assume the convention $0 \cdot \infty = 0$ for the product $\prod_{k=1}^m f_k^{c_k}(B_k x)$.

Our goal is to study the minimization problem for the functional $J$. It turns out that centered Gaussian functions, i.e. the functions of the form $e^{- q(x)}$ for a positive definite quadratic form  $q$, play a pivotal role. One of our main results is the following counterpart to Lieb's Theorem~\ref{theo:lieb-multi}:

\begin{theorem}\label{thm:main-result}
Let $c_1, \ldots, c_{m^+} > 0$, $c_{m^++1}, \ldots, c_{m} < 0$ with $0 \le m^+ \le  m$. Assume that the map
$x\mapsto (B_1x,\ldots,B_{m^+}x)$ from $H$ to $H_1\times\cdots\times H_{m^+}$ is onto  and that
\[
  \dim H \ge s^+(\mathcal Q) + \dim H_1 + \cdots + \dim H_{m^+}.
\]
Then $\inf J  = \inf_{\mathcal{CG}} J$, where the right-hand side stands for the infimum of $J$ over all choices of centered Gaussian functions $g_k$.
\end{theorem}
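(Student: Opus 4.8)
\emph{Proof sketch.}\
The inequality $\inf J\le\inf_{\mathcal{CG}}J$ is immediate, since centered Gaussian functions are admissible. The substance is the reverse inequality, and the plan is to show that \emph{every} admissible tuple $(f_1,\ldots,f_m)$ satisfies $J(f_1,\ldots,f_m)\ge\inf_{\mathcal{CG}}J$, by exhibiting centered Gaussian tuples whose $J$-values converge to, or drop below, $J(f_1,\ldots,f_m)$. First I would carry out a chain of routine reductions: diagonalize $\mathcal Q$ and put $(H;B_1,\ldots,B_m;\mathcal Q)$ into a convenient normal form; dispose separately of the degenerate regimes $\inf J=0$ and $\inf J=+\infty$ (the former being exactly the situation governed by the companion characterization of positivity of the constant); and reduce to $f_k$ that are smooth, strictly positive, centered, with nondegenerate covariance matrix $\Sigma_k$ and good tail bounds. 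The tail control is delicate precisely because of the negative exponents: for $k>m^+$ the factor $f_k^{c_k}(B_k x)$ blows up wherever $f_k$ is small, so mere finiteness of the numerator already forces quantitative lower bounds on the $f_k$ that must be carried through the whole argument.

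The engine is tensorization together with the central limit theorem. For $N\ge1$ let $J^{(N)}$ be the functional attached to $H^{\oplus N}$, to the quadratic form $x\mapsto\sum_{j=1}^N\mathcal Q(x_j)$, to the maps $B_k^{\oplus N}$, and to the same exponents. Two elementary facts drive everything: (i) $J^{(N)}(f_1^{\otimes N},\ldots,f_m^{\otimes N})=J(f_1,\ldots,f_m)^N$; and (ii) since a quadratic form obeys the parallelogram identity, $x\mapsto\sum_j\mathcal Q(x_j)$ is invariant and each $B_k^{\oplus N}$ is equivariant under every orthogonal map of $H^{\oplus N}$ of the form $r\otimes\mathrm{Id}_H$ with $r\in O(N)$, so that $J^{(N)}$ is unchanged when each test function is precomposed with the corresponding $r\otimes\mathrm{Id}_{H_k}$. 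Taking $r$ with first row $N^{-1/2}(1,\ldots,1)$ and then integrating out the last $N-1$ copies turns $f_k^{\otimes N}$ into a function $g_k^{(N)}$ on $H_k$ which, up to the factor $\int f_k$, is the density of the normalized sum $N^{-1/2}(X_1+\cdots+X_N)$ of i.i.d.\ vectors with law $f_k/\int f_k$; by the local central limit theorem $g_k^{(N)}$ converges to $\big(\int f_k\big)\gamma_{\Sigma_k}$, where $\gamma_{\Sigma_k}$ denotes the centered Gaussian density of covariance $\Sigma_k$. It then remains to establish a \emph{marginalization inequality} stating that integrating out the last $N-1$ copies cannot increase the appropriate power of $J^{(N)}$, i.e.
\[
  J(f_1,\ldots,f_m)\ \ge\ J\big(g_1^{(N)},\ldots,g_m^{(N)}\big)\,\big(1+o(1)\big)\qquad(N\to\infty),
\]
after which $N\to\infty$ gives $J(f_1,\ldots,f_m)\ge\inf_{\mathcal{CG}}J$.

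I expect the marginalization inequality to be the main obstacle, and it is exactly there that the two hypotheses are used. Under integration of the auxiliary copies, the positive-exponent data ($k\le m^+$) together with the positive part of $\mathcal Q$ plays the role of the kernel in Lieb's multilinear Theorem~\ref{theo:lieb-multi}, and the assumptions $\dim H\ge s^+(\mathcal Q)+\sum_{i\le m^+}\dim H_i$ and surjectivity of $x\mapsto(B_1x,\ldots,B_{m^+}x)$ are precisely what make the relevant auxiliary Gaussian integral finite and put Lieb's theorem in force; concretely, they give $\dim\big(\bigcap_{i\le m^+}\ker B_i\big)\ge s^+(\mathcal Q)$, i.e.\ enough room to split off, inside $\bigcap_{i\le m^+}\ker B_i$, a Gaussian factor that absorbs the positive part of $\mathcal Q$ and is transparent to $f_1,\ldots,f_{m^+}$. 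The negative-exponent data ($k>m^+$) does not submit to Hölder's inequality, but the duality $\|f\|_p=\inf_{g\ge0}\int fg/\|g\|_{p'}$ valid for $p<1$ — in particular \eqref{eq:dual-p<1} — recasts the needed reverse estimate as one amenable to the forward machinery. A last, more technical, difficulty is to make the central limit step quantitative enough to survive the division by $\prod_k\big(\int g_k^{(N)}\big)^{c_k}$ and the interaction with $e^{-\mathcal Q}$: one needs $g_k^{(N)}\to\big(\int f_k\big)\gamma_{\Sigma_k}$ in a topology strong enough to pass to the limit inside $J^{(N)}$ — locally uniformly, together with a uniform Gaussian majorant and, for $k>m^+$, a uniform positive lower bound of Gaussian type — which is where the tail reductions from the first step are needed.
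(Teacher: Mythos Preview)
Your proposed route is entirely different from the paper's. The paper does \emph{not} tensorize or invoke the central limit theorem; it proves $J(f_1,\ldots,f_m)\ge D^{-1/2}=\inf_{\mathcal{CG}}J$ directly by monotone (Brenier) transportation. After decomposing $\mathcal Q=\mathcal Q_+(B_0\cdot)-\mathcal Q_-(B_{m+1}\cdot)$ so that $(B_0,B_+)$ is a bijection and $\ker B_+\subset\ker B_{m+1}$, one transports each $f_k$ to a centered Gaussian via $T_k=\nabla\varphi_k$ and builds the global map $\theta(x)=\sum_{k=0}^{m+1}c_kB_k^\ast T_k(B_kx)$ on $H$. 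The heart of the argument is the \emph{surjectivity} of $\theta$ restricted to the set where $d\theta\ge0$; this is where the two hypotheses are spent, and it is what allows the change of variables (area formula) to go in the right direction for a lower bound. A quadratic inequality (Lemma~\ref{lem:quad-inverse}) then closes the estimate.

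More importantly, your scheme has a genuine gap: the ``marginalization inequality'' you isolate as the key step,
\[
J(f_1,\ldots,f_m)\ \ge\ J\big(g_1^{(N)},\ldots,g_m^{(N)}\big)(1+o(1)),\qquad g_k^{(N)}\to\big({\textstyle\int}f_k\big)\gamma_{\Sigma_k},
\]
is \emph{false} in general, already in the simplest instance. Take $H=H_1=H_2=\R$, $B_1=B_2=\mathrm{Id}$, $\mathcal Q=0$, $c_1=1+\varepsilon$, $c_2=-\varepsilon$ (reverse H\"older). Let $f_1=\gamma_1$ and $f_2=(1-\delta)\gamma_1+\delta\,\tfrac12\big(\gamma_1(\cdot-L)+\gamma_1(\cdot+L)\big)$ with $\delta=3/L^2$, so that $\Sigma_{f_1}=1$, $\Sigma_{f_2}=4$. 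Then $J(f_1,f_2)=\int\gamma_1(\gamma_1/f_2)^{\varepsilon}\to1$ as $L\to\infty$, whereas $J(\gamma_1,\gamma_4)=2^{\varepsilon}(1+3\varepsilon/4)^{-1/2}>1$. Thus $J(f_1,f_2)<J(\gamma_{\Sigma_{f_1}},\gamma_{\Sigma_{f_2}})$, contradicting the inequality you need. The conceptual reason is that the CLT singles out the \emph{covariance-matched} Gaussians $\gamma_{\Sigma_k}$, but these are not the extremizing Gaussians for $\inf_{\mathcal{CG}}J$; there is no monotonicity of $J$ under replacing functions by Gaussians with the same second moments (the analogous statement is equally false for the direct Brascamp--Lieb problem).

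Your attempt to rescue the step---treating the positive-exponent factors as a kernel and invoking Lieb's Theorem~\ref{theo:lieb-multi}, or using the $p<1$ duality~\eqref{eq:dual-p<1}---does not land either: Lieb's theorem yields \emph{upper} bounds with exponents in $[1,\infty)$, while here one needs a \emph{lower} bound in the auxiliary $(N-1)$ copies, which is nothing but the very inverse inequality under proof (with a constant that does not improve as $N\to\infty$). Lieb's own argument for the forward problem is also not a CLT comparison of this kind; it rests on existence of maximizers and uniform-convexity/product-structure arguments in $L^p$ for $p>1$, none of which are available when $c_k\notin(0,1]$. In short, the tensorization/rotation symmetry is correct and suggestive, but the step that would convert it into a proof is not merely ``the main obstacle''---as stated, it is false.
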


Hence a Gaussian minimizers principle holds under some hypotheses. It may fail when they are not fulfilled, but this happens only in degenerate situations. 
The purpose of Section \ref{sec:well-posedness} is to give a full picture of these degenerate cases. This is done via a careful inspection of the values of the functional $J$ on centered
as well as general Gaussian functions (i.e. non necessarily centered) of the form $e^{-q+\ell}$,
where $q$ is a positive definite quadratic form and $\ell$ is a linear form. This allows to put forward a natural and convenient non-degeneracy condition:
\begin{equation}
\label{eq:non-degeneracy2}
 \mathcal Q_{|\bigcap_{i=1}^{m^+}\ker B_i} \mbox{ is positive definite and }\dim H \ge s^+(\mathcal Q) + \dim H_1 + \cdots + \dim H_{m^+}.
\end{equation} 

Section \ref{sec:proof-main} gives a proof of the Gaussian minimizers principle under the above
condition \eqref{eq:non-degeneracy2}. The main tool is monotone transportation as in the proof of Brascamp-Lieb inequalities of \cite{barthe-inventiones}, see also \cite{barthe-young,valdimarsson}. The presence of negative exponents introduces substantial additional difficulties.
A crucial technical step is to use a particular decomposition of the quadratic form $\mathcal{Q}$
which is adapted to the geometric structure of the problem, and is inspired by \eqref{eq:non-degeneracy2}.
One could have tried and follow other techniques which applied to Brascamp-Lieb inequalities, as semigroup interpolation or stochastic representations \cite{carlen-lieb-loss,BCCT-structure,borell,barthe-cordero,bennett-bez,lehec,chen-dafnis-paouris,neeman,ledoux}. Nevertheless the transportation technique has the advantage that it does not require any a priori structural study of extremizers.

In Section \ref{sec:geometric}, we establish the analogue in our setting of the geometric Brascamp-Lieb inequality,
and show that it is equivalent to the correlation inequality of Chen, Dafnis and Paouris presented here as Theorem~\ref{theo:CDP}. Our structural study allows a better analysis of equality conditions. Note however that their semigroup proof of the geometric inequality is simpler than ours (somehow for the transportation approach the geometric situation is not easier than the general case).

Section \ref{sec:dual-inverse} presents a dual form of the inverse Brascamp-Lieb
inequalities, which can be obtained from the very same proof. A brief summary of the various types of inequalities is provided.

The next sections are devoted to the question of existence of a non-trivial inverse Brascamp-Lieb
inequality. In other words, when is it true that $\inf J>0$? For Brascamp-Lieb inequalities, the 
analogous question (of boundedness of multilinear Gaussian operators) was settled in the general case by Bennett, Carbery, Christ and Tao \cite{BCCT-structure,BCCT-finiteness} after other contributions in the rank one case \cite{barthe-inventiones,carlen-lieb-loss}. 

Section \ref{sec:interpolation} establishes, for fixed geometric data $(\mathcal{Q},(B_k)_{k=1}^m)$,
a convexity property of the set of exponents $(c_k)_{k=1}^m$ for which $\inf J>0$. We call this set  the positivity domain of $J$.

Section \ref{sec:finiteness-1} gives a description of the positivity domain in the rank one case, i.e. when the maps $B_k$ are linear forms and when  $s^+(\mathcal Q), \, s^-(\mathcal Q)\le 1$.
In this case, the proof is simple and based on explicit calculations on Gaussian functions.
The positivity domain is a  polyhedral convex cone which we can describe as an intersection of half-spaces or  by generating vectors.

Section \ref{sec:finiteness-n} deals with the general case. We follow the inductive approach of Bennett, Carbery, Christ and Tao \cite{BCCT-finiteness}. In our setting, the fact that
the quadratic form can have positive and negative parts (and thus corresponds to fixing two Gaussian 
functions instead of one) makes the analysis more delicate.
For simplicity, we state here our characterization in the case when no kernel is involved (the general result is formulated as Theorem~\ref{th:characterization-in-general-case}).
\begin{theorem}\label{th:positivity-general-rank-no-kernel}
Let $c_1, \ldots, c_{m^+} > 0$, $c_{m^++1}, \ldots, c_{m} < 0$ with $0 \le m^+ \le  m$. Assume that the map
$x\mapsto (B_1x,\ldots,B_{m^+}x)$ from $H$ to $H_1\times\cdots\times H_{m^+}$ is a bijection. For any integrable functions $f_k \colon H_k\to [0,+\infty]$ with $\int f_k>0$, let
\[ 
  J(f_1, \ldots, f_m) = \frac{\int_H \prod_{k=1}^m f_k^{c_k}(B_k x) \, dx}{\prod_{k=1}^m \left(\int_{H_k} f_k \right)^{c_k}}\cdot 
  \]
 Then $\inf J>0$ if and only if the following two conditions are verified:
 \begin{enumerate}
\item  $\dim H=\sum_{i=1}^m c_k \dim H_k$,
 \item For every linear subspace $V\subset H$ such that $\dim V=\sum_{i=1}^{m^+} \dim B_iV$, it holds
 \[ \dim V\ge \sum_{k=1}^m c_k \dim B_kV .\]
\end{enumerate}  
If $x\mapsto (B_1x,\ldots,B_{m^+}x)$ is not surjective then $\min J=0$. If it is surjective but not injective then $\inf J=+\infty$.
\end{theorem}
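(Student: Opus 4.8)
The plan is to prove Theorem~\ref{th:positivity-general-rank-no-kernel} by combining the scaling (homogeneity) heuristic that produces the two necessary conditions with an inductive argument in the spirit of Bennett--Carbery--Christ--Tao for sufficiency.

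First I would dispose of the degenerate cases. If $x\mapsto(B_1x,\ldots,B_{m^+}x)$ is not surjective, then choosing the $f_i$ for $i\le m^+$ to be indicators of thin slabs around the (proper) image forces the numerator to vanish while the denominator stays positive, so $\inf J=0$; conversely if it is surjective but not injective, the kernel $N=\bigcap_{i\le m^+}\ker B_i$ is a positive-dimensional subspace on which the integrand is constant along the $f_i$ with $i\le m^+$, and one exploits the remaining freedom in the $j>m^+$ factors (negative exponents, hence we want $f_j$ large) together with the infinite Lebesgue mass of $N$ to push $J$ to $+\infty$. So from now on assume $x\mapsto(B_1x,\ldots,B_{m^+}x)$ is a bijection.

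Next, the \emph{necessity} of (1) and (2). For (1): replace each $f_k$ by $f_k(\lambda_k\,\cdot)$ and track how numerator and denominator scale; testing also with a global dilation $x\mapsto tx$ shows that unless $\dim H=\sum_k c_k\dim H_k$ the functional $J$ can be driven to $0$ by a suitable one-parameter family, using that $c_k$ has a definite sign in each of the two groups. For (2): given a subspace $V$ with $\dim V=\sum_{i\le m^+}\dim B_iV$, plug in near-extremal Gaussians that concentrate mass near $V$ (i.e.\ $f_k=$ a thin Gaussian in the directions transverse to $B_kV$ in $H_k$, spread out inside $B_kV$), let the transverse scale go to $0$; a local computation shows $J\sim(\text{transverse scale})^{\dim V-\sum_k c_k\dim B_kV}$ up to the contribution of the complementary directions, which by (1) applied to the quotient data is balanced. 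Hence $\inf J>0$ forces $\dim V\ge\sum_k c_k\dim B_kV$. (One should double-check that the condition $\dim V=\sum_{i\le m^+}\dim B_iV$ is exactly what makes the $i\le m^+$ factors "see" $V$ as a product, i.e.\ the analogue of a Brascamp--Lieb-critical subspace; this is the reason only such $V$ enter.)

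The heart is the \emph{sufficiency} direction, and this is where I expect the main obstacle. The strategy is induction on $\dim H$, following \cite{BCCT-finiteness}: if there is a proper nonzero subspace $V$ for which condition (2) holds \emph{with equality}, one splits the problem along $V$ and its complement, applies the induction hypothesis to the restricted datum on $V$ and to the quotient datum on $H/V$ (checking that both inherit conditions (1) and (2), with the bijectivity hypothesis propagating appropriately), and multiplies the two lower bounds via a Fubini-type factorization of the integral. If no such "critical" subspace exists, i.e.\ condition (2) is strict for all proper nonzero $V$, one is in the "simple" case and should be able to produce a direct bound — here I would either invoke a Gaussian-minimizer reduction (Theorem~\ref{thm:main-result}, once the non-degeneracy condition \eqref{eq:non-degeneracy2} is verified — with no kernel and $x\mapsto(B_ix)_{i\le m^+}$ a bijection this is essentially automatic) and then compute the Gaussian optimization explicitly, reducing $\inf_{\mathcal{CG}}J>0$ to a statement about positive-definiteness of a certain matrix built from the $B_k$ and $c_k$; or argue directly that strict condition (2) plus (1) forces the relevant Gaussian quadratic form to be positive definite. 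The delicate points will be: (a) making sure the negative exponents do not break the factorization step (the group $j>m^+$ plays no role in the definition of "critical subspace", so one must carry those factors along as harmless multiplicative weights and control them — this is exactly the subtlety the introduction flags); (b) ensuring the induction is well-founded, i.e.\ that in the non-simple case the split genuinely reduces dimension and the sub-data are again of the required form; and (c) handling the case distinction cleanly when $m^+<m$, since condition (2) quantifies over subspaces of $H$ but only the first $m^+$ maps determine which subspaces are "visible". I would organize the write-up so that the general statement Theorem~\ref{th:characterization-in-general-case} (with a kernel $\mathcal Q$ present) is proved first, treating the kernel as two extra Gaussian factors — one with a positive and one with a negative exponent — and then specialize.
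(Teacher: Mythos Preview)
Your overall architecture matches the paper's: dispose of degenerate cases via Lemma~\ref{lem:nonsurjective1} and Proposition~\ref{prop:finitevalues2}, prove necessity by scaling along a subspace (this is exactly the paper's argument in Section~8.4, with $f_k^{(R)}(x,y)=f_k(x/R,y)$), and prove sufficiency by induction with a factorization along a critical subspace. Your treatment of the critical-subspace step is also right in spirit: the paper's Lemma~\ref{lem:inheritence-of-condition-C} (inheritance of Condition~(C)) and Lemma~\ref{lem:tensorization} (Fubini-type tensorization) do precisely what you sketch.

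The genuine gap is your handling of the ``simple'' case where condition~(2) is strict for every proper nonzero admissible $V$. You propose to invoke Theorem~\ref{thm:main-result} to reduce to centered Gaussians and then ``compute the Gaussian optimization explicitly'' or ``argue directly that strict condition (2) plus (1) forces the relevant Gaussian quadratic form to be positive definite''. But the quantity you need to bound is $D=\sup_{(A_k)\in\Lambda}\det\big(\sum_k c_k B_k^*A_kB_k\big)/\prod_k(\det A_k)^{c_k}$, a supremum over an unbounded parameter set, and showing $D<\infty$ from strict subspace inequalities is not a computation---it is essentially the problem itself. In the forward Brascamp--Lieb setting one can sometimes extract a critical subspace from a degenerating maximizing sequence, but with negative exponents the monotonicity goes the wrong way and no such compactness argument is available here.

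The paper sidesteps this entirely by a device you are missing: the interpolation/convexity result of Proposition~\ref{prop:interpolation}. Because the positivity domain $\mathcal C$ is a closed convex polytope, it suffices to establish strong positivity at the \emph{vertices} of a compact hyperplane section $\mathcal C_P$. At a vertex, $c$ lies on $m-1$ of the facet hyperplanes $\partial S_V$, $\partial S^{H/V}$, or $\{c_k=0\}$, so one is \emph{never} in the simple case: either some $c_k=0$ (reduce $m$), or there is a genuine critical subspace or quotient (reduce $\dim H$), or one lands in a low-complexity base case ($m\le 2$, Lemmas~\ref{lem:initialization-2functions-no-kernel} and~\ref{lem:initialization-1function}). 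This is why the paper's induction is on the pair $(\dim H,m)$ rather than on $\dim H$ alone. Without this convexity reduction your induction has no well-founded base in the simple case, and the proof does not close.
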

 Let us emphasize that in the positivity domain, $c_i\ge 1$ for $i=1,2,\ldots,m^+$ (see Proposition~\ref{prop:c-i-ge-1}), which means that our results can be stated in terms of $L^{p_k}$-spaces with $p_k=1/c_k\le 1$ and possibly negative.

\medskip

Let us conclude this introduction with some more notation and comments on the setting.
In the rest of the paper, we use the notation $\inf_{\mathcal{G}} J$ for the infimum 
of $J$ on $m$-tuples of Gaussian functions (not necessarily centered).
 
We could only require the sets $H_k$ to be finite dimensional vector spaces equipped with a Lebesgue measure. Euclidean structures are not relevant for our problems, but working in Euclidean spaces is convenient for explicit calculations for Gaussian functions, as quadratic functions are represented by symmetric linear maps. Also Euclidean structures induce a canonical choice of Lebesgue measure. In a context of a Euclidean space, we will use $\scalar{\cdot,\cdot}$ for the inner product, $|\cdot|$ for the Euclidean norm and for a linear map $L$ between Euclidean spaces, $L^*$ will stand for its adjoint. We denote by $Q$ a self-adjoint map on $H$ such that for all $x \in H$,
\[
  \mathcal{Q}(x) = \pi \scalar{x, Qx}.
\]

Eventually let us mention that we allow  $H_k=\{0\}$ and choose by convention
the Lebesgue measure to be the Dirac mass at $0$. For such a $k$, the terms involving
$f_k$ can be  canceled out from \eqref{def:J}.

 \subsection{Acknowledgments} This work has benefited from discussions on related topics with several colleagues. We would like to thank in particular  J\'er\^ome Bertrand,  Max Fathi, Piotr Mi{\l}o{\'s}, Krzysztof Oleszkiewicz, Grigoris Paouris.

\section{Well-posedness of the minimization problem and the minimum value}
\label{sec:well-posedness}

 We denote by  $B_+$ the linear map $(B_1,\ldots,B_{m^+})$, that is 
  \begin{equation}\label{def:B+}
\begin{array}{rcc}
B_+: \;  H & \to & H_1\times\cdots\times H_{m^+} \\
  x & \mapsto & (B_1x,\ldots,B_{m^+} x).
\end{array}
\end{equation}

\subsection{A non-degeneracy condition}

We put forward a simple condition on the above map $B_+$ which allows the functional $J$ to vanish.

\begin{lemma}\label{lem:nonsurjective1}
\begin{enumerate}
\item[(i)] If the map $B_+$ from $H$ to $H_1\times\cdots\times H_{m^+}$
is not onto, then $\inf J=\min J=0$.
\item[(ii)] Conversely, if the map $B_+$ is onto, then all $f_k \colon H_k \to [0, +\infty]$ with $0 < \int_{H_k} f_k < +\infty$ one has $J((f_k)) \in (0, +\infty]$.
\end{enumerate}
\end{lemma}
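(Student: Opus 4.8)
The plan is to prove both parts by elementary support and measure-theoretic arguments, using only that $e^{-\mathcal Q}>0$ pointwise and the fact that a surjective linear map pulls positive-measure sets back to positive-measure sets and null sets back to null sets; no information about $\mathcal Q$ beyond positivity of $e^{-\mathcal Q}$ will be needed.

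For (i), suppose $B_+$ is not onto, so $W:=\Rg(B_+)$ is a proper subspace of $H_1\times\cdots\times H_{m^+}$ and hence $W^\perp\neq\{0\}$. I would pick $u=(u_1,\ldots,u_{m^+})\in W^\perp\setminus\{0\}$ and set $I:=\{i\le m^+:\ u_i\neq 0\}$, which is nonempty. For $i\in I$ let $f_i$ be the indicator of a ball $B(t_iu_i,r_i)$ with $t_i,r_i>0$ chosen so small that $\langle y,u_i\rangle\ge\alpha_i>0$ for every $y$ in that ball; for every other index $k$ (in particular all $k>m^+$) let $f_k$ be the indicator of the unit ball, so that $0<\int_{H_k}f_k<\infty$ for all $k$. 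Since $\sum_{i=1}^{m^+}\langle y_i,u_i\rangle=0$ on $W$ but equals $\sum_{i\in I}\langle y_i,u_i\rangle\ge\sum_{i\in I}\alpha_i>0$ on the box $\{f_1>0\}\times\cdots\times\{f_{m^+}>0\}$, the reachable point $(B_1x,\ldots,B_{m^+}x)=B_+x\in W$ never lies in that box; hence for every $x\in H$ there is some $i\le m^+$ with $f_i(B_ix)=0$, and since $c_i>0$ and $0\cdot\infty=0$ the integrand $e^{-\mathcal Q(x)}\prod_kf_k^{c_k}(B_kx)$ vanishes identically. Thus $J((f_k))=0$, and as $J\ge 0$ always, this yields $\inf J=\min J=0$.

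For (ii), assume $B_+$ is onto and fix admissible $f_k$. Writing $S_k:=\{f_k>0\}$, each $S_k$ has positive Lebesgue measure because $\int_{H_k}f_k>0$. The key observation is that $A:=B_+^{-1}(S_1\times\cdots\times S_{m^+})$ has positive measure in $H$: decomposing $H=\ker B_+\oplus K$ with $B_+|_K$ a linear isomorphism onto $H_1\times\cdots\times H_{m^+}$, the set $A$ corresponds to $\ker B_+\times(B_+|_K)^{-1}(S_1\times\cdots\times S_{m^+})$, and a linear isomorphism carries positive-measure sets to positive-measure sets. Discarding from $A$ the null sets $B_k^{-1}(\{f_k=+\infty\})$ (null since $\{f_k=+\infty\}$ is null and each $B_k$ is surjective linear) leaves a set $A'$ of positive measure on which $f_k(B_kx)\in(0,+\infty)$ for $k\le m^+$ and $f_k(B_kx)\in[0,+\infty)$ for $k>m^+$; consequently every factor $f_k^{c_k}(B_kx)$ lies in $(0,+\infty]$, so does their product, and the integrand is strictly positive on $A'$. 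Hence the numerator of $J$ is bounded below by $\int_{A'}e^{-\mathcal Q(x)}\prod_kf_k^{c_k}(B_kx)\,dx>0$, while the denominator is finite and positive, so $J((f_k))\in(0,+\infty]$.

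I do not anticipate a genuine obstacle here; the argument is short, and the only points requiring care are (a) producing, in the non-onto case, the single separating linear functional $y\mapsto\sum_i\langle y_i,u_i\rangle$ that forces one positive-exponent factor to vanish simultaneously over all of $H$, and (b) the bookkeeping with the convention $0\cdot\infty=0$ together with the null level sets $\{f_k=+\infty\}$, which is precisely what makes the negative-exponent functions harmless in both directions (they can neither rescue a vanishing product in (i) nor destroy positivity in (ii)).
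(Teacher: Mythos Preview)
Your proof is correct and follows essentially the same approach as the paper. For (i), the paper picks a point at distance at least $\sqrt{m^+}$ from $\Rg(B_+)$ and uses the product of unit balls around its coordinates, whereas you use a linear functional $y\mapsto\sum_i\langle y_i,u_i\rangle$ vanishing on $\Rg(B_+)$ to separate; these are equivalent elementary constructions. For (ii), both arguments are identical in spirit: positive-measure pullback under the surjective $B_+$, then discard the null sets where some $f_k=+\infty$ so that the negative-exponent factors lie in $(0,+\infty]$ and cannot kill the product.
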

\begin{proof}
(i) Consider a point $(a_1,\ldots,a_{m^+})\in H_1\times\cdots\times H_{m^+}$, so that its Euclidean
distance to the range  of the non-surjective linear map $B_+$ is at least $\sqrt{m^+}$.
If we denote by $B_H(x,r)$ the open ball of center $x$ and radius $r$ in $H$, then 
$$ \big(B_{H_1}(a_1,1)\times \cdots \times  B_{H_m^+}(a_{m^+},1) \big) \cap \big\{ (B_1x, \ldots, B_{m^+}x); \; x\in H\big\}=\emptyset.$$
For $1\le i\le m^+$ consider the function $f_i\colon H_i\to \R^+$ defined as the characteristic function
of $B_{H_i}(a_i,1)$. Then the latter empty intersection ensures that for all $x\in H$,
$$\prod_{i=1}^{m^+} f_i^{c_i}(B_ix)=0.$$
Therefore, for any choice of functions $(f_j)_{j>m^+}$, it holds $J(f_1,\ldots,f_m)=0$.

(ii) Since $\prod_{i\le m^+} \int_{H_i} f_i > 0$, the measure of points $(z_1,\ldots,z_{m^+})\in H_1\times\cdots\times H_{m^+}$ for which $\prod_{i\le m^+} f_i^{c_i} (z_i)>0$ is positive. From the hypothesis that $B_+$ is onto it follows that the measure of 
 $$\{x\in H; \prod_{1\le i\le m^+} f_i^{c_i} (B_ix)>0\}$$
 is positive. To conclude it is enough to notice that integrability of $f_j$ (for $m^+ < j < m$) implies that $\prod_{j = 1+m^+}^m f_j^{c_j}(B_j x) > 0$ $x$-a.e. in $H$. 
\end{proof}

As a consequence of the previous lemma, we will often work under the non-degeneracy  assumption that
$B_+$ is surjective.

\subsection{Calculations for centered Gaussian functions}\label{subsec:centered-gaussians}

Recall the classical formula $\int_{\mathbb R} e^{-\pi t^2} dt=1$. From this, it follows that
for any self-adjoint operator $A$ on $\mathbb R^d$ (or a $d$-dimensional Euclidean space),
$$ \int_{\R^d} e^{-\pi\scalar{x,Ax}} dx=\left\{ 
\begin{array}{ll} \det(A)^{-1/2} & \mbox{if $A$ is positive definite},\\
   +\infty & \mbox{otherwise}.
\end{array}
\right.  $$
When $A$ is positive definite, we denote $g_A$ the centered Gaussian function defined by
\[
  g_A(x) = e^{-\pi \scalar{x, Ax}}.
\]
An elementary computation shows that
\begin{align}\label{eq:J-on-Gaussian-input}
  J(g_{A_1}, \ldots, g_{A_m}) = \begin{cases}
    \left(\frac{\det(Q + \sum_{k=1}^m c_k B_k^\ast A_k B_k)}{\prod_{k = 1}^m (\det A_k)^{c_k}}\right)^{-1/2} & \text{if $(A_1, \ldots, A_m) \in \Lambda$,} \\
    \infty & \text{otherwise,}
  \end{cases}
\end{align}
where the set $\Lambda$ is defined as follows
\[
  \Lambda = \Big\{ (A_1, \ldots, A_m) \colon A_k \colon H_k \to H_k \text{ and } Q + \sum_{k=1}^m c_k B_k^\ast A_k B_k \colon H \to H \text{ are positive definite} \Big\}.
\]
Therefore the infimum of $J$ over centered Gaussian functions equals $D^{-1/2}$, where
\begin{align}\label{def:BL-constant}
  D = \sup\left\{\frac{\det(Q + \sum_{k=1}^m c_k B_k^\ast A_k B_k)}{\prod_{k=1}^m (\det A_k)^{c_k}} \colon (A_1, \ldots, A_m) \in \Lambda \right\},
\end{align}
with the convention that $D = 0$ for $\Lambda = \emptyset$ (and thus $D^{-1/2} = \infty$). 

\subsection{Ensuring finiteness for some functions}
We investigate the  existence of non-zero functions for which $J$
takes a finite value.
The right setup for the functional $J$ to be non degenerate on centered Gaussian functions is the following condition:
\begin{equation}\label{eq:injectivity-condition}
\mathcal Q \textup{ is positive definite on } \ker B_+.
\end{equation}

\begin{prop}\label{prop:lamba-emptyset}
The following assertions are equivalent:
\begin{enumerate} 
\item[(i)] there exist centered Gaussian functions $g_1, \ldots, g_m$ with    $J(g_1, \ldots, g_m) <+\infty$,
  \item[(ii)] $\Lambda \neq \emptyset$,
  \item[(iii)] $\mathcal Q_{|\ker B_+}$ is positive definite.
\end{enumerate}
\end{prop}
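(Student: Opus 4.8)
The plan is to establish the cycle of implications $(ii)\Rightarrow(i)\Rightarrow(iii)\Rightarrow(ii)$, the first two being essentially formal and the last one being the substantive content. The implication $(ii)\Rightarrow(i)$ is immediate from the explicit formula \eqref{eq:J-on-Gaussian-input}: if $(A_1,\ldots,A_m)\in\Lambda$ then $J(g_{A_1},\ldots,g_{A_m})$ is the finite number $\big(\det(Q+\sum_k c_k B_k^\ast A_k B_k)/\prod_k(\det A_k)^{c_k}\big)^{-1/2}$. For $(i)\Rightarrow(iii)$ I would argue by contraposition: suppose $\mathcal Q$ is \emph{not} positive definite on $\ker B_+$, i.e.\ there is $v\in\ker B_+\setminus\{0\}$ with $\mathcal Q(v)\le 0$. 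Given any centered Gaussians $g_k=e^{-\pi\scalar{\cdot,A_k\cdot}}$, consider the integrand $e^{-\mathcal Q(x)}\prod_{k=1}^m (g_k(B_kx))^{c_k}=\exp\big(-\pi\scalar{x,(Q+\sum_k c_k B_k^\ast A_k B_k)x}\big)$. On the line $\R v$ the maps $B_1,\ldots,B_{m^+}$ vanish, so the quadratic form in the exponent restricted to $\R v$ equals $\mathcal Q(tv)+\sum_{j>m^+}c_j\pi t^2\scalar{B_jv,A_jB_jv}$; since $c_j<0$ for $j>m^+$ and $\mathcal Q(v)\le 0$, this is $\le 0$ for all $t$, so the integrand does not decay (indeed grows or stays constant) along $\R v$. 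A Fubini/slicing argument along the direction $v$ then forces $\int_H e^{-\mathcal Q}\prod(g_k\circ B_k)^{c_k}=+\infty$ (note the numerator of $J$ is nonnegative so there is no cancellation), hence $J(g_1,\ldots,g_m)=+\infty$. Thus no centered Gaussians give a finite value, proving the contrapositive of $(i)\Rightarrow(iii)$.

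The heart of the matter is $(iii)\Rightarrow(ii)$: assuming $\mathcal Q_{|\ker B_+}$ is positive definite, one must \emph{construct} a tuple $(A_1,\ldots,A_m)\in\Lambda$, i.e.\ self-adjoint $A_k$ on $H_k$ (no sign condition is imposed on the individual $A_k$ in the definition of $\Lambda$) such that $Q+\sum_{k=1}^m c_k B_k^\ast A_k B_k$ is positive definite on all of $H$. The natural idea is to first kill the negative-exponent terms by taking $A_j$ for $j>m^+$ to be small in operator norm — since $c_j<0$, the term $c_jB_j^\ast A_jB_j$ is then a small perturbation — so it suffices to make $Q+\sum_{i=1}^{m^+}c_iB_i^\ast A_iB_i$ positive definite with some margin, and then absorb the perturbation. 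For the positive part, the point is that $\sum_{i=1}^{m^+}c_iB_i^\ast A_iB_i$ (with each $A_i$ positive definite, say $A_i=\lambda I$) is a positive semidefinite form whose kernel is exactly $\bigcap_{i\le m^+}\ker B_i=\ker B_+$; on that kernel $\mathcal Q$ is positive definite by hypothesis, while on a complement one can take $\lambda$ large enough that $c_i\lambda B_i^\ast B_i$ dominates the (possibly negative) contribution of $\mathcal Q$. Making this precise is the one genuine technical step: I would decompose $H=\ker B_+\oplus(\ker B_+)^\perp$, write $Q$ in block form with respect to this splitting, observe that the $\ker B_+$-block is positive definite and that $B_+^\ast B_+$ (or $\sum_i B_i^\ast B_i$) is positive definite on $(\ker B_+)^\perp$ and vanishes on $\ker B_+$, and then invoke a standard Schur-complement / large-parameter lemma: for a symmetric block matrix $\begin{pmatrix}R & S\\ S^\ast & T+\lambda U\end{pmatrix}$ with $R$ positive definite and $U$ positive definite, the whole matrix is positive definite once $\lambda$ is large enough.

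The main obstacle is purely this construction in $(iii)\Rightarrow(ii)$: one must be careful that the $A_i$ are allowed to be any self-adjoint maps, that the ``small perturbation'' from the negative terms genuinely can be made small after the margin in the positive part is fixed (the order of quantifiers — first choose the $A_i$, $i\le m^+$, obtaining a definiteness margin $\delta>0$, \emph{then} choose $\|A_j\|$, $j>m^+$, small relative to $\delta$ — matters), and that the surjectivity of each $B_k$ is used so that $B_k^\ast A_kB_k$ has the expected rank. The reverse directions and the contrapositive slicing argument in $(i)\Rightarrow(iii)$ are routine once set up, with the only mild care needed being the justification that a non-decaying Gaussian integrand over $H$ integrates to $+\infty$, which follows by integrating first over lines parallel to $v$ and then using Tonelli since everything is nonnegative.
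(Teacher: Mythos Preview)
Your proposal is correct and follows essentially the same approach as the paper: both reduce $(iii)\Rightarrow(ii)$ to showing that $Q+D\sum_{i\le m^+}B_i^\ast B_i$ is positive definite for large $D$, after first disposing of the negative-exponent terms by taking the $A_j$ ($j>m^+$) small. The only difference is in how this last step is executed: you propose a block decomposition $H=\ker B_+\oplus(\ker B_+)^\perp$ together with a Schur-complement argument, whereas the paper isolates this as a separate lemma (Lemma~\ref{lem:defpos-subspace}) proved by a compactness argument on the unit sphere. Both are standard and equally direct.

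One small correction: you write that ``no sign condition is imposed on the individual $A_k$ in the definition of $\Lambda$'', but in fact $\Lambda$ requires each $A_k$ to be positive definite (since the $g_{A_k}$ must be integrable centered Gaussians). This is immaterial to your argument, since your construction takes $A_i=\lambda\,\Id$ for $i\le m^+$ and small positive $A_j$ for $j>m^+$, all of which are positive definite anyway.
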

\begin{proof}
The equivalence of  $(i)$ and $(ii)$ is a direct consequence of Formula \eqref{eq:J-on-Gaussian-input}.
Assertion $(ii)$ is equivalent to the existence of positive maps $(A_k)_{k=1}^m$ such that 
$Q+\sum_{k=1}^m c_k B_k^*A_k B_k$ is positive definite.
This can be rewritten as 
\[Q+\sum_{i=1}^{m^+} c_i B_i^*A_i B_i>\sum_{j=1+m^+}^m |c_j| B_j^*A_j B_j. \]
Since one may choose the matrices $A_j>0$ arbitrarily small, $(ii)$ is equivalent
to the existence of positive maps $(A_i)_{i=1}^{m^+}$ such that 
\[Q+\sum_{i=1}^{m^+} c_i B_i^*A_i B_i>0. \]
Similarly, one may choose each matrix $A_i$ as an arbitrarily large multiple of the identity on $H_i$. Hence $(ii)$ is equivalent to the existence of  $D>0$ such 
that 
$Q+\sum_{i=1}^{m^+} D B_i^* B_i>0$,
or in terms of quadratic forms
\[x\mapsto \langle x,Qx\rangle + D \sum_{i=1}^{m^+} | B_ix|^2=
 \langle x,Qx\rangle + D |B_+x|^2\]
 is positive definite. We may conclude thanks to 
Lemma \ref{lem:defpos-subspace} below for  $L = B_+$.
\end{proof}

\begin{lemma}\label{lem:defpos-subspace}
Let $\mathcal R$ be a quadratic form on $\mathbb R^d$. Let $L \colon \mathbb R^d\to \mathbb R^k$ be a linear map between Euclidean spaces. Then the following assertions
are equivalent:
\begin{enumerate}
 \item[(i)] There exists $D>0$ such that the quadratic form $x\mapsto \mathcal R(x)+D |Lx|^2$ is positive definite,
 \item[(ii)] $\mathcal R_{|\mathrm{ker}L}$ is positive definite.
\end{enumerate}
\end{lemma}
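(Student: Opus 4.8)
The plan is to prove the equivalence of (i) and (ii) in Lemma~\ref{lem:defpos-subspace} by a direct argument, since both implications are elementary once one isolates $\ker L$ as a distinguished subspace.

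For the implication (i) $\Rightarrow$ (ii): suppose $\mathcal{R}(x) + D|Lx|^2$ is positive definite for some $D > 0$. If $x \in \ker L \setminus \{0\}$, then $|Lx|^2 = 0$, so $\mathcal{R}(x) = \mathcal{R}(x) + D|Lx|^2 > 0$. Hence $\mathcal{R}$ restricted to $\ker L$ is positive definite. This direction requires essentially no work.

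For the implication (ii) $\Rightarrow$ (i), which is the substantive direction: assume $\mathcal{R}_{|\ker L}$ is positive definite. Decompose $\mathbb{R}^d = \ker L \oplus (\ker L)^\perp$ orthogonally, writing $x = y + z$ with $y \in \ker L$ and $z \in (\ker L)^\perp$. Since $L$ is injective on $(\ker L)^\perp$, there is a constant $\alpha > 0$ with $|Lx|^2 = |Lz|^2 \ge \alpha |z|^2$ for all $x$. Writing $R$ for the self-adjoint operator representing $\mathcal{R}$, we have $\mathcal{R}(x) = \langle y, Ry\rangle + 2\langle y, Rz\rangle + \langle z, Rz\rangle$. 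On the unit sphere $\{|y|^2 + |z|^2 = 1\}$, the term $\langle y, Ry\rangle$ is bounded below by $c|y|^2$ for some $c > 0$ (by the positive-definiteness hypothesis on $\ker L$), while $|2\langle y, Rz\rangle + \langle z, Rz\rangle| \le C|z|$ for some constant $C$ depending on $\|R\|$ and the dimension (using $|y| \le 1$). Thus $\mathcal{R}(x) + D|Lx|^2 \ge c|y|^2 - C|z| + D\alpha|z|^2$ on the unit sphere. Now estimate: either $|z|$ is small, in which case $|y|^2 = 1 - |z|^2$ is close to $1$ and the $c|y|^2$ term dominates the $-C|z|$ loss once $|z| \le c/(2C)$; or $|z| \ge c/(2C)$ is bounded away from $0$, in which case choosing $D$ large enough makes $D\alpha|z|^2 \ge D\alpha (c/(2C))^2$ dominate everything else. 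A clean way to package this is to use Young's inequality $C|z| \le \frac{c}{2}|z|^2 \cdot \frac{2C^2}{c} + \tfrac{1}{2}\cdot\frac{c}{2C^2}$, or more simply: for any $\eta > 0$, $C|z| \le \eta|z|^2 + C^2/(4\eta)$, so $\mathcal{R}(x) + D|Lx|^2 \ge c|y|^2 + (D\alpha - \eta)|z|^2 - C^2/(4\eta)$; but this leaves a negative constant, so I instead prefer the compactness/case-split argument, or alternatively split off $\varepsilon|y|^2$ from the $c|y|^2$ and absorb the cross term: $2|\langle y, Rz\rangle| \le \varepsilon|y|^2 + \varepsilon^{-1}\|R\|^2|z|^2$, giving $\mathcal{R}(x) + D|Lx|^2 \ge (c-\varepsilon)|y|^2 + (D\alpha - \varepsilon^{-1}\|R\|^2 - \|R\|)|z|^2$; pick $\varepsilon = c/2$ and then $D$ large enough that the coefficient of $|z|^2$ is positive, so the whole expression is bounded below by a positive multiple of $|y|^2 + |z|^2 = |x|^2$.

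The main obstacle — really the only point requiring care — is the handling of the cross term $\langle y, Rz\rangle$ in the second implication: one must absorb it into the sum of the (strictly positive) $|y|^2$-coefficient coming from the hypothesis and the $|z|^2$-coefficient coming from $D|Lx|^2$, and verify that a single choice of $D$ (after fixing the splitting parameter $\varepsilon$) works uniformly. The Cauchy–Schwarz-plus-Young absorption described above does this cleanly, so no genuine difficulty remains; the rest is bookkeeping on constants.
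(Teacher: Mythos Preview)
Your proof is correct. The implication (i) $\Rightarrow$ (ii) matches the paper's. For (ii) $\Rightarrow$ (i) you take a genuinely different route: the paper argues by contradiction and compactness---assuming no $D$ works, it picks for each $N$ a unit vector $x_N$ with $\mathcal{R}(x_N)+N|Lx_N|^2\le 0$, extracts a subsequential limit $x$ on the sphere, and observes that $\mathcal{R}(x)\le 0$ and $|Lx|=0$, contradicting (ii). You instead give a direct quantitative argument: orthogonally decompose along $\ker L$, use injectivity of $L$ on $(\ker L)^\perp$ to get $|Lx|^2\ge\alpha|z|^2$, and absorb the cross term $2\langle y,Rz\rangle$ via Young's inequality to obtain $(c-\varepsilon)|y|^2+(D\alpha-\varepsilon^{-1}\|R\|^2-\|R\|)|z|^2$. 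The paper's proof is shorter and avoids tracking constants; yours has the merit of producing an explicit threshold for $D$. One stylistic remark: your write-up meanders through two abandoned attempts (the case split on $|z|$ and the Young inequality that leaves a negative constant) before reaching the clean absorption; in a final version you should delete those and go straight to the last argument with $\varepsilon=c/2$.
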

\begin{proof}
If  $(i)$ holds then for  $D$ large enough $x\mapsto \mathcal R(x)+D |Lx|^2$ is positive definite. Hence its restriction to $\ker L$ is also positive definite, namely 
$\mathcal R_{|\mathrm{ker}L}$ is positive definite.

Next, let us show that $(ii)$ implies $(i)$, by contradiction. If $(i)$ is not true then for every
 integer $N$ there exists a unit vector $x_N\in S^{d-1}\subset \mathbb R^d$ such that
  $$  \mathcal R(x_N)+N |Lx_N|^2\le 0.$$
  By compactness of the unit sphere, one can find a converging subsequence $(x_{N_k})$. Let
  $x\in S^{d-1}$ denote its limit.
  
  Since $\mathcal R(x_{N_k})\le -N_k  |Lx_{N_k}|^2\le 0$, passing to the limit gives $\mathcal R(x)\le 0$.
  Moreover 
  $$ |Lx_{N_k}|^2\le -\frac{\mathcal R(x_{N_k})}{N_k}\le -\frac{\min_{S^{d-1}}\mathcal R}{N_k},$$
  so by continuity, letting $k$ go to infinity $|Lx|=0$. Hence $x\in \mathrm{ker} L\setminus\{0\}$
  verifies $\mathcal R(x)\le 0$, meaning that the restriction of  $\mathcal R$ to $\mathrm{ker}L$
  is not positive definite.
  \end{proof}

 Combined with the above proposition, the forthcoming one shows that if the map $B_+$ defined in \eqref{def:B+}
 is surjective then the following holds:  there  exists  functions for which  $J$ is finite
 if and only if there exists  centered Gaussian functions for which $J$ is finite.

\begin{prop}\label{prop:finitevalues2}
Assume  that the map $B_+$ is surjective.
If $\mathcal Q_{|\ker B_+}$ is not positive definite, then for every functions $f_k \colon H_k\to \R^+$
with $\int f_k \in (0,+\infty)$, the quantity $J((f_k)_{1\le k\le m})$ is $+\infty$.
\end{prop}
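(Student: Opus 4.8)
The plan is to show that when $\mathcal{Q}_{|\ker B_+}$ fails to be positive definite and $B_+$ is surjective, the numerator integral in $J$ diverges to $+\infty$ for \emph{every} admissible tuple $(f_k)$. The starting point is the hypothesis: there exists a nonzero vector $v \in \ker B_+$ with $\mathcal{Q}(v) \le 0$, i.e. $\langle v, Qv\rangle \le 0$. The idea is that along the whole line (or affine subspace) in direction $v$, the maps $B_i$ for $i \le m^+$ are constant, so the factors $f_i^{c_i}(B_i x)$ do not decay in that direction; meanwhile the Gaussian weight $e^{-\mathcal{Q}(x)}$ does not decay either (it is constant or grows along $v$); and the negative-exponent factors $f_j^{c_j}(B_j x)$ with $j > m^+$ are a.e.\ positive by integrability of $f_j$, so they only help. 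First I would make this quantitative: pick a ball $U \subset H_1 \times \cdots \times H_{m^+}$ of positive measure on which $\prod_{i \le m^+} f_i^{c_i}(z_i) \ge \delta > 0$ (possible since $\prod \int f_i > 0$), and use surjectivity of $B_+$ to find an open set $W \subset H$ of positive measure with $B_+(W) \subset U$; one may take $W$ of the form $W_0 + \R v$ is too much, so instead take $W = W_0 \times (\text{interval in } v\text{-direction})$ after splitting $H = (\R v)^\perp \oplus \R v$ — more precisely, shrink $W_0$ so that $B_+(W_0 + [-T,T]v) \subset U$ for all $T$, which holds because $B_+ v = 0$.

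The core estimate is then a one-dimensional Fubini slicing. Writing $x = y + tv$ with $y \in (\R v)^\perp$, we have $\mathcal{Q}(y+tv) = \mathcal{Q}(y) + 2t\,\pi\langle y, Qv\rangle + t^2 \mathcal{Q}(v)$, a quadratic polynomial in $t$ whose leading coefficient $\mathcal{Q}(v) \le 0$. The next step is to handle the negative-exponent factors: since each $f_j$ ($j > m^+$) is integrable, $f_j < +\infty$ a.e., hence $\prod_{j > m^+} f_j^{c_j}(B_j x) > 0$ for a.e.\ $x$; restricting $W_0$ further (by Egorov/Lusin-type reasoning, or simply by choosing $W_0$ inside a positive-measure set where this product exceeds some $\eta > 0$), I can assume $\prod_{j>m^+} f_j^{c_j}(B_j x) \ge \eta$ on $W_0 + \R v$ — wait, this product is \emph{not} constant in $t$, so I instead argue on the slice directly: for a.e.\ fixed $y \in W_0$, the function $t \mapsto \prod_{j>m^+} f_j^{c_j}(B_j(y+tv))$ is positive for a.e.\ $t$, and one shows $\int_{-T}^{T}$ of it stays bounded below by a positive constant as $T \to \infty$ (it cannot shrink to zero since the integrand is a fixed positive measurable function of $t$ with full support). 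Combining, for a.e.\ $y \in W_0$,
\[
  \int_H e^{-\mathcal{Q}(x)} \prod_{k=1}^m f_k^{c_k}(B_k x)\, dx \ge \delta \int_{W_0} \left( \int_{-T}^{T} e^{-\mathcal{Q}(y+tv)} \prod_{j>m^+} f_j^{c_j}(B_j(y+tv))\, dt \right) dy,
\]
and the inner integral, having integrand $\ge (\text{positive})\cdot e^{-P_y(t)}$ with $P_y$ a quadratic in $t$ of non-positive leading coefficient, tends to $+\infty$ as $T \to \infty$ by monotone convergence.

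The main obstacle I anticipate is the bookkeeping around the negative-exponent factors $f_j^{c_j}(B_j x)$ for $j > m^+$: because $c_j < 0$, these are of the form $f_j(B_j x)^{-|c_j|}$ and could in principle blow up, but more to the point they vary along the $v$-direction, so one cannot simply pull them out of the $t$-integral. The clean fix is to argue slice-by-slice: for a.e.\ $y$, $t \mapsto \prod_{j>m^+} f_j^{c_j}(B_j(y+tv))$ is a.e.\ positive (integrability of $f_j$ $\Rightarrow$ $f_j < \infty$ a.e.\ $\Rightarrow$ the composed function is positive off a null set, using that the pushforward of Lebesgue measure on the line under $x \mapsto B_j x$ is absolutely continuous when $B_j v \ne 0$, and is harmless when $B_j v = 0$), so $\int_{-T}^T (\text{that product})\cdot e^{-P_y(t)}\,dt \to +\infty$. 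A secondary technical point is ensuring the set of ``good'' $y$ has positive measure simultaneously for the condition $B_+(y) \in U$ and for the a.e.-positivity of the slice products; this follows from another application of Fubini. None of this requires the non-degeneracy hypothesis \eqref{eq:non-degeneracy2} beyond surjectivity of $B_+$, which is exactly the stated assumption.
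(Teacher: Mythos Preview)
Your overall architecture matches the paper's: pick $v\in\ker B_+\setminus\{0\}$ with $\mathcal Q(v)\le 0$, split $H=S\oplus\R v$, Fubini, and show the inner $t$-integral diverges for $y$ in a set of positive measure. The handling of the factors $f_i^{c_i}(B_ix)$ for $i\le m^+$ is fine.

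There is, however, a genuine gap in the step you flag as the ``main obstacle''. You assert that because $t\mapsto \prod_{j>m^+} f_j^{c_j}(B_j(y+tv))$ is a.e.\ positive, the integral $\int_{-T}^T(\text{that product})\,e^{-P_y(t)}\,dt$ tends to $+\infty$, writing ``integrand $\ge(\text{positive})\cdot e^{-P_y(t)}$''. But a.e.\ positivity does \emph{not} give a uniform positive lower bound, and without one the conclusion can fail: take the toy case $P_y\equiv 0$ and a single factor equal to $e^{-t^2}$, which is everywhere positive yet has finite integral. You have to rule this out, and here is where the integrability of $f_j$ must be used quantitatively, not just to say $f_j<\infty$ a.e. The paper's key observation is that for each $j>m^+$ with $B_jv\ne 0$, a Fubini argument in $H_j$ along the direction $B_jv$ shows that for a.e.\ $y$ the slice $t\mapsto f_j(B_j y+tB_jv)$ is integrable on $\R$, hence $\{t:f_j(B_j y+tB_jv)\ge 1\}$ has \emph{finite} Lebesgue measure, so $f_j^{c_j}\ge 1$ off a finite-measure set. (When $B_jv=0$ the factor is constant in $t$ and positive for a.e.\ $y$.) Intersecting over the finitely many $j$'s, the full product is $\ge 1$ off a finite-measure set $A^c\subset\R$.

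Once you have that, the divergence still needs one more line, which is the content of the paper's Lemma~\ref{lem:exp}: since $\langle v,Qv\rangle\le 0$ one has $e^{-P_y(t)}\ge c_y\,e^{\alpha_y t}$ for some constants, and $\int_A e^{\alpha_y t}\,dt=+\infty$ whenever $A^c$ has finite measure (by a direct argument: if $\int_A e^t<\infty$ then $|A\cap[N,N+1)|\le Ce^{-N}$ forces $|A^c|=\infty$). Without these two ingredients---the finite-measure complement and the exponential lemma---your inner-integral step does not go through.
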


\begin{proof}
Without loss of generality, we consider arbitrary functions $f_k$ with $\int f_k=1$.
By our hypothesis, there exists a unit vector $v\in H$ such that
$\langle v,Qv\rangle\le 0$ and for all $1\le i\le m^+$, $B_iv =0$. Let $S\subset H$ be any linear complement of  $\R v$. Then there is a positive constant $c_S$ such that,
decomposing each element of $H$ as $x=y+tv$ with $y\in S$ and $t\in\R$
\begin{eqnarray*}
J((f_k))&=& c_S \int  e^{-\pi\big(\langle y,Qy\rangle+ 2 t \langle Q y,v\rangle+t^2 \langle v,Qv\rangle\big)} \prod_{1\le i\le m^+} f_i^{c_i} (B_iy) 
\prod_{m^+ < j \le m} f_j^{c_j} (B_jy+tB_jv) \, dt dy \\
&\ge & c_S \int_S e^{-\pi\langle y,Qy\rangle} \prod_{1\le i\le m^+} f_i^{c_i} (B_iy) 
\left( \int_{\R} e^{-2\pi t \langle Q y,v\rangle} \prod_{m^+ < j \le m} f_j^{c_j} (B_jy+tB_jv) \, dt\right) dy
\end{eqnarray*}
Let us prove that $y$-almost everywhere in $S$, the inner integral equals $+\infty$. To do this, we prove that $y$-a.e. in $S$, the non-negative function $t\mapsto \prod_{m^+ < j \le m} f_j^{c_j} (B_jy+tB_jv)$ is bounded from below by a positive constant, except maybe on a set of finite Lebesgue measure. Here are the details:

  If $m^+ < j \le m$ is such that $B_jv=0$ then for all $t$,  $f_j^{c_j} (B_jy+tB_jv)= f_j^{c_j} (B_jy)$. Since $B_j \colon H \to H_i$ is surjective 
  and $B_jv=0$ it follows that the restriction of $B_j$ to $S$ is also surjective.
  Since $\int_{H_j} f_j<+\infty$, we know that $f_j<+\infty$ a.e. in $H_j$.
  As the preimage of a Lebesgue negligible set by a linear surjection is also Lebesgue negligible,
 we deduce that $y$-a.e in $S$, $f_j(B_jy)<+\infty$.  Using that $c_j$ is negative, we get that $y$-a.e. in $S$, $f_j^{c_j}(B_jy)>0$.
 
 If $m^+ < j \le m$ is such that $B_jv \neq 0$ we proceed differently. First, for each $y$, one can decompose $B_jy$ using orthogonal projections as follows 
 $$B_j y=P_{(\R B_j v) ^\bot} B_j y + P_{\R B_j v} B_jy=L_j y+ t_j(y) B_j v$$
 where $L_j=  P_{(\R B_jv) ^\bot} B_j$. 
By translation invariance of  Lebesgue's measure 
\begin{eqnarray*}
 \mathrm{vol}_1\left( \{t\in \R; f_j^{c_j} (B_jy+tB_jv) \le 1 \}\right)&=&
 \mathrm{vol}_1\left( \{t\in \R; f_j (L_jy+(t+t_j(y))B_jv) \ge 1 \}\right)\\
 &=& \mathrm{vol}_1\left( \{t\in \R; f_j (L_jy+ tB_jv)\ge 1 \}\right).
\end{eqnarray*}
Next 
$$ 1=\int_{H_j} f_j= |B_jv| \int_{(\R B_jv)^\bot} \int_\R f_j(z+tB_jv) dt dz,$$
hence $z$-a.e. in $(\R B_jv)^\bot$, the inner integral is finite and therefore
 $$\mathrm{vol}_1\left( \{t\in \R; f_j (z+ tB_jv)\ge 1 \}\right)<+\infty.$$
 Since by construction the above map $L_j \colon S \to (\R B_jv)^\bot$ is linear and onto, 
 it follows that $y$-a.e. in $S$, $\{t\in \R; f_j^{c_j} (B_jy+tB_jv) \le 1 \}$
has finite Lebesgue measure.

Putting everything together, we obtain as claimed that $y$-a.e. in $S$,  
$t\mapsto \prod_{j> m^+} f_j^{c_j} (B_jy+tB_jv)$ is bounded from below by $1$, except for a set of finite Lebesgue measure. Lemma \ref{lem:exp} below then yields that 
$y$-a.e. in $S$ the inner integral in the latter expression for $J((f_k))$ is infinite.
Consequently 
$$J((f_k))\ge c_S \int_S e^{-\pi\langle y,Qy\rangle} \prod_{1\le i\le m^+} f_i^{c_i} (B_iy) \times
(+\infty)\, dy.$$
So $J((f_k))=+\infty$ provided the set of elements $y\in S$ for which 
$\prod_{i\le m^+} f_i^{c_i} (B_iy)>0$ has positive measure (for at least one choice of $S$). To this end we use the hypothesis that $B_+$ is surjective and Lemma~\ref{lem:nonsurjective1}(ii) to obtain that $J((f_k)) > 0$, which readily implies that the set of $x \in H$ for which $\prod_{i\le m^+} f_i^{c_i} (B_ix) > 0$ has positive measure. By integrating over the Grassmannian of hyperplanes in $H$ there exists a non-negligible set of hyperplanes $\mathcal{S}$ such that for each $S \in \mathcal{S}$, the set of $y \in S$ for which $\prod_{i\le m^+} f_i^{c_i} (B_iy)>0$ has positive measure. Since the set of hyperplanes of $H$ containing $v$ is negligible, there must be a hyperplane not containing $v$ in $\mathcal{S}$.
\end{proof}

\begin{lemma}\label{lem:exp}
Let $A\subset \R$ be a Borel set. 
If $A^c$ has finite Lebesgue measure then $\int_A e^t dt=+\infty$
\end{lemma}
\begin{proof}
Assume on the contrary that $\int_A e^t dt=C<+\infty$. Then for every $N\in \mathbb N$,
 $e^N \mathrm{vol}_1(A\cap [N,N+1)) \le C$. Hence $\mathrm{vol}_1(A^c\cap [N,N+1))\ge 1-C e^{-N}$. Summing over $N\in \mathbb N$ gives that $A^c$ has infinite measure.
\end{proof}

\subsection{On the effect of translating Gaussian functions and consequences of positivity}\label{subsec:translated-gaussians}

In order to explain the relevance of the hypothesis
\begin{equation}\label{eq:surjectivity-condition}
  \dim H \ge s^+(\mathcal Q) + \dim H_1 + \cdots + \dim H_{m^+}
\end{equation}
which appears in Theorem~\ref{thm:main-result}, we study the value of the functional $J$ on non-centered Gaussian functions. 

In order to handle the Gaussian kernel $\exp(-\mathcal{Q})$ as two additional (fixed) Gaussian functions (one function corresponding to a positive exponent and the other corresponding to a negative exponent), we will decompose the quadratic form $\mathcal{Q}$ into a positive and negative part. To this end, note the following simple fact:
\begin{lemma}\label{lem:kerS-kerT-H}
Let $S \colon H \to X$ and $T \colon H \to Y$ be linear maps. The map $(S, T) \colon H \to X \times Y$ is surjective if and only if $S$ and $T$ are surjective and
\begin{equation}\label{eq:kerS-kerT-H}
  \ker S + \ker T = H.
\end{equation}
It is a linear isomorphism if and only if $S$ and $T$ are surjective and $\ker S \oplus \ker T=H$.
\end{lemma}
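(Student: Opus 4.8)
The plan is to prove both statements by straightforward linear algebra, treating the ``surjective'' case first and then refining it to the ``isomorphism'' case.

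First I would prove the forward implication of the first claim. Suppose $(S,T)\colon H\to X\times Y$ is surjective. Composing with the coordinate projections $X\times Y\to X$ and $X\times Y\to Y$ (which are themselves surjective), we immediately get that $S$ and $T$ are surjective. For \eqref{eq:kerS-kerT-H}, take any $x\in H$. By surjectivity of $(S,T)$ there is $x'\in H$ with $(S,T)x'=(0,Tx)$, i.e.\ $Sx'=0$ and $Tx'=Tx$. Then $x'\in\ker S$ and $x-x'\in\ker T$, so $x=x'+(x-x')\in\ker S+\ker T$; hence $\ker S+\ker T=H$.

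Next I would prove the converse. Assume $S$, $T$ are surjective and $\ker S+\ker T=H$. Let $(u,v)\in X\times Y$ be arbitrary. By surjectivity of $S$ and $T$ pick $a,b\in H$ with $Sa=u$, $Tb=v$. Write $a-b=p+q$ with $p\in\ker S$ and $q\in\ker T$, which is possible by \eqref{eq:kerS-kerT-H}. Set $z:=a-p=b+q$. Then $Sz=Sa-Sp=u$ and $Tz=Tb+Tq=v$, so $(S,T)z=(u,v)$ and $(S,T)$ is surjective. This proves the first equivalence.

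For the second statement, note that $(S,T)$ is a linear isomorphism iff it is surjective and injective. Given the first equivalence, it suffices to show that, under the standing assumption that $S$ and $T$ are surjective with $\ker S+\ker T=H$, injectivity of $(S,T)$ is equivalent to $\ker S\cap\ker T=\{0\}$, since $\ker S\oplus\ker T=H$ means exactly $\ker S+\ker T=H$ together with $\ker S\cap\ker T=\{0\}$. But $\ker(S,T)=\ker S\cap\ker T$ by definition of the map $(S,T)$, so $(S,T)$ is injective iff $\ker S\cap\ker T=\{0\}$. Combining, $(S,T)$ is an isomorphism iff $S$ and $T$ are surjective and $\ker S\oplus\ker T=H$. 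There is no real obstacle here; the only point requiring a moment's care is keeping the logical bookkeeping straight, namely that ``$\ker S\oplus\ker T=H$'' already encodes both the sum condition (equivalent, given surjectivity, to surjectivity of $(S,T)$) and the transversality condition (equivalent to injectivity of $(S,T)$), so no hypothesis is redundant and none is missing.
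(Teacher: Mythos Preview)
Your proof is correct and follows essentially the same approach as the paper's: the forward direction decomposes an arbitrary $x\in H$ via a preimage under $(S,T)$ of a vector with one zero coordinate, the converse builds $z$ from preimages of $u$ and $v$ by decomposing their difference along $\ker S+\ker T$, and the isomorphism case is handled via $\ker(S,T)=\ker S\cap\ker T$. The only cosmetic difference is which coordinate you set to zero in the forward step.
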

\begin{proof}
Assume that the  map $(S, T)$ is surjective. Then  $S$ and $T$ are surjective too. For~\eqref{eq:kerS-kerT-H}, consider any $x \in H$ and we aim to decompose it into $\ker S$ and $\ker T$. By surjectivity of the map $(S, T)$, there exists $y \in H$ such that $(Sy, Ty) = (Sx, 0)$.
Therefore $x - y \in \ker S$ and $y \in \ker T$, hence
\[
  x = (x - y) + y \in \ker S + \ker T.
\]

For the other implication, take any $x, y \in H$. From the hypothesis~\eqref{eq:kerS-kerT-H} it follows that there exists $v \in \ker S$ and $w \in \ker T$ such that $x - y = v + w$, i.e.
\[
  x - v = y + w.
\]
Denote $z = x - v$. We clearly have
\[
  S z = S (x - v) = S x \quad \textup{and} \quad T z = T (y + w) = T y,
\]
i.e. $(S, T)z = (Sx, Ty)$. This means that $(S, T)$ is surjective, since $(Sx, Sy)$ is arbitrary in $S H \times T H = X \times Y$,
by surjectivity of $S$ and $T$.

The second part of the lemma follows from  $\ker (S,T)=\ker S\cap \ker T$.
\end{proof}

In what follows we consider any decomposition of $\mathcal{Q}$ of the form
\begin{equation}\label{eq:any-decomposition-of-Q}
  \mathcal{Q}(x) = c_0 \mathcal{Q}_+(B_0 x) + c_{m+1} \mathcal{Q}_-(B_{m+1} x)
\end{equation}
where $c_0 > 0 > c_{m-1}$ are real numbers, $B_0 \colon H \to H_0$ and $B_{m+1} \colon H \to H_{m+1}$ are surjective linear  maps onto Euclidean spaces $H_0$ and $H_{m+1}$ (respectively), such that the map $(B_0, B_{m+1})$ is surjective, or equivalently, by Lemma~\ref{lem:kerS-kerT-H},
\begin{equation}\label{eq:ker-B0-Bm1-H}
\ker B_0 + \ker B_{m+1} = H
\end{equation}
and $\mathcal{Q}_+$, $\mathcal{Q}_-$ are positive definite quadratic forms on $H_0$ and $H_{m+1}$ (respectively). 

The existence of such decomposition is obvious by considering an eigenvalue decomposition of the self-adjoint map $Q$. Then $B_0$ can be taken as the orthogonal projection of $H$ onto $H_0$ being a subspace spanned by eigenvectors corresponding to positive eigenvalues of $Q$, and similarly $B_{m+1}$. One can take $c_0 = 1$ and $c_{m+1} = -1$. Condition~\eqref{eq:ker-B0-Bm1-H} follows from orthogonality of $H_0$ and $H_{m+1}$ in $H$. Moreover, we clearly have
\begin{equation}\label{eq:signature-dim-H0-Hm1}
s^+(\mathcal{Q}) = \dim H_0, \quad s^-(\mathcal{Q}) = \dim H_{m+1}.
\end{equation}

Conversely, any decomposition of $\mathcal{Q}$ as in~\eqref{eq:any-decomposition-of-Q} satisfies~\eqref{eq:signature-dim-H0-Hm1}. Indeed, by~\eqref{eq:ker-B0-Bm1-H}, one can find a complement subspace $V$ of $\ker B_0$ in $H$ which satisfies $V \subseteq \ker B_{m+1}$ and hence $\mathcal{Q}$ is positive definite on $V$. This yields $s^+(\mathcal{Q}) \ge \dim V = \dim H - \dim \ker B_0 = \dim H_0$. On the other hand, $\mathcal{Q}$ is negative semi-definite on $\ker B_0$, hence $s^+(\mathcal{Q}) \le \dim H - \dim \ker B_0 = \dim H_0$. The same argument shows the second assertion of~\eqref{eq:signature-dim-H0-Hm1}.

\medskip

The starting point of the forthcoming calculations is that for any self-adjoint map $A$ on $\mathbb R^d $ and any vector $b\in \mathbb R^d$,
\begin{equation}\label{eq:integral-of-shifted-gaussian}
\int_{\R^d} e^{-\pi\scalar{x,Ax}+2\pi \scalar{b,x}} dx=\left\{ 
\begin{array}{ll} e^{\pi\scalar{A^{-1}b,b}}\det(A)^{-1/2} & \mbox{if $A$ is positive definite},\\
   +\infty & \mbox{otherwise}.
\end{array}
\right.
\end{equation}

For $k=1,\ldots, m$, let $A_k$ be a positive definite map on $H_k$. Moreover, let $A_0$ be positive definite map $H_0$ such that $\mathcal{Q}_+(x) = \pi \scalar{x, A_0 x}$, and similarly define $A_{m+1}$ for $\mathcal{Q}_-$. With this notation~\eqref{eq:any-decomposition-of-Q} becomes
\begin{equation}\label{eq:any-decomposition-of-Q-as-sefl-adj-map}
  Q = \sum_{k\in \{0,m+1\}} c_k B_k^\ast A_k B_k.
\end{equation}
For $k=0, \ldots, m+1$ fix any $b_k\in H_k$. Since the map $(B_0, B_{m+1})$ is surjective, we can find a vector $b \in H$ such that $B_0 b = b_0$ and $B_{m+1} b = b_{m+1}$.

We calculate the value of $J$ on the translated Gaussian functions $g_{A_k}(\cdot+b_k)$. By translation invariance of Lebesgue's measure, $\int g_{A_k}(\cdot+b_k) = \det(A_k)^{-1/2}$. In order to introduce a translation also in the Gaussian kernel, we perform a change of variable $y=x+b$ in the integral
\begin{eqnarray*}
J\big((g_{A_k}(\cdot+b_k))\big) \prod_{k=1}^m \det(A_k)^{-c_k/2} 
&=&  \int_H e^{-\pi\scalar{y,Qy}} \prod_{k=1}^m g_{A_k}^{c_k}(B_k y + b_k) \, dy \\
&=& \int_H e^{-\pi\scalar{x+b,Q(x+b)}} \prod_{k=1}^m g_{A_k}^{c_k}(B_k x+B_k b+b_k) \, dx \\
&=& \int_H e^{-\pi\sum_{k \in \{0, m+1\}} \scalar{B_k x+b_k,A_k(B_k x+b_k)}} \prod_{k=1}^m g_{A_k}^{c_k}(B_k x+B_k b+b_k) \, dx.
\end{eqnarray*}
Here it is convenient to set $u_k=B_k b+b_k$ for $k=1,2,\ldots,m^+$ and $u_k = b_k$ for $k \in \{0,m+1\}$ (for the sake of consistency of notation). Developing all the quadratic terms shows that the 
latter integral is equal to 
\begin{eqnarray*}
&&\int_H e^{-\pi\big(\sum_{k=0}^{m+1} c_k \scalar{A_k(B_k x+u_k),B_k x+u_k} \big)} \, dx \\
&=& \int_H e^{-\pi\big(\scalar{x,Ax}+2\scalar{x,v}+\sum_{k=0}^{m+1} c_k\scalar{A_k u_k,u_k} \big)} \, dx,
\end{eqnarray*}
 where we have set $A=\sum_{k=0}^{m+1} c_k B_k^*A_k B_k$ and $v = \sum_{k=0}^{m+1} c_k B_k^*A_k u_k$.
From the above calculations and~\eqref{eq:integral-of-shifted-gaussian} it follows that
\begin{equation}\label{eq:J-finite-on-shifted-gaussians}
J\big((g_{A_k}(\cdot+b_k))\big) < +\infty \quad \iff A \textup{ is positive definite (i.e. $(A_k)_{k=1}^m\in \Lambda$)}
\end{equation}
and in case $A$ is positive definite,
\begin{equation} \label{eq:J-translated-gaussians}
 J\big((g_{A_k}(\cdot-B_kb+u_k))\big) = \left( \frac{\det(A)}{\prod_{k=1}^m \det(A_k)^{c_k}}\right)^{-\frac12}
e^{\pi\big(\scalar{A^{-1}v,v} - \sum_{k=0}^{m+1} c_k\scalar{A_k u_k,u_k}\big)}.
\end{equation}
In terms of the 
translation parameters $u_k$ (for $k=0,\ldots,m+1$), the term inside the exponential is a quadratic form. Hence
its infimum is 0 if the quadratic form is positive semi-definite and  $-\infty$ else.  
In the latter case, we get that the infimum of $J$ is zero because of certain non-centered Gaussian 
functions, while in the former case we get that $J$ takes smaller values on  centered Gaussians $(g_{A_k})$
than on their translates. In short,
$$ \inf_{\mathcal G} J\in \big\{ 0, \inf_{\mathcal{CG}}J \big\}.$$

\begin{prop}\label{prop:nonzeroinf}
Suppose that $\mathcal Q$ is positive definite on $\ker B_+$ (which guarantees that $J$ is finite
for some centered Gaussian functions) and that $\inf_{\mathcal{G}} J>0$. Assuming the notation that is involved in~\eqref{eq:any-decomposition-of-Q} and~\eqref{eq:any-decomposition-of-Q-as-sefl-adj-map}, the following assertions hold true:
\begin{enumerate}
\item If $(A_k)_{k=1}^m\in \Lambda$ then for all $v_k\in H_k$, $k=0,\ldots, m+1$, setting $A=\sum_{k=0}^{m+1} c_k B_k^*A_k B_k$ and  $v=\sum_{k=0}^{m+1} c_k B_k^* v_k$, it holds
$$\scalar{A^{-1}v,v}\ge \sum_{k=0}^{m+1} c_k\scalar{A_k^{-1}v_k,v_k}.$$
\item The map $x \mapsto (B_0 x,B_1 x,\ldots,B_{m^+}x)$ from $H$ to $H_0\times \cdots\times H_{m^+}$ is onto.
\item $\dim H \ge s^+(\mathcal Q)+\sum_{i=1}^{m^+} \dim H_i$.
\end{enumerate}
\end{prop}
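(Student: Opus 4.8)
The plan is to exploit the explicit formula~\eqref{eq:J-translated-gaussians} for $J$ on translated Gaussians, together with the hypothesis $\inf_{\mathcal G} J > 0$, which forces the quadratic form in the translation parameters appearing in the exponent to be positive semi-definite (otherwise, as observed just before the proposition, $\inf_{\mathcal G} J = 0$). Concretely: fix $(A_k)_{k=1}^m \in \Lambda$ (which is nonempty since $\mathcal Q_{|\ker B_+}$ is positive definite, by Proposition~\ref{prop:lamba-emptyset}), set $A = \sum_{k=0}^{m+1} c_k B_k^\ast A_k B_k$ (positive definite by~\eqref{eq:J-finite-on-shifted-gaussians}), and recall that~\eqref{eq:J-translated-gaussians} expresses $J$ of the translates via $\exp\big(\pi(\scalar{A^{-1}v,v} - \sum_k c_k \scalar{A_k u_k, u_k})\big)$ with $v = \sum_k c_k B_k^\ast A_k u_k$.

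For part (1): the $u_k$ for $k = 1,\dots,m^+$ range freely over $H_k$ (given $b_k \in H_k$ and the freedom in $b$ — actually $u_k = B_k b + b_k$ is arbitrary since $b_k$ is arbitrary), and likewise $u_0, u_{m+1}$ are free by surjectivity of $(B_0, B_{m+1})$. So the exponent is a genuine quadratic form in $(u_0, u_1, \dots, u_{m+1}) \in H_0 \times \cdots \times H_{m+1}$, and $\inf_{\mathcal G} J > 0$ forces it to be $\ge 0$ everywhere, i.e. $\scalar{A^{-1}v,v} \ge \sum_{k=0}^{m+1} c_k \scalar{A_k u_k, u_k}$. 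To get the stated form with $v_k \in H_k$ and $v = \sum c_k B_k^\ast v_k$, substitute $v_k = A_k u_k$ (a bijection of $H_k$ since $A_k > 0$), so $\scalar{A_k u_k, u_k} = \scalar{A_k^{-1} v_k, v_k}$ and $v = \sum c_k B_k^\ast A_k u_k = \sum c_k B_k^\ast v_k$; this yields exactly the inequality in (1). I should be slightly careful that $u_k$ for $k>m^+$ is only constrained as $u_k = b_k$ free, and for $k \le m^+$ as $u_k = B_k b + b_k$, still free — the point is all $m+2$ parameters are independent, which follows from surjectivity of $(B_0, B_1, \dots, B_{m^+}, B_{m+1}, \dots, B_m)$ composed appropriately; in fact only $(B_0, B_{m+1})$ surjectivity is needed to choose $b$, and then the $b_k$ absorb everything.

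For part (2): apply the inequality of part (1) with a cleverly chosen degenerate datum. The map $\Phi := (B_0, B_1, \dots, B_{m^+}) \colon H \to H_0 \times H_1 \times \cdots \times H_{m^+}$ is onto iff $\Phi^\ast$ is injective iff there is no nonzero tuple $(v_0, v_1, \dots, v_{m^+})$ with $\sum_{i=0}^{m^+} B_i^\ast v_i = 0$. Suppose such a nonzero tuple exists; set $v_j = 0$ for $j > m^+$ (including $v_{m+1} = 0$). Then $v = \sum_{k=0}^{m+1} c_k B_k^\ast v_k = 0$, so the left side $\scalar{A^{-1} v, v} = 0$, while the right side is $\sum_{i=0}^{m^+} c_i \scalar{A_i^{-1} v_i, v_i} > 0$ since each $c_i > 0$, each $A_i^{-1} > 0$, and at least one $v_i \ne 0$ — contradiction. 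Hence $\Phi$ is onto, which is part (2). Part (3) is then immediate: by Lemma~\ref{lem:kerS-kerT-H} applied to $(B_0, (B_1,\dots,B_{m^+})) = (B_0, B_+)$ — wait, more directly, surjectivity of $(B_0, B_1, \dots, B_{m^+})$ gives $\dim H \ge \dim H_0 + \dim H_1 + \cdots + \dim H_{m^+}$, and by~\eqref{eq:signature-dim-H0-Hm1} we have $\dim H_0 = s^+(\mathcal Q)$, so $\dim H \ge s^+(\mathcal Q) + \sum_{i=1}^{m^+} \dim H_i$.

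The only real subtlety — and the step I'd double-check most carefully — is the independence of the translation parameters $u_0, \dots, u_{m+1}$ and the reduction of the exponent to a bona fide quadratic form in them: one must verify that as the $b_k$ (all of them) and the compatible $b$ vary, the tuple $(u_0, \dots, u_{m+1})$ sweeps out all of $H_0 \times \cdots \times H_{m+1}$, so that positivity of $\inf_{\mathcal G} J$ genuinely forces the quadratic form to be nonnegative on the whole space rather than on some affine slice. Given~\eqref{eq:ker-B0-Bm1-H} this is routine, but it is where the hypothesis on the decomposition of $\mathcal Q$ is used. Everything else is linear algebra plugged into the closed-form Gaussian computation already established.
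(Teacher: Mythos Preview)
Your approach is essentially identical to the paper's: derive (1) from the nonnegativity of the exponent in~\eqref{eq:J-translated-gaussians} via the substitution $u_k = A_k^{-1} v_k$, derive (2) from (1) by a duality/injectivity argument with $v_j=0$ for $j>m^+$, and read off (3) from surjectivity together with $\dim H_0 = s^+(\mathcal Q)$.

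There is one genuine slip in your argument for (2). You assume a nonzero tuple $(v_0,\dots,v_{m^+})$ with $\sum_{i=0}^{m^+} B_i^\ast v_i = 0$, set $v_j=0$ for $j>m^+$, and then assert $v = \sum_{k=0}^{m+1} c_k B_k^\ast v_k = 0$. But $\sum_{k} c_k B_k^\ast v_k = \sum_{i=0}^{m^+} c_i B_i^\ast v_i$, which is not the same as $\sum_{i=0}^{m^+} B_i^\ast v_i$ unless all $c_i=1$. The fix is immediate: since each $c_i>0$ for $0\le i\le m^+$, injectivity of $\Phi^\ast$ is equivalent to injectivity of $(w_0,\dots,w_{m^+})\mapsto \sum_{i=0}^{m^+} c_i B_i^\ast w_i$; so either start from a nonzero tuple in the kernel of the latter map (this is what the paper does), or replace your $v_i$ by $v_i/c_i$ before invoking (1). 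With this correction your proof goes through and matches the paper's.
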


\begin{proof}
Since  $\inf_{\mathcal{G}} J>0$, reasoning as above on the argument of the exponential term in \eqref{eq:J-translated-gaussians} shows that if  $v=\sum_{k=1}^m c_k B_k^*A_k u_k$ then 
$$\scalar{A^{-1}v,v} \ge \sum_{k=0}^{m+1} c_k\scalar{A_k u_k,u_k}.$$
Applying this to $u_k=A^{-1}v_k$ ($k=0,1,\ldots,m+1$) concludes the proof of the first item.

Let us address the second part of the claim.
By duality, our goal is to show that the map $(v_0,\ldots,v_{m^+})\mapsto \sum_{i=0}^{m^+}
c_i B_i^* v_i$ is injective. So we assume that $\sum_{i=0}^{m^+} c_i
B_i^* v_i=0$, and we want to prove that $v_0=\cdots=v_{m^+}=0$ (recall that $c_i\neq 0$).
If we set $v_j=0$ for $m^+ < j \le m+1$, it holds that $0=\sum_{k=0}^{m+1} c_k B_k^*v_k$.
 Thanks to Proposition \ref{prop:lamba-emptyset}, we may find $(A_k)_{k=1}^m\in \Lambda$ and apply the first item of the present Proposition \ref{prop:nonzeroinf}; it gives
 that 
 $$0=\scalar{A^{-1}0,0}\ge \sum_{k=0}^{m+1} c_k\scalar{A_k^{-1}v_k,v_k}= \sum_{i=0}^{m^+} c_i\scalar{A_i^{-1}v_i,v_i}.$$
 Since $c_i>0$ for $0 \le i \le m^+$, we deduce that $\scalar{A_i^{-1}v_i,v_i}=0$, thus $v_i=0$. 
 
 Eventually the third point of the claim is a direct consequence of the second one (surjectivity implies that the dimension of the target space is not bigger than that of the initial space, and $s^+(\mathcal Q)=\dim H_0$ by~\eqref{eq:signature-dim-H0-Hm1}).
\end{proof}

\begin{prop}\label{prop:nonzeroinf-for-centered-gaussians}
Assume that $\mathcal Q$ is positive definite on $\ker B_+$ and that 
   $\inf_{\mathcal{CG}} J>0$. Then $B_+$ is surjective.
\end{prop}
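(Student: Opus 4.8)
The plan is to prove the contrapositive: assuming $\mathcal{Q}$ is positive definite on $\ker B_+$ but $B_+$ is \emph{not} surjective, I would exhibit non-centered Gaussian functions witnessing $\inf_{\mathcal{CG}} J = 0$. Wait — that doesn't match the hypothesis, since centered Gaussians don't have translation parameters. So instead I would argue directly at the level of centered Gaussians. Here is the route. Since $\mathcal{Q}_{|\ker B_+}$ is positive definite, Proposition~\ref{prop:lamba-emptyset} gives $\Lambda \neq \emptyset$, so $\inf_{\mathcal{CG}} J = D^{-1/2}$ with $D$ as in~\eqref{def:BL-constant}; the hypothesis $\inf_{\mathcal{CG}} J > 0$ means precisely that $D < +\infty$, i.e. the ratio $\det(Q + \sum_k c_k B_k^* A_k B_k) / \prod_k (\det A_k)^{c_k}$ stays bounded over $\Lambda$. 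The goal is to show that if $B_+$ is not onto, this supremum is $+\infty$.

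Concretely, suppose $\Rg(B_+) \subsetneq H_1 \times \cdots \times H_{m^+}$; pick an index $i_0 \le m^+$ and a nonzero vector $w \in H_{i_0}$ orthogonal to $\Rg(B_{i_0})$... but $B_{i_0}$ is surjective onto $H_{i_0}$ by hypothesis, so that cannot happen; the failure of surjectivity of $B_+$ is a joint phenomenon. So instead I would use the characterization from Lemma~\ref{lem:kerS-kerT-H} (applied iteratively, or directly): $B_+$ surjective $\iff$ the kernels $\ker B_i$, $i \le m^+$, together with the appropriate sum conditions, fill $H$; its failure produces a vector in some $\prod H_i$ not hit by $B_+$. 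The cleanest handle: $B_+$ is \emph{not} onto iff there is a nonzero $(w_1,\dots,w_{m^+})$ with $\sum_{i=1}^{m^+} B_i^* w_i = 0$, i.e. the adjoint $B_+^* : \prod H_i \to H$ is not injective. I would then test $J$ on centered Gaussians $g_{A_i}$ with $A_i = \lambda \, (w_i \otimes w_i) + \varepsilon \Id_{H_i}$ for one distinguished index where $w_i \ne 0$ (and $A_i = \varepsilon \Id$ elsewhere, $A_j$ fixed for $j > m^+$), let $\varepsilon \to 0$ after $\lambda \to \infty$: the numerator $\det(Q + \sum c_k B_k^* A_k B_k)$ stays bounded because the directions $B_i^* w_i$ cancel in the sum $\sum_i B_i^* A_i B_i$ modulo lower-order terms, while $\det A_{i_0} \sim \lambda \varepsilon^{\dim H_{i_0} - 1} \to \infty$ and $c_{i_0} > 0$, forcing the ratio to blow up. Hence $D = +\infty$, contradicting $\inf_{\mathcal{CG}} J > 0$.

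The main obstacle is bookkeeping the determinant asymptotics so that the numerator genuinely stays bounded: adding $\lambda B_{i_0}^* (w_{i_0} \otimes w_{i_0}) B_{i_0}$ to $Q + \cdots$ does increase the determinant by a factor growing in $\lambda$ unless this rank-one perturbation lies in a direction already "saturated." The resolution is that one should not perturb a single $A_i$ but rather move along the ray $A_i \mapsto A_i + \lambda (w_i \otimes w_i)$ \emph{simultaneously for all $i$ with $w_i \ne 0$}: then $\sum_i c_i B_i^* (\lambda w_i \otimes w_i) B_i = \lambda B_+^* \mathrm{diag}(c_i w_i \otimes w_i) B_+$, and since $(w_i) \in \ker B_+^* \circ (\text{something})$... the precise statement is that because $\sum_i B_i^* w_i = 0$ one can choose signs/scalings so the rank-one contributions to $Q + \sum c_k B_k^* A_k B_k$ telescope and the numerator is $O(1)$ while $\prod (\det A_i)^{c_i} \to \infty$. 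Making this cancellation rigorous — identifying exactly which combination of the $w_i$ keeps the numerator bounded while the product of determinants diverges — is the delicate point; everything else (invoking Proposition~\ref{prop:lamba-emptyset}, the formula~\eqref{def:BL-constant}, the convention $D^{-1/2} = \infty$) is routine.
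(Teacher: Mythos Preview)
Your overall framework is right: since $\mathcal Q_{|\ker B_+}>0$ gives $\Lambda\neq\emptyset$, one has $\inf_{\mathcal{CG}}J=D^{-1/2}$, and the task is to show $D=+\infty$ when $B_+$ is not onto. The dual characterization via $\ker B_+^*$ is also correct. But the cancellation you hope for does not occur, and this is a genuine gap, not a bookkeeping issue. If $(w_i)\in\ker B_+^*$ and you perturb $A_i\mapsto A_i+\lambda\, w_i\otimes w_i$ simultaneously, the contribution to the numerator is
\[
\lambda\sum_{i\le m^+} c_i\, B_i^*(w_i\otimes w_i)B_i
=\lambda\sum_{i\le m^+} c_i\,(B_i^*w_i)\otimes(B_i^*w_i),
\]
which, as a quadratic form, is $x\mapsto \lambda\sum_i c_i\langle x,B_i^*w_i\rangle^2$. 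The relation $\sum_i B_i^*w_i=0$ says the vectors $B_i^*w_i$ are linearly \emph{dependent}, not that each of them vanishes; since each $B_i$ is surjective, $B_i^*$ is injective, so $B_i^*w_i\neq 0$ whenever $w_i\neq 0$. Hence this is a nonzero positive semi-definite form, and the determinant in the numerator grows like $\lambda^r$ with $r=\dim\operatorname{span}\{B_i^*w_i\}$. Meanwhile $\prod_i(\det A_i)^{c_i}$ grows like $\lambda^{\sum_{i:w_i\neq 0}c_i}$. There is no reason for $r>\sum c_i$; in fact in the regime $c_i\ge 1$ the ratio goes the wrong way. A one-line sanity check: take $H=\R$, $B_1=B_2=\Id$, $Q=1$, $c_1=c_2=c>0$; then the ratio is $(1+2ca)/a^{2c}$, which tends to $0$ as $a\to\infty$ whenever $2c>1$.

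The paper's argument avoids this by degenerating rather than inflating, and by working with a single coordinate direction in the target. Pick $i$ and $v\in H_i\setminus\{0\}$ with $(0,\ldots,v,\ldots,0)\notin\Rg B_+$ (such a coordinate vector exists whenever $\Rg B_+$ is proper). Replace $B_i$ by $\tilde B_i=P_{v^\perp}B_i$ and observe that $\ker\tilde B_+=\ker B_+$: if $B_kx=0$ for $k\neq i$ and $B_ix\in\R v$, then $B_+x\in\{0\}\times\cdots\times\R v\times\cdots\times\{0\}$, which forces $B_+x=0$ by the choice of $v$. Hence $\mathcal Q$ is positive definite on $\ker\tilde B_+$, so Proposition~\ref{prop:lamba-emptyset} applied to the \emph{reduced} system furnishes positive maps $(A_k)_{k\neq i}$ and $\tilde A_i>0$ on $v^\perp$ with $Q+c_i\tilde B_i^*\tilde A_i\tilde B_i+\sum_{k\neq i}c_kB_k^*A_kB_k>0$. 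Now set $A_i^{(t)}=P_{v^\perp}^*\tilde A_iP_{v^\perp}+t\,v\otimes v$ and let $t\to 0^+$: the numerator converges to the positive determinant of the reduced system, while $\det A_i^{(t)}\to 0$ and $c_i>0$, so the ratio in~\eqref{def:BL-constant} diverges. The missing idea in your sketch is this reduction step showing that one can drop a dimension in $H_i$ without losing positive-definiteness of $\mathcal Q$ on the relevant kernel.
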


\begin{proof}
We proceed by contradiction. Assume that $B_+$ is not onto. Then for some $i \in \{1,2,\ldots, m^+\}$ and $v \in H_i \setminus\{0\}$, the vector 
$$(0,\ldots, 0, \underbrace{v}_{\textup{$i$-th component}}, 0, \ldots, 0) \in H_1 \times \cdots \times H_{m^+}$$
 is not in the image of the  $B_+=(B_1,\ldots, B_{m^+}) \colon H \to H_1 \times \cdots \times H_{m^+}$. Fix such $i$ and $v$ and let $P_{(\R v)^\bot} \colon H_i \to H_i \cap (\R v)^\bot$ be an orthogonal projection. Put $\tilde{H}_i = H_i \cap (\R v)^\bot$ and
\[
  \tilde{B}_i = P_{(\R v)^\bot} B_i \colon H \to \tilde{H}_i.
\]
From the surjective maps $B_1, \ldots, B_{i-1}, \tilde{B}_i, B_{i+1}, \ldots, B_{m^+}$,
we construct a map $\tilde B_+=(B_1, \ldots, B_{i-1}, \tilde{B}_i, B_{i+1}, \ldots, B_{m^+})$ from $H$ to $H_1 \times \cdots \times  H_{i-1} \times  \tilde{H}_i \times H_{i+1} \times\cdots \times  H_{m^+}$.

Now we show that $Q$ is positive definite on $\ker \tilde{B}_+$. To this end, take any $x \in H$ for which $(B_1 x, \ldots B_{i-1} x, \tilde{B}_i x, B_{i+1} x, \ldots, B_{m^+} x) = (0, \ldots, 0)$. Hence 
$$B_+x=(B_1 x, \ldots, B_{m^+} x) \in (0, \ldots, 0, \underbrace{\R v}_{\textup{$i$-th component}}, 0, \ldots, 0),$$ 
but since $(0, \ldots, 0, 
v, 0, \ldots, 0)$
 is not in the image of $B_+=(B_1, \ldots, B_{m^+})$, we must have $B_+x =0$.
 By  assumption, $Q$ is positive definite on $\ker B_+$, which gives $\scalar{Qx, x} > 0$ if $x\neq 0$. 

Applying Proposition~\ref{prop:lamba-emptyset} to $Q$ and the maps $B_1, \ldots, B_{i-1}, \tilde{B}_i, B_{i+1}, \ldots, B_m$ we have positive maps $A_k \colon H_k \to H_k$ for $k\neq i$  and $\tilde{A}_i \colon \tilde{H}_i \to \tilde{H}_i$ such that the map
\[
  Q + c_i \tilde{B}_i^\ast \tilde{A}_i \tilde{B}_i + \sum_{\stackrel{1 \le k \le m}{k \neq i}} c_k B_k^\ast A_k B_k \quad \textup{is positive.}
\]
For $t > 0$ define a positive map $A^{(t)}_i = P_{(\R v)^\bot}^\ast \tilde{A}_i P_{(\R v)^\bot} + t v v^\ast \colon H_i \to H_i$. Note that
\[
  \lim_{t \to 0^+} \det \Big(Q + c_i B_i^\ast A^{(t)}_i B_i + \sum_{\stackrel{1 \le k \le m}{k \neq i}} c_k B_k^\ast A_k B_k \Big) =
  \det\Big(Q + c_i \tilde{B}_i^\ast \tilde{A}_i \tilde{B}_i + \sum_{\stackrel{1 \le k \le m}{k \neq i}} c_k B_k^\ast A_k B_k \Big) > 0
\]
while $\lim_{t \to 0+} \det A^{(t)}_i = 0$. Therefore using the formula~\eqref{eq:J-on-Gaussian-input} we see that
\[
  \lim_{t \to 0^+} J(g_{A_1}, \ldots, g_{A_{i-1}}, g_{A^{(t)}_i}, g_{A_{i+1}}, \ldots, g_{A_m}) = 0,
\]
where $g_A(x)$ is a centered Gaussian function $e^{-\pi\scalar{Ax,x}}$.
\end{proof}

\subsection{Case analysis and non-degeneracy hypotheses}\label{subsection:case-analysis}
The goal of this section is to give a full view of the cases when the best 
constant in inverse Brascamp-Lieb inequalities can be computed with Gaussian
functions only. 

\smallskip
Case 0.0:  The restriction of $\mathcal Q$ to $\ker B_+$ is not positive definite and $B_+$ is not surjective. 
In this case, Lemma~\ref{lem:nonsurjective1}(i) implies that $\min J=0$. On the other hand, Proposition~\ref{prop:lamba-emptyset}
implies that $\inf_{\mathcal{CG}}J= +\infty$, or equivalently $\Lambda = \emptyset$, which combined with~\eqref{eq:J-finite-on-shifted-gaussians} implies that also $\inf_{\mathcal G} J=+\infty$.
Gaussian functions do not allow to compute the infimum of $J$.

\smallskip
Case 0.1: The restriction of $\mathcal Q$ to $\ker B_+$ is not positive definite and $B_+$ is surjective.
 Proposition \ref{prop:finitevalues2} ensures that $\inf J=+\infty$.
The functional is always infinite. In a very degenerate sense, centered Gaussian 
functions allow to compute the infimum of $J$.

\smallskip
Case 1.0.0: The restriction of $\mathcal Q$ to $\ker B_+$ is positive definite, $\dim H< s^+(\mathcal Q)+\sum_{i=1}^{m^+} \dim H_i$ and $B_+$ is not surjective.
By Lemma \ref{lem:nonsurjective1}(i), $\min J=0$. Proposition \ref{prop:nonzeroinf-for-centered-gaussians} ensures that 
$\inf_{\mathcal{CG}}J=0$. 

\smallskip
Case 1.0.1:  $\mathcal Q$ is positive definite on  $\ker B_+$, $\dim H< s^+(\mathcal Q)+\sum_{i=1}^{m^+} \dim H_i$ and $B_+$ is surjective. Proposition \ref{prop:nonzeroinf} gives $\inf_{\mathcal G} J=0$.
However, in this case the value of $\inf_{\mathcal CG} J$ is not always 0. We will give examples later.

\smallskip
Case 1.1: $\mathcal Q$ is positive definite on  $\ker B_+$  and  $\dim H\ge s^+(\mathcal Q)+\sum_{i=1}^{m^+} \dim H_i$.
This is our last case, and in a sense the only non-degenerate one. Dealing with it is the main part of the work.
We postpone the proof of the following statement to the next section, in order to discuss its consequences first.

\begin{theorem}\label{theo:gauss-mini}
If $\mathcal Q$ is positive definite on $\ker B_+$ and 
 $$ \dim H\ge s^+(\mathcal Q)+\sum_{i=1}^{m^+} \dim H_i,$$
 then $\inf J=\inf_{\mathcal{CG}} J$.
\end{theorem}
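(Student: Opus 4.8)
The plan is to prove the Gaussian minimizers principle by monotone transportation, in the spirit of the transportation proof of the Brascamp--Lieb inequality, but adapted to accommodate the negative exponents and the indefinite quadratic form $\mathcal{Q}$. The first step is to reduce the general functional to a ``symmetric'' normal form. Using the decomposition~\eqref{eq:any-decomposition-of-Q} of $\mathcal{Q}$ into a positive and a negative part (which is available precisely because of condition~\eqref{eq:ker-B0-Bm1-H} from Lemma~\ref{lem:kerS-kerT-H}), I would absorb $e^{-\mathcal{Q}}$ into two extra factors: a function $f_0$ with positive exponent $c_0=1$ attached to $B_0\colon H\to H_0$, and a function $f_{m+1}$ with negative exponent $c_{m+1}=-1$ attached to $B_{m+1}\colon H\to H_{m+1}$. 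Fixing $f_0=g_{A_0}$ and $f_{m+1}=g_{A_{m+1}}$ to be the centered Gaussians realizing $\mathcal{Q}_+$ and $\mathcal{Q}_-$, the functional $J$ becomes a kernel-free inverse Brascamp--Lieb functional over $H_0,\dots,H_{m+1}$; the key point is that by~\eqref{eq:signature-dim-H0-Hm1} the hypothesis $\dim H\ge s^+(\mathcal{Q})+\sum_{i=1}^{m^+}\dim H_i$ reads exactly $\dim H\ge \sum_{i=0,\dots,m^+}\dim H_i$, i.e.\ the map $(B_0,B_1,\dots,B_{m^+})$ can (and, by Proposition~\ref{prop:nonzeroinf}-type reasoning, does after a further reduction) be taken \emph{injective}, not merely surjective. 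This is what makes the transport argument close.

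Next, I would set up the transport maps. Fix arbitrary admissible input functions $f_1,\dots,f_m$; by truncation and regularization I may assume each $f_k$ is a smooth, bounded, compactly supported probability density bounded below on its support, so the relevant Brenier maps are smooth diffeomorphisms. For each $k\in\{1,\dots,m^+\}$ (the positive factors) let $T_k\colon H_k\to H_k$ be the monotone (Brenier) transport map pushing the standard Gaussian-type reference density forward to $f_k$; for each $j$ with $m^+<j\le m$ (the negative factors, including the fixed $f_{m+1}$) one instead transports $f_j$ \emph{onto} the reference density, so that the Jacobian inequalities point the right way when raised to a negative power. The heart of the proof is the pointwise change-of-variables: build a map $\Phi\colon H\to H$ whose components along the images of the $B_k$'s are governed by the $T_k$'s, compute $\det D\Phi$ via the chain rule, and combine the Monge--Amp\`ere Jacobian equations $\det DT_k = (\text{density ratio})$ with the arithmetic-mean/concavity (or convexity, for negative $c_j$) estimates on $\log\det$ of a sum of positive matrices. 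The output is the inequality $\int_H e^{-\mathcal{Q}}\prod f_k^{c_k}(B_k\cdot)\,dx \ge D^{-1}\prod(\int f_k)^{c_k}$ with $D$ the Gaussian constant of~\eqref{def:BL-constant}, i.e.\ exactly $\inf J\ge \inf_{\mathcal{CG}}J$; the reverse inequality $\inf J\le\inf_{\mathcal{CG}}J$ is trivial since centered Gaussians are admissible.

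The main obstacle, and where the argument genuinely departs from the classical Brascamp--Lieb transportation proof, is the bookkeeping of the negative exponents in the Jacobian estimate. For positive $c_k$ one uses concavity of $A\mapsto \log\det A$ on positive matrices in the form $\det\big(\sum_k c_k M_k\big)\ge \prod_k(\det M_k)^{c_k}$ when $\sum c_k=1$; but with both signs present one needs a hybrid inequality of the shape $\det\big(Q+\sum_i c_i M_i - \sum_j |c_j| M_j\big)$ controlled \emph{from below} by $\prod(\det M_k)^{c_k}$, which is false without the structural decomposition of $\mathcal{Q}$ adapted to $\ker B_+$ — one must split $H$ along $\bigcap_{i\le m^+}\ker B_i$ and its complement, handle the Gaussian directions of $\mathcal{Q}_+$ together with the $B_i$, $i\le m^+$, as a genuinely injective block (using $\dim H\ge s^+(\mathcal{Q})+\sum\dim H_i$), and treat the negative block as a perturbation that can be made small or integrated out, exactly as in the proof of Proposition~\ref{prop:lamba-emptyset}. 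Carrying this through rigorously — justifying the transport regularity, controlling boundary terms in the change of variables when supports are noncompact after the reduction, and making the determinant inequality an honest pointwise statement — is the technical core; the rest is the standard density/approximation wrapper. I would organize the write-up as: (1) reduction to the injective kernel-free normal form; (2) construction and regularity of the transport maps; (3) the pointwise Jacobian/determinant inequality; (4) integration and the approximation argument removing the smoothness assumptions.
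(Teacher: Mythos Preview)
Your overall strategy---absorb $e^{-\mathcal Q}$ as two extra factors via a decomposition of $\mathcal Q$, set up Brenier maps, build a global change-of-variable map $\Phi$ on $H$, and push a Jacobian inequality through---matches the paper's architecture. But the proposal misidentifies where the real difficulty lies, and the step you flag as ``the main obstacle'' is not the obstacle at all.

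The determinant inequality you worry about, namely a lower bound of the form $\det\big(Q+\sum_k c_k B_k^* M_k B_k\big)\ge C\prod_k(\det M_k)^{c_k}$, is \emph{not} what the proof needs and is indeed false in general. The paper uses the \emph{upper} bound $\det d\theta(x)\le D\prod_k(\det dT_k(B_kx))^{c_k}$, which is immediate from the very definition~\eqref{def:BL-constant} of the Gaussian constant $D$ once $d\theta(x)=Q+\sum_k c_k B_k^* dT_k(B_kx)B_k$ is positive semi-definite. No concavity/convexity of $\log\det$ with mixed signs is invoked. After substituting the Monge--Amp\`ere equations $f_k=\tilde g_k(T_k)\det dT_k$ and replacing $\prod_k(\det dT_k)^{c_k}$ by $D^{-1}\det d\theta$, one lower-bounds the Gaussian product $\prod_k g_k^{c_k}(T_k(B_kx))$ by the infimum of $\prod_k g_k^{c_k}(y_k)$ over all decompositions $\theta(x)=\sum_k c_k B_k^* y_k$, and then evaluates that infimum via the quadratic-form inequality of Lemma~\ref{lem:quad-inverse}. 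Your sketch omits this second step entirely.

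The genuine technical difficulty---completely absent from your proposal---is proving that the change-of-variable map $\theta(x)=\sum_{k} c_k B_k^* T_k(B_kx)$ is \emph{surjective} from its natural domain onto $H$. Since you are after a lower bound on $\int_H\cdots\,dx$, the area formula in the direction you need requires surjectivity of $\theta$ (restricted to the set where $d\theta\ge0$). This is the content of Lemma~\ref{lem:theta-is-surjective}, and it is where both hypotheses of the theorem are actually consumed: the specific decomposition of Lemma~\ref{lem:decompose-Q} (with $H_0=\ker B_+$, so that $(B_0,B_+)$ is a bijection and $\ker B_+\subset\ker B_{m+1}$) is used to show that the potential $\varphi(x)=\sum_k c_k\varphi_k(B_kx)$ is superlinear (Lemma~\ref{lem:vp-superlinear}), which in turn, via Lemma~\ref{lem:difference-of-convex-attains-infimum}, forces $x\mapsto\varphi(x)-\langle x,y_0\rangle$ to attain a minimum for every $y_0\in H$. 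The generic decomposition of Section~\ref{subsec:translated-gaussians} that you invoke does not give the inclusion $\ker B_+\subset\ker B_{m+1}$ needed for the compactness step~\eqref{eq:compact-image}, so the surjectivity argument would not go through as written.

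Finally, the direction of the transports and the choice of target densities matter: in the paper, \emph{all} $T_k$ push $f_k$ forward to (possibly truncated) Gaussians; the truncation of the targets for $j>m^+$ to balls of radius $R$ is what makes the negative-exponent potentials Lipschitz and hence the full potential $\varphi$ superlinear. Your ``standard density/approximation wrapper'' also understates the three-step approximation in Section~3.8, which is tailored to these choices.
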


So under the above hypothesis, centered Gaussian functions allow to compute the optimal constant in 
inverse Brascamp-Lieb inequalities.
When the hypothesis of the theorem is not verified, $\inf J$ can  only be 0 or $+\infty$.
 
\begin{remark} \label{rem:dec-Q}
Assume \eqref{eq:any-decomposition-of-Q}, \eqref{eq:ker-B0-Bm1-H}
and the notation \eqref{eq:any-decomposition-of-Q-as-sefl-adj-map}. Then  
\[\mathcal Q(x)=\pi c_0\scalar{A_0 B_0 x, B_0 x} + \pi c_{m+1}\scalar{A_{m+1} B_{m+1} x, B_{m+1} x},\]  which ensures that 
\[ \mathrm{ker}(B_0,\ldots,B_{m^+})\subset \big\{ x \in H \colon \mathcal Q(x) \le 0\big\} \cap \bigcap_{i=1}^{m^+} \ker B_i.\]
 Hence $(B_0,\ldots,B_{m^+})$ is injective when $\mathcal Q$ is positive on $\ker B_+$. Together with \eqref{eq:signature-dim-H0-Hm1}, this implies
that  $ \dim H \le s^+(\mathcal Q) + \dim H_1 + \cdots + \dim H_{m^+} $. Since the hypotheses of the above theorem provide the converse inequality, they imply that  $ \dim H =s^+(\mathcal Q) + \dim H_1 + \cdots + \dim H_{m^+}$, and that $(B_0,\ldots,B_{m^+})$ is a bijection.
\end{remark}

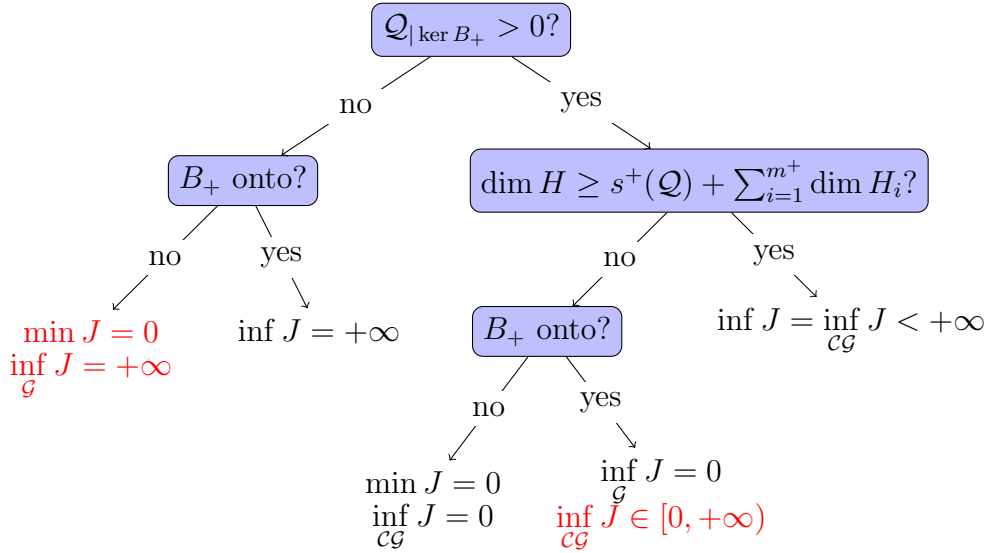
\begin{figure}

\begin{tikzpicture}
	\tikzstyle{question}=[rectangle,draw,rounded corners=4pt,fill=blue!25]
	\tikzstyle{conclusion-}=[rectangle,text=red]
	\tikzstyle{conclusion}=[rectangle]
	\node[question] (Q) at (0,4) {$\mathcal Q_{|\ker B_+}>0$?};
	\node[question] (B1) at (-3,2) {$B_+$ onto?};
	\node[question] (D) at (3,2) {$\dim H\ge s^+(\mathcal{Q}) +\sum_{i=1}^{m^+} \dim H_i$?};
	\node[question] (B2) at (1,0) {$B_+$ onto?};
	\node[conclusion-] (00a) at (-5,0) {  $\;\; \min J=0 \;\; $  };
	\node[conclusion-] (00b) at (-5,-0.6) {$\displaystyle \inf_{\mathcal G}J=+\infty$};
	\node[conclusion] (01) at (-2,0) {  $\inf J=+\infty$  };
	\node[conclusion] (100a) at (-0.5,-2) {  $ \min J=0$  };
	\node[conclusion] (100b) at (-0.5,-2.6) { $\displaystyle \inf_{\mathcal{CG}}J=0$  };
	\node[conclusion] (101a) at (2.5,-2) { $\displaystyle \inf_{\mathcal{G}}J=0$ };
	\node[conclusion-] (101b) at (2.5,-2.6) { $\displaystyle \inf_{\mathcal{CG}}J\in [0,+\infty)$  };
	\node[conclusion] (11) at (5,0) {$\displaystyle \inf J=\inf_{\mathcal{CG}}J<+\infty$};
	\tikzstyle{suite}=[->]
	\draw[suite] (Q) -- (B1) node[midway,fill=white]{no};
	\draw[suite] (Q) -- (D) node[midway,fill=white]{yes};
	\draw[suite] (B1) -- (00a) node[midway,fill=white]{no};
	\draw[suite] (B1) -- (01) node[midway,fill=white]{yes};
	\draw[suite] (D) -- (B2) node[midway,fill=white]{no};
	\draw[suite] (D) -- (11) node[midway,fill=white]{yes};
	\draw[suite] (B2) -- (100a) node[midway,fill=white]{no};
	\draw[suite] (B2) -- (101a) node[midway,fill=white]{yes};
\end{tikzpicture}
\caption{Summary of the case analysis}
\end{figure}

Let us mention variants of the above theorem, which consist in grouping a bit differently the various possible cases.
A first variant is Theorem \ref{thm:main-result}, as stated in the introduction. Another one is given next. It means
that under the assumption that the functional $J$ is finite for some Gaussian functions,  the optimal constant
can be computed using non-centered Gaussian functions only.

\begin{theorem}\label{theo:gauss-mini-noncentered}
If $\mathcal Q$ is positive definite on $\ker B_+$ 
 then $\inf J=\inf_{\mathcal{G}} J$.
\end{theorem}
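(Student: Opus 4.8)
The plan is to split according to whether the non-degeneracy dimension inequality $\dim H \ge s^+(\mathcal{Q}) + \sum_{i=1}^{m^+} \dim H_i$ holds, and to combine the case analysis of Section~\ref{subsection:case-analysis} with the key dichotomy $\inf_{\mathcal{G}} J \in \{0, \inf_{\mathcal{CG}} J\}$ established in~\eqref{eq:J-translated-gaussians} and the surrounding discussion. Since we assume $\mathcal{Q}$ is positive definite on $\ker B_+$, by Proposition~\ref{prop:lamba-emptyset} we have $\Lambda \neq \emptyset$, so $\inf_{\mathcal{CG}} J < +\infty$ and in particular $\inf_{\mathcal{G}} J < +\infty$.

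\emph{First case: $\dim H \ge s^+(\mathcal Q) + \sum_{i=1}^{m^+} \dim H_i$.} Here I would simply invoke Theorem~\ref{theo:gauss-mini}, which gives $\inf J = \inf_{\mathcal{CG}} J$. Since $\inf_{\mathcal{CG}} J \ge \inf_{\mathcal{G}} J \ge \inf J$ trivially (centered Gaussians are Gaussians, and Gaussians are admissible functions), all three quantities coincide, and in particular $\inf J = \inf_{\mathcal{G}} J$.

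\emph{Second case: $\dim H < s^+(\mathcal Q) + \sum_{i=1}^{m^+} \dim H_i$.} This is Case~1.0 of the case analysis (subdivided into 1.0.0 and 1.0.1 according to surjectivity of $B_+$), and in both subcases I claim $\inf J = \inf_{\mathcal{G}} J = 0$. If $B_+$ is not surjective (Case~1.0.0), then Lemma~\ref{lem:nonsurjective1}(i) gives $\min J = 0$, and since $0 \le \inf_{\mathcal{G}} J \le$ (any specific Gaussian value), I must still check $\inf_{\mathcal{G}} J = 0$; but this is exactly what the dichotomy $\inf_{\mathcal G} J\in\{0,\inf_{\mathcal{CG}}J\}$ together with $\inf J = 0 \le \inf_{\mathcal G} J$ does not immediately yield, so instead I would note that Proposition~\ref{prop:nonzeroinf-for-centered-gaussians} is contrapositive: $\mathcal{Q}$ positive definite on $\ker B_+$ and $B_+$ not surjective force $\inf_{\mathcal{CG}} J = 0$, hence $\inf_{\mathcal{G}} J = 0$ as well, and then $\inf J = \inf_{\mathcal G} J = 0$. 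If $B_+$ is surjective (Case~1.0.1), then the contrapositive of Proposition~\ref{prop:nonzeroinf}(iii) applies: the failure of the dimension inequality forces $\inf_{\mathcal{G}} J = 0$ (this is precisely item~(3) of that proposition read contrapositively, using that its hypotheses are $\mathcal{Q}_{|\ker B_+}>0$ and $\inf_{\mathcal G}J>0$). Combined with Proposition~\ref{prop:finitevalues2}'s companion — actually more simply, once $\inf_{\mathcal{G}} J = 0$ and Gaussians are admissible, $\inf J \le \inf_{\mathcal G} J = 0$, so $\inf J = \inf_{\mathcal G} J = 0$.

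The main obstacle is purely bookkeeping: one must verify that in the degenerate second case $\inf_{\mathcal{G}} J$ really drops to $0$ and is not merely bounded above by $\inf_{\mathcal{CG}} J$ (which could a priori be positive, as flagged in Case~1.0.1). This is resolved by the non-centered constructions inside the proofs of Propositions~\ref{prop:nonzeroinf} and~\ref{prop:nonzeroinf-for-centered-gaussians}, which exhibit explicit translated Gaussians driving $J$ to $0$; so the theorem follows by citing those propositions in contrapositive form rather than by any new computation. No genuinely new idea is needed beyond organizing the earlier results.
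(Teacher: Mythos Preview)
Your proof is correct and follows essentially the same approach as the paper: the theorem is a direct consequence of the case analysis in Section~\ref{subsection:case-analysis}, combining Theorem~\ref{theo:gauss-mini} in the non-degenerate case with the contrapositives of Propositions~\ref{prop:nonzeroinf} and~\ref{prop:nonzeroinf-for-centered-gaussians} in the degenerate cases. One minor simplification: in Case~1.0.0 you need not pass through $\inf_{\mathcal{CG}} J$ via Proposition~\ref{prop:nonzeroinf-for-centered-gaussians}, since the contrapositive of Proposition~\ref{prop:nonzeroinf}(3) already gives $\inf_{\mathcal G} J = 0$ directly (that proposition does not assume surjectivity of $B_+$), so Cases~1.0.0 and~1.0.1 can be handled uniformly.
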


Next we provide examples of the cases when the Gaussian minimizers principle fails.

\begin{example} 
Consider the very simple case of the functional 
$$J(f,g):=\frac{\int_{\R^2} f(x)g(x) \,dx\,dy}{ \int_{\R} f \times \int_{\R} g}\cdot$$
Here $m^+=m=2$, $c_1=c_2=1$, $\mathcal Q=0$ and 
$B_1(x,y)=B_2(x,y)=x$.
The map $B_+\colon \R^2\to \R^2$ is given by $B_+(x,y)=(x,x)$. It is not surjective, and
 $\mathcal{Q}$ is not positive definite on $\ker B_+=\{0\}\times \R$. So we are in the setting
 of Case 0.0 above.  
 
By Fubini $J(f,g)=+\infty \times \int_{\R} f(x)g(x)\, dx/ (\int f \times \int g)$ which is equal 
to 0 if the supports of $f$ and $g$ are disjoint, and is equal to $+\infty$ for Gaussian functions. 
\end{example}

\begin{example}[Reversed hypercontractivity]
Borell's reverse Gaussian hypercontractivity~\cite{borell-1982} states that for any $p,q\in(-\infty,1)$ the operators of the Ornstein-Uhlenbeck semigroup $P_t f(x) = \int_{\mathbb R} f(e^{-t} x + \sqrt{1-e^{-2t}} y) \gamma(dy)$, where $\gamma$ is a standard Gaussian measure, satisfy
\[
  \| P_t f \|_{L^q(\gamma)} \ge \|f\|_{L^p(\gamma)}
\]
for all \emph{positive} functions $f \in L^1(\gamma)$ if and only if $e^{-2t} \le \frac{1-p}{1-q}$.
Excluding the case when either $p$, $q$ or $t$ is $0$ and using the fact that for $q\in (-\infty,1)$  and 
$h \in L^q$, $\|h\|_{L^q} = \inf \{ \int h k \colon k > 0, \int k^{q'} = 1\}$ where $q'=q/(q-1)$, the above estimate can be restated as follows: let $n=2$, $m=2$, $n_1 = n_2 = 1$, $B_1(x_1,x_2) = x_1$, $B_2(x_1,x_2)=x_2$, $c_1 = 1/p\in \R\setminus [0,1]$, $ c_2 = 1/q' \in \R\setminus [0,1]$, $t > 0$ and
\[
  Q = \frac{1}{2\pi(1-e^{-2t})} \left( \begin{array}{cc} 1 - (1-e^{-2t}) c_1 & -e^{-t} \\ -e^{-t} & 1 - (1-e^{-2t}) c_2 \end{array} \right).
\]
Then for the corresponding functional
\[
  J(f,g) = \int_{\R^2} e^{-\pi \scalar{Qx,x}} f^{c_1}(x_1) g^{c_2}(x_2) \, dx_1 dx_2 \, \Big(\int f\Big)^{-c_1} \Big(\int g\Big)^{-c_2},
\]
we have $\inf J = (2\pi)^{1 - \frac{c_1+c_2}{2}} \sqrt{1-e^{-2t}}$ if and only if $c_1 c_2 \det Q \ge 0$.

 Now, let us focus on the  a specific example $c_1 = c_2 = 2$. In this case $B_+(x_1,x_2)=(x_1,x_2)$. Hence $B_+$ is surjective
 and $Q$ is positive definite on $\ker B_+=\{0\}$.
 Condition~\eqref{eq:surjectivity-condition} is violated if and only if $s^+(Q) > 0$, which is equivalent to
 \[ \tr (Q) > 0 \quad \mathrm{or} \quad  \det( Q )< 0.\] 
 Actually, in our case $\tr (Q) > 0$ implies $\det (Q) < 0$. A simple calculation shows that  $s^+(Q) > 0$ holds if and only if $e^{-2t} > 1/4$.
 Thus, whenever $e^{-2t} > 1/4$, we are in Case 1.0.1 above, and   $\inf J = \inf_{\mathcal G} J = 0$. Besides, Borell's result asserts that $\inf J = (2\pi)^{-1} \sqrt{1-e^{-2t}}$ provided $e^{-2t} \le 1/4$. Next we claim that
\[
  \inf_{\mathcal{CG}} J = 
   \begin{cases}
   (2\pi)^{-1} \sqrt{1-e^{-2t}} &\text{if } e^{-2t} \in \big(\frac14, \frac12\big],
\\
   0        &\text{if } e^{-2t} \in \big(\frac12, 1\big).
   \end{cases}
\]
This is an illustration of Case 1.0.1 above: $\inf J=\inf_{\mathcal{G}}J=0$ but $\inf_{\mathcal{CG}}J$ can be 0 in some cases,
and positive in some other cases.

It remains to prove the claim. Put $f(x) = e^{-a x^2 /2}$ and $g(x) = e^{-b x^2 /2}$ for some $a,b>0$. Then 
\[
  J(f,g)^2 = \begin{cases}
    \frac{(2\pi)^{2-(c_1+c_2)} a^{c_1} b^{c_2} (1 - e^{-2t})^2}
           {\det \left( \begin{array}{cc} 1+(1-e^{-2t}) c_1 (a-1) & -e^{-t} \\[1ex]
                                                           -e^{-t} & 1+(1-e^{-2t}) c_2 (b-1) \end{array} \right)} & \text{if $\det > 0,$} \\
           +\infty & \text{otherwise.}
  \end{cases}
\]
Restricting our attention to the case $c_1 = c_2 =2$,
\[
  J(f,g)^2 = \begin{cases}
    (2\pi)^{-2} (1-e^{-2t}) \frac{a^2 b^2}{4(1+ab)(1-e^{-2t}) - 3 + 2(2e^{-2t}-1) (a+b)} &
      \text{if the denominator is positive} \\[1ex]
    +\infty & \text{otherwise.}
  \end{cases}
\]
In the case $e^{-2t} \in (1/2,1)$ we show that $\inf_{a,b>0} J(f,g)^2 = 0$ by checking that
\begin{align*}
 \sup_{a,b>0} \frac{4(1+ab)(1-e^{-2t}) - 3 + 2(2e^{-2t}-1) (a+b)}{a^2 b^2} \\[1ex]
  \ge 
 \sup_{a>0, b=1/a} 8(1-e^{-2t}) - 3 + 2(2e^{-2t}-1) (a+1/a) = +\infty.
\end{align*}
In the case $e^{-2t} \in (1/4, 1/2]$ we will have $\inf_{a,b>0} J(f,g) = (2\pi)^{-1} \sqrt{1-e^{-2t}}$ if we show 
\begin{equation}\label{eq:sup1}
 \sup_{a,b>0} \frac{4(1+ab)(1-e^{-2t}) - 3 + 2(2e^{-2t}-1) (a+b)}{a^2 b^2} = 1.
\end{equation}
Put $\lambda = 2 - 4e^{-2t} \in [0,1)$. Since $a+b$ is multiplied by the coefficient $2(2e^{-2t}-1) = -\lambda\le 0$, we can use the inequality $a+b \ge 2 \sqrt{ab}$ to calculate the supremum in~\eqref{eq:sup1} as follows:
\begin{align*}
  \sup_{a,b>0} \frac{\lambda-1 - \lambda(a+b) + (\lambda+2)ab}{a^2 b^2} 
  = \sup_{x = (ab)^{-1/2} > 0} (\lambda-1)x^4 - 2\lambda x^3 + (\lambda+2) x^2 =: \sup_{x>0} \varphi(x).
\end{align*}
Since
\[
  \varphi'(x) = -4(1-\lambda) x(x-1)\Big(x - \frac{\lambda + 2}{2(\lambda-1)} \Big),
\]
$\varphi$ is increasing on $(0,1]$ and decreasing on $[1,\infty)$ and hence $\sup_{x>0} \varphi(x) = \varphi(1) = 1$.
\end{example}

We conclude this section with the analysis of degenerate and non-degenerate cases for the inverse convolution inequality.

\begin{example}[Reverse Young inequality]
As mentioned in the introduction: for $p,q,r\in (0,1]$ such that $1+1/r=1/p+1/q$,  and positive functions on $\mathbb R^n$, Brascamp and Lieb have proved that 
\[ \|f*g\|_r\ge \left(\frac{C_pC_q}{C_r}\right)^n \|f\|_p \|g\|_q\] holds where $C_t=|t|^{1/t}/|t'|^{1/t'}$, and the constant is  optimal. Our goal here is to discuss extensions to negative exponents.

Using a duality type argument, a change of functions and the fact that $C_{p'}=1/C_p$ we can reformulate the above result as follows:
if $p,q\in (0,1]$ and $r'\in (-\infty,0)$ verify $1/p+1/q+1/r'=2$ then for all
positive integrable functions $f,g,h$,
\[\int_{(\mathbb R^n)^2}  f(x-y)^{\frac1p} g(y)^{\frac1q} h(x)^{\frac{1}{r'}} \,dx\,dy\ge (C_pC_qC_{r'})^n \left(\int f\right)^{\frac1p} \left(\int g\right)^{\frac1g} \left(\int h\right)^{\frac{1}{r'}}.\] 
Simple changes of variables as 
$\int F(x-y)G(y)H(x) \,dx\,dy= \int F(z)G(x-z)H(x) \,dx\,dz$ show that $p,q$ and $r'$
play symmetric roles. Therefore the convolution inequality is also true when 
$1/p+1/q+1/r'=2$ and among the three numbers $p,q,r'$, two are in $(0,1]$ and one is in $(-\infty,0)$,
(which is more general than $p,q,r\in (0,1]$).
However no non-trivial inequality holds beyond this range of indices, as we show next.

 The 
condition $1/p+1/q+1/r'=2$ is necessary (applying the inequality to $f(\lambda \cdot),
g(\lambda \cdot)$, $ h(\lambda \cdot)$ for $\lambda> 0$ and changing variables $(x,y)=\lambda^{-1}(X,Y)$ gives it).
Consider the three surjective maps from $\mathbb R^{2n}$ to $\mathbb R^n$ defined by $B_1(x,y)=x-y$, $B_2(x,y)=y$, $B_3(x,y)=x$, and the numbers
$c_1=1/p$, $c_2=1/q$ and $c_3=1/r'$.
The above analysis of degenerate cases shows that one should focus on 
the map $B_+=(B_i)_{i\colon c_i>0}$.
If $p,q,r'$ are positive then $B_+$ is not surjective and the only possible constant 
in the convolution inequality is 0. If only one among the three number $p,q,r'$ is 
positive, then $B_+$ is surjective but not injective and we are in Case 0.1, meaning
that the functional under study never takes finite values. 

\end{example}

\section{Proof of Theorem \ref{theo:gauss-mini}}\label{sec:proof-main}

\subsection{Decomposition of the kernel $\exp(-\mathcal Q)$}\label{sec:decomposition-of-Q}
The positive and negative parts of a quadratic form $\mathcal Q$ play different roles, as do the functions $f_i$ with $i \le m^+$ and the functions $f_j$ with $j > m^+$. Although there is no canonical decomposition of $H$ into subspaces on which $\mathcal Q$ is, respectively, positive and negative definite, Condition~\eqref{eq:injectivity-condition} provides a natural candidate for a subspace on which $\mathcal Q$ is positive definite.
This leads to the following result:

\begin{lemma}\label{lem:decompose-Q}
The following two assertions are equivalent:
\begin{enumerate}
	\item \textup{(i)} $\mathcal Q$ is positive definite on $\ker B_+$ and \textup{(ii)} $\dim H \ge s^+(\mathcal Q)+ \sum_{i=1}^{m^+} \dim H_i $.
	\item There exist vector spaces $H_0$, $H_{m+1}$,  surjective linear maps $B_0 \colon H\to H_0$ and $B_{m+1} \colon H\to H_{m+1}$, and positive definite quadratic forms $\mathcal Q_+$ on $H_0$ and $\mathcal Q_-$ on $H_{m+1}$ such that:
	\begin{align}
		\bullet\; & (B_0,B_+) \colon H\to H_0\times\cdots \times H_{m^+} \mbox{ is bijective}, \label{eq:isomorphism-condition} \\
		\bullet\; &  \ker B_+\subset \ker B_{m+1}, \label{eq:kerBplus-contained-in-kerBm1} \\
		\bullet\; & \mbox{for all } x\in H, \quad \mathcal Q(x)=\mathcal Q_+(B_0x)-\mathcal Q_-(B_{m+1}x). \nonumber
 	\end{align}		
\end{enumerate}
\end{lemma}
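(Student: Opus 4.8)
The plan is to prove the two implications of the equivalence separately. The direction $(2)\Rightarrow(1)$ is routine: given the data of $(2)$, condition~\eqref{eq:kerBplus-contained-in-kerBm1} forces $B_{m+1}x=0$ for $x\in\ker B_+$, so $\mathcal Q(x)=\mathcal Q_+(B_0x)$ there; since $(B_0,B_+)$ is injective, $B_0$ is injective on $\ker B_+$, and positive definiteness of $\mathcal Q_+$ then gives assertion (i). For (ii), bijectivity of $(B_0,B_+)$ yields $\dim H=\dim H_0+\sum_{i=1}^{m^+}\dim H_i$, while $\mathcal Q(x)=-\mathcal Q_-(B_{m+1}x)\le 0$ on the subspace $\ker B_0$ (of dimension $\dim H-\dim H_0$, as $B_0$ is onto), whence $s^+(\mathcal Q)\le\dim H_0$ and (ii) follows.

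For $(1)\Rightarrow(2)$ I would first do the dimension bookkeeping. Set $W=\ker B_+$. Positive definiteness of $\mathcal Q|_W$ gives $s^+(\mathcal Q)\ge\dim W$, while $\dim W=\dim H-\dim B_+(H)\ge\dim H-\sum_{i=1}^{m^+}\dim H_i\ge s^+(\mathcal Q)$ by (ii); hence all these inequalities are equalities, so $B_+$ is onto (this is not assumed a priori — it gets forced here), $\dim H=s^+(\mathcal Q)+\sum_{i=1}^{m^+}\dim H_i$, and $s^+(\mathcal Q)=\dim W$. Next, take $H_0:=W$ with the inner product induced from $H$, let $Q_W\colon W\to W$ be the positive definite self-adjoint operator with $\scalar{z,Q_Wz}=\scalar{z,Qz}$ for $z\in W$, and complete the square in the $W$-component of $x=P_Wx+P_{W^\perp}x$ to obtain, for all $x\in H$,
\[
  \mathcal Q(x)=\mathcal Q_+(B_0x)-\widetilde{\mathcal Q}(P_{W^\perp}x),
\]
where $B_0x:=P_Wx+Q_W^{-1}P_WQP_{W^\perp}x\in W$, $\mathcal Q_+(z):=\pi\scalar{z,Q_Wz}$ is positive definite on $H_0$, and $\widetilde{\mathcal Q}$ is a quadratic form on $W^\perp$. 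Since $B_0|_W=\id$, the map $B_0$ is onto and $\ker B_0\cap W=\{0\}$; combined with the dimension identity and Lemma~\ref{lem:kerS-kerT-H}, this shows $(B_0,B_+)$ is bijective, i.e.~\eqref{eq:isomorphism-condition}.

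The hard part is to show $\widetilde{\mathcal Q}\ge 0$ on $W^\perp$, and this is the only place where hypothesis (ii) is genuinely used. I would check by direct computation that $W$ and $\ker B_0$ are $\mathcal Q$-orthogonal, namely $\scalar{w,Qx}=0$ for $w\in W$ and $x\in\ker B_0$ (using $Q_W(P_Wx)=-P_WQP_{W^\perp}x$ on $\ker B_0$); since they are also complementary in $H$, additivity of the index of inertia over a $\mathcal Q$-orthogonal direct sum gives $s^+(\mathcal Q)=\dim W+s^+(\mathcal Q|_{\ker B_0})$, so $s^+(\mathcal Q)=\dim W$ forces $\mathcal Q|_{\ker B_0}\le 0$. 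Because $P_{W^\perp}$ restricts to a linear isomorphism $\ker B_0\to W^\perp$ along which $\mathcal Q(x)=-\widetilde{\mathcal Q}(P_{W^\perp}x)$ (the term $\mathcal Q_+(B_0x)$ vanishes on $\ker B_0$), this immediately yields $\widetilde{\mathcal Q}\ge 0$.

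Finally, to upgrade $\widetilde{\mathcal Q}$ to a positive definite form I would pass to a quotient: let $N\subset W^\perp$ be the kernel of $\widetilde{\mathcal Q}$, which equals its radical since $\widetilde{\mathcal Q}\ge 0$, and set $H_{m+1}:=W^\perp\cap N^\perp$, $B_{m+1}:=P_{H_{m+1}}\colon H\to H_{m+1}$, $\mathcal Q_-:=\widetilde{\mathcal Q}|_{H_{m+1}}$. Then $\mathcal Q_-$ is positive definite, $B_{m+1}$ is onto, $\widetilde{\mathcal Q}(P_{W^\perp}x)=\mathcal Q_-(B_{m+1}x)$ for all $x$ (since $\widetilde{\mathcal Q}$ is constant along $N$), and $\ker B_{m+1}=W+N\supseteq W=\ker B_+$, which is~\eqref{eq:kerBplus-contained-in-kerBm1}. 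Substituting into the displayed identity gives $\mathcal Q(x)=\mathcal Q_+(B_0x)-\mathcal Q_-(B_{m+1}x)$, the third bullet of $(2)$, so the construction is complete.
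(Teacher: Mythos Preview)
Your proof is correct and follows essentially the same approach as the paper: both set $H_0=\ker B_+$ and construct $B_0$ so that $\ker B_0$ is the $\mathcal Q$-orthogonal complement of $H_0$ (you arrive there by completing the square in the Euclidean decomposition $H=W\oplus W^\perp$, whereas the paper works directly with $H_0^{\perp_{\mathcal Q}}$ and the associated projection), and both invoke the Sylvester-type count $s^+(\mathcal Q)=\dim W$ to force $\mathcal Q\le 0$ on $\ker B_0$. The quotient step making $\mathcal Q_-$ positive definite is likewise the same---the paper mods out by $\rad\mathcal Q$, you by the kernel $N$ of $\widetilde{\mathcal Q}$, and these coincide under the isomorphism $P_{W^\perp}\colon\ker B_0\to W^\perp$.
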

\begin{remark} The above decomposition of $\mathcal Q$  is more specific than the ones introduced in Subsection~\ref{subsec:translated-gaussians} in~\eqref{eq:any-decomposition-of-Q}. Although we have used for convenience the same notation $B_0$ and $B_{m+1}$ there, they do not necessarily represent the same maps as in the above lemma. However no confusion will be possible, since from now on we will only use the decomposition of Lemma~\ref{lem:decompose-Q}.
\end{remark}

\begin{proof}
We start with $(2)\Longrightarrow (1)$:
For any $x\in \ker B_+\subset \ker B_{m+1}$, it holds
$$ \mathcal Q(x)=\mathcal Q_+(B_0x)-\mathcal Q_-(B_{m+1}x)=\mathcal Q_+(B_0x)\ge 0.$$
Moreover, if $\mathcal Q(x)=0$, using that $\mathcal Q_+$ is definite positive, we deduce that
$B_0x=0$. It follows that $x$ belongs to $\ker B_0\cap \ker B_+$, which is equal to $\{0\}$ by hypothesis.
Thus we have shown that $\mathcal Q$ is positive definite on $\ker B_+$.

It remains to prove (1)(ii). By hypothesis, $(B_0,B_+)$ is
a linear isomorphism, which implies that
$$\dim H=\sum_{i=0}^{m^+} \dim H_i.$$
Therefore, it is enough to show that $s^+(\mathcal Q)\le \dim H_0$.
Since $\mathcal Q_-$ is positive definite, $\mathcal Q$ is negative semi-definite on $\ker B_0$, and hence
\[s^+(\mathcal Q)\le \dim H-\dim \ker B_0=\dim H_0. \]

\medskip
Now we prove that $(1)\Longrightarrow(2)$. Set
\[
  H_0 = \ker B_+ = \bigcap_{i=1}^{m^+} \ker B_i.
\]
Note that (1) implies $\dim H_0 = s^+(\mathcal Q)$. Indeed,
\[
  \dim H_0 = \dim \ker B_+ = \dim H - \dim \im B_+ \ge \dim H - \sum_{i=1}^{m^+} \dim H_i \ge s^+(\mathcal Q),
\]
where the last inequality follows from~(1)(ii). The converse inequality follows from Sylvester's theorem since $\mathcal Q$ is positive definite on $H_0$.

Now consider the subspace
\[
  H_0^{\perp_{\mathcal Q}} = \{ x \in H \colon \forall {y \in H_0}, \ \mathcal Q(x,y) = 0 \},
\]
where we also denote by $\mathcal{Q}(\cdot, \cdot)$ the symmetric bilinear form associated with the quadratic form $\mathcal Q$.
Since $\mathcal Q$ is positive definite on $H_0$, we have
\[
  H_0 \cap H_0^{\perp_{\mathcal Q}} 
  \subseteq \{ x \in H_0 \colon \mathcal Q(x, x) = 0 \} = \{0\}.
\]
As a general fact, $\dim H_0^{\perp_{\mathcal Q}} \ge \dim H - \dim H_0$, therefore
\begin{equation}\label{eq:direct-product-H0}
  H_0 \oplus H_0^{\perp_{\mathcal Q}} = H.
\end{equation}

Consider the projection $P \colon H \to H$ onto $H_0$ with kernel $H_0^{\perp_{\mathcal Q}}$. Then $\Id - P \colon H \to H$ is the projection onto $H_0^{\perp_{\mathcal Q}}$ with $\ker (\Id - P) = H_0$ and 
\begin{equation}\label{eq:decomposition-Q-prep-step}
  \mathcal Q(x) = \mathcal Q(Px) + \mathcal Q((\Id - P)x).
\end{equation}

Next, note that $\mathcal Q$ is negative semi-definite on $H_0^{\perp_{\mathcal  Q}}$. Indeed, suppose that for some $0 \neq x \in H_0^{\perp_{\mathcal  Q}}$, $\mathcal Q(x) > 0$. Then for all $\lambda \in \R$ and $y \in H_0$,
\[
  \mathcal Q(\lambda x + y) = \lambda^2 \mathcal Q(x) + 2\lambda \mathcal Q(x,y) + \mathcal Q(y) = \lambda^2 \mathcal Q(x) + \mathcal Q(y) > 0
\]
whenever $\lambda \neq 0$ or $y \neq 0$, which thanks to~\eqref{eq:direct-product-H0} is equivalent to $\lambda x + y \neq 0$. In this way $\mathcal Q$ would be positive definite on the subspace $H_0 \oplus \textup{span}\{x\}$ which has dimension strictly larger than $s^+(\mathcal Q)$ and thus it contradicts Sylvester's theorem.

Further on, as a general fact, the \emph{radical} of $\mathcal Q$
\[
  \rad \mathcal Q = \{ x \in H \colon \forall{y \in H}, \ \mathcal Q(x,y) = 0\}
\]
is a subspace in $H_0^{\perp_{\mathcal Q}}$. Consider
\[
  H_{m+1} = \sfrac{H_0^{\perp_{\mathcal Q}}}{\rad \mathcal Q}
\]
and the maps $ B_0 \colon H \to H_0$ and $ B_{m+1} \colon H \to H_{m+1} $ defined for $x\in H$ by 
\begin{eqnarray*}
  &  & B_0(x) = P(x), \\
  & & B_{m+1}(x) = \pi_{H_0^{\perp_{\mathcal Q}} \to H_0^{\perp_ {\mathcal Q}}/{\rad \mathcal Q} } \big((\Id - P)(x)\big),
\end{eqnarray*}
where $\pi_{H_0^{\perp_{\mathcal Q}} \to {H_0^{\perp_{\mathcal Q}}}/{\rad \mathcal Q}}$ is the  natural quotient map from $H_0^{\perp_{\mathcal Q}}$ to $\sfrac{H_0^{\perp_{\mathcal Q}}}{\rad \mathcal Q}$.
Finally, consider the following positive definite quadratic forms
\[ \begin{split}
  & \mathcal Q_+ = \mathcal Q|_{H_0} \colon H_0 \to \R, \\
  & \mathcal Q_- \colon H_{m+1} \to \R, \quad \mathcal Q_-(x + \rad \mathcal Q) = -\mathcal Q(x) \textup{ for $x \in H_0^{\perp_{\mathcal Q}}$}.
\end{split} \]
Then the decomposition~\eqref{eq:decomposition-Q-prep-step} becomes
\[
 \mathcal Q(x) = \mathcal Q_+(B_0 x) - \mathcal Q_-(B_{m+1} x).
\]

Next, let us establish the claimed properties of the linear maps which appear
in the  above decomposition of $\mathcal Q$. 

The non-degeneracy conditions~(1)(i) and~(1)(ii) imply that the map
\[
B_{0+} := (B_0, B_1, \ldots, B_{m^+}) \colon H \to H_0 \times H_1 \times \cdots \times H_{m^+} \textup{ is a linear isomorphism.}
\]
Indeed $\ker B_0 \cap \ker B_+=H_0^{\perp Q}\cap H_0=\{0\}$, hence $B_{0+}$ is injective. The dimension condition (1)(ii),
once rewritten as $\dim H\ge \sum_{i=0}^{m^+} \dim H_i$, shows that $B_{0+}$
is an isomorphism.

Note also that $\ker B_{m+1} = H_0 + \rad \mathcal Q$ and therefore
\[
  \ker B_+ =H_0\subseteq \ker B_{m+1}.
\]
This concludes the proof of the claimed properties. 
\end{proof}

The simple lemma stated below establishes the following consequence of~\eqref{eq:kerBplus-contained-in-kerBm1}:
\begin{equation}\label{eq:compact-image}
\textup{if $F \subseteq H_1 \times \cdots \times H_{m^+}$ is compact, then $B_{m+1}(B_+^{-1}(F))$ is compact.}
\end{equation}
\begin{lemma}\label{lem:compact-image}
Let $S \colon X \to Y$ and $T \colon X \to Z$ be linear maps between finite dimensional linear spaces $X, Y, Z$ and $F \subseteq Y$ be a compact set. If $\ker S \subseteq \ker T$ then $T(S^{-1}(F))$ is a compact subset of $Z$.
\end{lemma}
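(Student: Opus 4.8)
\textbf{Proof proposal for Lemma~\ref{lem:compact-image}.}
The plan is to reduce the statement to the elementary fact that a continuous map sends compact sets to compact sets, after quotienting out the degeneracy coming from $\ker S$. First I would observe that since $\ker S \subseteq \ker T$, the map $T$ factors through $S$: there is a well-defined linear map $\bar T \colon SX \to Z$ on the image of $S$ such that $T = \bar T \circ S$. Indeed, for $y \in SX$ pick any $x$ with $Sx = y$ and set $\bar T(y) = Tx$; this is independent of the choice of $x$ precisely because if $Sx = Sx'$ then $x - x' \in \ker S \subseteq \ker T$, so $Tx = Tx'$. Linearity of $\bar T$ is immediate, and on finite dimensional spaces $\bar T$ is automatically continuous.

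Next I would rewrite the set in question in terms of $\bar T$. For any subset $F \subseteq Y$ we have
\[
  T(S^{-1}(F)) = \bar T\big(S(S^{-1}(F))\big) = \bar T\big(F \cap SX\big),
\]
using $T = \bar T \circ S$ for the first equality and $S(S^{-1}(F)) = F \cap \im S$ for the second. Now if $F$ is compact, then $F \cap SX$ is the intersection of a compact set with the closed linear subspace $SX$, hence compact; and $\bar T$ being continuous, its image $\bar T(F \cap SX) = T(S^{-1}(F))$ is compact. This completes the argument.

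There is essentially no obstacle here: the only point requiring a moment's care is the well-definedness of the factorization $\bar T$, which is exactly where the hypothesis $\ker S \subseteq \ker T$ enters, and the bookkeeping identity $S(S^{-1}(F)) = F \cap \im S$. Everything else is the standard ``continuous image of a compact set is compact'' together with the fact that linear maps between finite dimensional spaces are continuous and that linear subspaces are closed.

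To obtain the claimed consequence~\eqref{eq:compact-image}, one simply applies the lemma with $X = H$, $Y = H_1 \times \cdots \times H_{m^+}$, $Z = H_{m+1}$, $S = B_+$ and $T = B_{m+1}$; the hypothesis $\ker S \subseteq \ker T$ is precisely~\eqref{eq:kerBplus-contained-in-kerBm1}.
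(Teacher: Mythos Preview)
Your proof is correct and follows essentially the same idea as the paper's: both arguments quotient out $\ker S$ to reduce to a continuous image of a compact set, the paper by passing to the abstract quotient $X/\ker S$ and showing $\tilde S^{-1}(F)$ is compact there, you by factoring $T=\bar T\circ S$ through the image $SX$ and using $T(S^{-1}(F))=\bar T(F\cap SX)$. The two presentations are isomorphic (via $X/\ker S\cong SX$), and your version is perhaps marginally more direct since it avoids naming the quotient space.
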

\begin{proof}
Put $V = \ker S$ and let $\pi \colon X \to \sfrac{X}{V}$ be the natural quotient map. Define $\tilde{S} \colon \sfrac{X}{V} \to Y$ and $\tilde{T} \colon \sfrac{X}{V} \to Z$ as
\[
  \tilde{S}(x + V) = S x, \qquad \tilde{T}(x + V) = T x
\]
(these definitions are correct since the kernels of $S$ and $T$ contain $V$), i.e. $\tilde S \circ \pi = S$ and $\tilde{T} \circ \pi = T$. Note that $\ker \tilde{S}$ is trivial, hence $\tilde{S}$ is a linear an isomorphism onto its range, and thus $G = \tilde{S}^{-1}(F)$ is a compact subset of $\sfrac{X}{V}$. Finally write
\[
  T(S^{-1}(F)) = \tilde{T}(\pi(\pi^{-1}(\tilde{S}^{-1}(F)))) = \tilde{T}(\tilde{S}^{-1}(F)) = \tilde{T}(G)
\]
and use that $\tilde{T}(G)$ is also compact.
\end{proof}

\subsection{More on quadratic forms}

We start with recalling a simple inequality, which appears in the transportation proof of the Brascamp-Lieb inequalities.

\begin{lemma}\label{lem:quad-direct}
Let $I$ be a finite set, and for each $i\in I$ let $d_i>0$, $L_i\colon H\to H_i$ be linear and onto 
and $K_i\colon H_i\to H_i$ be a linear symmetric definite positive map. 
Assume that $K:=\sum_i d_i L_i^*K_iL_i>0$.  Let $w\in H$, then for all $y_i\in H_i$ verifying 
$w=\sum_i d_i L_i^* y_i$, the following holds
$$\langle K^{-1}w,w \rangle \le \sum_i d_i \langle K_i^{-1} y_i,y_i\rangle.$$
There is equality if one chooses $y_i:=K_iL_iK^{-1}w$.
\end{lemma}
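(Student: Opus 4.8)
The plan is to verify the inequality by a direct completion-of-squares argument, reducing everything to the non-negativity of a single quadratic form. First I would observe that the claimed equality case is the natural guess: if we set $y_i^\star := K_i L_i K^{-1} w$, then
\[
  \sum_i d_i L_i^\ast y_i^\star = \Big(\sum_i d_i L_i^\ast K_i L_i\Big) K^{-1} w = K K^{-1} w = w,
\]
so $(y_i^\star)$ is an admissible choice, and a short computation gives
\[
  \sum_i d_i \langle K_i^{-1} y_i^\star, y_i^\star\rangle = \sum_i d_i \langle L_i K^{-1} w, K_i L_i K^{-1} w\rangle = \langle K^{-1} w, K K^{-1} w\rangle = \langle K^{-1} w, w\rangle,
\]
which is exactly the right-hand side we want to be a minimum. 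This both proves the equality statement and tells us the inequality is an optimality/convexity statement around $(y_i^\star)$.

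Next I would prove the inequality itself. Write an arbitrary admissible tuple as $y_i = y_i^\star + z_i$, where the constraint $w = \sum_i d_i L_i^\ast y_i$ forces $\sum_i d_i L_i^\ast z_i = 0$. Expanding,
\[
  \sum_i d_i \langle K_i^{-1} y_i, y_i\rangle = \sum_i d_i \langle K_i^{-1} y_i^\star, y_i^\star\rangle + 2 \sum_i d_i \langle K_i^{-1} y_i^\star, z_i\rangle + \sum_i d_i \langle K_i^{-1} z_i, z_i\rangle.
\]
The cross term vanishes: since $K_i^{-1} y_i^\star = L_i K^{-1} w$, we get $\sum_i d_i \langle K_i^{-1} y_i^\star, z_i\rangle = \langle K^{-1} w, \sum_i d_i L_i^\ast z_i\rangle = 0$. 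The last term is $\sum_i d_i \langle K_i^{-1} z_i, z_i\rangle \ge 0$ because each $d_i > 0$ and each $K_i$ (hence $K_i^{-1}$) is positive definite. Therefore $\sum_i d_i \langle K_i^{-1} y_i, y_i\rangle \ge \sum_i d_i \langle K_i^{-1} y_i^\star, y_i^\star\rangle = \langle K^{-1} w, w\rangle$, as desired.

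I should double-check the hypotheses are all used and nothing is circular: positive definiteness of $K$ is needed so $K^{-1}$ exists and $y_i^\star$ makes sense; positive definiteness of each $K_i$ is needed both for $K_i^{-1}$ to exist and for the non-negativity of the remainder term; surjectivity of the $L_i$ is not strictly needed for this computation but guarantees the setup is non-degenerate (and that admissible tuples exist for every $w$). There is essentially no serious obstacle here — the only mild subtlety is bookkeeping the constraint $\sum_i d_i L_i^\ast z_i = 0$ correctly so that the cross term really does cancel, and making sure the identity $K_i^{-1} y_i^\star = L_i K^{-1} w$ is used in the right place. This is the standard dual/variational reformulation of the quadratic identity $\langle K^{-1}w,w\rangle = \min\{\sum_i d_i\langle K_i^{-1}y_i,y_i\rangle : \sum_i d_i L_i^\ast y_i = w\}$, and the proof is just the Lagrange/orthogonality argument made explicit.
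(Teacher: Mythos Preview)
Your proof is correct and complete. The paper takes a slightly different route: instead of a completion-of-squares/orthogonality argument around the minimizer $y_i^\star$, it applies the Cauchy--Schwarz inequality directly to $\langle K^{-1}w,w\rangle = \sum_i d_i \langle K_i^{1/2}L_iK^{-1}w,\, K_i^{-1/2}y_i\rangle$, obtaining the bound in one line. Your approach has the advantage of making the equality case and the structure of the minimization problem transparent from the outset (and never needs square roots of the $K_i$); the paper's Cauchy--Schwarz argument is marginally shorter but requires spotting the right factorization. Both are standard and essentially equivalent rewritings of the same quadratic identity.
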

\begin{proof}
This is a direct application of the Cauchy-Schwarz inequality:
\begin{eqnarray*}
\langle K^{-1}w,w \rangle &=& \sum_i d_i  \langle K^{-1}w,L_i^*y_i\rangle
=   \sum_i d_i  \langle K_i^{1/2}L_i K^{-1}w,K_i^{-1/2} y_i\rangle\\
&\le & \left(  \sum_i d_i  \langle K_i^{1/2}L_i K^{-1}w,  K_i^{1/2}L_i K^{-1}w \right)^{\frac12}\left(  \sum_i d_i  \langle K_i^{-1/2} y_i,K_i^{-1/2} y_i\rangle\right)^{\frac12}\\
&=& \left(  \Big\langle (\sum_i d_i  L_i^* K_iL_i) K^{-1}w, K^{-1}w \Big\rangle\right)^{\frac12}\left(  \sum_i d_i  \langle K_i^{-1} y_i, y_i\rangle\right)^{\frac12}\\
&=&    \langle  K^{-1}w,w\rangle ^{\frac12}\Big(  \sum_i d_i  \langle K_i^{-1} y_i, y_i\rangle\Big)^{\frac12}
\end{eqnarray*}
\end{proof}

When proving Proposition \ref{prop:nonzeroinf}, we have shown that the quadratic inequality stated as its first conclusion implies that the map $(B_0,\ldots,B_{m^+})$ is onto. Our next task is to prove a converse statement, for further use.

\begin{lemma}\label{lem:quad-inverse}
Let $c_1,\ldots,c_{m^+}>0>c_{m^++1},\ldots, c_m$.  For $k=1,\ldots,m$, let
 $B_k \colon H\to H_k$ be a linear surjective map, and let $A_k \colon H_k\to H_k$ be symmetric definite positive operator.
Assume  that $(B_1,\ldots,B_{m+})\colon H\to H_1\times\cdots\times H_{m^+}$ is onto. 

If $A:=\sum_{k=1}^m c_k B_k^*A_kB_k>0$ and $y=\sum_{k=1}^m c_k B_k^* y_k$ for some $y_k\in H_k$, then 
$$\langle A^{-1}y,y \rangle \ge \sum_{k=1}^m c_k \langle A_k^{-1} y_k,y_k\rangle.$$
There is equality if one chooses $y_k:=A_kB_kA^{-1}y$.
\end{lemma}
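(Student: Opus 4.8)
## Plan for the Proof of Lemma~\ref{lem:quad-inverse}

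\textbf{Setup and strategy.} The plan is to reduce the mixed-sign inequality to the positive-definite case handled by Lemma~\ref{lem:quad-direct}. Write the positive indices as $i \le m^+$ and the negative indices as $j > m^+$, and set $d_j = |c_j| = -c_j > 0$ for $j > m^+$. The key idea is to isolate the negative part. Since $A = \sum_{i \le m^+} c_i B_i^* A_i B_i - \sum_{j > m^+} d_j B_j^* A_j B_j > 0$, the matrix $M := \sum_{i \le m^+} c_i B_i^* A_i B_i = A + \sum_{j > m^+} d_j B_j^* A_j B_j$ is positive definite (sum of a positive definite and positive semi-definite matrix). Moreover $(B_1,\dots,B_{m^+})$ being onto guarantees, via Lemma~\ref{lem:quad-direct} applied to the index set $\{1,\dots,m^+\}$ with this $M$, the elementary estimate we will need for the positive part. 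The linear constraint reads $y = \sum_{i \le m^+} c_i B_i^* y_i - \sum_{j > m^+} d_j B_j^* y_j$, i.e. $y + \sum_{j>m^+} d_j B_j^* y_j = \sum_{i \le m^+} c_i B_i^* y_i$.

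\textbf{Main computation.} I would introduce the vector $w := y + \sum_{j > m^+} d_j B_j^* y_j$, which by the constraint equals $\sum_{i \le m^+} c_i B_i^* y_i$. Applying Lemma~\ref{lem:quad-direct} to the positive-definite matrix $M$, the maps $B_i$ ($i \le m^+$), weights $c_i$, and the decomposition $w = \sum_{i \le m^+} c_i B_i^* y_i$, gives
\[
\sum_{i \le m^+} c_i \langle A_i^{-1} y_i, y_i \rangle \ge \langle M^{-1} w, w \rangle.
\]
So it suffices to prove
\[
\langle M^{-1} w, w \rangle - \sum_{j > m^+} d_j \langle A_j^{-1} y_j, y_j \rangle \ge \langle A^{-1} y, y \rangle.
\]
Now $M = A + \sum_{j>m^+} d_j B_j^* A_j B_j$ and $w = y + \sum_{j>m^+} d_j B_j^* y_j$, so both sides involve only $A$, the $A_j$, the $B_j$, $y$ and the $y_j$ for $j > m^+$ — the positive-index data has been absorbed. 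This is now a purely "additive perturbation" statement, and I expect it to follow from a finite induction on the number of negative indices, reducing to the single-block case.

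\textbf{The single-block reduction.} The heart of the matter is the case of one negative index: given $A > 0$, a surjective $B \colon H \to K$, $A' > 0$ on $K$, $d > 0$, with $M := A + d B^* A' B$, and $w := y + d B^* z$ for some $z \in K$, show
\[
\langle M^{-1} w, w\rangle - d \langle (A')^{-1} z, z\rangle \ge \langle A^{-1} y, y\rangle.
\]
Here I would use the Sherman–Morrison–Woodbury identity for $M^{-1} = (A + dB^*A'B)^{-1}$, namely $M^{-1} = A^{-1} - A^{-1}B^*\bigl((dA')^{-1} + BA^{-1}B^*\bigr)^{-1}BA^{-1}$; expanding $\langle M^{-1}w, w\rangle$ with $w = y + dB^*z$ and collecting terms, the difference $\langle M^{-1}w,w\rangle - d\langle(A')^{-1}z,z\rangle - \langle A^{-1}y,y\rangle$ should simplify to $\langle N^{-1}\xi, \xi\rangle$ for a suitable positive-definite $N$ (built from $A', BA^{-1}B^*$) and a suitable vector $\xi$ linear in $y$ and $z$ — hence nonnegative. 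This Schur-complement bookkeeping is the main obstacle: keeping the algebra straight and confirming the residual is a genuine positive-definite quadratic form. Once the single-block case is in hand, iterate it over $j = m^++1, \dots, m$ (at each stage the running matrix stays positive definite since we only add positive semi-definite terms), which completes the chain of inequalities and proves the claim.

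\textbf{Equality case.} Finally, I would verify directly that $y_k := A_k B_k A^{-1} y$ gives equality: then $\sum_k c_k B_k^* y_k = \bigl(\sum_k c_k B_k^* A_k B_k\bigr)A^{-1}y = A A^{-1} y = y$, so the constraint holds, and $\sum_k c_k \langle A_k^{-1} y_k, y_k\rangle = \sum_k c_k \langle A_k B_k A^{-1} y, B_k A^{-1} y\rangle = \langle A A^{-1} y, A^{-1} y\rangle = \langle A^{-1} y, y\rangle$, matching the left-hand side. (Alternatively, this choice makes equality hold in each application of Lemma~\ref{lem:quad-direct} and in each Schur-complement step, which is a useful consistency check on the computation.)
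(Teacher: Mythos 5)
There is a genuine gap: your reduction runs in the wrong direction, and the ``single-block'' inequality at its heart is false. If you combine your first display $\sum_{i \le m^+} c_i \langle A_i^{-1} y_i, y_i \rangle \ge \langle M^{-1} w, w \rangle$ with your proposed target $\langle M^{-1} w, w \rangle - \sum_{j > m^+} d_j \langle A_j^{-1} y_j, y_j \rangle \ge \langle A^{-1} y, y \rangle$, the chain yields $\sum_{k=1}^m c_k \langle A_k^{-1} y_k, y_k \rangle \ge \langle A^{-1} y, y \rangle$, which is the \emph{reverse} of the lemma. Moreover the single-block claim already fails in the scalar case $H=K=\R$, $A=A'=B=d=1$: it asserts $(y+z)^2/2 - z^2 \ge y^2$, i.e. $-(y-z)^2/2 \ge 0$. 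The correct inequality is the opposite one, $\langle M^{-1}w,w\rangle \le \langle A^{-1}y,y\rangle + \sum_{j>m^+} d_j \langle A_j^{-1}y_j,y_j\rangle$, and it requires no Woodbury computation: it is exactly Lemma~\ref{lem:quad-direct} applied to the family consisting of the blocks $(B_j,A_j,d_j)$ for $j>m^+$ together with one extra block $(\mathrm{Id}_H, A, 1)$, since $M=A+\sum_j d_jB_j^*A_jB_j$ and $w=y+\sum_j d_jB_j^*y_j$.

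Your strategy can be repaired, and the repair is essentially the paper's proof. The missing ingredient is the joint surjectivity of $B_+$, which you mention in the setup but never actually use (Lemma~\ref{lem:quad-direct} only needs each $B_i$ individually onto). Joint surjectivity provides $z\in H$ with $A_iB_iz=y_i$ for all $i\le m^+$; then $w=\sum_{i\le m^+} c_iB_i^*y_i=Mz$, so the $y_i$ are exactly of the equality form $y_i=A_iB_iM^{-1}w$ in Lemma~\ref{lem:quad-direct}, and your first display is in fact an \emph{equality}: $\sum_{i\le m^+}c_i\langle A_i^{-1}y_i,y_i\rangle=\langle Mz,z\rangle=\langle M^{-1}w,w\rangle$. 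Combining this equality with the correctly-oriented bound $\langle M^{-1}w,w\rangle\le\langle A^{-1}y,y\rangle+\sum_{j>m^+} d_j\langle A_j^{-1}y_j,y_j\rangle$ and rearranging gives precisely the lemma. Your verification of the equality case at $y_k=A_kB_kA^{-1}y$ is correct as written.
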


\begin{proof}
The statement is derived from the former lemma, after  rearranging the terms.
By the surjectivity hypothesis, there exits $z\in H$ such that for all $i\le m^+$, $y_i=A_iB_iz$. 

The relationship $y=\sum_{k=1}^m c_k B_k^* y_k$ can be rewritten as $$y+\sum_{m^+<j\le m} |c_j| B_j^* y_j=
\Big(\sum_{i\le m^+} c_iB_i^*A_iB_i\Big) z.$$
If we set $K:=\sum_{i\le m^+} c_i B_i^*A_iB_i$, $w:=Kz$, $H_{m+1}:=H$, $B_{m+1}:=\Id_H$, $A_{m+1}:=A$, $y_{m+1}:=y$ and $c_{m+1}=1$, we obtain that 
\begin{equation} \label{eq:quad1}
w=\sum_{m^+<j\le m+1} |c_j| B_j^* y_j.
\end{equation}
 With this notation, we may also rewrite the relationship 
  $A=\sum_{k=1}^m c_k B_k^*A_kB_k$ as 
 \begin{equation}\label{eq:quad2}
 K=\sum_{i\le m^+} c_i B_i^*A_iB_i=A+\sum_{m^+<j\le m} |c_j| B_j^*A_jB_j
 =\sum_{m^+<j\le m+1} |c_j| B_j^*A_jB_j.
 \end{equation}
 Observe that $K>0$ holds, as a consequence of $A>0$. Therefore \eqref{eq:quad1} and  \eqref{eq:quad2} allow to apply Lemma~\ref{lem:quad-direct} and to get 
 \begin{equation}\label{eq:quad3}
\sum_{m^+<j\le m+1} |c_j| \langle A_j^{-1} y_j,y_j\rangle \ge \langle K^{-1} w,w\rangle.
\end{equation}
By our definitions for $w$,  $K$ and $z$,
 $$ \langle K^{-1} w,w\rangle= \langle K z,z\rangle=\Big\langle \sum_{i\le m^+} c_iB_i^*A_iB_iz,z\Big\rangle= \sum_{i\le m^+} c_i \langle A_iB_iz, B_i z\rangle
 =\sum_{i\le m^+} c_i \langle y_i, A_i^{-1} y_i\rangle.$$
Since $|c_{m+1}| \langle A_{m+1}^{-1} y_{m+1},y_{m+1}\rangle=\langle A^{-1}y,y\rangle$
and $c_j<0$ for $m^+<j\le m$, the statement of~\eqref{eq:quad3} gives the claimed inequality. The case of equality is easily verified.
\end{proof}

\subsection{Preliminaries and general strategy of the proof of Theorem~\ref{theo:gauss-mini}}

The very first step in the proof of Theorem~\ref{theo:gauss-mini} is to consider a decomposition of the Gaussian kernel $\exp(-\mathcal Q)$ as explained before. Namely, thanks to the hypothesis of Theorem~\ref{theo:gauss-mini} the assertion (2) from Lemma~\ref{lem:decompose-Q} holds true. Therefore we will consider the quadratic forms $\mathcal{Q}_+$ and $\mathcal{Q}_-$ together with the maps $B_0 \colon H \to H_0$ and $B_{m+1} \colon H \to H_{m+1}$ whose existence is ensured by that assertion. Further consider self-adjoint maps $Q_+ \colon H_0 \to H_0$ and $Q_- \colon H_{m+1} \to H_{m+1}$ which represent the respective quadratic forms, i.e.
\[ \begin{split}
  \mathcal{Q}_+(x) &= \pi \scalar{Q_+ x, x} \quad \textup{for $x \in H_0$,} \\
  \mathcal{Q}_-(x) &= \pi \scalar{Q_- x, x} \quad \textup{for $x \in H_{m+1}$.}
\end{split} \]

For $k = 1,2,\ldots,m$ fix measurable functions $f_k \colon H_k \to [0, \infty]$ of integral one. We will deal with the Gaussian kernel $\exp(-\mathcal Q)$ as two additional functions, namely
\[
  \exp(-\mathcal{Q}(x)) = \sqrt{\frac{\det Q_-}{\det Q_+}} f_0^{c_0}(B_0 x) f_{m+1}^{c_{m+1}}(B_{m+1} x),
\]
where $f_0 \colon H_0 \to [0, \infty]$ and $f_{m+1} \colon H_{m+1} \to [0, \infty]$ are defined as
\[ \begin{split}
  f_0(x) &= \sqrt{\det Q_+} \exp(-\pi \scalar{Q_+ x, x}), \\
  f_{m+1}(x) &= \sqrt{\det Q_-} \exp(-\pi \scalar{Q_- x, x})
\end{split} \]
and $c_0 = 1$, $c_{m+1} = -1$. With this notation we have
\begin{equation}\label{eq:J-as-product-of-f}
  J(f_1, \ldots, f_m) = \sqrt{\frac{\det Q_-}{\det Q_+}} \int_H \prod_{k=0}^{m+1} f_k^{c_k}(B_k x) \, dx.
\end{equation}

The general strategy is similar to the one of the proof of the direct and reverse Brascamp-Lieb inequality from~\cite{barthe-inventiones}. Namely we will consider a tuple of centered Gaussian functions $g_k$ on $H_k$ ($k = 0,1,\ldots,m+1$) of integral one and optimal transport maps $H_k \ni x \mapsto y = T_k(x) \in H_k$ which push forward the density $f_k(x) \, dx$ onto the density $g_k(y) \, dy$. Starting from the maps $T_k$, we will build a change of variable map $\theta \colon H \to H$ which will allow us to pass from $J(f_1, \ldots, f_k)$ in the form of~\eqref{eq:J-as-product-of-f} to the integral over $H$ involving a Gaussian function only. However, since we aim to bound~\eqref{eq:J-as-product-of-f} from below, it is crucial that the map $\theta$ is \emph{surjective}. This point is a substantial technical difficulty which is not present in the transportation proof of the Brascamp-Lieb inequality with positive exponents.

In order to make the above strategy work, we need to restrict $f_k$ to carefully chosen classes of functions. There are two reasons behind this: first, we need to ensure existence of optimal transport maps; moreover, it will be convenient to have some regularity of these maps and to have the Monge-Amp{\`e}re equation satisfied in the classical sense. 
The second reason is that we need to ensure surjectivity of the map $\theta$. This can be done by appropriate choice of supports of the test functions. Finally, the inequality for any integrable functions $f_k$ will be obtained via an approximation argument.

\bigskip

We close this subsection with notation and facts concerning convex functions on Euclidean spaces. A standard reference is~\cite{rockafellar-convex-analysis}. In the sequel we write $\interior A$, $\cl A$, $\bd A$ for the interior, closure and boundary  of $A$.

Let $\varphi \colon \R^n \to \R \cup \{+\infty\}$ be a convex function. The \emph{domain} of $\varphi$ is
\[
  \dom \varphi = \{ x \in \R^n \colon \varphi(x) < +\infty \}.
\]
We say that $\varphi$ is \emph{proper} if $\dom \varphi \neq \emptyset$. We say that $\varphi$ is \emph{closed} if the epigraph of $\varphi$, i.e. the set $\{ (x, y) \in \R^{n+1} \colon x \in \dom \varphi, y \ge \varphi(x) \}$, is a closed subset of $\R^n$. A convex function $\varphi$ is closed if and only if it is lower semi-continuous (or, equivalently, for every $\alpha \in \R$, $\{ x \in \R^n  \colon \varphi(x) \le \alpha \}$ is a closed subset of $\R^n$).

For $x \in \R^n$, the \emph{subdifferential} $\partial \varphi(x)$ is the set of all vectors $x^\ast \in \R^n$, called \emph{subgradients}, which satisfy
\[
  f(y) \ge f(x) + \scalar{x^\ast, y-x} \quad \textup{for all $y \in H$.}
\]
For $A \subseteq \R^n$,
\[
  \partial \varphi(A) = \bigcup_{x \in A} \partial \varphi(x).
\]
Note that $\partial \varphi(x) \neq \emptyset$ for all $x \in \interior \dom \varphi$ (actually for all $x$ in the relative interior of $\dom \varphi$) and that if $\varphi$ is proper then $\partial \varphi(x) = \emptyset$ for all $x \not\in \dom \varphi$. If $\varphi$ is differentiable at $x \in \R^n$ then $\partial \varphi(x)$ contains exactly one vector $\nabla \varphi(x)$. The converse statement is also true: for a convex function having a unique subgradient at a given point implies differentiability at that point (see~\cite[Theorem 25.1]{rockafellar-convex-analysis}).

If $\varphi$ is proper, we define the Legendre conjugate of $\varphi$ as
\[
  \varphi^\ast(y) = \sup_{x \in \dom \varphi} \scalar{x, y} - \varphi(x),
\]
which is a proper closed convex function on $\R^n$. If $\varphi$ is proper and closed then $(\varphi^\ast)^\ast$ coincides with $\varphi$.

If $\varphi$ is proper and closed then the multi-valued maps $\partial \varphi$ and $\partial \varphi^\ast$ are inverses of each other, i.e.
\begin{equation}\label{eq:subgrad-and-legendre}
  y \in \partial \varphi(x) \quad \iff \quad x \in \partial \varphi^\ast(y)
\end{equation}
(see~\cite[Corollary 23.5.1]{rockafellar-convex-analysis}). In particular, $y \in \partial \varphi(\R^n)$ if and only if $\partial \varphi^\ast(y) \neq \emptyset$ which readily implies that if $\varphi$ is proper and closed then
\begin{equation}\label{eq:inclusion-of-domains}
  \interior \dom \varphi^\ast \subseteq \partial \varphi(\R^n) \subseteq \dom \varphi^\ast.
\end{equation}

\subsection{Optimal transport map}
Here we present a result (formulated as Corollary~\ref{cor:transport-maps}) on existence of smooth solutions to the Monge-Amp{\`e}re equation related to certain class of optimal transport problems. Although the result is most probably well-known to specialists in the theory of the Monge-Amp{\`e}re equation, we were not able to find a reference where it is explicitly stated. For this reason we explain below how the result can be derived from well-established results in optimal transport and regularity theory of the Monge-Amp{\`e}re equation.

Let us begin our discussion with the following result of McCann~\cite{McCann-1995}, which is a refinement of an earlier result of Brenier~\cite{Brenier-1987} (see also references in~\cite{McCann-1995} for related developments):
\begin{theorem}[Brenier, McCann]\label{thm:McCann}
Let $\mu$ and $\nu$ be Borel probability measures on $\R^n$.
\begin{enumerate}
\item[(i)] There exist a Borel probability measure $\gamma$ on $\R^n \times \R^n$ whose marginals are $\mu$ and $\nu$ (namely, for any Borel set $A \in \R^n$, $\mu(A) = \gamma(A \times \R^n)$ and $\nu(A) = \gamma(\R^n \times A)$) and a closed convex function $\varphi \colon \R^n \to \R \cup \{+\infty\}$ such that the measure $\gamma$ is supported on the graph of the subdifferential of $\varphi$, i.e. the set
\[
  \{(x, y) \in \R^n \times \R^n \colon y \in \partial \varphi(x)\}.
\]
\item[(ii)] In addition to (i), if $\mu$ vanishes on Borel subsets of $\R^n$ Hausdorff dimension $n-1$, then $\varphi$ is differentiable $\mu$-a.e. and the map $\nabla \varphi$ (defined in points where $\varphi$ is differentiable) pushes $\mu$ forward to $\nu$, i.e. for every Borel subset $B \subseteq \R^n$,
\begin{equation}\label{eq:push-forward-by-gradient}
  \nu(B) = \mu\big((\nabla \varphi)^{-1}(B)\big)
\end{equation}
(and in fact, the map $(\id, \nabla \varphi) \colon \R^n \to \R^n \times \R^n$ pushes $\mu$ forward to $\gamma$). Moreover, the map $\nabla \varphi$ satisfying~\eqref{eq:push-forward-by-gradient} is uniquely determined $\mu$-a.e. among gradients of convex functions on $\R^n$.
\end{enumerate}
\end{theorem}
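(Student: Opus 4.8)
The plan is to recognise this as the classical Brenier polar-factorisation theorem for the quadratic cost, in the sharpened form of McCann which imposes no moment assumption on $\mu$ and $\nu$, and to prove it along the standard route through cyclical monotonicity.

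For part~(i), I would first dispose of compactly supported marginals and then pass to a limit. Fix an exhaustion of $\R^n$ by balls $B_N$ and, for $N$ large enough that $\mu(B_N), \nu(B_N) > 0$, put $\mu_N = \mu(\cdot \cap B_N)/\mu(B_N)$ and $\nu_N = \nu(\cdot \cap B_N)/\nu(B_N)$; these are compactly supported, hence have finite second moments, so the Monge--Kantorovich problem for the cost $c(x,y) = |x-y|^2$ admits an optimal coupling $\gamma_N$ (by weak compactness of the couplings with fixed compactly supported marginals and lower semicontinuity of the cost). Any optimal coupling is $c$-cyclically monotone, and since $c(x,y) = |x|^2 - 2\scalar{x, y} + |y|^2$ with the pure $x$- and $y$-terms cancelling in cyclic sums, this amounts to ordinary cyclical monotonicity of $\supp \gamma_N$; Rockafellar's theorem~\cite{rockafellar-convex-analysis} then produces a closed proper convex $\varphi_N$ whose subdifferential graph contains $\supp \gamma_N$. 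Next I would take a limit: as $\mu_N \to \mu$ and $\nu_N \to \nu$ weakly the marginals are tight, hence $\{\gamma_N\}$ is tight, and along a subsequence $\gamma_N$ converges weakly to a probability measure $\gamma$ on $\R^n \times \R^n$ with marginals $\mu$ and $\nu$. The crucial point is that cyclical monotonicity survives the limit: given finitely many points of $\supp \gamma$, weak convergence lets me approximate each by points of $\supp \gamma_N$, and the (closed) cyclical-monotonicity inequalities for $\supp \gamma_N$ pass to the limit, so $\supp \gamma$ is cyclically monotone; one last application of Rockafellar's theorem yields the convex $\varphi$ asserted in~(i).

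For part~(ii), I would invoke the regularity theory of convex functions: the set of points of $\R^n$ at which a finite convex function fails to be differentiable is contained in a countable union of Lipschitz hypersurfaces, hence has Hausdorff dimension at most $n-1$. Under the stated hypothesis $\mu$ gives this set measure zero, so $\varphi$ is differentiable $\mu$-a.e., and on the full-measure set where $\partial \varphi(x) = \{\nabla \varphi(x)\}$ the facts that $\gamma$ is carried by the subdifferential graph and has first marginal $\mu$ force $\gamma$ to equal the image of $\mu$ under $x \mapsto (x, \nabla\varphi(x))$; projecting onto the second coordinate gives $\nu(B) = \mu\big((\nabla\varphi)^{-1}(B)\big)$, i.e. $\nabla\varphi$ transports $\mu$ to $\nu$. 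For uniqueness among gradients of convex functions I would appeal to McCann's argument: two such maps both induce cyclically monotone couplings of $\mu$ and $\nu$, which under the hypothesis on $\mu$ coincide, pinning down $\nabla\varphi$ $\mu$-a.e. (see~\cite{McCann-1995}).

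The part I expect to require real care is precisely the step McCann isolated, namely the passage from compactly supported $\mu$, $\nu$ — where one can minimise a finite transport cost — to arbitrary Borel probability measures, whose transport cost may be infinite, so that the argument must proceed by approximation together with a stability statement for cyclical monotonicity under weak convergence of couplings. Once this is available, the differentiability-a.e. assertion of~(ii) is a clean consequence of classical convex analysis and the transport identity then follows formally.
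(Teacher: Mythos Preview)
The paper does not supply its own proof of this statement: Theorem~\ref{thm:McCann} is quoted as a result of Brenier and McCann, with a citation to~\cite{McCann-1995}, and is used as a black box in the subsequent discussion of Caffarelli's regularity theory. There is therefore no ``paper's proof'' to compare your sketch against.

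That said, your outline is essentially McCann's own argument from~\cite{McCann-1995}: reduce first to compactly supported marginals (where the quadratic Monge--Kantorovich problem has an optimal plan), identify $c$-cyclical monotonicity for the quadratic cost with ordinary cyclical monotonicity, invoke Rockafellar's theorem to produce a closed convex potential, and then pass to the limit using tightness and the stability of cyclical monotonicity under weak convergence. The treatment of part~(ii) via the rectifiability of the non-differentiability set of a convex function (cf.~the paper's Remark~(v) citing Anderson--Klee~\cite{Anderson-Klee}) and the resulting identification of $\gamma$ as $(\id,\nabla\varphi)_\# \mu$ is also standard and correct. Your sketch is a faithful summary of the proof one would find in the original reference.
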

The map $\nabla \varphi$ from Theorem~\ref{thm:McCann}(ii) is called the \emph{Brenier map}.
\begin{remark}
\begin{enumerate}
\item[(i)] For a closed convex function the graph of its subdifferential is a closed subset of $\R^n \times \R^n$.
\item[(ii)] From Theorem~\ref{thm:McCann}(i) it follows that
\begin{equation}\label{eq:supp-mu-in-cl-dom-varphi}
  \supp(\mu) \subseteq \cl \{ x \in \R^n \colon \partial \varphi(x) \neq \emptyset \} = \cl \dom \varphi
\end{equation}
and
\begin{equation}\label{eq:supp-nu-in-cl-partial-varphi}
  \supp(\nu) \subseteq \cl \partial \varphi(\R^n).
\end{equation}
\item[(iii)] The assumption on $\mu$ in Theorem~\ref{thm:McCann}(ii) is satisfied if $\mu$ is absolutely continuous with respect to the Lebesgue measure on $\R^n$.
\item[(iv)] Using~\eqref{eq:push-forward-by-gradient} and continuity of the (sub)gradient of a convex function (see e.g.~\cite[Corollary 24.5.1]{rockafellar-convex-analysis}) one can show that
\begin{equation}\label{eq:supports-mu-nu}
  \supp(\nu) = \cl \nabla \varphi(\supp(\mu))
\end{equation}
(see e.g. the proof of Theorem 2.12 in~\cite{villani-tot} for details).
\item[(v)] By the above item (ii), the exterior of the domain of $\varphi$ has measure $\mu$ zero, and the boundary of $\dom \varphi$ (as the boundary of a convex set in $\R^n$) has Hausdorff dimension at most $n-1$. Therefore, $\mu$-a.e. differentiability of $\varphi$ follows from the result of Anderson and Klee~\cite{Anderson-Klee} which says that a convex function on $\R^n$ is differentiable everywhere in its domain except for a set of Hausdorff dimension at most $n-1$.
\end{enumerate}
\end{remark}

From now on we assume that $\mu$ is a probability measure on $\R^n$ with a density $f > 0$, and $\nu$ is a probability measure on $\R^n$ with a density $g$ which is positive in an open bounded convex set $\Omega$ and $g \equiv 0$ in $\Omega^c$. Thanks to Theorem~\ref{thm:McCann} we consider a closed convex function $\varphi$ for which $\nabla \varphi$ is the Brenier map which pushes $\mu$ forward to $\nu$. Let us discuss some basic properties of $\varphi$ in this context:
\begin{enumerate}
\item[(i)] Since $\supp(\mu) = \R^n$, it follows from~\eqref{eq:supp-mu-in-cl-dom-varphi} that $\dom \varphi = \R^n$.
\item[(ii)] By the hypothesis that $f > 0$, the Lebesgue measure on $\R^n$ is absolutely continuous with respect to $\mu$ and hence $\varphi$ is differentiable a.e. in $\R^n$.
\item[(iii)] By~\eqref{eq:supports-mu-nu} it is clear that
\begin{equation}\label{eq:nabla-varphi-in-cl-omega}
  \nabla \varphi(x) \in \cl \Omega \textup{ for all $x \in \R^n$ for which $\nabla \varphi(x)$ is defined.}
\end{equation}
Moreover, by~\eqref{eq:push-forward-by-gradient}, $\mu((\nabla \varphi)^{-1}(\bd \Omega)) = \nu(\bd \Omega) = 0$, hence the set $(\nabla \varphi)^{-1}(\bd \Omega)$ has zero Lebesgue measure. Therefore
\begin{equation}\label{eq:nabla-varphi-in-omega}
  \textup{$x$-a.e. the following holds: $\varphi$ is differentiable at $x$ and $\nabla \varphi(x) \in \Omega.$}
\end{equation}
\end{enumerate}

Thanks to the regularity theory of the Monge-Amp{\`e}re equation it is known that under some additional assumptions on the densities $f$ and $g$, the Brenier map $\nabla \varphi$ is defined everywhere on $\R^n$ and is a $\mathcal{C}^1$ diffeomorphism onto $\Omega$. In such case, the change of variable formula justifies that~\eqref{eq:push-forward-by-gradient} is equivalent to the fact that $\varphi$ is a solution to the Monge-Amp{\`e}re equation
\begin{equation}\label{eq:monge-ampere-general}
  \det \Hess \varphi(x) = \frac{f(x)}{g(\nabla \varphi(x))}.
\end{equation}
To this end we follow the argument of Caffarelli as presented in the paper~\cite{alesker-dar-milman}.

First note that due to~\eqref{eq:nabla-varphi-in-omega} the right hand side of~\eqref{eq:monge-ampere-general} is defined $x$-a.e. Second, we use the result of Caffarelli~\cite{caffarelli-1992} (see also Theorems 4.8 and 4.10 in~\cite{villani-tot}): since $\mu$ and $\nu$ are absolutely continuous with respect to the Lebesgue measure and the support of $\nu$ is convex, we have that $\varphi$ satisfies~\eqref{eq:monge-ampere-general} in the Aleksandrov sense, i.e. the \emph{Hessian measure} $ {\det}_H \Hess \varphi$  associated to $\varphi$, defined by
\[
  {\det}_H \Hess \varphi(A) = \textup{vol}_n(\partial \varphi(A)) \quad \textup{for any Borel set $A \subseteq \R^n,$}
\]
is absolutely continuous with respect to the Lebesgue measure and its density coincides almost everywhere with the right-hand side of~\eqref{eq:monge-ampere-general}. For a proof of this result the following relation is crucial:
\begin{equation}\label{eq:partial-varphi-cl-omega}
  \partial \varphi(\R^n) \subseteq \cl \Omega.
\end{equation}
To see~\eqref{eq:partial-varphi-cl-omega}, use~\eqref{eq:nabla-varphi-in-cl-omega} and a general result on a subdifferential of a (closed) convex function~\cite[Theorem 25.6]{rockafellar-convex-analysis} which in our case says that for any $x \in \R^n$, $\partial \varphi(x)$ lies in the closure of the convex hull of limits of $\nabla \varphi(x_k)$, where $(x_k)$ runs through all sequences of points of differentiability of $\varphi$ which converge to $x$. Since $\cl \Omega$ is already convex and closed, $\partial \varphi(x) \subseteq \cl \Omega$ for any $x \in \R^n$.

Further on, we assume additionally that the functions $f$ and $1/f$ are bounded on compact subsets of $\R^n$ and $g$ is bounded and bounded away from zero on $\Omega$. Then for any $R > 0$ there exists $0 < c(R) < C(R) < \infty$ such that the right hand side of~\eqref{eq:monge-ampere-general} is between $c(R)$ and $C(R)$ almost everywhere in the ball $B(0,R)$. Since $\varphi$ satisfies~\eqref{eq:monge-ampere-general} in the Aleksandrov sense, it follows that ${\det}_H \Hess \varphi$ has a density which is bounded and bounded away from zero on compact sets. With these a priori bounds on ${\det}_H \Hess \varphi$ we apply a geometric lemma of Caffarelli~\cite[Theorem 1]{caffarelli-localization-1990} (see also~\cite[Chapter 5]{guttierez-book}) which will allow us to prove that $\varphi$ is strictly convex.
\begin{lemma}[{Caffarelli~\cite[Theorem 1]{caffarelli-localization-1990}}]\label{lem:caffarelli-strict-convexity}
Let $\Gamma \subseteq \R^n$ be an open bounded convex set and $\psi \colon \Gamma \to \R$ be a non-negative convex function. Suppose for some constants $0 < c < C < \infty$,
\[
  c \,\textup{vol}_n(A) \le \textup{vol}_n(\partial \psi(A)) \le C \,\textup{vol}_n(A) \quad \textup{for any Borel set $A \subseteq \Gamma.$}
\]
If the (convex) set $\{x \in \Gamma \colon \psi(x) = 0 \}$ is non-empty and contains more than one point, then it has no extremal points.
\end{lemma}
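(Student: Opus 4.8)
The plan is to argue by contradiction, following Caffarelli's localization method. Suppose the convex set $Z := \{x \in \Gamma \colon \psi(x) = 0\}$ contains two distinct points and has an extreme point. Since the hypothesis and the conclusion are invariant under precomposing $\psi$ with an invertible affine map (which only multiplies $c$ and $C$ by a power of the determinant of its linear part), we may assume $\psi \ge 0$, $\psi(0) = 0$, and that $0$ is the extreme point in question. By a standard reduction based on Straszewicz's theorem (exposed points are dense among extreme points) we may moreover assume that $0$ is an \emph{exposed} point of $Z$: there is a hyperplane, which we take to be $\{x_n = 0\}$, with $Z \subseteq \{x_n \ge 0\}$ and $Z \cap \{x_n = 0\} = \{0\}$. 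Picking $z \in Z$ with $z_n > 0$ (such $z$ exists since $Z \neq \{0\}$) and applying one further affine normalization, we arrange $z = e_n$, so $[0,e_n] \subseteq Z$. The exposedness then yields the \emph{pointedness} of $Z$ at $0$: for every $\eta > 0$ there is $\delta > 0$ with $Z \cap \{x_n < \delta\} \subseteq B(0,\eta)$.

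The second step extracts the geometry of the sections $S_t := \{x \colon \psi(x) < t\}$ forced by the two-sided bound $c\,\textup{vol}_n(A) \le \textup{vol}_n(\partial\psi(A)) \le C\,\textup{vol}_n(A)$. These are nested bounded open convex sets, contained in $\Gamma$ for $t$ small, decreasing to $Z$, each containing $0$ and $e_n$. For each $t$ pick, via John's ellipsoid theorem, an affine map $T_t$ with $B(0,\kappa_n) \subseteq T_t(S_t) \subseteq B(0,1)$, and consider the renormalized convex functions $\tilde\psi_t := t^{-1}\,\psi\circ T_t^{-1}$ on $T_t(S_t)$; these equal $1$ on the boundary, have minimum value $0$, and their Monge--Amp\`ere measure is comparable to $t^{-n}|\det T_t|^{-2}$ times Lebesgue measure. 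Applying the Aleksandrov--Bakelman--Pucci estimate (for the upper bound on the minimum) together with a barrier construction (for the lower bound) to $\tilde\psi_t$ on the normalized domain gives $|\det T_t| \asymp t^{-n/2}$, hence $\textup{vol}_n(S_t) \asymp t^{n/2}$ uniformly in $t$. The same estimates, applied from a point where $\tilde\psi_t$ vanishes, show that $T_t(Z)$ stays at distance at least a dimensional constant from $\bd\,T_t(S_t)$; and since $S_t \supseteq [0,e_n]$ for every $t$, the direction $e_n$ is never a short direction of $S_t$, so $T_t(Z) \supseteq [T_t 0, T_t e_n]$ is a segment whose length is bounded below independently of $t$.

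Finally, one derives the contradiction by confronting this uniform section geometry with the pointedness of $Z$ at $0$. The estimates above imply the usual doubling/engulfing property of the sections $S_t$; iterating it along dyadic heights $t = 2^{-k} \to 0$ controls how the John centers of $S_{2^{-k}}$ move between consecutive scales, whereas the pointedness forces these centers to converge to $0$. On the other hand, ``$T_t(Z)$ lies at a definite distance from $\bd\,T_t(S_t)$'' applied to $0 \in Z$ says precisely that $T_t(0)$ is \emph{not} near the boundary of the normalized section, i.e. $0$ behaves like an interior center of $S_t$ at every small scale --- which is incompatible with $0$ being an exposed, hence one-sided, point of $\bigcap_t S_t = Z$. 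Passing to the limit in this incompatibility yields a contradiction and shows $Z$ has no extreme point. The main obstacle is exactly this last step: turning the qualitative statement ``$0$ is exposed in $Z$'' into a quantitative one at the scale of the vanishingly small sections that clashes with their uniform normalized geometry. This is the technical heart of Caffarelli's argument, and it is where the two-sided Monge--Amp\`ere bound is used most essentially --- the lower bound to prevent the sections from becoming too thin near $0$, the upper bound to prevent the subgradient image of the ``cusp'' region near $0$ from being large enough to coexist with the volume estimate $\textup{vol}_n(S_t) \asymp t^{n/2}$.
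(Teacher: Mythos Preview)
The paper does not prove this lemma at all: it is quoted verbatim as Theorem~1 of Caffarelli's 1990 paper (and the reader is also pointed to Guti\'errez's monograph). So there is no ``paper's proof'' to compare your proposal against; the relevant question is whether your proposal is a valid proof of Caffarelli's theorem.

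Your outline follows the standard localization strategy --- reduce to an exposed point of the zero set, study the sub-level sections $S_t$, normalize them by John's ellipsoid, and use the two-sided Monge--Amp\`ere bound together with Aleksandrov's maximum principle to control their shape --- and that strategy is correct. But what you have written is an outline, not a proof. You yourself flag the decisive step as unfinished: ``The main obstacle is exactly this last step\ldots\ This is the technical heart of Caffarelli's argument.'' Describing \emph{that} a contradiction arises between the pointedness of $Z$ at $0$ and the uniform interior position of $T_t(0)$ in the normalized sections is not the same as \emph{producing} the contradiction. In Caffarelli's actual argument this is done concretely, e.g.\ by tilting: one studies sets of the form $\{\psi(x) < \varepsilon(\delta - x_n)\}$, shows they are trapped in $\{x_n<\delta\}$ and hence (by exposedness) have small diameter transverse to $e_n$, yet contain a definite segment in the $e_n$-direction; the Aleksandrov estimate then forces a volume/shape incompatibility with the two-sided bound. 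Your text gestures at this but does not carry it out.

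Two smaller points. First, the Straszewicz reduction to an exposed point needs a word of care: $Z$ is only relatively closed in the open set $\Gamma$, so one should argue on $\overline{Z}$ (which is compact since $\Gamma$ is bounded) and then check that an exposed point of $\overline{Z}$ lying in $\Gamma$ serves the purpose. Second, the assertion that ``the direction $e_n$ is never a short direction of $S_t$'' is true but is itself a consequence of the normalized-section estimates, not an independent input; as written it reads as an assumption. If you want a self-contained write-up, the cleanest route is to follow Guti\'errez, Chapter~5, where the tilting argument is spelled out in full.
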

\begin{corollary}
$\varphi$ is strictly convex.
\end{corollary}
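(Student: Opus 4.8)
The plan is to argue by contradiction along the classical lines of Caffarelli, using Lemma~\ref{lem:caffarelli-strict-convexity} as the only substantial input beyond what has already been established in this subsection. Suppose $\varphi$ is not strictly convex. Then there are points $x_0 \neq x_1$ such that $\varphi$ is affine on the segment $[x_0, x_1]$. Choosing $p \in \partial\varphi\big(\tfrac{x_0 + x_1}{2}\big)$ (this set is nonempty because $\dom\varphi = \R^n$, as noted above), the affine map $\ell(x) = \varphi\big(\tfrac{x_0+x_1}{2}\big) + \scalar{p, x - \tfrac{x_0+x_1}{2}}$ satisfies $\ell \le \varphi$ everywhere and $\ell = \varphi$ on $[x_0, x_1]$. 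Set $\psi = \varphi - \ell \ge 0$ and $Z = \{x \in \R^n \colon \psi(x) = 0\}$, a closed convex set containing the nondegenerate segment $[x_0, x_1]$, so that $\dim Z \ge 1$. We aim for a contradiction. Since $\ell$ is affine, $\Hess\psi = \Hess\varphi$ and, for every Borel $A$, $\partial\psi(A) = \partial\varphi(A) - p$ is merely a translate of $\partial\varphi(A)$; hence ${\det}_H \Hess\psi = {\det}_H\Hess\varphi$, and in particular the a priori bounds recorded above — on every ball $B(0,R)$ the Hessian measure ${\det}_H \Hess\varphi$ has a density lying between two positive constants $c(R)$ and $C(R)$ — hold verbatim for $\psi$ as well.

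The heart of the argument is to show that $Z$ has no extremal point. Assume $z$ is an extremal point of $Z$, fix any $r > 0$, and set $\Gamma = B(z,r)$, an open bounded convex set. By the previous paragraph, $\psi|_\Gamma$ satisfies the hypotheses of Lemma~\ref{lem:caffarelli-strict-convexity}: it is non-negative and convex on $\Gamma$, and there are $0 < c < C < \infty$ with $c\,\mathrm{vol}_n(A) \le \mathrm{vol}_n(\partial\psi(A)) \le C\,\mathrm{vol}_n(A)$ for all Borel $A \subseteq \Gamma$. Moreover $\{x \in \Gamma \colon \psi(x) = 0\} = Z \cap \Gamma$ contains $z$ and, since $\dim Z \ge 1$, a nondegenerate sub-segment of $Z$ issuing from $z$, so it has more than one point; and $z$ is still an extremal point of the convex subset $Z \cap \Gamma \subseteq Z$. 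Lemma~\ref{lem:caffarelli-strict-convexity} then asserts that $Z \cap \Gamma$ has \emph{no} extremal point, a contradiction.

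It remains to convert the absence of extremal points into a contradiction. It is classical that a nonempty closed convex subset of $\R^n$ with no extremal point must contain an affine line (see, e.g., \cite[Corollary 18.5.3]{rockafellar-convex-analysis}); let $L \subseteq Z$ be such a line. After an orthogonal change of coordinates and a translation we may assume $L = \R e_1$ and write points as $x = (x_1, x')$ with $x' \in \R^{n-1}$. Since $\psi \ge 0$ and $\psi \equiv 0$ on $L$, convexity yields, for all $x_1 \in \R$ and $x' \in \R^{n-1}$,
\[
  \psi(x_1, x') \le \tfrac12 \psi(2x_1, 0) + \tfrac12 \psi(0, 2x') = \tfrac12 \psi(0, 2x'),
\]
where the right-hand side is finite (as $\dom\varphi = \R^n$) and independent of $x_1$; hence $x_1 \mapsto \psi(x_1, x')$ is a convex function on $\R$ that is bounded above, thus constant. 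Therefore $\psi$ does not depend on $x_1$, which forces $\partial\psi(x) \subseteq \{0\} \times \R^{n-1}$ for every $x$, and consequently $\partial\varphi(x) = p + \partial\psi(x)$ lies in the fixed affine hyperplane $p + (\{0\} \times \R^{n-1})$. It follows that ${\det}_H\Hess\varphi(A) = \mathrm{vol}_n(\partial\varphi(A)) = 0$ for every Borel set $A$, contradicting the lower bound ${\det}_H\Hess\varphi \ge c(R) > 0$ on $B(0,R)$ established above. Hence $\varphi$ is strictly convex.

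The main obstacle is precisely this last reduction: Lemma~\ref{lem:caffarelli-strict-convexity} only forbids \emph{bounded} contact sets (more precisely, contact sets having an extremal point), so one has to rule out separately the degenerate possibility that $Z$ contains an entire affine line — which is where the a priori positivity of the Hessian measure is used, through the short convexity computation above. A secondary point demanding some care is the bookkeeping that allows Lemma~\ref{lem:caffarelli-strict-convexity} to be applied to the \emph{localized} function $\psi|_\Gamma$ while preserving the two-sided bounds on its Hessian measure.
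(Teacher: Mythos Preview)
Your proof is correct and follows essentially the same approach as the paper's: argue by contradiction, pass to $\psi=\varphi-\ell$, use Lemma~\ref{lem:caffarelli-strict-convexity} on a ball to rule out extremal points of the contact set, then handle the remaining case where the contact set contains a line. The only cosmetic differences are that the paper chooses one large ball containing both a second point of the contact set and the putative extremal point (you instead take a small ball centered at the extremal point and note $\dim Z\ge 1$), and that for the final contradiction the paper invokes $\supp(\nu)\subseteq\cl\partial\varphi(\R^n)$ rather than the Hessian-measure lower bound---either route works, since both amount to observing that $\partial\varphi(\R^n)$ cannot lie in an affine hyperplane.
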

\begin{proof}
Assume the opposite. Then there exists $x_0 \in \R^n$ and a supporting hyperplane $l$ of $\varphi$ at $x_0$ such the closed convex set that $F = \{ x \in \R^n \colon \varphi(x) = l(x) \}$ contains some other point $z \neq x_0$.

First suppose that $F$ contains an extreme point, say $x_1$ (not necessarily distinct from $x_0$), and take $\Gamma$ to be an (open) ball $B(0,R)$ large enough to contain $x_0, z$ and the extreme point $x_1$. Let $\psi$ be the non-negative convex function $\varphi - l$ restricted to $\Gamma$. By translation invariance of the Lebesgue measure, $\textup{vol}_n(\partial \psi(A)) = \textup{vol}_n(\partial \varphi(A))$ for any Borel set $A \subset \Gamma$ and thus we can apply Lemma~\ref{lem:caffarelli-strict-convexity}. The set $\{ x \in \Gamma \colon \psi(x) = 0 \}$ coincides with $F \cap \Gamma$ and since $F \cap \Gamma$ contains two distinct points $x_0$ and $z$, it follows from the lemma that $x_1$ cannot be an extreme point of $F \cap \Gamma$. This clearly contradicts the fact that $x_1$ is an extreme point of $F$.

Therefore $F$ is a non-empty closed convex set which has no extreme points. Hence it must contain a line (see e.g.~\cite[Corollary 18.5.3]{rockafellar-convex-analysis}). In consequence, the graph of $\varphi$ must contain a line and hence all hyperplanes supporting $\varphi$ are parallel to that line. This means that $\partial \varphi(\R^n)$ is contained in an affine subset of $\R^n$ of dimension at most $n-1$, which clearly contradicts~\eqref{eq:supp-nu-in-cl-partial-varphi}.
\end{proof}

Having established strict convexity of $\varphi$ we can conclude with a stronger statement than~\eqref{eq:partial-varphi-cl-omega}, namely that $\partial \varphi(\R^n) = \Omega$. It is based on a convexity argument. 
\begin{lemma}
Let $\psi$ be a convex function on $\R^n$ with $\dom \psi = \R^n$. If $\psi$ is strictly convex then $\partial \psi(\R^n) = \interior \dom \psi^\ast$.
\end{lemma}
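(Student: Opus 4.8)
The plan is to establish the two inclusions separately. One of them, $\interior \dom \psi^\ast \subseteq \partial \psi(\R^n)$, is already available and uses nothing about strict convexity: since $\dom \psi = \R^n$, the function $\psi$ is finite-valued, hence continuous, hence closed and proper, so~\eqref{eq:inclusion-of-domains} applies and yields $\interior \dom \psi^\ast \subseteq \partial \psi(\R^n) \subseteq \dom \psi^\ast$. Thus the whole point is the reverse inclusion $\partial \psi(\R^n) \subseteq \interior \dom \psi^\ast$, and this is where strict convexity must enter.

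I would argue by contradiction. Fix $y_0 \in \partial \psi(\R^n)$, so that $y_0 \in \partial \psi(x_0)$ for some $x_0 \in \R^n$; equivalently, by~\eqref{eq:subgrad-and-legendre}, $x_0 \in \partial \psi^\ast(y_0)$, and in particular $y_0 \in \dom \psi^\ast$ by the inclusion just recalled. Suppose, towards a contradiction, that $y_0 \notin \interior \dom \psi^\ast$. Since $\dom \psi^\ast$ is a convex set and $y_0$ is a non-interior point of it, there is a nonzero vector $v \in \R^n$ such that $\langle v, z - y_0 \rangle \le 0$ for every $z \in \dom \psi^\ast$: when $\interior \dom \psi^\ast \neq \emptyset$ this is the supporting hyperplane theorem at the boundary point $y_0$, and when $\interior \dom \psi^\ast = \emptyset$ one takes $v$ orthogonal to an affine hyperplane containing $\dom \psi^\ast$.

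The key step is the observation that $x_0 + t v \in \partial \psi^\ast(y_0)$ for every $t \ge 0$. Indeed, $x_0 \in \partial \psi^\ast(y_0)$ means $\psi^\ast(z) \ge \psi^\ast(y_0) + \langle x_0, z - y_0 \rangle$ for all $z$; adding $t \langle v, z - y_0 \rangle$, which is $\le 0$ for $z \in \dom \psi^\ast$ and irrelevant for $z \notin \dom \psi^\ast$ (where $\psi^\ast(z) = +\infty$), gives $\psi^\ast(z) \ge \psi^\ast(y_0) + \langle x_0 + t v, z - y_0 \rangle$ for all $z$, which is precisely $x_0 + t v \in \partial \psi^\ast(y_0)$. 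Applying~\eqref{eq:subgrad-and-legendre} in the other direction, $y_0 \in \partial \psi(x_0 + t v)$ for all $t \ge 0$. But $y_0 \in \partial \psi(x)$ says exactly that $x$ is a global minimizer of $h := \psi - \langle y_0, \cdot \rangle$, which is strictly convex since $\psi$ is; hence $h$ is constant, equal to its minimum, along the whole ray $\{ x_0 + t v \colon t \ge 0 \}$, which is impossible for a strictly convex function since $v \neq 0$. This contradiction shows $y_0 \in \interior \dom \psi^\ast$, and the proof is complete.

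I expect the only delicate point to be the extraction of the separating functional $v$ in the second paragraph: one has to recognize that being a non-interior point of the convex set $\dom \psi^\ast$ already forces such a $v$, and to treat separately the degenerate case in which $\dom \psi^\ast$ has empty interior. Everything afterwards is soft --- a one-line manipulation of the subgradient inequality combined with the conjugacy relation~\eqref{eq:subgrad-and-legendre} and the elementary fact that a strictly convex function cannot be affine along a nondegenerate segment. (One could instead deduce the statement from Rockafellar's theorem relating essential strict convexity of $\psi$ to essential smoothness of $\psi^\ast$, but the direct argument above is shorter and self-contained.)
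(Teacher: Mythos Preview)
Your proof is correct and follows essentially the same route as the paper: reduce via~\eqref{eq:inclusion-of-domains} to showing $\partial\psi(\R^n)\cap\bd\dom\psi^\ast=\emptyset$, take a supporting functional at a putative boundary point, and use strict convexity for the contradiction. The only cosmetic difference is in the last step: the paper compares a subgradient at $x_0$ with a subgradient at $x_0+\lambda v$ directly on the primal side, whereas you first show that $\partial\psi^\ast(y_0)$ contains the whole ray $\{x_0+tv:t\ge 0\}$ and then pull back via~\eqref{eq:subgrad-and-legendre}; both arrive at the same impossibility.
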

\begin{proof}
In the view of~\eqref{eq:inclusion-of-domains} it is enough to prove that $\partial \psi(\R^n)$ is disjoint from $\bd \dom \psi^\ast$. To this end suppose $x \in \R^n$ and $y \in \partial \psi(x) \cap \bd \dom \psi^\ast$. Since $y \in \bd \dom \psi^\ast$, by convexity of $\dom \psi^\ast$ there exists $u \in \R^n$ such that
\begin{equation}\label{eq:outer-normal-to-dom-varphi-ast}
  \scalar{u, v - y} \le 0 \quad \textup{for all $v \in \dom \psi^\ast.$}
\end{equation}

On the other hand, consider $z = x + \lambda u$ with any $\lambda > 0$. By strict convexity of $\psi$,
\[ \begin{split}
  \psi(z) - \psi(x) &> \scalar{z - x, y} \\
  \psi(x) - \psi(z) &> \scalar{x - z, v},
  \end{split}
\]
where $v$ is an arbitrary vector from $\partial \psi(z)$. Adding up the two above inequalities yields $\scalar{z - x, y - v} = \lambda \scalar{u, y - v} < 0$ which means that $\scalar{u, v - y} > 0$. Since by~\eqref{eq:inclusion-of-domains} the vector $v \in \partial \psi(z)$ belongs to $\dom \psi^\ast$, it contradicts~\eqref{eq:outer-normal-to-dom-varphi-ast}.
\end{proof}
Applying the above lemma to $\varphi$ we get that $\partial \varphi(\R^n)$ is an open convex subset of $\R^n$. The fact that $\partial \varphi(\R^n)$ is open combined with~\eqref{eq:partial-varphi-cl-omega} implies that $\partial \varphi(\R^n) \subseteq \interior \cl \Omega = \Omega$.
On the other hand, since $\partial \varphi(\R^n)$ is open and convex, then \eqref{eq:supp-nu-in-cl-partial-varphi} yields $\Omega \subseteq \interior \cl \partial \varphi(\R^n) = \partial \varphi(\R^n)$. Therefore the three open convex sets
\begin{equation}\label{eq:partial-varphi-equals-int-dom-varphi-ast-equals-omega}
  \partial \varphi(\R^n) = \interior \dom \varphi^\ast = \Omega
\end{equation}
coincide.

Finally, we use the following
\begin{theorem}[Caffarelli {\cite[Theorem 1.3]{alesker-dar-milman}}]\label{thm:caffarelli}
Let $\mu(dx)=f(x) dx$ and $\nu(dx)=g(x)dx$ be two probability measures on $\R^n$.
Assume that $f$ is locally H{\"o}lder and strictly positive on $\R^n$. Assume that the restriction of  $g$ to an open bounded convex set $\Omega$  is locally H{\"o}lder, bounded and bounded away from zero, and that $g \equiv 0$ in $\Omega^c$. Then any convex function $\varphi$ on $\R^n$ that induces the Brenier map $\nabla \varphi$ which pushes $\mu$ forward to $\nu$ belongs locally to the H{\"o}lder class $\mathcal{C}^{2,\alpha}$ for some $\alpha > 0$ and satisfies~\eqref{eq:monge-ampere-general} for all $x \in \R^n$.
\end{theorem}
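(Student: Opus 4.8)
The plan is to obtain the conclusion by feeding the structural facts already assembled above into Caffarelli's interior regularity theory for the Monge--Ampère equation. Recall that the preceding discussion has shown that the Brenier potential $\varphi$ is strictly convex on all of $\R^n$, that $\nabla\varphi$ is defined almost everywhere with $\partial\varphi(\R^n)=\interior\dom\varphi^{\ast}=\Omega$ by~\eqref{eq:partial-varphi-equals-int-dom-varphi-ast-equals-omega}, and that $\varphi$ satisfies~\eqref{eq:monge-ampere-general} in the Aleksandrov sense, with a right-hand side that on each ball $B(0,R)$ is pinched between two positive constants. These are exactly the ingredients on which Caffarelli's theory operates.

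The first step is to invoke Caffarelli's $\mathcal{C}^{1,\alpha}_{\mathrm{loc}}$ regularity result (\cite{caffarelli-1992}; see also \cite[Chapter 5]{guttierez-book}): a strictly convex Aleksandrov solution whose Monge--Ampère measure has density bounded above and below by positive constants on compact sets belongs to $\mathcal{C}^{1,\alpha}_{\mathrm{loc}}(\R^n)$, and the gradient $\nabla\varphi$ is then a homeomorphism of $\R^n$ onto $\Omega$; in particular $\nabla\varphi$ is everywhere defined and locally Hölder. Next, using the hypotheses that $f$ is locally Hölder and strictly positive and that $g$ restricted to $\Omega$ is locally Hölder, bounded and bounded away from zero, the function $x\mapsto f(x)/g(\nabla\varphi(x))$ appearing on the right of~\eqref{eq:monge-ampere-general} is itself locally Hölder and strictly positive on $\R^n$, being a quotient of a Hölder function by the composition of two Hölder functions (the composed map taking values in $\Omega$, where $g$ is bounded away from zero). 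Caffarelli's Schauder-type estimate for the Monge--Ampère equation then upgrades $\varphi$ to a classical $\mathcal{C}^{2,\alpha}_{\mathrm{loc}}$ solution. Finally, once $\varphi\in\mathcal{C}^2(\R^n)$ and $\nabla\varphi$ is a $\mathcal{C}^1$-diffeomorphism onto $\Omega$, the change of variables formula converts the pushforward identity~\eqref{eq:push-forward-by-gradient} into the pointwise equation~\eqref{eq:monge-ampere-general}, now valid at every $x\in\R^n$.

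I expect the genuinely hard input to be the first step: deducing $\mathcal{C}^{1,\alpha}$ regularity, together with the surjectivity of $\nabla\varphi$ onto $\Omega$, from nothing but the two-sided bound on the Monge--Ampère measure. This rests on Caffarelli's localization lemma (stated here as Lemma~\ref{lem:caffarelli-strict-convexity}) and the theory of sections of solutions, and it is precisely what the paragraphs before the statement have been preparing through the strict convexity argument and the identification~\eqref{eq:partial-varphi-cl-omega}. The subsequent bootstrap to $\mathcal{C}^{2,\alpha}$ and the change of variables are then standard. Since this whole chain is packaged in \cite[Theorem 1.3]{alesker-dar-milman}, the most economical route is simply to cite that theorem and observe that its hypotheses hold in our setting.
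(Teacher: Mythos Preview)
Your sketch is correct, but note that the paper does not actually prove this theorem: it is stated as a black-box citation of Caffarelli's result (via \cite[Theorem 1.3]{alesker-dar-milman}), with no proof given. The preceding paragraphs in the paper establish strict convexity, the Aleksandrov-sense equation, and the identity~\eqref{eq:partial-varphi-equals-int-dom-varphi-ast-equals-omega} precisely so that the hypotheses of the cited theorem are met, and then the theorem is simply invoked.

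Your proposal unpacks what lies inside that citation---the $\mathcal{C}^{1,\alpha}$ step from Caffarelli's theory of sections, the H\"older regularity of the composite right-hand side, and the Schauder bootstrap to $\mathcal{C}^{2,\alpha}$---and this outline is accurate. You correctly identify at the end that the economical route is just to cite the packaged result, which is exactly what the paper does.
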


We will also need a $\mathcal{C}^2$ convex function whose gradient pushes forward $\nu$ to $\mu$. Clearly, a natural candidate is $\varphi^\ast$. In the corollary below we state the final result we will use in the sequel.
\begin{corollary}\label{cor:transport-maps}
Assume $f$ and $g$ are as in Theorem~\ref{thm:caffarelli}.
\begin{enumerate}
\item[(i)]
There exists a strictly convex function $\varphi \in \mathcal{C}^2(\R^n)$ with $\Hess \varphi$ positive definite everywhere whose gradient $\nabla \varphi$ maps $\R^n$ onto $\Omega$, pushes $\mu$ forward to $\nu$ and thus satisfies the Monge-Amp{\`e}re equation~\eqref{eq:monge-ampere-general}.
\item[(ii)] For $\varphi$ as in (i), the Legendre conjugate $\varphi^\ast$ has $\interior \dom \varphi^\ast = \Omega$, belongs to $\mathcal{C}^2(\Omega)$, $\partial \varphi^\ast(y) = \emptyset$ for all $y \not\in \Omega$. Moreover $\nabla \varphi^\ast$ pushes $\nu$ forward to $\mu$, $\Hess \varphi^\ast$ is positive definite everywhere in $\Omega$ and $\varphi^\ast$ satisfies
\begin{equation}\label{eq:monge-ampere-general-inversed}
\det \Hess \varphi^\ast(y) = \frac{g(y)}{f(\nabla \varphi^\ast(y))} \quad \textup{for all $y \in \Omega.$}
\end{equation}
\end{enumerate}
\end{corollary}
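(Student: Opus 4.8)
The plan is to harvest the facts already established in this subsection: the genuinely analytic content (local $\mathcal C^{2,\alpha}$ regularity of $\varphi$, strict convexity of $\varphi$, and the identification $\partial\varphi(\R^n)=\interior\dom\varphi^\ast=\Omega$) is in place, so the corollary will follow by elementary convex analysis combined with the inverse function theorem.

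\emph{Part (i).} Let $\varphi$ be the closed convex function furnished by Theorem~\ref{thm:McCann} whose gradient is the Brenier map sending $\mu$ to $\nu$; recall that $\dom\varphi=\R^n$. We have already shown that $\varphi$ is strictly convex and that $\partial\varphi(\R^n)=\interior\dom\varphi^\ast=\Omega$ (see~\eqref{eq:partial-varphi-equals-int-dom-varphi-ast-equals-omega}), and by Theorem~\ref{thm:caffarelli} the function $\varphi$ lies locally in $\mathcal C^{2,\alpha}$ — in particular $\varphi\in\mathcal C^2(\R^n)$ — and satisfies~\eqref{eq:monge-ampere-general} at every point. Since $\dom\varphi=\R^n$ and $\varphi$ is differentiable, $\partial\varphi(x)=\{\nabla\varphi(x)\}$ for every $x$, hence $\nabla\varphi(\R^n)=\Omega$. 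Finally the right-hand side of~\eqref{eq:monge-ampere-general} is strictly positive: $f(x)>0$, and since $\nabla\varphi(x)\in\Omega$ the value $g(\nabla\varphi(x))$ is finite and bounded below by a positive constant. Thus $\det\Hess\varphi(x)>0$, and as $\Hess\varphi(x)$ is positive semi-definite by convexity it is in fact positive definite. This gives (i).

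\emph{Part (ii).} Since $\varphi\in\mathcal C^2(\R^n)$ has positive definite Hessian everywhere, $\nabla\varphi$ is $\mathcal C^1$ with everywhere invertible Jacobian, hence an open map, and it is injective because $\varphi$ is strictly convex; therefore $\nabla\varphi\colon\R^n\to\Omega$ is a $\mathcal C^1$-diffeomorphism by the inverse function theorem. On the other hand $\interior\dom\varphi^\ast=\Omega$ by~\eqref{eq:partial-varphi-equals-int-dom-varphi-ast-equals-omega}, and by~\eqref{eq:subgrad-and-legendre} the maps $\partial\varphi$ and $\partial\varphi^\ast$ are mutual inverses; combining these, for $y\in\Omega$ the set $\partial\varphi^\ast(y)$ reduces to the single point $(\nabla\varphi)^{-1}(y)$, so $\varphi^\ast$ is differentiable on $\Omega$ with $\nabla\varphi^\ast=(\nabla\varphi)^{-1}$, whence $\varphi^\ast\in\mathcal C^2(\Omega)$ and $\Hess\varphi^\ast(y)=\bigl(\Hess\varphi(\nabla\varphi^\ast(y))\bigr)^{-1}$ is positive definite. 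For $y\notin\Omega$, since $\partial\varphi(\R^n)=\Omega$, relation~\eqref{eq:subgrad-and-legendre} forces $\partial\varphi^\ast(y)=\emptyset$. To see that $\nabla\varphi^\ast$ pushes $\nu$ forward to $\mu$: as $\nabla\varphi$ is a bijection $\R^n\to\Omega$ with inverse $\nabla\varphi^\ast$ and $\nu$ is concentrated on $\Omega$, for every Borel $A\subseteq\R^n$ we get $\mu(A)=\mu\bigl((\nabla\varphi)^{-1}(\nabla\varphi(A))\bigr)=\nu(\nabla\varphi(A))=\nu\bigl((\nabla\varphi^\ast)^{-1}(A)\bigr)$ using~\eqref{eq:push-forward-by-gradient}. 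Finally, substituting $x=\nabla\varphi^\ast(y)$ in~\eqref{eq:monge-ampere-general} and using $\nabla\varphi(\nabla\varphi^\ast(y))=y$ together with $\det\Hess\varphi^\ast(y)=\det\Hess\varphi(\nabla\varphi^\ast(y))^{-1}$ yields~\eqref{eq:monge-ampere-general-inversed}.

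\emph{Main obstacle.} I expect the only delicate point to be the step in part (ii) where the abstract Legendre gradient $\nabla\varphi^\ast$ is identified with the inverse diffeomorphism $(\nabla\varphi)^{-1}$ on $\Omega$, and where one must rule out that $\varphi^\ast$ is differentiable on a set larger than $\Omega$; both are settled by the two already-proved facts $\interior\dom\varphi^\ast=\Omega$ and the inversion formula~\eqref{eq:subgrad-and-legendre}. Everything else is bookkeeping.
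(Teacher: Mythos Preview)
Your proof is correct and follows essentially the same approach as the paper's own proof: both combine the already-established facts (strict convexity of $\varphi$, the identification $\partial\varphi(\R^n)=\interior\dom\varphi^\ast=\Omega$, and Caffarelli's $\mathcal C^{2,\alpha}$ regularity) with the subgradient--Legendre inversion~\eqref{eq:subgrad-and-legendre} and the inverse function theorem. Your version is slightly more explicit in a couple of places --- notably in deriving positive definiteness of $\Hess\varphi$ from the Monge--Amp\`ere equation and in spelling out the push-forward computation --- but there is no substantive difference in strategy.
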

\begin{proof}
Consider $\varphi$ as in Theorem~\ref{thm:caffarelli}. In order to complete the proof of (i) it is enough to note that \eqref{eq:partial-varphi-equals-int-dom-varphi-ast-equals-omega} becomes $\nabla \varphi(\R^n) = \interior \dom \varphi^\ast = \Omega$.

(ii) Strict convexity of $\varphi$ allows us to conclude that $\nabla \varphi$ is a $\mathcal{C}^1$ bijection from $\R^n$ onto $\Omega$. Combining it with~\eqref{eq:subgrad-and-legendre} shows that $\partial \varphi^\ast(y) = \{ (\nabla \varphi)^{-1}(y) \}$ for $y \in \Omega$ and $\partial \varphi^\ast(y) = \emptyset$ for $y \not\in \Omega$. The uniqueness of subgradient of $\varphi^\ast$ at each point of $\Omega$ implies that $\varphi^\ast$ is differentiable everywhere in $\interior \dom \varphi^\ast = \Omega$ and the map $\nabla \varphi^\ast \colon \Omega \to \R^n$ is the inverse map of $\nabla \varphi$. This is already sufficient to justify that $\nabla \varphi^\ast$ pushes $\nu$ forward to $\mu$. In order to show that $\varphi^\ast$ is in fact $\mathcal{C}^2(\Omega)$ and satisfies~\eqref{eq:monge-ampere-general-inversed}, it is enough to use that the Jacobian of the map $\nabla \varphi$ (i.e. $\det \Hess \varphi$) does not vanish and thus use the inverse function theorem to obtain that the map $\nabla \varphi^\ast$ is $\mathcal{C}^1(\Omega)$ and its derivative $\Hess \varphi^\ast(y)$ equals $(\Hess \varphi(x))^{-1}$ where $y \in \Omega$ and $x = \nabla \varphi^\ast(y) \in \R^n$. Thus~\eqref{eq:monge-ampere-general-inversed} follows from~\eqref{eq:monge-ampere-general}.
\end{proof}

\subsection{Classes of test functions}
For each $k=1,2,\ldots,m$ fix a measurable function $f_k \colon H_k \to [0, \infty]$ of integral one such that
\begin{itemize}
  \item for $1 \le i \le m^+$, $f_i$ is locally Lipschitz, bounded and bounded away from zero on some bounded open convex subset of $H_i$, and vanishes outside this set;
  \item for $m^+ < j \le m$, $f_j$ is locally Lipschitz and strictly positive in its whole domain $H_j$.
\end{itemize}
The target functions are chosen as follows. Fix $R > 0$ and consider any $m$-tuple $(A_1, \ldots, A_m) \in \Lambda$ (for the definition of $\Lambda$ refer to  Subsection~\ref{subsec:centered-gaussians}) and also put $A_0 = Q_+$ and $A_{m+1} = Q_-$. For $k = 0,1,\ldots, m+1$ define
\[
  g_k(y) = (\det A_k)^{-1/2} \exp(-\pi \scalar{-A_k^{-1} y, y}).
\]
The target functions will be $\tilde{g}_k$ defined as
\begin{align*}
  \tilde{g}_i &= g_i \quad \textup{for $1 \le i \le m^+,$} \\
  \tilde{g}_0 &= g_0, \\
  \tilde{g}_{m+1} &= g_{m+1}, \\
  \tilde{g}_j &= \lambda_j g_j \Ind{B_{H_j}(0,R)} \quad \textup{for $m^+ < j \le m,$}
\end{align*}
where $\lambda_j > 1$ is a normalizing constant (such that $\int_{H_j} \tilde{g}_j = 1$).

\subsection{Transportation argument}

For each $i=1,2,\ldots,m^+$ let $\varphi_i \colon H_i \to \R \cup \{ +\infty \}$ be the function $\varphi^\ast$ from Corollary~\ref{cor:transport-maps}(ii) for probability measures $\mu$ and $\nu$ on $H_i$ having the densities $\tilde{g}_i$ and $f_i$ respectively. Each $\varphi_i$ belongs to $\mathcal{C}^2(\interior \dom \varphi_i)$, is lower semi-continuous (as the Legendre transform of a convex function) and satisfies
\begin{equation}\label{eq:vp-i-has-empty-subdifferential-outside-int-dom}
  \partial \vp_i(x) = \emptyset \quad \textup{for all $x \not\in \interior \dom \vp_i$.}
\end{equation}
For each $j=1+m^+, \ldots, m$ let $\varphi_j \colon H_j \to \R$ be the function $\varphi$ from Corollary~\ref{cor:transport-maps}(i) for probability measures $\mu$ and $\nu$ of $H_j$ having the densities $f_j$ and $\tilde{g}_j$ respectively. Each $\varphi_j$ is $\mathcal{C}^2(H_j)$ and
\begin{equation}\label{eq:vp-j-to-B-0-R}
  \nabla \varphi_j(x) \in B_{H_j}(0,R) \quad \textup{for all $x \in H_j.$}
\end{equation}
Additionally put $\varphi_0(x) = \frac12 \scalar{Q_+ x, x}$ and $\varphi_{m+1}(x) = \frac12 \scalar{Q_- x, x}$.

For $k=0,1,2,\ldots, m+1$, put $T_k = \nabla \varphi_k$ and note that by Corollary~\ref{cor:transport-maps}, for all $x \in \interior \dom \varphi_k$,
\begin{equation}\label{eq:monge-ampere}
  f_k(x) = \tilde{g}_k(T_k(x)) \det dT_k(x)
\end{equation}
and
\begin{equation}\label{eq:dTk-symmetric-positive-definite}
  dT_k(x) = \Hess \vp_k(x) \textup{ is symmetric positive definite}
\end{equation}
for all $x \in \interior \dom \vp_k$.

For $x \in H$ put
\begin{align}
  \varphi_+(x) &= \sum_{1\le i\le m^+} c_i \varphi_i(B_i x),  \nonumber\\
  \varphi_-(x) &= \sum_{m^+ < j \le m} (-c_j) \varphi_j(B_j x), \nonumber \\
  \varphi(x) &= \varphi_0(B_0 x) + \varphi_+(x) - \varphi_-(x) - \varphi_{m+1}(B_{m+1} x)=\sum_{k=0}^{m+1} c_k \varphi_k(B_k x). \label{def:phi}
\end{align} 

On the open domain $S = \bigcap_{i=1}^{m^+} B_i^{-1}(\interior \dom \vp_i) \subset H$, which is non-empty thanks to surjectivity of $B_+$, define the change of variable map $\theta \colon S \to H$ by
\begin{align*}
  \theta(x) = \nabla \vp(x) = \sum_{k=0}^{m+1} c_k B_k^\ast T_k(B_k x).
\end{align*}
This map is $\mathcal{C}^1$ and its differential equals
\begin{equation}\label{eq:dtheta}
\begin{split}
  d\theta(x) &= \Hess \vp(x) = \sum_{k=0}^{m+1} c_k B_k^\ast dT_k(B_k x) B_k \\
  &= B_0^\ast Q_+ B_0 - B_{m+1}^\ast Q_- B_{m+1} + \sum_{k=1}^{m} c_k B_k^\ast dT_k(B_k x) B_k \\
  &= Q + \sum_{k=1}^{m} c_k B_k^\ast dT_k(B_k x) B_k.
\end{split}
\end{equation}
Combining the above with~\eqref{def:BL-constant} and~\eqref{eq:dTk-symmetric-positive-definite} we obtain that for $x \in S$,
\begin{equation}\label{eq:det-dtheta-bound}
  \det d\theta(x) \le D \prod_{k=1}^m \big(\det dT_k(B_k)\big)^{c_k}
\end{equation}
whenever $d\theta(x)$ is positive definite. Since~\eqref{eq:det-dtheta-bound} remains true for $d\theta(x)$ being positive semi-definite, we will consider the subdomain of $S$,
\[
  S_+ = \{ x \in S \colon d\theta(x) \textup{ is positive semi-definite} \}
\]
on which~\eqref{eq:det-dtheta-bound} is valid. Note that by continuity of the map $S \ni x \mapsto d\theta(x)$, $S_+$ is a closed subset of $S$ and in particular $S_+$ is a measurable subset of $H$.

As announced above, the following lemma is crucial in  our argument. We defer its proof to  Subsection~\ref{subsec:surjectivity-of-theta}.
\begin{lemma}\label{lem:theta-is-surjective}
The map $\theta_{| S_+} \colon S_+ \to H$ is surjective.
\end{lemma}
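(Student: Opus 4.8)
The plan is to show that $\theta_{|S_+}\colon S_+\to H$ is surjective by a degree-theoretic / topological argument: one fixes a target point $w\in H$, exhibits a compact set $\mathcal K\subseteq S_+$ such that $w\in\theta(\mathcal K)$, and deduces this from the fact that a suitable auxiliary function attains its minimum. Concretely, I would consider, for a fixed $w\in H$, the function $x\mapsto \varphi(x)-\scalar{w,x}$ on $S$, where $\varphi$ is the convex-type function from~\eqref{def:phi}. At an interior minimizer $x_0$ one has $\nabla\varphi(x_0)=\theta(x_0)=w$ and $\Hess\varphi(x_0)=d\theta(x_0)\ge 0$, so $x_0\in S_+$ and $w\in\theta(S_+)$. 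Thus everything reduces to showing that $x\mapsto\varphi(x)-\scalar{w,x}$ attains a minimum in the interior of $S$.

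The first step is to control the behaviour of $\varphi$ near the boundary of $S$ and at infinity. For $1\le i\le m^+$, $\varphi_i=\varphi^\ast$ is the Legendre conjugate of a convex function finite on a \emph{bounded} convex set $\Omega_i$ (the domain of $\tilde g_i=g_i$'s transport potential), so $\varphi_i$ grows at least linearly, indeed superlinearly, and more importantly $\nabla\varphi_i$ takes values in the bounded set (the domain where the source density $\tilde g_i$ lives) — wait, one must be careful which direction: here $\mu$ has density $\tilde g_i$ (supported on a bounded convex set) and $\nu$ has density $f_i$; so $\varphi_i=\varphi^\ast$ has $\interior\dom\varphi_i=\Omega_i$ bounded, i.e. $\varphi_i\equiv+\infty$ off a bounded set, and $\varphi_+$ blows up to $+\infty$ as $x$ approaches $\bd S$. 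For $m^+<j\le m$, $\varphi_j$ is a genuine $\mathcal C^2$ convex function on all of $H_j$ with $\nabla\varphi_j$ valued in $B_{H_j}(0,R)$ by~\eqref{eq:vp-j-to-B-0-R}, hence $\varphi_j$ grows at most linearly (slope $\le R$), so $-\varphi_-(x)\ge -R\sum_{j}|c_j||B_jx|-\text{const}$. The key point then is: $\varphi_+(x)$, being built from the superlinear potentials $\varphi_i$ composed with $B_i$ and with $B_{0+}=(B_0,B_1,\dots,B_{m^+})$ a linear isomorphism (Remark~\ref{rem:dec-Q}), dominates the at-most-linear negative contribution $-\varphi_-(x)-\varphi_{m+1}(B_{m+1}x)+\varphi_0(B_0x)-\scalar{w,x}$; here I also use that $\varphi_0=\frac12\scalar{Q_+\cdot,\cdot}$ is a fixed positive definite quadratic form which only helps, while $\varphi_{m+1}(B_{m+1}x)=\frac12\scalar{Q_-B_{m+1}x,B_{m+1}x}$ is subordinate, by~\eqref{eq:compact-image}, to the bounded region carved out by the $B_i$'s. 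I would quantify this: on $S$ the function $\varphi(x)-\scalar{w,x}$ is lower semi-continuous, tends to $+\infty$ both as $x\to\bd S$ and as $|x|\to\infty$ within $\cl S$, hence attains its infimum at some $x_0\in S$; by the $\mathcal C^2$ smoothness of each $\varphi_k$ on the relevant interior, $x_0\in\interior\dom\varphi=S$ and the first- and second-order conditions give $\theta(x_0)=w$, $d\theta(x_0)\ge 0$, i.e. $x_0\in S_+$.

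The main obstacle I expect is precisely the coercivity estimate: making rigorous that the superlinear growth of $\sum_i c_i\varphi_i(B_ix)$ along the ``$\ker B_{m+1}$-complement'' directions, combined with the linear-growth bound on $\varphi_-$ and the quadratic term $\varphi_0$, beats the negative quadratic $-\varphi_{m+1}(B_{m+1}x)$ in the directions of $\ker B_+$. The structural input that saves the day is~\eqref{eq:kerBplus-contained-in-kerBm1}, $\ker B_+\subseteq\ker B_{m+1}$: on $\ker B_+$ one has $\varphi_{m+1}(B_{m+1}x)=0$, so the dangerous negative quadratic direction is absent there and is controlled by $\varphi_0(B_0x)$ (positive definite) via the isomorphism $B_{0+}$; transversally to $\ker B_+$ the superlinear $\varphi_+$ takes over, and~\eqref{eq:compact-image} ensures $B_{m+1}$ maps bounded subsets of $\{x : B_+x\in(\text{bounded})\}$ to bounded sets so $\varphi_{m+1}\circ B_{m+1}$ stays bounded there. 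Carefully splitting $H$ as $\ker B_+\oplus(\ker B_+)^\perp$ and estimating on each factor, then recombining, is the technical heart; once coercivity and lower semi-continuity are in hand, the existence of the minimizer and the verification that it lies in $S_+$ with $\theta(x_0)=w$ are routine.
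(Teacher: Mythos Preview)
Your overall strategy---consider $\varphi(\cdot)-\scalar{w,\cdot}$, show it attains a minimum in $S$, and read off $\theta(x_0)=w$ with $d\theta(x_0)\ge 0$---is exactly the paper's, and your coercivity argument at infinity (boundedness of $\dom\varphi_i$ for $i\le m^+$, Lipschitz bound on $\varphi_-$ from \eqref{eq:vp-j-to-B-0-R}, control of $\varphi_{m+1}\circ B_{m+1}$ via \eqref{eq:compact-image}, and the isomorphism $B_{0+}$) matches Lemma~\ref{lem:vp-superlinear} closely.

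The genuine gap is your claim that $\varphi(x)-\scalar{w,x}$ ``tends to $+\infty$ as $x\to\bd S$''. This need not hold: each $\varphi_i$ (for $i\le m^+$) is the Legendre conjugate of a strictly convex $\mathcal C^2$ function on all of $H_i$, and such conjugates can remain \emph{bounded} on the closure of their domain even though their gradient blows up there. A model example in one dimension is $\psi(x)=\sqrt{1+x^2}$, whose conjugate $\psi^\ast(y)=-\sqrt{1-y^2}$ on $[-1,1]$ is finite at $y=\pm1$ while $(\psi^\ast)'(y)\to\pm\infty$. So your compactness-of-sublevel-sets argument does not rule out a minimizer on $\bd S$, and the subsequent line ``by the $\mathcal C^2$ smoothness \dots\ $x_0\in S$'' is circular.

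The paper repairs this not by boundary blow-up but by a subdifferential argument. It writes $\varphi-\scalar{\cdot,y_0}=f-g$ with $f=\varphi_0\circ B_0+\varphi_+$ convex l.s.c.\ and $g=\varphi_-+\varphi_{m+1}\circ B_{m+1}+\scalar{\cdot,y_0}$ convex, uses \eqref{eq:vp-i-has-empty-subdifferential-outside-int-dom} to get $\partial f(x)=\emptyset$ for every $x\notin S$, and then applies Lemma~\ref{lem:difference-of-convex-attains-infimum}: if $f-g\to+\infty$ at infinity and $\partial f$ is empty on $\bd\dom f$, the infimum of $f-g$ must be attained in $\interior\dom f$. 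The point is that at a boundary minimizer any subgradient of $g$ would have to be a subgradient of $f$, which is impossible. Replacing your blow-up claim with this mechanism closes the argument.
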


Now we are in the position to establish a sharp lower bound on $J(f_1, \ldots, f_m)$.
Starting from~\eqref{eq:J-as-product-of-f} and using the Monge-Amp\`ere equations \eqref{eq:monge-ampere} we get
\begin{align*}
  J(f_1, \ldots, f_m) &\ge \sqrt{\frac{\det Q_-}{\det Q_+}} \int_{S_+} \prod_{k=0}^{m+1} f_k^{c_k}(B_k x) \, dx 
\\
&= \sqrt{\frac{\det Q_-}{\det Q_+}} \int_{S_+} \prod_{k=0}^{m+1} \big( \tilde{g}_k(T_k(B_k x)) \det dT_k(B_k x))\big)^{c_k} \, dx \\
&= \sqrt{\frac{\det Q_+}{\det Q_-}} \int_{S_+} \left( \prod_{k=0}^{m+1} \tilde{g}_k^{c_k}(T_k(B_k x))\right) \left(\prod_{k=1}^m (\det dT_k(B_k x))^{c_k} \right) \, dx \\
     &\ge D^{-1} \sqrt{\frac{\det Q_+}{\det Q_-}} \int_{S_+} \left(\prod_{k=0}^{m+1} \tilde{g}_k^{c_k}(T_k(B_k x))\right) \det d\theta(x) \, dx \\
       &= (*),
\end{align*}
where the latter inequality comes from~\eqref{eq:det-dtheta-bound}.
Setting $\lambda = \prod_{j=m^+ + 1}^m \lambda_j^{c_j}$ and using the point-wise estimate $\tilde{g}_j^{c_j} \ge \lambda_j^{c_j} g_j^{c_j}$ for $m^+ < j \le m$ we continue with the bound
\begin{align}
  (*) &\ge 
  \lambda D^{-1} \sqrt{\frac{\det Q_+}{\det Q_-}} 
  \int_{S_+} \left(\prod_{k=0}^{m+1} g_k^{c_k}(T_k(B_k x))\right) \det d\theta(x) \, dx  \nonumber\\
  &\ge \lambda D^{-1} \sqrt{\frac{\det Q_+}{\det Q_-}} \int_{S_+} \left(\inf_{\theta(x) = \sum_{k=0}^{m+1} c_k B_k^\ast y_k} \prod_{k=0}^{m+1} g_k^{c_k}(y_k)\right) \det d\theta(x) \, dx  \nonumber\\
  &\ge \lambda D^{-1} \sqrt{\frac{\det Q_+}{\det Q_-}} \int_H \inf_{z = \sum_{k=0}^{m+1} c_k B_k^\ast y_k} \prod_{k=0}^{m+1} g_k^{c_k}(y_k) \, dz  \label{eq:dual-inverse-proof}\\
  &= \lambda D^{-1} \left( \prod_{k=1}^m (\det A_k)^{-c_k/2} \right)
  \int_H \exp\left(-\pi \sup_{z = \sum_{k=0}^{m+1} c_k B_k^\ast y_k} \sum_{k=0}^{m+1} c_k \scalar{A_k^{-1} y_k, y_k}\right) \, dz\nonumber\\
  &= (**), \nonumber
\end{align}
where $\lambda = \prod_{j=m^+ + 1}^m \lambda_j^{c_j}$ and the last inequality follows from the area formula for $\mathcal{C}^1$ maps \cite[Theorem 3.2.5]{federer}
and the fact that the map $\theta|S_+ \colon S_+ \to H$ is surjective (Lemma~\ref{lem:theta-is-surjective}). Using Lemma~\ref{lem:quad-inverse}, we finish the above estimate with
\begin{align*}
  (**) &=
  \lambda D^{-1} \left(\prod_{k=1}^m (\det A_k)^{-c_k/2} \right) \int_H \exp\left(-\pi \biggscalar{\Big(\sum_{k=0}^{m+1} c_k B_k^\ast A_k B_k \Big)^{-1} z, z} \right) \, dz \\
  &= \lambda D^{-1} \left( \frac{\det\big(Q + \sum_{k=1}^{m} c_k B_k^\ast A_k B_k \big)}{\prod_{k=1}^m (\det A_k)^{c_k}} \right)^{1/2}.
\end{align*}
Thanks to \eqref{def:BL-constant}, taking supremum over $(A_k)_{k=1}^m \in \Lambda$ and the limit $R \to \infty$ (which results in $\lambda \uparrow 1$) yields the desired inequality, i.e. $J(f_1, \ldots, f_m) \ge D^{-1/2}$.

\subsection{Surjectivity of the map $\theta$}\label{subsec:surjectivity-of-theta}

\begin{lemma}\label{lem:difference-of-convex-attains-infimum}
Let $f \colon H \to \R \cup \{+\infty\}$ be convex, lower semi-continuous and $g \colon H \to \R$ be convex. Assume that $\dom f \neq \emptyset$ and $\partial f(x) = \emptyset$ at every $x \in \bd \dom f$. If $f(x) - g(x) \to + \infty$ as $|x| \to +\infty$ then $f-g$ attains its infimum at a point in $\interior \dom f$.
\end{lemma}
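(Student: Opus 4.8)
The plan is to show that $h := f - g$ attains a global minimum on $H$, and that \emph{any} such minimizer must lie in $\interior \dom f$ because one can exhibit an explicit subgradient of $f$ at that point.

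First I would record two soft facts. Since $g$ is a finite convex function on the Euclidean space $H$, it is continuous, so $h = f + (-g)$ is lower semi-continuous (sum of an lsc and a continuous function). Combined with the coercivity hypothesis $h(x) \to +\infty$ as $|x| \to \infty$, this makes every sublevel set $\{x \in H \colon h(x) \le \alpha\}$ closed and bounded, hence compact; and since $\dom f \neq \emptyset$ these sets are nonempty for $\alpha$ large. Taking $\alpha_n \downarrow \inf_H h$ gives a nested sequence of nonempty compacts whose intersection is exactly the (nonempty) set of global minimizers of $h$. Pick a minimizer $x_0$; since $h(x_0) < +\infty$ we have $x_0 \in \dom f$.

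The key step is to check that $\partial f(x_0) \neq \emptyset$. As $g$ is finite and convex on all of $H$, $\partial g(x_0) \neq \emptyset$; fix $g^\ast \in \partial g(x_0)$. Global minimality of $x_0$ means $f(y) - g(y) \ge f(x_0) - g(x_0)$ for every $y \in H$, while the subgradient inequality gives $g(y) - g(x_0) \ge \scalar{g^\ast, y - x_0}$. Adding these two inequalities yields
\[
  f(y) \ge f(x_0) + \scalar{g^\ast, y - x_0} \qquad \text{for all } y \in H,
\]
which is precisely the assertion $g^\ast \in \partial f(x_0)$.

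Finally I would invoke the hypothesis that $\partial f$ is empty at every point of $\bd \dom f$. Since $x_0 \in \dom f \subseteq \cl \dom f = \interior \dom f \cup \bd \dom f$ and we have just shown $\partial f(x_0) \neq \emptyset$, the point $x_0$ cannot belong to $\bd \dom f$, hence $x_0 \in \interior \dom f$, which is the conclusion. I do not anticipate any real obstacle: the only mildly delicate points are the lsc/coercivity justification that the infimum is attained and the continuity (hence everywhere-subdifferentiability) of the finite convex function $g$, both entirely standard. As a side remark, the same argument in fact forces $\interior \dom f \neq \emptyset$, but this is not needed.
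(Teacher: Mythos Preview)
Your proof is correct and follows essentially the same approach as the paper's: both use lower semicontinuity and coercivity to produce a minimizer $x_0\in\dom f$, then exploit a subgradient of $g$ at $x_0$ together with the minimality of $f-g$ to rule out $x_0\in\bd\dom f$. The only cosmetic difference is that you argue directly (the subgradient $g^\ast$ of $g$ at $x_0$ becomes a subgradient of $f$, forcing $\partial f(x_0)\neq\emptyset$), whereas the paper phrases the same computation as a contradiction (if $\partial f(x_0)=\emptyset$ then some $y$ witnesses a strictly smaller value of $f-g$).
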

\begin{proof}
Note that $f-g$ is lower semi-continuous which combined with the hypothesis that $f-g \to +\infty$ at infinity implies that the sets $A_r = \{ x \in H \colon f(x)-g(x) \le r\}$ are compact for all $r \in \R$. Since $\dom f \neq \emptyset$, $A_r$ is non-empty for $r$ large enough. Therefore $f-g$ attains its infimum at some point $x \in \dom f$. Suppose $x \in \bd \dom f$. Take any $x^\ast \in \partial g(x)$ (note that the subdifferential of $g$ is everywhere non-empty). By hypothesis, $\partial f(x) = \emptyset$ hence we can find $y \in H$ such that
\begin{align}\label{ineq:empty-differential}
  f(y) < f(x) + \scalar{x^\ast, y-x}.
\end{align}
On the other hand we have
\[
  g(y) \ge g(x) + \scalar{x^\ast, y-x},
\]
which combined with~\eqref{ineq:empty-differential} gives
\[
  f(y) - g(y) < f(x) - g(x) = \inf (f-g)
\]
and hence contradicts the assumption that $x \in \bd \dom f$.
\end{proof}

We will need one more lemma, about the function $\varphi$, defined in \eqref{def:phi}.
\begin{lemma}\label{lem:vp-superlinear}
The function $\vp$ is superlinear, i.e.
\[
  \lim_{|x| \to \infty} \frac{\vp(x)}{|x|} = +\infty.
\]
\end{lemma}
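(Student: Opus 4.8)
The plan is to bound $\varphi$ from below on its domain by a function that grows quadratically in $|x|$ up to a linear correction; dividing by $|x|$ and sending $|x|\to\infty$ then forces the ratio to blow up, and off $\dom\varphi$ we have $\varphi\equiv+\infty$ so there is nothing to prove. Recall from \eqref{def:phi} that $\varphi(x)=\sum_{k=0}^{m+1}c_k\varphi_k(B_kx)$. Among the building blocks only $\varphi_1,\dots,\varphi_{m^+}$ fail to be finite everywhere: $\varphi_0,\varphi_{m+1}$ are the quadratic forms $\tfrac1{2\pi}\mathcal Q_+,\tfrac1{2\pi}\mathcal Q_-$ on $H_0,H_{m+1}$, and $\varphi_j\in\mathcal C^2(H_j)$ for $j>m^+$. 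Hence $\dom\varphi=\{x\in H:\ B_ix\in\dom\varphi_i\text{ for }1\le i\le m^+\}$, and since $\dom\varphi_i$ is a convex set whose interior $\Omega_i$ is the bounded open convex support of $f_i$, we get $\dom\varphi_i\subseteq\cl\Omega_i$, which is compact. Consequently there is a fixed $R_0<\infty$ such that $|B_+x|\le R_0$ for every $x\in\dom\varphi$.

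First I would split $\varphi$ on $\dom\varphi$ into three groups. For $1\le i\le m^+$ each $\varphi_i$ is a proper lower semicontinuous convex function which is finite on the bounded set $\dom\varphi_i$, hence bounded below there (by a supporting affine function at an interior point); since $c_i>0$, this gives $\sum_{1\le i\le m^+}c_i\varphi_i(B_ix)\ge-C_1$. For $m^+<j\le m$, estimate \eqref{eq:vp-j-to-B-0-R} gives $|\nabla\varphi_j|\le R$ everywhere, so $\varphi_j$ is $R$-Lipschitz and $\varphi_j(B_jx)\le\varphi_j(0)+R\|B_j\|\,|x|$; since $c_j<0$, this yields $\sum_{m^+<j\le m}c_j\varphi_j(B_jx)\ge-C_2-C_3|x|$. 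Finally, $\varphi_0(B_0x)-\varphi_{m+1}(B_{m+1}x)=\tfrac1{2\pi}\bigl(\mathcal Q_+(B_0x)-\mathcal Q_-(B_{m+1}x)\bigr)=\tfrac1{2\pi}\mathcal Q(x)$.

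Then I would use the standing hypothesis of Theorem~\ref{theo:gauss-mini}, namely that $\mathcal Q$ is positive definite on $\ker B_+$. By Lemma~\ref{lem:defpos-subspace} with $\mathcal R=\mathcal Q$ and $L=B_+$, there is $D>0$ such that $x\mapsto\mathcal Q(x)+D|B_+x|^2$ is positive definite, hence bounded below by $\mu|x|^2$ for some $\mu>0$; therefore $\mathcal Q(x)\ge\mu|x|^2-D|B_+x|^2\ge\mu|x|^2-DR_0^2$ for all $x\in\dom\varphi$. Combining the three estimates, for $x\in\dom\varphi$,
\[
  \varphi(x)\ \ge\ \frac{\mu}{2\pi}\,|x|^2-C_3|x|-\Bigl(C_1+C_2+\frac{DR_0^2}{2\pi}\Bigr),
\]
so $\varphi(x)/|x|\to+\infty$ as $|x|\to\infty$ within $\dom\varphi$. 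Together with $\varphi\equiv+\infty$ on $H\setminus\dom\varphi$, this proves that $\varphi$ is superlinear.

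The only step needing real thought is the very first one: recognising that $B_+x$ stays in a fixed compact set on $\dom\varphi$, which holds precisely because the transport targets $f_i$ with $i\le m^+$ are chosen with bounded support. That boundedness is what turns the a priori indefinite form $\mathcal Q$ into a coercive one via Lemma~\ref{lem:defpos-subspace}; once it is available, the at-most-linear growth coming from the negative-exponent transport maps $\varphi_j$ ($j>m^+$) is harmless against the quadratic gain, and the rest is bookkeeping.
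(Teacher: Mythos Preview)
Your proof is correct and takes a somewhat different route from the paper's. The paper keeps $\varphi_0(B_0x)$ and $\varphi_{m+1}(B_{m+1}x)$ separate: it bounds $\varphi_{m+1}(B_{m+1}x)$ on $\dom\varphi_+$ via the compact-image property~\eqref{eq:compact-image} (which uses $\ker B_+\subset\ker B_{m+1}$), and then extracts quadratic growth from $\varphi_0(B_0x)+\varphi_+(x)$ using injectivity of $B_{0+}$. You instead recombine $\varphi_0(B_0x)-\varphi_{m+1}(B_{m+1}x)=\tfrac1{2\pi}\mathcal Q(x)$ and appeal directly to Lemma~\ref{lem:defpos-subspace} with the original hypothesis that $\mathcal Q$ is positive definite on $\ker B_+$; the boundedness of $B_+x$ on $\dom\varphi$ then immediately turns this into quadratic coercivity. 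Your path is a little more economical here, bypassing both \eqref{eq:compact-image} and the explicit use of the isomorphism $B_{0+}$, though of course those structural facts are what make Lemma~\ref{lem:defpos-subspace} applicable in the first place. The remaining ingredients (lower bound on $\varphi_+$ from boundedness of the domains, Lipschitz bound on $\varphi_-$ from \eqref{eq:vp-j-to-B-0-R}) are the same in both arguments.
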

\begin{proof}
Consider the compact set
\[
  F = \prod_{i=1}^{m^+} \cl \dom \vp_i.
\]
Obviously
\[
  \dom \vp_+ = \bigcap_{i=1}^{m^+} B_i^{-1}(\dom \vp_i) \subseteq B_+^{-1}(F).
\]
Using~\eqref{eq:compact-image} we get
\begin{equation}\label{eq:vp-mplus1-bounded}
  \sup_{x \in \dom \vp_+} \vp_{m+1}(B_{m+1} x) = C_1 < \infty.
\end{equation}
For each $1 \le i \le m^+$, since $\dom \vp_i$ is bounded, we have $\inf_{H_i} \vp_i > -\infty$ and thus $\inf_{H} \vp_+ > -\infty$. Therefore for some constant $C_2 < \infty$ we have
\begin{equation}\label{eq:vp-plus-at-least-quadratic}
  \vp_+(x) \ge |B_+ x|^2 - C_2
\end{equation}
for all $x \in H$ (it is enough to ensure this inequality on the set $B_+^{-1}(F)$, inside which $|B_+ x|^2 - C_2$ has finite supremum).

Combining~\eqref{eq:vp-plus-at-least-quadratic} with the fact that $\vp_0$ is a positive definite quadratic function on $H_0$, we get
\[
  \vp_0(B_0 x) + \vp_+(x) \ge \varepsilon |B_0 x|^2 + |B_+ x|^2 - C_2 \ge \varepsilon' |x|^2 - C_2,
\]
for some $\varepsilon, \varepsilon' > 0$, where in the last inequality we used injectivity of the map $B_{0+} = (B_0, B_+)$. Combining the above estimate with~\eqref{eq:vp-mplus1-bounded} we get
\[
  \varphi(x)+\varphi_-(x)=\varphi_0(B_0 x) + \varphi_+(x) - \varphi_{m+1}(B_{m+1} x) \ge \varepsilon' |x|^2 - C_1 - C_2,
\]
hence the function $\vp + \vp_-$ is superlinear.

Due to~\eqref{eq:vp-j-to-B-0-R} the function $\vp_-$ is Lipschitz. Therefore $\vp$ is also superlinear.
\end{proof}

Now we are ready to establish our claim about the map $\theta$.
\begin{proof}[Proof of Lemma~\ref{lem:theta-is-surjective}]
Consider the function $f \colon H \to \R \cup \{+\infty\}$ defined by
\begin{equation}\label{eq:def-function-f}
  f(x) = \vp_0(B_0 x) + \vp_+(x)=\sum_{i=0}^{m^+} c_i \varphi_i(B_ix).
\end{equation}
Clearly $f$ is convex and lower semi-continuous. Note also that
\[
  \dom f = \bigcap_{1 \le i \le m^+} \dom(\vp_i \circ B_i) = \bigcap_{1 \le i \le m^+} B_i^{-1}(\dom \vp_i)
\]
and
\[
  \interior \dom f = \bigcap_{1 \le i \le m^+} B_i^{-1}(\interior \dom \vp_i),
\]
i.e. $\interior \dom f$ coincides with the domain $S$.

Using Theorems 23.8 and 23.9 from~\cite{rockafellar-convex-analysis} we express the subdifferential of $f$ in terms of subdifferentials of $\vp_0$ and $\vp_i$ with $1 \le i \le m^+$. Namely for all $x \in H$ we have
\[
  \partial f(x) = \sum_{0 \le i \le m^+} c_i B_i^\ast \partial \vp_i(B_i x)
\]
where the summation means the Minkowski sum of sets (in $H$). (The above formula is an equality rather than merely an inclusion $\supset$ if e.g. the set $\bigcap_{i=0}^{m^+} \dom (\vp_i \circ B_i) \neq \emptyset$; this follows from the fact that the domain $S$ is non-empty).
Combining the above with~\eqref{eq:vp-i-has-empty-subdifferential-outside-int-dom} we obtain that if $x \not\in S$ (i.e. for some $1 \le i \le m^+$, $B_i x \not\in \interior \dom \vp_i$) then $\partial f(x) = \emptyset$.

Fix any $y_0 \in H$. We claim that the $\mathcal{C}^2$ function $S \ni x \mapsto \vp(x) - \scalar{x, y_0}$ attains a local minimum at, say, $x_0 \in S$. This will allow to establish the lemma. Indeed, since $S$ is an open set, the gradient of this function vanishes at $x_0$, i.e. $\nabla \vp(x_0) - y_0 = 0$ and $\Hess \vp(x_0)$ is positive semi-definite, which means that $\theta(x_0) = y_0$ and $x_0 \in S_+$.

Eventually, let us prove the claim that $\vp(\cdot) - \scalar{\cdot, y_0} \colon S \to H$ attains its infimum. Beside the function $f$ defined in~\eqref{eq:def-function-f}, consider a convex function $g \colon H \to \R$,
\[
  g(x) = \vp_-(x) + \vp_{m+1}(B_{m+1} x) + \scalar{x, y_0}.
\]
Obviously 
\begin{equation}\label{eq:f-minus-g-as-vp-and-scalar-prod}
  f(x) - g(x) = \vp(x) - \scalar{x, y_0}.
\end{equation}
By Lemma~\ref{lem:vp-superlinear} and the fact that $\scalar{\cdot, y_0}$ is Lipschitz we obtain that $f - g$ is superlinear at infinity, so in particular $f(x) - g(x) \to \infty$ as $|x| \to \infty$. Since the subdifferential of $f$ is empty outside $S = \interior \dom f$, we can use Lemma~\ref{lem:difference-of-convex-attains-infimum} in order to conclude.
\end{proof}

\subsection{Approximation argument}

For non-negative, integrable functions $f_i$ ($1 \le i \le m^+$) and $f_j$ ($m^+ < j \le m$) denote
\[
  \mathcal{I}((f_i), (f_j))(x) = e^{-\mathcal{Q_+}(B_0 x)} e^{\mathcal{Q_-}(B_{m+1} x)} \prod_{i=1}^{m^+} f_i^{c_i}(B_i x) \prod_{j=1+m^+}^m f_j^{c_j}(B_j x) \quad\textup{for $x \in H$.}
\]
We proved that under the hypothesis of Theorem~\ref{theo:gauss-mini},
\begin{equation}\label{eq:gauss-mini-for-class-of-functions}
  \int_H \mathcal{I}((f_i), (f_j)) \ge K \prod_{1\le i\le m^+} \left(\int_{H_i} f_i\right)^{c_i} \prod_{m^+<j\le m} \left(\int_{H_j} f_j\right)^{c_j},
\end{equation}
for all $f_i \in \mathcal{F}_i^0$ ($1 \le i \le m^+$) and for all $f_j \in \mathcal{F}_j^0$ ($m^+ < j \le m$), where $K = \inf_{\mathcal{CG}} J$ and
\begin{itemize}
\item $\mathcal{F}_i^0$ is the class of non-negative functions on $H_i$ which are locally Lipschitz, bounded and bounded away from zero on an open bounded convex subset of $H_i$, and vanish outside this set,
\item $\mathcal{F}_j^0$ is the class of strictly positive and locally Lipschitz functions on  $H_j$. 
\end{itemize}
We proceed in three steps $s = 1,2,3$. In each step we consider different classes of functions $\mathcal{F}_i^s$ and $\mathcal{F}_j^s$ for which we prove~\eqref{eq:gauss-mini-for-class-of-functions} to be valid. At the final step $s=3$, the classes $\mathcal{F}_i^3$, $\mathcal{F}_j^3$ will consist of all non-negative, integrable functions.

\paragraph{\bf{Step 1.}} Fix $f_i \in \mathcal{F}_i^1$  ($1 \le i \le m^+$) and $f_j \in \mathcal{F}_j^1$ ($m^+ < j \le m$) where
\begin{itemize}
\item $\mathcal{F}_i^1$ is the class of non-negative bounded measurable functions on $H_i$ with compact support,
\item $\mathcal{F}_j^1$ is the class of positive bounded Lipschitz functions $f_j$ on $H_j$ for which $f_j(y)^{-1}$ is bounded from above by a polynomial in $|y|$.
\end{itemize}
Note that $f_j$ belongs to $\mathcal{F}_j^0$ as well. For each $i$, take $R_i > 0$ such that the ball $B_{H_i}(0,R_i)$ contains the support of $f_i$. Consider the sequence of functions
\[
  f_{i,n} = f_i \ast \phi_{i,n} + \frac{1}{n} \Ind{B_{H_i}(0,R_i + 1)},
\]
where $\phi_{i,n}(x) = c_i n^{\dim H_i} \dist(x, H_i \setminus B_{H_i}(0, 1/n))$ and $c_i$ is such that $\int_{H_i} \phi_{i,n} = 1$. 

Since $\phi_{i,n}$ are bounded, Lipschitz and $f_i$ are non-negative and measurable with compact support, $f_{i,n} \in \mathcal{F}_i^0$. Moreover $\int_{H_i} f_{i,n} \to \int_{H_i} f_i$ and by the Lebesgue differentiation theorem, $f_{i,n} \to f_i$ a.e.. In other words the set $\Omega_i\subset H_i$ where the latter convergence holds has a negligible 
complement. Then the convergence $\mathcal{I}((f_{i,n}), (f_j)) \to \mathcal{I}((f_i), (f_j))$ holds for all points
in the set $\Omega:=\bigcap_{i=1}^{m^+} B_i^{-1}(\Omega_i)$. Since the maps $B_i$ are surjective, the complement of $\Omega$
is negligible. Hence  $\mathcal{I}((f_{i,n}), (f_j)) \to \mathcal{I}((f_i), (f_j))$ a.e.

In order to verify~\eqref{eq:gauss-mini-for-class-of-functions} it is enough to ensure that
\begin{equation}\label{eq:convergence-of-J}
\lim_{n \to \infty} \int_H \mathcal{I}((f_{i,n}), (f_j)) = \int_H \mathcal{I}((f_i), (f_j)).
\end{equation}
To this end we find an integrable function on $H$ which dominates $\mathcal{I}((f_{i,n}), (f_j))$ uniformly in $n$ and then apply the Lebesgue dominated convergence theorem.

First, since $f_i$ are bounded, all $f_{i,n}$ are bounded uniformly in $n$ and thus for some constant $C > 0$ and a compact set $F \subseteq H_1 \times \cdots \times H_{m^+}$,
\[
  \prod_{i=1}^{m^+} f_{i,n}^{c_i}(B_i x) \le C \Ind{F}(B_+ x)\quad\textup{for all $x \in H$.}
\]
Second, by~\eqref{eq:compact-image}, $B_{m+1}(B_+^{-1}(F))$ is compact, hence after adjusting the constant $C$, we also have
\begin{equation}\label{eq:domination-of-integrand-1}
  e^{\mathcal{Q}_-(B_{m+1} x)} \prod_{i=1}^{m^+} f_{i,n}^{c_i}(B_i x) \le C \Ind{F}(B_+ x)\quad\textup{for all $x \in H$.}
\end{equation}
Since the map $(B_0, B_+)$ is a linear isomorphism, $\mathcal{I}((f_{i,n}), (f_j))$ would be compactly supported if only the function $e^{-\mathcal{Q_+}}$ was compactly supported. Obviously it is not (unless $\mathcal{Q_+}$ is trivial), but we can still use a compactness argument by decomposing $e^{-\mathcal{Q_+}}$ into slices, namely
\[ \begin{split}
  e^{-\mathcal{Q_+}(y)} &= \int_0^1 \Ind{\{e^{-\mathcal{Q_+}} \ge u\}}(y) \, du
  = \int_0^\infty 2t e^{-t^2} \Ind{\{\mathcal{Q_+} \le t^2\}}(y) \, dt \\
  &= \int_0^\infty 2 t e^{-t^2} \Ind{t \{\mathcal{Q_+} \le 1\}}(y) \, dt.
\end{split} \]
Combining the above with~\eqref{eq:domination-of-integrand-1}, we can bound $\mathcal{I}((f_{i,n}), (f_j))(x)$ pointwise and uniformly in $n$ by a constant times
\[ \begin{split}
  \int_0^\infty t e^{-t^2} \Ind{t \{\mathcal{Q_+} \le 1\}}(B_0 x) \Ind{F}(B_+ x) \, dt
  \prod_{j = 1+m^+}^m f_j^{c_j}(B_j x) \\
  \le
  \int_0^\infty t e^{-t^2} \Ind{(t+1)B_H(0,R)}(x) \, dt
  \prod_{j = 1+m^+}^m f_j^{c_j}(B_j x) 
\end{split} \]
with $R > 0$ large enough. Since $f_j(y)^{-1}$ is bounded from above by a polynomial in $|y|$, we finally obtain a pointwise upper bound
\[
  \mathcal{I}((f_{i,n}), (f_j))(x) \le \int_0^\infty (C_1 t^\alpha + C_2) e^{-t^2} \Ind{(t+1)B_H(0,R)}(x) \, dt
\]
with some constants $C_1, C_2, \alpha > 0$, which is clearly an integrable function of $x \in H$.

\medskip
\paragraph{\bf{Step 2.}} Fix $f_i \in \mathcal{F}_i^2$ ($1\le  i \le m^+$) and $f_j \in \mathcal{F}_j^2$ ($m^+ < j \le m$) where
\begin{itemize}
\item $\mathcal{F}_i^2 = \mathcal{F}_i^1$,
\item $\mathcal{F}_j^2$ is the class of non-negative integrable functions on $H_j$.
\end{itemize}
Let $\phi(u) = \frac{1}{\pi(1+u^2)}$ be the density of the standard Cauchy distribution and for each $j = 1+m^+, \ldots, m$ and $\lambda > 0$ put
\[
  \phi_{j,\lambda}(x) = \lambda^{\dim H_j} \prod_{l=1}^{\dim H_j} \phi(\lambda x_l).
\]
Fix $\varepsilon > 0$ and for each $j$ and $n$ put
\[
  f_{j,n} = (f_j + \varepsilon \phi_{j,1}) \ast \phi_{j,n}.
\]
For each $j$ and $n$, $\phi_{j,n}$ is bounded and Lipschitz and $f_j$ is integrable, hence $f_{j,n}$ is also bounded and Lipschitz. From the estimate
\[
f_{j,n} \ge \varepsilon \phi_{j,1} \ast \phi_{j,n} = \varepsilon \phi_{j,\frac{n}{n+1}} 
\ge \varepsilon 2^{-\dim H_j} \phi_{j,1}
\]
(the equality above follows from the fact that the Cauchy distribution is $1$-stable), we obtain that
\begin{equation}\label{eq:polynomial-decay}
f_{j,n}(y)^{-1} \quad\text{is bounded from above by a polynomial in $|y|$ uniformly in $n$.}
\end{equation}
Hence we proved that $f_{j,n} \in \mathcal{F}_j^1$.

By the result of Step 1 (i.e.~\eqref{eq:gauss-mini-for-class-of-functions} for $(f_i), (f_{j,n})$),
\[ \begin{split}
  \int_H \mathcal{I}((f_i), (f_{j,n})) &\ge K \prod_{1\le i\le m^+} \left( \int_{H_i} f_i \right)^{c_i} \prod_{m^+ < j\le m} \left( \int_{H_j} f_{j,n} \right)^{c_j} \\
  &= K \prod_{1\le i\le m^+} \left( \int_{H_i} f_i \right)^{c_i} \prod_{m^+ < j\le m} \left( \varepsilon + \int_{H_j} f_j\right)^{c_j},
\end{split} \]
where the last equality follows from $\int_{H_j} f_{j,n} = \varepsilon + \int_{H_j} f_j$. Obviously
\[
  \int_H \mathcal{I}((f_i), (f_j)) \ge \int_H \mathcal{I}((f_i), (f_j + \varepsilon \phi_{j,1})),
\]
so proving
\begin{equation}\label{eq:convergence-of-J-2}
  \lim_{n \to \infty} \int_H \mathcal{I}((f_i), (f_{j,n})) = \int_H \mathcal{I}((f_i), (f_j + \varepsilon \phi_{j,1}))
\end{equation}
would yield
\[
  \int_H \mathcal{I}((f_i), (f_j)) \ge K \prod_{1\le i\le m^+} \left( \int_{H_i} f_i \right)^{c_i} \prod_{m^+ < j\le m} \left( \int_{H_j} f_j + \varepsilon \right)^{c_j}
\]
and in consequence~\eqref{eq:gauss-mini-for-class-of-functions} by letting $\varepsilon \to 0$.
 
Since $f_{j,n} \to f_j + \varepsilon \phi_{j,1}$ a.e., we have $\mathcal{I}((f_i), (f_{j,n})) \to \mathcal{I}((f_i), (f_j + \varepsilon \phi_{j,1}))$ a.e. In the view of~\eqref{eq:polynomial-decay} and the fact that $f_i$ are bounded with compact support, we can proceed as in Step 1 to find an integrable function on $H$ which dominates $\mathcal{I}((f_i), (f_{j,n}))$ for all $n$ and conclude with~\eqref{eq:convergence-of-J-2}.

\medskip
\paragraph{\bf{Step 3.}} Fix $f_i \in \mathcal{F}_i^3$ ($1\le i \le m^+$) and $f_j \in \mathcal{F}_j^3$ ($m^+ < j \le m$) where
\begin{itemize}
\item $\mathcal{F}_i^3$ is the class of non-negative integrable functions on $H_i$,
\item $\mathcal{F}_j^3 = \mathcal{F}_j^2$.
\end{itemize}
We approximate $f_i$ with $f_{i,n} = \min(f_i, n) \Ind{B_{H_i}(0,n)}$ which belong to $\mathcal{F}_i^2$. The convergence as in~\eqref{eq:convergence-of-J} follows from the monotone convergence theorem. We conclude with~\eqref{eq:gauss-mini-for-class-of-functions} by using the result of Step 2 for the functions $(f_{i,n}), (f_j)$.

\section{Geometric Brascamp-Lieb inequality}\label{sec:geometric}

We study specific non-degenerate situations for  which $\inf J=1$  and some extremizing functions can be identified.  They are related to 
geometric Brascamp-Lieb inequalities and the decomposition of the identity~\eqref{eq:decomposition-identity-1d}. More precisely, they are characterized by the following conditions:
\begin{align}
  \label{cond:geometric-condition-0}
  B_k B_k^\ast &= \Id_{H_k} \quad \text{for $k = 1,\ldots, m,$} \\
  \label{cond:geometric-condition}
  Q + \sum_{k=1}^m c_k B_k^\ast B_k &= \Id_{H}.
\end{align}

\subsection{Finding $\inf_{\mathcal{CG}} J$}
The aim of this subsection is to prove that if the non-degeneracy condition~\eqref{eq:non-degeneracy2} and the geometric conditions ~\eqref{cond:geometric-condition-0} and~\eqref{cond:geometric-condition} hold then the infimum of $J$ on centered Gaussian functions is  equal to 1, and is achieved when for all $k$,  $f_k(\cdot)=\exp(-\pi|\,\cdot\,|^2$). The crucial result here is Proposition~\ref{prop:concavity-of-log-det} which establishes a concavity property of a function related to Formula~\eqref{eq:J-on-Gaussian-input}.

First, let us put forward two useful facts and a lemma.
\begin{fact}[{see e.g.~\cite[Theorem 7.7.6]{horn_johnson_matrix_analysis}}]\label{fact:schur-complement}
Let $A$ be $n \times n$ real symmetric matrix and C be $m \times m$ real symmetric matrix and $B$ be an $n \times m$ real matrix. Let $X = \left( \begin{array}{cc} A & B \\ B^\ast & C \end{array} \right)$. If $C > 0$ then
\begin{enumerate}
  \item[(i)] $X \ge 0$ if and only if $A - BC^{-1}B^\ast \ge 0$;
  \item[(ii)] $X > 0$ if and only if $A - BC^{-1}B^\ast > 0$.
\end{enumerate}
\end{fact}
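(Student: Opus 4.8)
This is the classical Schur-complement criterion, so in the paper we would simply invoke~\cite{horn_johnson_matrix_analysis}; should one wish to reproduce the argument, here is the plan. The idea is to eliminate the off-diagonal block $B$ by block Gaussian elimination --- i.e.\ by a congruence transformation using the invertible block $C$ --- and then read off (semi)definiteness from the resulting block-diagonal form. Concretely, I would first record the factorization
\[
  X = \begin{pmatrix} \Id_n & BC^{-1} \\ 0 & \Id_m \end{pmatrix}
       \begin{pmatrix} A - BC^{-1}B^\ast & 0 \\ 0 & C \end{pmatrix}
       \begin{pmatrix} \Id_n & 0 \\ C^{-1}B^\ast & \Id_m \end{pmatrix},
\]
valid since $C$ is invertible, in which the first and third factors are invertible and are transposes of one another (using $C = C^\ast$). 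Next I would invoke Sylvester's law of inertia: $X$ is congruent to $\diag(A - BC^{-1}B^\ast,\, C)$, hence has the same inertia; since $C > 0$, this block-diagonal matrix is positive semi-definite (resp.\ positive definite) exactly when $A - BC^{-1}B^\ast \ge 0$ (resp.\ $> 0$). That gives~(i) and~(ii) at once.

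An equivalent and slightly more hands-on route, which I would probably prefer to present, is to work directly with the quadratic form: for $v = (x, y) \in \R^n \times \R^m$, completing the square in $y$ (legitimate because $C$ is invertible) yields
\[
  \scalar{Xv, v} = \scalar{Ax, x} + 2\scalar{B^\ast x, y} + \scalar{Cy, y}
  = \bigscalar{(A - BC^{-1}B^\ast)x,\, x} + \bigscalar{C(y + C^{-1}B^\ast x),\, y + C^{-1}B^\ast x}.
\]
From this single identity the four implications follow without further computation. For the ``only if'' directions one specializes $y = -C^{-1}B^\ast x$ so that the second term vanishes; for the ``if'' directions one uses that both terms on the right are then $\ge 0$ (resp., in the strict case, one splits into the subcases $x \neq 0$ and $x = 0$, the latter giving $\scalar{Xv,v} = \scalar{Cy,y} > 0$ since $C > 0$ and $y = v \neq 0$).

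I do not expect any real obstacle here. The single point that warrants care is that the reduction above is a \emph{congruence} $X \mapsto P^\ast X P$ with $P$ invertible, not a similarity, so that definiteness is transferred by Sylvester's inertia theorem (equivalently, by the explicit quadratic-form identity) rather than by any spectral argument; keeping this straight is the only place one could slip.
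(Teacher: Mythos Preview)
Your proposal is correct. The paper does not give its own proof of this fact; it simply records the statement with a reference to \cite[Theorem~7.7.6]{horn_johnson_matrix_analysis}, exactly as you anticipate in your first sentence. The congruence/Sylvester argument and the completing-the-square computation you sketch are both standard and valid, so there is nothing to add.
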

Below $\I_n$ denotes the $n \times n$ identity matrix.
\begin{fact}[Woodbury formula]\label{fact:woodbury-formula}
For an $m \times n$ matrix $A$,
\[
  A^\ast (\I_m + A A^\ast)^{-1} A = \I_n - (\I_n + A^\ast A)^{-1}.
\]
\end{fact}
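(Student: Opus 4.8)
The plan is to verify the identity directly by elementary manipulations, the only nontrivial input being that $\I_m + A A^\ast$ and $\I_n + A^\ast A$ are invertible. This holds because $A A^\ast$ and $A^\ast A$ are positive semi-definite, so adding the identity yields positive definite — hence invertible — matrices.

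The key observation is the intertwining relation
\[
  A^\ast (\I_m + A A^\ast) = A^\ast + A^\ast A A^\ast = (\I_n + A^\ast A) A^\ast.
\]
Multiplying this on the left by $(\I_n + A^\ast A)^{-1}$ and on the right by $(\I_m + A A^\ast)^{-1}$ gives
\[
  (\I_n + A^\ast A)^{-1} A^\ast = A^\ast (\I_m + A A^\ast)^{-1}.
\]
Substituting into the left-hand side of the asserted formula,
\[
  A^\ast (\I_m + A A^\ast)^{-1} A = (\I_n + A^\ast A)^{-1} A^\ast A = (\I_n + A^\ast A)^{-1} \big( (\I_n + A^\ast A) - \I_n \big) = \I_n - (\I_n + A^\ast A)^{-1},
\]
which is exactly the claim.

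There is essentially no obstacle here; the only point requiring a word of justification is the invertibility of the two matrices being inverted, which I would dispatch with the positive-definiteness remark above. An alternative route, if one prefers to avoid displaying $(\I_n + A^\ast A)^{-1} A^\ast$ explicitly, is to check that $(\I_n + A^\ast A)\big(\I_n - A^\ast (\I_m + A A^\ast)^{-1} A\big) = \I_n$ by expanding and again invoking the same intertwining relation.
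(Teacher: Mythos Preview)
Your proof is correct and is precisely the kind of direct calculation the paper has in mind (the paper's proof consists of the two words ``Direct calculation''). Your intertwining relation $A^\ast(\I_m+AA^\ast)=(\I_n+A^\ast A)A^\ast$ is the natural way to carry this out, and your remark on invertibility via positive definiteness is the right justification.
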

\begin{proof}
Direct calculation.
\end{proof}
\begin{lemma}\label{lem:equivalence-of-positive-definiteness}
Let $R$ be a $p \times n$ real matrix and $S$ be an $r \times n$ real matrix. Consider the $(p+r) \times (p+r)$ matrix
\[
  M = \left( \begin{array}{cc}
    -\I_p + R R^\ast & R S^\ast \\
    S R^\ast & \I_r + S S^\ast
    \end{array} \right).
\]
\begin{enumerate}
\item[(i)] $M \ge 0$ if and only if $R(\I_n + S^\ast S)^{-1} R^\ast \ge \I_p$.
\item[(ii)] If $n = p$ and $R$ is invertible then $M \ge 0$ if and only if $R^\ast R - S^\ast S \ge \I_n$.
\end{enumerate}
\end{lemma}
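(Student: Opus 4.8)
The plan is to deduce both parts from Fact~\ref{fact:schur-complement} (Schur complement) together with Fact~\ref{fact:woodbury-formula} (Woodbury). First I would observe that in the block matrix $M$, the lower-right block $C = \I_r + S S^\ast$ is always positive definite, so Fact~\ref{fact:schur-complement}(i) applies directly: $M \ge 0$ if and only if the Schur complement
\[
  (-\I_p + R R^\ast) - (R S^\ast)(\I_r + S S^\ast)^{-1}(S R^\ast) \ge 0,
\]
that is,
\[
  R\big(\I_n - S^\ast(\I_r + S S^\ast)^{-1} S\big) R^\ast \ge \I_p.
\]
Now apply the Woodbury formula (Fact~\ref{fact:woodbury-formula}) with $A = S$ (an $r \times n$ matrix), which gives $S^\ast(\I_r + S S^\ast)^{-1} S = \I_n - (\I_n + S^\ast S)^{-1}$, hence $\I_n - S^\ast(\I_r + S S^\ast)^{-1} S = (\I_n + S^\ast S)^{-1}$. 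Substituting, the condition becomes $R(\I_n + S^\ast S)^{-1} R^\ast \ge \I_p$, which is exactly (i).

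For part (ii), I would specialize $n = p$ and use that $R$ is invertible. Starting from (i), the inequality $R(\I_n + S^\ast S)^{-1} R^\ast \ge \I_n$ can be conjugated by $R^{-1}$: since $R$ is invertible, $X \ge \I_n$ is equivalent to $R^{-1} X (R^{-1})^\ast \ge R^{-1}(R^{-1})^\ast = (R^\ast R)^{-1}$ for any symmetric $X$ (congruence by an invertible matrix preserves the semidefinite order). Applying this with $X = R(\I_n + S^\ast S)^{-1} R^\ast$ yields $(\I_n + S^\ast S)^{-1} \ge (R^\ast R)^{-1}$. Finally, for positive definite matrices $A, B$ one has $A^{-1} \ge B^{-1}$ if and only if $B \ge A$ (the map $A \mapsto A^{-1}$ is operator-antimonotone on positive definite matrices); applying this with $A = \I_n + S^\ast S$ and $B = R^\ast R$ gives $R^\ast R \ge \I_n + S^\ast S$, i.e. $R^\ast R - S^\ast S \ge \I_n$. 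Conversely every step is reversible, which establishes (ii).

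The argument is essentially a chain of standard equivalences, so there is no serious obstacle; the only points requiring a little care are checking that the lower-right block is genuinely positive definite (so that Fact~\ref{fact:schur-complement} applies with no degeneracy), applying the Woodbury identity with the correct orientation of $S$ versus $S^\ast$, and invoking congruence-invariance and operator-antimonotonicity of inversion in the right direction for part (ii). I expect the writeup to be short, with the Woodbury simplification being the one computational step worth displaying explicitly.
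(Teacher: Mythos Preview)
Your proposal is correct and follows essentially the same route as the paper: Schur complement with respect to the positive definite block $\I_r + S S^\ast$, then the Woodbury identity with $A=S$ for part (i), and for part (ii) congruence by $R^{-1}$ followed by operator-antimonotonicity of inversion.
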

\begin{proof}
Applying Fact~\ref{fact:schur-complement}(i) we obtain that $M \ge 0$ is equivalent to
\[
  -\I_p + R R^\ast - R S^\ast (\I_r + S S^\ast)^{-1} S R^\ast \ge 0.
\]
Using Fact~\ref{fact:woodbury-formula} for $A = S$ the above can be rephrased as
\[
  -\I_p + R R^\ast - R \big(\I_n - (\I_n + S^\ast S)^{-1}\big) R^\ast \ge 0,
\]
which finishes the proof of (i).

If $n=p$ and $R$ is invertible, then $M \ge 0$ is also equivalent to
\[
  (\I_n + S^\ast S)^{-1} \ge R^{-1} R^{-\ast} = (R^\ast R)^{-1}
\]
which in turn is equivalent to
\[
  \I_n + S^\ast S \le R^\ast R.
\]
\end{proof}

In the context of Brascamp-Lieb inequalities the following easy consequence of the Cauchy-Binet formula is useful, see Proposition 6 of \cite{barthe-inventiones}:
if $d\le n$ and $U$ is an $n\times d$ matrix, then the map 
$$x\in \mathbb R^n \mapsto  \log \det\big(U^\ast \diag((e^{x_i})_{i \le n}) U\big)$$
is convex. The next property is a counterpart for inverse Brascamp-Lieb inequalities.
\begin{prop}\label{prop:concavity-of-log-det}
  Let $m \ge n \ge 1$ and $U$ be an invertible $n \times n$ real matrix and $V$ be a real $(m-n) \times n$ matrix. Let
  \[
    \Omega = \big\{(x_1, \ldots, x_m) \in \R^m \colon U^\ast \diag((e^{x_i})_{i \le n}) U - V^\ast \diag((e^{x_j})_{j > n}) V > 0 \big\}.
  \]
Then $\Omega$ is convex and the map $\phi \colon \Omega \to \R$,
\[
  \phi(x_1, \ldots, x_m) = \log \det\big(U^\ast \diag((e^{x_i})_{i \le n}) U - V^\ast \diag((e^{x_j})_{j > n}) V\big)
\]
is concave.
\end{prop}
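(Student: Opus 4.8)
The plan is to reduce the statement, via the substitution $t_i = e^{x_i}$, to a statement about log-concavity along segments, and then to use a Schur-complement / Woodbury manipulation (precisely Lemma~\ref{lem:equivalence-of-positive-definiteness}, which was placed right before the proposition for this purpose) to rewrite the determinant $\det\big(U^\ast \diag((e^{x_i})_{i\le n}) U - V^\ast \diag((e^{x_j})_{j>n}) V\big)$ in a form where the known convexity of $x \mapsto \log\det(W^\ast \diag((e^{x_i})) W)$ can be applied. Concretely, first I would factor out $U$: writing $A = U^\ast \diag((e^{x_i})_{i\le n}) U$ and $B = V^\ast \diag((e^{x_j})_{j>n}) V$, we have
\[
  \det(A - B) = \det(U)^2 \, \det\big(\diag((e^{x_i})_{i\le n}) - U^{-\ast} V^\ast \diag((e^{x_j})_{j>n}) V U^{-1}\big),
\]
so after absorbing the harmless additive constant $2\log|\det U|$ it suffices to treat the case $U = \I_n$. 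Thus I want: $\Omega = \{x : \diag((e^{x_i})_{i\le n}) > V^\ast \diag((e^{x_j})_{j>n}) V\}$ is convex and $x \mapsto \log\det\big(\diag((e^{x_i})_{i\le n}) - V^\ast \diag((e^{x_j})_{j>n}) V\big)$ is concave.

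Next I would fix two points $x^{(0)}, x^{(1)} \in \Omega$, a parameter $s \in [0,1]$, and set $x^{(s)} = (1-s)x^{(0)} + s x^{(1)}$, i.e. $e^{x^{(s)}_k} = (e^{x^{(0)}_k})^{1-s}(e^{x^{(1)}_k})^{s}$. The key algebraic identity I would look to exploit is a joint-concavity statement for the "generalized matrix geometric mean / Schur complement" type expression. A clean route: diagonal matrices with positive entries are positive definite, so for each coordinate the weighted geometric mean satisfies $e^{x^{(s)}_k} = $ (geometric mean of the two), and the map $(P, R) \mapsto \log\det(P - R^\ast S^{-1} R)$ (the log-determinant of a Schur complement) is jointly concave on the cone where it is defined — this is a classical fact (it equals $\log\det$ of a Schur complement of the big block matrix $\left(\begin{smallmatrix} P & R^\ast \\ R & S\end{smallmatrix}\right)$, and $\log\det$ is concave on PSD matrices, while the Schur complement is operator concave). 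Applying this with $P = \diag((e^{x_i})_{i\le n})$, $S = \diag((e^{x_j})_{j>n})$, $R = \diag((e^{x_j})_{j>n})V$ — so that $R^\ast S^{-1} R = V^\ast \diag((e^{x_j})_{j>n})V$ — and using that along $s$ each diagonal entry moves along a log-linear (hence, after exponentiation, geometric-mean) path, gives both convexity of $\Omega$ (the Schur complement stays positive) and concavity of $\phi$ in one stroke. Alternatively, if one prefers to stay strictly within the tools of the excerpt, Lemma~\ref{lem:equivalence-of-positive-definiteness}(i)/(ii) already packages exactly the rewriting $P - R^\ast S^{-1}R \ge cI \iff$ (a positive-semidefiniteness of an explicit block matrix built from $\diag(e^{x_i/2})$ and $\diag(e^{x_j/2})V$), and one can run the convexity of $\log\det(W^\ast \diag(e^{x})W)$ on the enlarged data.

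The main obstacle I anticipate is establishing the joint concavity of the Schur-complement log-determinant with the weights being \emph{geometric} rather than arithmetic means — i.e. getting the concavity in the variables $x_k$ (exponents) rather than in $t_k = e^{x_k}$. The map $t \mapsto \log\det(\diag(t_i) - V^\ast \diag(t_j) V)$ is \emph{not} concave in $t$; concavity only appears after the exponential change of variables, exactly as in the positive Brascamp-Lieb case where $\log\det(U^\ast\diag(t_i)U)$ is linear in $t$ but convex in $\log t$. So the careful point is to combine (a) operator-monotonicity/operator-concavity of the Schur complement $P \mapsto P - R^\ast S^{-1}R$ in the PSD order, (b) the fact that $\diag(e^{x^{(s)}})$ lies \emph{below} $(1-s)\diag(e^{x^{(0)}}) + s\diag(e^{x^{(1)}})$ in the PSD order (AM–GM coordinatewise) for the $P$-block but one needs the reverse direction for the subtracted $S$-block, and (c) concavity and monotonicity of $\log\det$. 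Reconciling the two opposite-direction AM–GM estimates for the added and subtracted blocks is the delicate bookkeeping; I would handle it by writing everything through the single big block matrix $M(x) = \left(\begin{smallmatrix} \diag((e^{x_i})_{i\le n}) & V^\ast\diag((e^{x_j})_{j>n}) \\ \diag((e^{x_j})_{j>n})V & \diag((e^{x_j})_{j>n})\end{smallmatrix}\right)$, for which all diagonal blocks are genuinely log-linear/geometric along the segment, using that $\det(\text{Schur complement}) = \det M(x)/\det(\text{lower-right block})$, and noting $\log\det$ of the lower-right block $= \sum_{j>n} x_j$ is affine — so concavity of $\phi$ reduces to concavity of $x \mapsto \log\det M(x)$ restricted to the (convex) region where $M(x)$'s lower-right block is positive and the Schur complement is positive, which is where the convexity-of-$\log\det\big(\text{(something)}^\ast \diag(e^{x})(\text{something})\big)$-type argument or a direct second-derivative computation closes the proof.
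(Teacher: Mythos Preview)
Your proposal has a genuine gap: the ``reduction'' via the block matrix $M(x)$ is circular, and the tool you invoke at the end does not apply.

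Concretely, you reduce concavity of $\phi$ to concavity of $x\mapsto \log\det M(x)$, where
\[
  M(x)=\begin{pmatrix} \diag((e^{x_i})_{i\le n}) & V^\ast\diag((e^{x_j})_{j>n}) \\ \diag((e^{x_j})_{j>n})V & \diag((e^{x_j})_{j>n}) \end{pmatrix}.
\]
But by the Schur complement determinant identity, $\log\det M(x)=\phi(x)+\sum_{j>n}x_j$, so this is $\phi$ plus an affine function: nothing has been gained. Likewise $M(x)>0$ holds precisely on $\Omega$, so the convexity of $\Omega$ is not made any easier. You then hope to close with ``the convexity-of-$\log\det(W^\ast\diag(e^{x})W)$-type argument''; however $M(x)$ is \emph{not} of the form $W^\ast\diag(e^{x})W$ for any fixed $W$ (forcing the diagonal entries to be $e^{x_k}$ gives $W=\I_m$, but then the off-diagonal vanishes), and in any case that classical Cauchy--Binet fact yields \emph{convexity}, the wrong sign for your purpose. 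Your earlier route via joint concavity of the Schur complement $X\mapsto\log\det(A-BC^{-1}B^\ast)$ on the positive cone is concavity in the \emph{matrix} variable; you correctly identify that the obstacle is passing from arithmetic to geometric means of the diagonal blocks, with the two blocks requiring opposite AM--GM directions, and you do not resolve this.

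The paper proceeds quite differently. For concavity of $\phi$, it computes the Hessian explicitly and writes it as $-M\circ N$, a Hadamard product of two positive semidefinite matrices; positivity of $N$ is exactly where Lemma~\ref{lem:equivalence-of-positive-definiteness} is used (applied with $R=e^{\diag(x_+)/2}UA(x)^{-1/2}$ and $S=e^{\diag(x_-)/2}VA(x)^{-1/2}$, for which $R^\ast R-S^\ast S=\I_n$), and one concludes by the Schur product theorem. For convexity of $\Omega$, the paper uses a matrix operator-norm interpolation inequality ($\|B^\lambda A C^{1-\lambda}\|\le\|BA\|^\lambda\|AC\|^{1-\lambda}$ for positive diagonal $B,C$), not a Schur-complement argument. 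If you want to salvage your approach, the content would have to be a direct proof that $x\mapsto\log\det M(x)$ is concave, which is equivalent to the original statement and requires new work; the fallback you mention, ``a direct second-derivative computation,'' is precisely what the paper does and is where all the difficulty lies.
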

\begin{proof}
First we show that $\Omega$ is convex. Take any $x = (x_1, \ldots, x_m) \in \Omega$, $y = (y_1, \ldots, y_m) \in \Omega$ and $\lambda \in (0,1)$. Let $X = \diag((e^{x_k}/2)_{1 \le k \le m})$, $Y = \diag((e^{y_k/2})_{1 \le k \le m})$ and $X_+, X_-$ be the diagonal blocks of $X$ of size $n \times n$ and $(m-n) \times (m-n)$ (resp.), similarly $Y_+$, $Y_-$.

From $x \in \Omega$ it follows that $U^\ast X_+^2 U > V^\ast X_-^2 V$. By invertibility of $U$,
\begin{align*}
  \I_n > X_+^{-1} U^{-\ast} V^\ast X_-^2 V U^{-1} X_+^{-1} = (X_- V U^{-1} X_+^{-1})^\ast (X_- V U^{-1} X_+^{-1}),
\end{align*}
which is equivalent to $\| X_- V U^{-1} X_+^{-1} \| < 1$ ($\|\cdot\|$ denotes the operator norm).
Similarly, $y \in S$ implies $\| Y_- V U^{-1} Y_+^{-1} \| < 1$.

Put $A = Y_- V U^{-1} X_+^{-1}$ and $B = X_- Y_-^{-1}$ and $C = X_+ Y_+^{-1}$. Then we have
\begin{align*}
  \|B A\| < 1, \qquad \|A C\| < 1.
\end{align*}
Now use~\cite[Corollary IX.5.3]{Bhatia-Matrix-analysis} which asserts that
\[
  \|B^\lambda A C^{1-\lambda}\| \le \|B A\|^\lambda \|A C\|^{1-\lambda}
\]
to obtain
\begin{align*}
  \I_n > (B^\lambda A C^{1-\lambda})^\ast (B^\lambda A C^{1-\lambda}) = Y_+^{\lambda-1} X_+^{-\lambda} U^{-\ast} V^\ast X_-^{2\lambda} Y_-^{2(1-\lambda)} V U^{-1} X_+^{-\lambda} Y_+^{\lambda-1},
\end{align*}
which is equivalent to
\begin{align}\label{ineq:to-be-in-omega}
  U^\ast (X_+^2)^\lambda (Y_+^2)^{1-\lambda} U > V^\ast (X_-^2)^\lambda (Y_-^2)^{1-\lambda} V.
\end{align}
Since $(X^2)^\lambda (Y^2)^{1-\lambda}$ is the diagonal matrix with the entries $e^{\lambda x_k + (1-\lambda) y_k}$, \eqref{ineq:to-be-in-omega} ensures that $\lambda x + (1-\lambda) y \in \Omega$.

Next we establish concavity of $\phi$ on $\Omega$. For $x = (x_1, \ldots, x_m) \in \R^m$, set $x_+ = (x_1, \ldots, x_n) \in \R^n$ and $x_- = (x_{n+1}, \ldots, x_m) \in \R^{m-n}$. Let $A \colon \R^m \to \R^{n \times n}$ be defined as
\[
  A(x) = U^\ast e^{\diag(x_+)} U - V^\ast e^{\diag(x_-)} V.
\]
Then $\phi(x) = \log \det A(x)$ for $x \in \Omega$.
Since $\phi$ is a smooth function, we can analyze the Hessian of $\phi$. To this end, we will use the following formulas:
\begin{align}
  \label{eq:d-log-det}
  \partial \log \det X &= \tr(X^{-1} \partial X), \quad \textup{for $X > 0,$} \\
  \label{eq:d-x-inverse}
  \partial X^{-1} &= -X^{-1} (\partial X) X^{-1}, \quad \textup{for $X > 0,$} \\
  \label{eq:d-A-of-x}
  \partial A(x) &= U^\ast e^{\diag(x_+)} \diag((\partial x)_+) U - V^\ast e^{\diag(x_-)} \diag((\partial x)_-) V.
\end{align}
Specialization of~\eqref{eq:d-A-of-x} to partial derivatives gives
\begin{align}
  \label{eq:di-A-of-x}
  \partial_i A(x) &= e^{x_i} U^\ast e_i e_i^\ast U
  \quad \textup{for $i \le n,$} \\
  \label{eq:dj-A-of-x}
  \partial_j A(x) &= -e^{x_j} V^\ast f_{j-n} f_{j-n}^\ast V
  \quad \textup{for $j > n,$}
\end{align}
where for $i\le n$, $e_i$ is a column matrix with $n$ rows, a coefficient $1$ in the $i$-th row and all other coefficients equal to 0. Similarly, for  $\ell\le m-n$, $f_\ell$
is a column matrix with $m-n$ rows, with a 1 in its $\ell$-th row and zeroes elsewhere.
Fix $x \in \Omega$. Using~\eqref{eq:d-log-det} and~\eqref{eq:di-A-of-x}, for $i \le n$ we obtain
\[
  \partial_i \phi(x) = \tr \big( A^{-1}(x) \partial_i A(x) \big) = e^{x_i} e_i^\ast U A^{-1}(x) U^\ast e_i,
\]
Similarly, for $j > n$,
\[
  \partial_j \phi(x) = -e^{x_j} e_{j-n}^\ast V A^{-1}(x) V^\ast e_{j-n}.
\]
In order to calculate second order partial derivatives, we use~\eqref{eq:d-x-inverse} combined with~\eqref{eq:di-A-of-x} or~\eqref{eq:dj-A-of-x}. For $i_1, i_2 \le n$ and $i_1 \neq i_2$ we have
\[ \begin{split}
  \partial^2_{i_1 i_2} \phi(x) &= -e^{x_{i_1}} e_{i_1}^\ast U A^{-1}(x) \partial_{i_2} A(x) A^{-1} U^\ast e_{i_1} \\
  &= -e^{x_{i_1} + x_{i_2}} e_{i_1}^\ast U A^{-1}(x) U^\ast e_{i_2} e_{i_2}^\ast U A^{-1}(x) U^\ast e_{i_1}
\end{split} \]
Denoting $$R = e^{\diag(x_+)/2} U A^{-1/2}(x)$$ we can write the above second order mixed partial derivative in a more compact way
\[
  \partial^2_{i_1 i_2} \phi(x) = -(R R^\ast)_{i_1 i_2} (R R^\ast)_{i_2 i_1} = -(R R^\ast)^2_{i_1 i_2},
\]
and for $i \le n$ we have
\[
  \partial^2_{ii} \phi(x) = \partial_i \phi(x) - (R R^\ast)^2_{ii} = (R R^\ast)_{ii} - (R R^\ast)^2_{ii}.
\]
Combining the two above formulas we can write that for any $i_1, i_2 \le n$,
\[
  \partial^2_{i_1 i_2} \phi(x) = (R R^\ast)_{i_1 i_2} (\I_n - R R^\ast)_{i_1 i_2}.
\]
If we denote $$S = e^{\diag(x_-)/2} V A^{-1/2}(x),$$ then by similar calculations we get that for $j_1, j_2 > n$,
\[
  \partial^2_{j_1, j_2} \phi(x) = -(S S^\ast)_{j_1-n, j_2-n} (\I_{m-n} + S S^\ast)_{j_1-n, j_2-n}.
\]
Lastly, for $i \le n$ and $j > n$,
\[ \begin{split}
  \partial^2_{ij} \phi(x) &= -e^{x_i} e_i^\ast U A^{-1}(x) \partial_j A(x) A^{-1}(x) U^\ast e_i \\
  &= e^{x_i + x_j} e_i^\ast U A^{-1}(x) V^\ast f_{j-n} f_{j-n}^\ast V A^{-1}(x) U^\ast e_i \\
  &= (R S^\ast)_{i, j-n} (S R^\ast)_{j-n, i} = (R S^\ast)_{i, j-n}^2 = (S R^\ast)^2_{j-n, i}.
\end{split} \]
As a result,
\[ \begin{split}
  \Hess \phi(x) &= -\left( \begin{array}{cc}
    (R R^\ast) \circ (-\I_n + R R^\ast) & -(R S^\ast) \circ (R S^\ast) \\
    -(S R^\ast) \circ (S R^\ast) & (S S^\ast) \circ (\I_{m-n} + S S^\ast)
  \end{array} \right) \\
  &= -\underbrace{\left( \begin{array}{cc}
      R R^\ast & -R S^\ast \\
     -S R^\ast &  S S^\ast
  \end{array} \right)}_{M} \circ
  \underbrace{\left( \begin{array}{cc}
    -\I_n + R R^\ast & R S^\ast \\
    S R^\ast         & \I_{m-n} + S S^\ast
  \end{array} \right)}_{N},
\end{split} \]
where $A \circ B$ denotes the Hadamard product (i.e. entry-wise product) of $A$ and $B$. 

Note that $M$ is positive semi-definite. Indeed, $M = \left( \begin{array}{c} R \\ -S \end{array} \right) (R^\ast, -S^\ast) \ge 0.$ Now we argue that also $N \ge 0$. From definitions of the matrices $A(x)$ and $R$ and $S$ it follows immediately that
\[
  R^\ast R - S^\ast S = \I_n.
\]
Since $U$ is invertible, so does $R$ and Lemma~\ref{lem:equivalence-of-positive-definiteness}(ii) implies $N \ge 0$.

Now it is enough to apply the Schur product theorem (see e.g.~\cite[Theorem 7.5.3]{horn_johnson_matrix_analysis}) which asserts that if $M$ and $N$ are positive semi-definite then $M \circ N$ is also positive semi-definite. Therefore $\Hess \phi(x)$ is negative semi-definite at any $x \in \Omega$ and hence $\phi$ is concave.
\end{proof}

The next theorem uses the notation from Subsection~\ref{subsec:centered-gaussians}.
\begin{theorem}\label{thm:D-in-extremisable-case}
  Assume the non-degeneracy condition~\eqref{eq:non-degeneracy2} holds. Let $(A_1, \ldots, A_m) \in \Lambda$. Put $A = Q + \sum_{k=1}^m c_k B_k^\ast A_k B_k>0$.
  Then the supremum in~\eqref{def:BL-constant} is attained at $(A_1, \ldots, A_m)$, i.e.
  \begin{align}\label{eq:gaussian-extremisable}
    D = \frac{\det A}{\prod_{i=k}^m (\det A_k)^{c_k}}
  \end{align}
  if and only if
  \begin{align}\label{eq:generalized-geometric-condition}
    A_k^{-1} - B_k A^{-1} B_k^\ast = 0 \quad\text{for all $k=1,\ldots,m$ for which $c_k \neq 0.$}
  \end{align}
  In particular, if~\eqref{cond:geometric-condition-0} and~\eqref{cond:geometric-condition} hold then $D=1$.
\end{theorem}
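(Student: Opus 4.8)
The plan is to view $D$ from~\eqref{def:BL-constant} as the supremum over the open set $\Lambda$ of $\log G$, where $G((A_k)) = \det\!\big(Q+\sum_k c_k B_k^\ast A_k B_k\big)\big/\prod_k (\det A_k)^{c_k}$, and to combine the first-order optimality condition with the concavity established in Proposition~\ref{prop:concavity-of-log-det}. For the necessity direction I would first differentiate: for a symmetric perturbation $\delta$ of a single $A_k$,
\[
  \frac{d}{dt}\Big|_{0}\log G\big((\dots,A_k+t\delta,\dots)\big)=c_k\,\tr\!\big((B_kA^{-1}B_k^\ast-A_k^{-1})\,\delta\big),
\]
where $A=Q+\sum_k c_k B_k^\ast A_k B_k$. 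If the supremum $D$ is attained at $(A_k)\in\Lambda$, then since $\Lambda$ is open this derivative vanishes for all $k$ and all symmetric $\delta$; non-degeneracy of the trace pairing on symmetric maps and $c_k\neq0$ then give $A_k^{-1}-B_kA^{-1}B_k^\ast=0$, that is, \eqref{eq:generalized-geometric-condition}.

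For the converse, assume \eqref{eq:generalized-geometric-condition}. I would normalise by the congruences replacing $B_k$ by $\widetilde B_k:=A_k^{1/2}B_kA^{-1/2}$ and $\mathcal Q$ by $\widetilde{\mathcal Q}(x):=\mathcal Q(A^{-1/2}x)$. A direct check shows these preserve the non-degeneracy hypothesis~\eqref{eq:non-degeneracy2}, transform $D$ into $\big(\prod_k(\det A_k)^{c_k}/\det A\big)\widetilde D$, and turn \eqref{eq:generalized-geometric-condition} into the geometric identities $\widetilde B_k\widetilde B_k^\ast=\Id$ for all $k$ and $\widetilde Q+\sum_k c_k\widetilde B_k^\ast\widetilde B_k=\Id$. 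Thus the assertion $D=\det A/\prod_k(\det A_k)^{c_k}$ is equivalent to $\widetilde D=1$, and so it suffices to prove the final ``in particular'' statement: under \eqref{eq:non-degeneracy2}, \eqref{cond:geometric-condition-0} and \eqref{cond:geometric-condition} one has $D=1$.

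So assume the geometric conditions. Then $(A_k)=(\Id_{H_k})$ lies in $\Lambda$, $A=\Id_H$, and $G((\Id))=1$, so $D\ge1$; it remains to show $G((A_k'))\le1$ for every $(A_k')\in\Lambda$. Fix such a tuple and fix orthogonal maps $O_k$ diagonalising $A_k'$. Writing $\mathcal Q(x)=\mathcal Q_+(B_0x)-\mathcal Q_-(B_{m+1}x)$ as in Lemma~\ref{lem:decompose-Q} and setting $c_0=1$, $c_{m+1}=-1$, $A_0=Q_+$, $A_{m+1}=Q_-$, the positive part $\sum_{i=0}^{m^+}c_iB_i^\ast A_i'B_i$ is positive definite because $(B_0,B_+)$ is an isomorphism and the $c_i$ are positive; diagonalising it I would write it as $U^\ast\diag((e^{x_l})_l)U$ with $U$ invertible (built from $(B_0,B_+)$ and the fixed eigenbases), and similarly $\sum_{j>m^+}|c_j|B_j^\ast A_j'B_j=V^\ast\diag((e^{y_l})_l)V$ by diagonalising each $|c_j|A_j'$ and stacking the resulting maps. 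This is exactly the situation of Proposition~\ref{prop:concavity-of-log-det}. As $(A_k')_{k=1}^m$ ranges over tuples with these fixed eigenbases, $(x_l),(y_l)$ depend affinely on the log-eigenvalues $\tau^{(k)}$ of $A_k'$, the set $\{A'>0\}$ is the affine preimage of the convex set $\Omega$ of that proposition, and
\[
  \log G\big((A_k')\big)=\phi\big((x_l),(y_l)\big)-\sum_{k=1}^m c_k\sum_l\tau^{(k)}_l
\]
is a concave function of $\tau$. The point $\tau=0$ (that is, $(A_k')=(\Id)$) lies on this slice, and by the first-order formula above together with \eqref{cond:geometric-condition-0} the gradient of this function vanishes there; hence $\tau=0$ maximises $\log G$ on the slice, so $\log G((A_k'))\le\log G((\Id))=0$. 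Since $(A_k')$ was arbitrary, $D=1$, which completes both the converse and the ``in particular'' assertion.

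The main obstacle is that Proposition~\ref{prop:concavity-of-log-det} gives concavity only in the ``scalar'' log-eigenvalue parameters, whereas the $A_k$ are genuine symmetric maps; the normalisation to $A_k=\Id$ is precisely what forces every tuple of $\Lambda$ to lie, together with the candidate maximiser, on a common ``simultaneous eigenbasis'' slice, so that concavity along such slices is enough. One should also be careful that the domain of each slice is convex — this is exactly the convexity of $\Omega$ in the proposition — so that the vanishing of the gradient at $\tau=0$ really certifies a global maximum on the slice.
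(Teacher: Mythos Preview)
Your proposal is correct and follows essentially the same strategy as the paper: compute the first variation to get~\eqref{eq:generalized-geometric-condition} as a necessary condition, then use Proposition~\ref{prop:concavity-of-log-det} to upgrade the critical point to a global maximum. The only cosmetic difference is that you first reduce by congruence to the geometric situation $A_k=\Id$, $A=\Id$ (exactly as in the Remark following the theorem) and then work on ``eigenbasis slices'' parametrised by the full vector of log-eigenvalues $\tau$, whereas the paper stays with general $(A_k)$ and runs one-parameter curves $t\mapsto \big(A_k^{1/2}\exp(tY_k)A_k^{1/2}\big)$; both feed into Proposition~\ref{prop:concavity-of-log-det} via the same block map $U=(\sqrt{c_i}\,U_iA_i^{1/2}B_i)_i$, whose invertibility is precisely the isomorphism~\eqref{eq:isomorphism-condition}.
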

\begin{remark}
If \eqref{eq:generalized-geometric-condition} holds, then $\tilde Q:=A^{-1/2}QA^{-1/2}$ and $\tilde{B}_k:=A_k^{1/2}B_kA^{-1/2}$ satisfy the generalized geometric conditions \eqref{cond:geometric-condition-0}
and \eqref{cond:geometric-condition}. This allows to show that up to linear isomorphisms (by $A^{1/2}$ on $H$ and $A_k^{1/2}$ on $H_k$)
the situations where $\inf_{\mathcal{CG}}J$ is achieved are equivalent to the geometric situations. This follows exactly what happens for direct Brascamp-Lieb inequalities, see \cite{BCCT-structure}.
\end{remark}

\begin{proof}[Proof of Theorem \ref{thm:D-in-extremisable-case}]
Consider the function $\Phi \colon \Lambda \to \R$,
\[
  \Phi(M_1, \ldots, M_m) = \log \det \left(Q + \sum_{k=1}^m c_k B_k^\ast M_k B_k\right) - \sum_{k=1}^m c_k \log \det M_k.
\]
Note $\Phi$ is smooth and $\sup_\Lambda \Phi = \log D$.

Fix any self-adjoint operators $X_k \colon H_k \to H_k$ (for $k=1,\ldots,m$). Using Formula~\eqref{eq:d-log-det} the directional derivative of $\Phi$ at $(A_1,\ldots,A_m)$ in the direction of $(X_1, \ldots, X_m)$ is
\begin{align*}
  & \partial_{(X_1, \ldots, X_m)} \Phi(A_1, \ldots, A_m) \\
  &=
  \lim_{t \to 0} \frac{1}{t} \big(\Phi(A_1 + tX_1, \ldots, A_m + tX_m) - \Phi(A_1,\ldots,A_m)\big) \\
  &= \tr\left(A^{-1} \Big(\sum_{k=1}^m c_k B_k^\ast X_k B_k\Big)\right) - \sum_{k=1}^m c_k \tr(A_k^{-1} X_k) \\
  &= \sum_{k=1}^m c_k \tr(A^{-1} B_k^\ast X_k B_k) - \sum_{k=1}^m c_k \tr(A_k^{-1} X_k) =
  \sum_{k=1}^m c_k \tr\big((B_k A^{-1} B_k^\ast - A_k^{-1}) X_k \big).
\end{align*}
The condition~\eqref{eq:gaussian-extremisable} implies that the derivative must be $0$. Using the fact that a self-adjoint operator $Y$ is zero if and only if $\tr (Y X) = 0$ for all self-adjoint operators $X$, \eqref{eq:generalized-geometric-condition} follows by considering $X_1, \ldots, X_{k-1}, X_{k+1}, \ldots, X_m$ being zero and $X_k$ being arbitrary for each $k=1,\ldots,m$ such that $c_k \neq 0$.

For the converse implication assume that~\eqref{eq:generalized-geometric-condition} holds. Then the above calculation shows that the derivative of $\Phi$ at $(A_1,\ldots,A_m)$ is zero. In order to conclude that $\Phi$ has a global maximum at this point, we prove below that $\Phi$ enjoys a concavity type property along well chosen curves.

Fix any self-adjoint operators $Y_k \colon H_k \to H_k$ (for $k=1,\ldots,m$) and for any real $t$ put
\begin{equation}\label{eq:exponential-parametrization}
  \mathbf{A}(t) = \big(A_1^{1/2} \exp(t Y_1) A_1^{1/2}, \ldots, A_m^{1/2} \exp(t Y_m) A_m^{1/2}\big).
\end{equation}
For $t \in \R$ for which $\mathbf{A}(t) \in \Lambda$ consider the function
\[
  \vp(t) = \Phi\big(\mathbf{A}(t)\big).
\]
Since $\Lambda$ is an open set and the function $t \mapsto \mathbf{A}(t)$ is continuous, the domain of $\vp$ is an open subset of $\R$. The domain contains $0$ and, as $\vp$ is smooth, $\vp'(0)$ vanishes.

For each $k = 1,\ldots,m$ take an orthogonal transformation $U_k \in O(H_k)$ such that $U_k^\ast Y_k U_k$, when identified with its matrix in the standard basis $(e_l^{H_k})_l$ in $H_k$, is a diagonal matrix. Denote the diagonal entries by $y_{k 1}, \ldots, y_{k n_k}$, where $n_k = \dim H_k$. Then
\begin{equation}\label{eq:rep-expY}
  B_k^\ast A_k^{1/2} \exp(tY_k) A_k^{1/2} B_k = (U_k A_k^{1/2} B_k)^\ast \diag\big((e^{t y_{k l}})_{l \le n_k}\big) U_k A_k^{1/2} B_k.
\end{equation}
Thanks for the non-degeneracy condition~\eqref{eq:non-degeneracy2} we can use the decomposition of the Gaussian kernel $\exp(-\mathcal{Q})$ as asserted by Lemma~\ref{lem:decompose-Q}. Beside the maps $B_0 \colon H \to H_0$ and $B_{m+1} \colon H \to H_{m+1}$ consider also $A_0 > 0$ on $H_0$ and $A_{m+1} > 0$ on $H_{m+1}$ such that
\[
  Q = \sum_{k \in \{0, m+1\}} c_k B^\ast_k A_k B_k,
\]
with $c_0 = 1$ and $c_{m+1} = -1$. For the sake of consistency with~\eqref{eq:rep-expY}, for $k \in \{0, m+1\}$ put $Y_k = 0$ (a zero map on $H_k$), $U_k = \Id_{H_k}$ and $y_{kl} = 0$ for all $l \le n_k = \dim H_k$.

Let
\[ \begin{split}
  U &= (\sqrt{c_i} U_i A_i^{1/2} B_i)_{0 \le i \le m^+} \colon H \to H_0 \times \cdots \times H_{m^+} \\
  V &= (\sqrt{-c_j} U_j A_j^{1/2} B_j)_{m^+ < j \le m++1} \colon H \to H_{m^+ +1} \times \cdots \times H_{m+1}.
\end{split} \]
Considering the diagonal matrix $D_+(t) = \diag\big((e^{t y_{i l}})_{0 \le i \le m^+, l \le n_i}\big)$ as an operator acting on $H_0 \times \cdots \times H_{m^+}$ and the diagonal matrix $D_-(t) = \diag\big((e^{t y_{j l}})_{m^+ < j \le m+1, l \le n_j}\big)$ as an operator acting on $H_{m^+ +1} \times \cdots \times H_{m+1}$ we can write
\begin{align*}
  \vp(t) = \log\det\left(U^\ast D_+(t) U - V^\ast D_-(t) V \right) - \sum_{k=1}^m c_k \log\det A_k - t \sum_{k=1}^m c_k \tr Y_k,
\end{align*}
where used the formula $\log\det(\exp(Y)) = \tr Y$ for a self-adjoint $Y$.

It follows from Assertion~\eqref{eq:isomorphism-condition}  that $U$ is a linear isomorphism. Therefore we can apply Proposition~\ref{prop:concavity-of-log-det}, which tells us  that the domain of $\vp$ must be an open interval and that $\vp$ is concave. Since $\vp'(0) = 0$, $\vp$ attains its global maximum at $t=0$.

Since for any $(X_1, \ldots, X_m) \in \Lambda$ there exist $t$ and self adjoint operators $Y_k$ on $H_k$ (for $k=1,\ldots,m$) such that $(X_1, \ldots, X_m)$ is of the form~\eqref{eq:exponential-parametrization} (e.g. take $t=1$ and $Y_k = \log(A_k^{-1/2} X_k A_k^{-1/2}$), we actually showed that $(A_1,\ldots,A_m)$ is a global maximum of $\Phi$ and thus~\eqref{eq:gaussian-extremisable} holds.
\end{proof}

\subsection{Geometric version of Inverse Brascamp-Lieb inequalities}

\begin{theorem}\label{th:geometric-IBL}
For $k=1,\ldots,m$, let $c_k\in \mathbb R$ and  let $B_k:H\to H_k$ be linear surjective maps such that 
$B_kB_k^*=\mathrm{Id}{H_k}$. Let $Q:H\to H$ be a symmetric operator.
Assume that 
\[
 Q+\sum_{k=1}^m c_k B_k^*B_k=\mathrm{Id}_H \quad \mathrm{and}\quad
 \dim H\ge s^+(Q)+\sum_{k:\, c_k>0} \dim H_k.
\]
Then for all non-negative integrable functions $h_k:H_k \to [0,+\infty]$ with 
$\int h_k >0$, it holds
\[
  \int_{H} \exp(-\pi \scalar{x, Qx}) \prod_{k=1}^m h_k^{c_k}(B_k x) \, dx \ge \prod_{k=1}^m \Big( \int_{H_k} h_k \Big)^{c_k}.
\]
There is equality when for all $k$ and all $y\in H_k$, $f_k(y)=\exp(-\pi|y|^2)$.
\end{theorem}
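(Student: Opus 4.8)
The plan is to recognize the statement as the ``extremizable'' special case of the Gaussian minimizers principle already established. After reordering the indices so that $c_1,\ldots,c_{m^+} > 0$ and $c_{m^++1},\ldots,c_m < 0$ (indices with $c_k = 0$ may be dropped, since they contribute a factor $1$ to both sides and nothing to $Q + \sum_k c_k B_k^\ast B_k$), and writing $\mathcal{Q}(x) = \pi\scalar{x,Qx}$, we are exactly in the setting of \eqref{def:J}, and the asserted inequality is precisely $J(h_1,\ldots,h_m) \ge 1$. So it suffices to prove $\inf J = 1$ and to exhibit a minimizer.

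First I would check that the hypotheses place us in the non-degenerate case, i.e.\ that the non-degeneracy condition \eqref{eq:non-degeneracy2} holds. The dimension inequality is assumed (note $s^+(\mathcal{Q}) = s^+(Q)$ and, after reordering, $\{k : c_k > 0\} = \{1,\ldots,m^+\}$). For positive definiteness of $\mathcal{Q}$ on $\ker B_+$, take $x \in \ker B_+ = \bigcap_{i \le m^+} \ker B_i$; from \eqref{cond:geometric-condition} one has $Q = \mathrm{Id}_H - \sum_k c_k B_k^\ast B_k$, hence $\scalar{x,Qx} = |x|^2 - \sum_{j > m^+} c_j |B_j x|^2 = |x|^2 + \sum_{j > m^+} |c_j|\,|B_j x|^2 \ge |x|^2 > 0$ whenever $x \neq 0$. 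Thus \eqref{eq:non-degeneracy2} is satisfied; by Remark~\ref{rem:dec-Q} the map $B_+$ is then automatically onto, so $J$ is well posed (Lemma~\ref{lem:nonsurjective1}).

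With \eqref{eq:non-degeneracy2} in hand, Theorem~\ref{theo:gauss-mini} gives $\inf J = \inf_{\mathcal{CG}} J = D^{-1/2}$, where $D$ is the constant of \eqref{def:BL-constant}. To evaluate $D$ I would substitute $A_k = \mathrm{Id}_{H_k}$ for every $k$ in Theorem~\ref{thm:D-in-extremisable-case}: then $A := Q + \sum_{k=1}^m c_k B_k^\ast A_k B_k = \mathrm{Id}_H$ by \eqref{cond:geometric-condition}, so $(A_1,\ldots,A_m) \in \Lambda$, and the optimality criterion \eqref{eq:generalized-geometric-condition} holds because $A_k^{-1} - B_k A^{-1} B_k^\ast = \mathrm{Id}_{H_k} - B_k B_k^\ast = 0$ by \eqref{cond:geometric-condition-0}. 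Hence $D = \det A / \prod_k (\det A_k)^{c_k} = 1$ and $\inf J = 1$, which is exactly the ``in particular'' clause of Theorem~\ref{thm:D-in-extremisable-case}; this proves the inequality. For the equality statement, the admissible choice $h_k(y) = \exp(-\pi|y|^2) = g_{\mathrm{Id}_{H_k}}(y)$ gives, by formula \eqref{eq:J-on-Gaussian-input}, $J(h_1,\ldots,h_m) = (\det \mathrm{Id}_H / 1)^{-1/2} = 1$, so the bound is attained.

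Since both ingredients --- the transportation argument underlying Theorem~\ref{theo:gauss-mini} and the Schur/Hadamard-product concavity underlying Theorem~\ref{thm:D-in-extremisable-case} --- are already in place, there is no genuine obstacle in this argument; the only point demanding (minor) care is the verification that the normalization \eqref{cond:geometric-condition} forces $\mathcal{Q}$ to be positive definite on $\ker B_+$, which is what lets us invoke the non-degenerate case rather than one of the degenerate alternatives in the case analysis of Section~\ref{subsection:case-analysis}.
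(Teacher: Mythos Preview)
Your proposal is correct and follows essentially the same route as the paper's proof: reorder the exponents, use the decomposition-of-identity hypothesis to check that $\mathcal Q$ is positive definite on $\ker B_+$ (the paper phrases this as $Q+\sum_{i\le m^+} c_i B_i^*B_i=\mathrm{Id}_H+\sum_{j>m^+}|c_j|B_j^*B_j>0$ and restricts to $\ker B_+$), invoke Theorem~\ref{theo:gauss-mini} to reduce to centered Gaussians, and then apply Theorem~\ref{thm:D-in-extremisable-case} with $A_k=\mathrm{Id}_{H_k}$ to obtain $D=1$. Your write-up is in fact slightly more detailed than the paper's (you spell out the verification of \eqref{eq:generalized-geometric-condition} and the equality case via \eqref{eq:J-on-Gaussian-input}), but the logical structure is identical.
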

\begin{proof}
We may assume without loss of generality that $c_1,\ldots, c_{m^+}>0>c_{1+m^+},\ldots, c_m$. The above decomposition of the identity implies that
\[ Q+\sum_{i=1}^{m^+} c_i B_i^*B_i=\mathrm{Id}_H+\sum_{j>m^+} |c_j| B_j^*B_j>0.\] 
Hence the restriction of $Q$ to $\ker B_+=\bigcap_{i=1}^{m^+} \ker B_i$ is positive definite.   The non-degeneracy conditions \eqref{eq:non-degeneracy2} are verified and we may apply  Theorem \ref{theo:gauss-mini} to conclude $\inf J=\inf_{\mathcal {CG}} J$. Then Theorem \ref{thm:D-in-extremisable-case} ensures that $\inf_{\mathcal {CG}} J=D^{-\frac12}=1$.
\end{proof}

\subsection{Relation with the results of Chen, Dafnis and Paouris}

The reverse Gaussian correlation inequality by Chen, Dafnis and Paouris, presented here in Theorem~\ref{theo:CDP}, turns out to be the geometric version of our main result. To see this, let us consider a slight reformulation of the second inequality from Theorem~\ref{theo:CDP}, which appears explicitly in~\cite{chen-dafnis-paouris}:

\begin{theorem}[{\cite[Theorem 3(ii)]{chen-dafnis-paouris}}]\label{thm:CDP-thm-3}
Let $\gamma_E$ stand for the standard Gaussian measure on a Euclidean space $E$.
Let $B_k \colon H \to H_k$ (for $k=1,\ldots,m$) be linear maps satisfying $B_kB_k^*=\mathrm{Id}_{H_k}$. 
 Denote
\[
  B = (B_1, \ldots, B_m) \colon H \to H_1 \times \cdots \times H_m
\]
and let $C \colon H_1 \times \cdots \times H_m \to H_1 \times \cdots \times H_m$ be the block diagonal operator defined as
\[
  C = \diag\big(c_1 \Id_{H_1}, \ldots, c_m \Id_{H_m}\big).
\]
If
\begin{equation}\label{cond:CDP-algebraic-condition}
  B B^\ast \ge C^{-1}
\end{equation}
then for any non-negative functions $f_k \in L^1(H_k, \gamma_{H_k})$ ($k=1,\ldots,m$),
\[
  \int_{H} \prod_{k=1}^m f_k^{c_k}(B_k x) \, d\gamma_H(x) \ge \prod_{k=1}^m \Big( \int_{H_k} f_k \, d\gamma_{H_k} \Big)^{c_k},
\]
\end{theorem}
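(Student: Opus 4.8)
The plan is to obtain Theorem~\ref{thm:CDP-thm-3} from the geometric inverse Brascamp-Lieb inequality of Theorem~\ref{th:geometric-IBL} by moving the Gaussian weights into the functions. We may assume that all $c_k\neq 0$ (an exponent equal to $0$ makes the corresponding factor trivial on both sides) and that $0<\int_{H_k}f_k\,d\gamma_{H_k}<+\infty$ for every $k$ (otherwise the inequality holds trivially or by the convention $0\cdot\infty=0$). Write $n=\dim H$, $n_k=\dim H_k$, and put $\phi_k=f_k\,e^{-|\cdot|^2/2}$ on $H_k$. Since $B_kB_k^\ast=\Id_{H_k}$ yields $|B_kx|^2=\scalar{B_k^\ast B_k x,x}$, one has the pointwise identity
\[
  e^{-|x|^2/2}\prod_{k=1}^m f_k^{c_k}(B_kx)=e^{-\frac12\scalar{Qx,x}}\prod_{k=1}^m\phi_k^{c_k}(B_kx),\qquad Q:=\Id_H-\sum_{k=1}^m c_kB_k^\ast B_k .
\]
Performing the substitution $x=\sqrt{2\pi}\,w$ and setting $\psi_k(u):=\phi_k(\sqrt{2\pi}\,u)$, all powers of $2\pi$ cancel, so that
\[
  \int_H\prod_{k=1}^m f_k^{c_k}(B_kx)\,d\gamma_H(x)=\int_H e^{-\pi\scalar{Qw,w}}\prod_{k=1}^m\psi_k^{c_k}(B_kw)\,dw ,
\]
while an analogous computation gives $\int_{H_k}\psi_k=\int_{H_k}f_k\,d\gamma_{H_k}$ for each $k$. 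Hence it suffices to apply Theorem~\ref{th:geometric-IBL} to the data $\big(Q,(B_k)_{k},(c_k)_{k}\big)$ and the test functions $\psi_k$.

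The hypotheses $B_kB_k^\ast=\Id_{H_k}$ and $Q+\sum_k c_kB_k^\ast B_k=\Id_H$ of Theorem~\ref{th:geometric-IBL} hold by construction. The only real point is the dimension inequality $\dim H\ge s^+(Q)+\sum_{k\colon c_k>0}\dim H_k$, which is where the assumption $BB^\ast\ge C^{-1}$ is used, and I expect this to be the main obstacle. To handle it I would run a Schur-complement (inertia additivity) argument: since $\Id_H$ and $C^{-1}$ are both invertible, computing the two Schur complements of the bordered matrix $M=\begin{pmatrix}\Id_H&B^\ast\\ B&C^{-1}\end{pmatrix}$ with respect to its diagonal blocks exhibits $M$ as congruent to $\Id_H\oplus(C^{-1}-BB^\ast)$ and also to $Q\oplus C^{-1}$ — note that $\Id_H-B^\ast(C^{-1})^{-1}B=\Id_H-\sum_k c_kB_k^\ast B_k=Q$. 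By Sylvester's law of inertia,
\[
  \mathrm{In}(Q)+\mathrm{In}(C^{-1})=\mathrm{In}(\Id_H)+\mathrm{In}(C^{-1}-BB^\ast) .
\]
Reading off the positive parts and using $s^+(\Id_H)=\dim H$, $s^+(C^{-1})=\sum_{k\colon c_k>0}\dim H_k$, together with $s^+(C^{-1}-BB^\ast)=0$ (because $BB^\ast\ge C^{-1}$), one obtains in fact the equality $\dim H=s^+(Q)+\sum_{k\colon c_k>0}\dim H_k$, which is more than enough.

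With the dimension condition in hand, Theorem~\ref{th:geometric-IBL} gives
\[
  \int_H e^{-\pi\scalar{Qw,w}}\prod_{k=1}^m\psi_k^{c_k}(B_kw)\,dw\ \ge\ \prod_{k=1}^m\Big(\int_{H_k}\psi_k\Big)^{c_k}=\prod_{k=1}^m\Big(\int_{H_k}f_k\,d\gamma_{H_k}\Big)^{c_k},
\]
which is the assertion of Theorem~\ref{thm:CDP-thm-3}; tracing the equality case of Theorem~\ref{th:geometric-IBL} (Gaussian $\psi_k$) back through the substitution corresponds to $f_k\equiv 1$, consistent with both sides then equalling $1$. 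To summarise, the substance of the proof is the inertia bookkeeping of the second paragraph — one must make sure that the mixed-sign condition $BB^\ast\ge C^{-1}$, which couples the blocks attached to positive and to negative exponents, really controls the positive signature $s^+(Q)$ and not another invariant, and the bordered-matrix identity is what makes this transparent; the absorption of the Gaussian weights and the $\sqrt{2\pi}$-scaling are routine.
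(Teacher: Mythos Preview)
Your proof is correct, and the overall reduction to Theorem~\ref{th:geometric-IBL} via absorption of the Gaussian density and the $\sqrt{2\pi}$ rescaling is the same as the paper's. The genuine difference lies in how the dimension condition $\dim H\ge s^+(Q)+\sum_{k\colon c_k>0}\dim H_k$ is extracted from the hypothesis $BB^\ast\ge C^{-1}$.

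The paper proceeds by rewriting $BB^\ast\ge C^{-1}$ as a $2\times 2$ block inequality in terms of $\tilde B_\pm=|c_k|^{1/2}B_k$, then invokes Lemma~\ref{lem:equivalence-of-positive-definiteness} to convert this into $\tilde B_+(\Id_H+\tilde B_-^\ast\tilde B_-)^{-1}\tilde B_+^\ast\ge \Id_{H_+}$, massages this into $A(A+Q)^{-1}A\ge A$ with $A=\tilde B_+^\ast\tilde B_+$, and finally appeals to a dedicated lemma (diagonalising $A$ and $Q$ simultaneously) to conclude $s^+(Q)\le\dim\ker A=\dim H-\dim H_+$. Your route is the single bordered matrix $M=\begin{pmatrix}\Id_H&B^\ast\\ B&C^{-1}\end{pmatrix}$ together with Haynsworth inertia additivity: the two Schur complements give $\mathrm{In}(M)=\mathrm{In}(\Id_H)+\mathrm{In}(C^{-1}-BB^\ast)=\mathrm{In}(C^{-1})+\mathrm{In}(Q)$, and reading positive parts with $s^+(C^{-1}-BB^\ast)=0$ yields the equality $\dim H=s^+(Q)+\sum_{k\colon c_k>0}\dim H_k$ in one stroke. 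Your argument is shorter, avoids the two auxiliary lemmas, and makes transparent why one obtains an equality rather than merely the inequality required by Theorem~\ref{th:geometric-IBL}; the paper's approach, on the other hand, also records surjectivity of $B_+$ along the way, which is relevant elsewhere in Section~\ref{sec:geometric}.
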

By setting $h_k(x) = f_k(\sqrt{2\pi} x) e^{-\pi |x|^2}$ we can rewrite the above inequality in terms of integrals with respect to the Lebesgue measure:
\[
  \int_{H} \exp(-\pi \scalar{x, Qx}) \prod_{k=1}^m h_k^{c_k}(B_k x) \, dx \ge \prod_{k=1}^m \Big( \int_{H_k} h_k \Big)^{c_k},
\]
where $Q = \Id_H - \sum_{k=1}^m c_k B_k^\ast B_k$. Hence the geometric condition~\eqref{cond:geometric-condition} is obviously satisfied.
In order to deduce Theorem~\ref{thm:CDP-thm-3} from Theorem~\ref{th:geometric-IBL}, we need to establish the dimension condition 
 $\dim H\ge s^+(Q)+\sum_{k:\, c_k>0} \dim H_k$. This is what we do next.

Assume as usual that $c_1, \ldots, c_{m^+} > 0$ and $c_{m^+ +1}, \ldots, c_m < 0$. Recall that $B_+ = (B_1, \ldots, B_{m^+})$ and set $B_- = (B_{m^+ +1}, \ldots, B_m)$ and $H_+ = H_1 \times \cdots \times H_{m^+}$, $H_- = H_{m^+ +1} \times \cdots \times H_m$. The condition~\eqref{cond:CDP-algebraic-condition} is equivalent to
\[
  |C|^{1/2} B B^\ast |C|^{1/2} \ge \left( \begin{array}{cc} \Id_{H_+} & \\ & -\Id_{H_-} \end{array} \right).
\]
Introducing $\tilde{B}_k = |c_k|^{1/2} B_k$ for $k=1,\ldots,m$ and defining $\tilde{B}_+$ and $\tilde{B}_-$ correspondingly, the above condition can be rewritten as
\begin{equation}\label{cond:CDP-algebraic-condition-2}
  \left( \begin{array}{cc}
    -\Id_{H_+} + \tilde{B}_+ \tilde{B}_+^\ast & \tilde{B}_+ \tilde{B}_-^\ast \\[1ex]
    \tilde{B}_- \tilde{B}_+^\ast & \Id_{H_-} + \tilde{B}_- \tilde{B}_-^\ast
  \end{array} \right) \ge 0.
\end{equation}
Since the upper-left corner of the above matrix is semi-definite positive, we know  that $\tilde{B}_+ \tilde{B}_+^\ast$ is positive definite and hence $\tilde{B}_+$ is surjective, so does $B_+$. 
Moreover, from Lemma~\ref{lem:equivalence-of-positive-definiteness}(i) we get that~\eqref{cond:CDP-algebraic-condition-2} is equivalent to 
\[
  \tilde{B}_+ \big(\Id_H + \tilde{B}_-^\ast \tilde{B}_-\big)^{-1} \tilde{B}_+^\ast \ge \Id_{H_+},
\]
which in view of the identity $Q = \Id_H - \tilde{B}_+^\ast \tilde{B}_+ + \tilde{B}_-^\ast \tilde{B}_-$ implies
\[
  \tilde{B}_+^\ast \tilde{B}_+ \big(\tilde{B}_+^\ast \tilde{B}_+ + Q\big)^{-1} \tilde{B}_+^\ast \tilde{B}_+ \ge \tilde{B}_+^\ast \tilde{B}_+.
\]
Thanks to the lemma below we conclude that $s^+(Q) \le \dim \ker \tilde{B}_+$, which coincides with the dimension condition~\eqref{eq:surjectivity-condition} since $\dim \ker \tilde{B}_+ = \dim H - \dim H_+$ due to surjectivity of $\tilde{B}_+$.
\begin{lemma}
Let $A, B$ be real symmetric matrices of size $d$ such that $A \ge 0$ and $A + B > 0$. If $A (A+B)^{-1} A \ge A$ then $s^+(B) \le \dim \ker A$.
\end{lemma}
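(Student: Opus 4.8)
The plan is to diagonalize the problem after a suitable congruence. Set $M = A + B$, which is positive definite by hypothesis, so $M^{1/2}$ and $M^{-1/2}$ are well-defined, symmetric and invertible. First I would introduce $\tilde A = M^{-1/2} A M^{-1/2}$. It is positive semidefinite, and since $M^{-1/2}$ is invertible, $\dim \ker \tilde A = \dim \ker A$. Writing $A = M^{1/2} \tilde A M^{1/2}$ and using $M^{1/2} M^{-1} M^{1/2} = \Id$ one gets $A(A+B)^{-1}A = M^{1/2} \tilde A^2 M^{1/2}$, so the hypothesis $A(A+B)^{-1}A \ge A$ is, after conjugating by the invertible symmetric matrix $M^{-1/2}$, equivalent to the simple relation $\tilde A^2 \ge \tilde A$.

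The next step is purely spectral: working in an orthonormal eigenbasis of $\tilde A$, the inequalities $\tilde A \ge 0$ and $\tilde A^2 \ge \tilde A$ translate to $\lambda \ge 0$ and $\lambda^2 \ge \lambda$ for each eigenvalue $\lambda$, hence $\lambda = 0$ or $\lambda \ge 1$. In particular $\tilde A$ has no eigenvalue in the open interval $(0,1)$.

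Finally, $B = M - A$ is congruent (via $M^{-1/2}$) to $M^{-1/2} B M^{-1/2} = \Id - \tilde A$, so by Sylvester's law of inertia $s^+(B) = s^+(\Id - \tilde A)$. The positive eigenvalues of $\Id - \tilde A$ are the numbers $1 - \lambda$ with $\lambda$ an eigenvalue of $\tilde A$ satisfying $\lambda < 1$; by the previous paragraph such $\lambda$ must equal $0$. Hence $s^+(\Id - \tilde A)$ is exactly the multiplicity of the eigenvalue $0$ of $\tilde A$, that is $\dim \ker \tilde A = \dim \ker A$. This actually proves $s^+(B) = \dim \ker A$, which is stronger than the claimed inequality. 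There is no genuine difficulty in this argument; the only points requiring a little care are that conjugation by $M^{-1/2}$ simultaneously preserves the Loewner order (used to simplify the hypothesis into $\tilde A^2 \ge \tilde A$) and the inertia (used via Sylvester to compute $s^+(B)$).
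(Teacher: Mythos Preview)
Your proof is correct and follows essentially the same route as the paper: both arguments reduce via a congruence (the paper simultaneously diagonalizes $A$ and $A+B$, you conjugate by $(A+B)^{-1/2}$ and then diagonalize $\tilde A$) to a scalar eigenvalue condition, from which the conclusion is read off directly. Your observation that in fact $s^+(B)=\dim\ker A$ holds is also implicit in the paper's argument (since $a_i=0$ forces $b_i>0$ by $a_i+b_i>0$), though the paper only records the inequality.
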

\begin{proof}
Observe that the statement of this lemma is invariant under congruency (i.e. under replacing $A$ with $C^\ast A C$  and $B$ with $C^\ast B C$).
Since $A+B>0$ there exists an invertible matrix such that $C^*(A+B)C$ and $C^*AC$ are both diagonal. By subtraction we get that $C^*BC$ is diagonal too.
Hence, we may assume without loss of generality that $A=\diag\big((a_i)_{i=1}^d\big)$ and $B=\diag\big((b_i)_{i=1}^d\big)$ with for all $i$, $a_i\ge 0$ and $a_i+b_i>0$. The hypothesis $A (A+B)^{-1} A \ge A$ reads as $a_i^2/(a_i+b_i)\ge a_i$,
which is equivalent to $0\ge a_ib_i$, for all $i$. Since $a_i\ge 0$, we may deduce that for all $i$
$$ b_i>0 \Longrightarrow a_i=0.$$
The matrices being  diagonal, this implication means that $s^+(B)\le \dim \ker A$.
\end{proof}

Let us also comment on the Lebesgue version of the inverse Brascamp-Lieb inequalities presented in~\cite{chen-dafnis-paouris} as Theorem 2(ii). Applying suitable linear transformation in the Euclidean spaces $H$ and $H_1, \ldots, H_m$ one can formulate that result as follows:
\begin{theorem}[{\cite[Theorem 2(ii)]{chen-dafnis-paouris}}]
In the settings of Theorem~\ref{thm:CDP-thm-3}, if
\begin{equation}\label{cond:homogeneity}
  \dim H = \sum_{k=1}^m c_k \dim H_k
\end{equation}
and $BB^*\ge C^{-1}$ then for any non-negative integrable functions $f_k \in H_k \to [0,\infty)$,
\begin{equation}\label{eq:CDP-Lebesgue-BL}
  \int_{H} \prod_{k=1}^m f_k^{c_k}(B_k x) \, dx \ge \prod_{k=1}^m \Big( \int_{H_k} f_k \Big)^{c_k}.
\end{equation}
\end{theorem}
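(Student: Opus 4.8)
The plan is to deduce this from the geometric inequality of Theorem~\ref{th:geometric-IBL} (with vanishing kernel) after a short case split on the map $B_+=(B_1,\ldots,B_{m^+})$. First note that $B_+$ is automatically onto: as observed just above, $BB^\ast\ge C^{-1}$ forces the upper-left block $\tilde B_+\tilde B_+^\ast$ of the positive semi-definite matrix in~\eqref{cond:CDP-algebraic-condition-2} to be positive definite, hence $\tilde B_+$ and thus $B_+$ are surjective.

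I would first dispose of the degenerate case in which $B_+$ fails to be injective. Here the functional is trivially infinite: taking $\mathcal Q\equiv 0$, its restriction to $\ker B_+\neq\{0\}$ is not positive definite while $B_+$ is surjective, so Proposition~\ref{prop:finitevalues2} gives $\int_H\prod_{k=1}^m f_k^{c_k}(B_kx)\,dx=+\infty$ for every tuple with $\int f_k\in(0,+\infty)$, and~\eqref{eq:CDP-Lebesgue-BL} follows (the borderline cases where some $\int f_k$ is $0$ or $+\infty$ being handled by the conventions on $0\cdot\infty$).

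The substantive case is that $B_+$ is bijective, and the key step is to show that then $Q:=\Id_H-\sum_{k=1}^m c_kB_k^\ast B_k$ must vanish, so that we are exactly in the setting of Theorem~\ref{th:geometric-IBL} with trivial kernel. On the one hand, cyclic invariance of the trace together with $B_kB_k^\ast=\Id_{H_k}$ and the homogeneity hypothesis~\eqref{cond:homogeneity} give $\tr\big(\sum_k c_kB_k^\ast B_k\big)=\sum_k c_k\dim H_k=\dim H$. On the other hand, the computation carried out just above (applying the last lemma with $A=\tilde B_+^\ast\tilde B_+\ge 0$ and $B=Q$, for which $A+B=\Id_H+\tilde B_-^\ast\tilde B_-\>0$ and $A(A+B)^{-1}A\ge A$ is equivalent to $BB^\ast\ge C^{-1}$) yields $s^+(Q)\le\dim\ker\tilde B_+=\dim H-\dim H_+=0$ since $B_+$ is bijective. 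Hence $\sum_k c_kB_k^\ast B_k=\Id_H-Q\ge\Id_H$ has $\dim H$ eigenvalues, all $\ge 1$ and summing to $\dim H$; they are therefore all equal to $1$, so $\sum_k c_kB_k^\ast B_k=\Id_H$ and $Q=0$. It then remains to check the hypotheses of Theorem~\ref{th:geometric-IBL}: $B_kB_k^\ast=\Id_{H_k}$ holds by assumption, the geometric identity $\mathcal Q+\sum_k c_kB_k^\ast B_k=\Id_H$ holds with $\mathcal Q\equiv 0$, and the required dimension condition $\dim H\ge s^+(\mathcal Q)+\sum_{k\colon c_k>0}\dim H_k$ reads $\dim H\ge\dim H_+$, which holds with equality. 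Theorem~\ref{th:geometric-IBL} then gives~\eqref{eq:CDP-Lebesgue-BL}.

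The point to be careful about is precisely this dichotomy. A natural but naive attempt would be to derive~\eqref{eq:CDP-Lebesgue-BL} from the Gaussian-kernel inequality of Theorem~\ref{th:geometric-IBL} by applying it to dilates $f_k(t\,\cdot)$ and letting $t\to\infty$, using~\eqref{cond:homogeneity} to cancel the resulting powers of $t$; but because $Q$ is in general indefinite, the rescaled kernels $\exp(-\pi\langle x,Qx\rangle/t^2)$ are neither uniformly dominated by an integrable function nor monotone in $t$, so the passage to the limit is not justified. The argument above sidesteps this entirely: homogeneity together with $BB^\ast\ge C^{-1}$ already collapses $Q$ to $0$ in the only case that matters, so no limiting procedure is needed.
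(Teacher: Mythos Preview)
Your proof is correct and follows essentially the same route as the paper: the same dichotomy on injectivity of $B_+$, the same appeal to Proposition~\ref{prop:finitevalues2} in the degenerate case, and the same trace argument to force $\sum_k c_kB_k^\ast B_k=\Id_H$ in the bijective case, followed by Theorem~\ref{th:geometric-IBL}. The only cosmetic difference is that in the bijective case the paper obtains $\sum_k c_kB_k^\ast B_k\ge\Id_H$ directly from Lemma~\ref{lem:equivalence-of-positive-definiteness}(ii), whereas you reach it via the bound $s^+(Q)\le\dim\ker\tilde B_+=0$ established just above; these are equivalent observations.
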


Let us explain how to reprove this results from what we already did, and settle a question on existence of cases of equalities that
was left open in \cite{chen-dafnis-paouris}.
Recall that $BB^*\ge C^{-1}$ is equivalent to~\eqref{cond:CDP-algebraic-condition-2} and implies that $B_+$ (equivalently $\tilde{B}_+$) is surjective. There are two possible cases:
\begin{enumerate}
\item[Case 1:] $B_+$ (equivalently $\tilde{B_+}$) is injective. In this case we can apply Lemma~\ref{lem:equivalence-of-positive-definiteness}(ii) to get that $\eqref{cond:CDP-algebraic-condition-2}$ is equivalent to
$\tilde{B}_+^\ast \tilde{B}_+ - \tilde{B}_-^\ast \tilde{B}_- \ge \Id_H$ or simply
\begin{equation}\label{cond:CDP-algebraic-condition-3}
  \sum_{k=1}^m c_k B_k^\ast B_k \ge \Id_H.
\end{equation}
Using the $B_kB_k^*=\mathrm{Id}_{H_k}$ and~\eqref{cond:homogeneity} we see the maps on both sides of the above inequality have the same trace. Hence there must be equality in~\eqref{cond:CDP-algebraic-condition-3}, i.e. the geometric condition~\eqref{cond:geometric-condition} holds. In particular for the functions $f_k(x) = \exp(-\pi |x|^2)$ we get equality in~\eqref{eq:CDP-Lebesgue-BL}. The decomposition of the identity also allows to deduce \eqref{eq:CDP-Lebesgue-BL} from Theorem \ref{cond:CDP-algebraic-condition} applied to the functions $f_k(\cdot)\exp(|\cdot|^2/2)$.
\item[Case 2:] $B_+$ has a non-trivial kernel. Since $B_+$ is surjective and $Q=0$, we are in the degenerate case 0.1 of the case analysis made in Subsection~\ref{subsection:case-analysis}, in which the left-hand side of~\eqref{eq:CDP-Lebesgue-BL} is always infinite and thus~\eqref{eq:CDP-Lebesgue-BL} does not admit extremizers.
\end{enumerate}

\section{Dual form of inverse Brascamp-Lieb inequalities}\label{sec:dual-inverse}
The transportation technique that we have used in Section \ref{sec:proof-main} in order to prove inverse Brascamp-Lieb inequalities follows the one used by the first named author in \cite{barthe-inventiones}. In this reference, the method is actually proved to establish two inequalities:
\begin{itemize}
\item The classical multilinear Brascamp-Lieb inequality, of the form
\[ \int_{ H} \prod_{i=0}^m f_i(B_ix)^{c_i} dx \le C_{BL} \prod_{i=0}^m \left( \int_{H_i} f_i \right) ^{c_i},\]
\item The ``dual'' Brascamp-Lieb inequality,
\[ \int_{ H}^* \sup_{\sum_i c_i B_i^*x_i=x}\prod_{i=0}^m f_i(x_i)^{c_i} dx \ge C_{DBL} \prod_{i=0}^m \left( \int_{H_i} f_i \right) ^{c_i}.\]
\end{itemize}
For both inequalities, the optimal constant is obtained by inspecting centered Gaussian functions. Also the only relevant indices are $c_i\in (0,1]$. Moreover it is possible to introduce a kernel by fixing $c_0=1$ and $f_0$ to be a specific Gaussian functions, and then to consider the best constant for arbitrary non-negative integrable functions $f_1,\ldots,f_m$.

Let us reproduce the proof of Theorem \ref{theo:gauss-mini} of the inverse Brascamp Lieb inequality 
\[ \int_{ H} \prod_{i=0}^{m+1} f_i(B_ix)^{c_i} dx \ge C_{IBL} \prod_{i=0}^{m+1} \left( \int_{H_i} f_i \right) ^{c_i},\]
but choosing  the functions $g_1,\ldots, g_m$ to be arbitrary  (we omit here to repeat the regularity and support assumptions, that can be achieved by approximation. Recall that $c_0=1=-c_{m+1}$ and that $f_0, g_0, f_{m+1},g_{m+1}$ are specific Gaussian functions, which model Gaussian kernels).
With our notation,
\[J(f_1,\ldots,f_m)=\frac{\int_H e^{-\pi \langle Q_+B_0x,B_0x\rangle+\pi \langle Q_-B_{m+1}x,B_{m+1}x\rangle} \prod_{k=1}^m f_k(B_kx)^{c_k}dx}{ \prod_{k=1}^m \left( \int_{H_k} f_k\right)^{c_k}}.\]
In view of \eqref{eq:dual-inverse-proof} we also set ($*$ standing for inner integral):
\[K(g_1,\ldots,g_m)=\frac{ \int_{*,H} \inf_{\sum c_k B_k^*y_k=y} e^{-\pi \langle Q_+^{-1}y_0,y_0\rangle+\pi \langle Q_-^{-1}y_{m+1},y_{m+1}\rangle} \prod_{k=1}^m g_k(y_k)^{c_k}dy}{ \prod_{k=1}^m \left( \int_{H_k} g_k\right)^{c_k}}.\]
The above transportation argument, up to \eqref{eq:dual-inverse-proof}
 yields $ J(f_1,\ldots,f_m)\ge D^{-1} K(g_1,\ldots,g_m)$ for all functions, hence
 $\inf J\ge D^{-1} \sup K$. 
 However, we have seen in \eqref{def:BL-constant} that $\inf_{\mathcal{CG}}J=D^{-\frac12}$. The conclusion of the argument after \eqref{eq:dual-inverse-proof} can be rephrased as $\sup_{\mathcal{CG}}K=D^{\frac12}$. Therefore 
 \[ \sqrt D = D \inf_{\mathcal{CG}}J\ge D \inf J \ge \sup K\ge \sup_{\mathcal{CG}}K=\sqrt D.\]
 In particular $\sup K =\sup_{\mathcal{CG}}K$. This means that under the non-degeneracy hypothesis of Theorem \ref{theo:gauss-mini}, the best constant 
in the following inequality (which can be called dual inverse Brascamp-Lieb)
is obtained by inspecting centered Gaussian functions only: for all $g_1,\ldots,g_m$,
\[ \int_{*,H} \inf_{\sum c_k B_k^*y_k=y}  \prod_{k=0}^{m+1} g_k(y_k)^{c_k}dy\le C_{DIBL} \prod_{k=1}^m \left( \int_{H_k} g_k\right)^{c_k}.\]
Let us state the simplest examples of these four inequalities: no kernel, two functions, all maps being the identity: for all $f,g:\mathbb R^n\to \mathbb R^+$ with $\int f\in (0,+\infty)$:

If $\lambda\in (0,1)$,
 \[ \int f(x)^\lambda g(x)^{1-\lambda}dx \le \left(\int f\right)^\lambda  \left(\int g\right)^{1-\lambda}\le \int^* \sup_{\lambda a+(1-\lambda) b=x}  f(a)^\lambda g(b)^{1-\lambda}\;dx \]

If $\lambda\in \mathbb R\setminus [0,1]$,
\[\int f(x)^\lambda g(x)^{1-\lambda}dx \ge \left(\int f\right)^\lambda  \left(\int g\right)^{1-\lambda}\ge \int_* \inf_{\lambda a+(1-\lambda) b=x}  f(a)^\lambda g(b)^{1-\lambda}\; dx 
\]
The reader has recognized the inequalities of H\"older, Pr\'ekopa-Leindler and the inverse H\"older inequality. The fourth inequality seems novel. In this very simple situation, the inequalities for $\lambda \in \mathbb R\setminus [0,1]$ can be deduced from the ones for $\lambda\in (0,1)$ by rearranging the terms.

\section{Interpolation}\label{sec:interpolation}

We have proved that the best constant in inverse Brascamp-Lieb inequalities can be computed using centered Gaussian functions, apart from some degenerate situations. In the rest of the paper, we address the question of positivity
of this optimal constant. More precisely, given a quadratic form $\mathcal Q$ and the geometric data $B=(B_k)_{k=1}^m$, our aim is to characterize exponents $c=(c)_{k=1}^m$ for which a non-trivial inverse Brascamp-Lieb inequality holds, meaning $\inf J_{\mathcal Q,B,c}>0$ where
$$
 J_{\mathcal Q,B,c}(f_1, \ldots, f_m) = \frac{\int_H e^{-\mathcal Q (x)} \prod_{i=k}^m f_k^{c_k}(B_k x) \, dx}{\prod_{k=1}^m \left(\int_{H_k} f_k \right)^{c_k}}.
 $$ 
The analogous question for direct Brascamp-Lieb inequality was solved in full generality by Bennett, Carbery, Christ and Tao \cite{BCCT-structure,BCCT-finiteness}. They gave a description of the set $\mathcal F$ of exponents $c$ for which $\sup J_{\mathcal Q,B,c}<+\infty$. It turns out that this set $\mathcal F$ is convex, which is a simple instance of interpolation of Lebesgue spaces. Actually, this may be proved by mere application of the Cauchy-Schwarz inequality: if $t\in [0,1]$,
 \begin{align*}
 \int  e^{-\mathcal Q } \prod_{k=1}^m f_k^{tc_k+(1-t)d_k}\circ B_k 
 & \le \left(\int e^{-\mathcal Q} \prod_{k=1}^m f_k^{c_k}\circ B_k  \right)^t \left(\int e^{-\mathcal Q } \prod_{k=1}^m f_k^{d_k}\circ B_k  \right)^{1-t} \\
 &\le  (\sup J_{\mathcal Q,B,c})^t (\sup J_{\mathcal Q,B,d})^{1-t}  \prod_{k=1}^m \left(\int_{H_k} f_k \right)^{tc_k+(1-t)d_k}.
\end{align*}

In the setting of inverse inequalities, we did not find a simple interpolation argument as above. Nevertheless
the convexity of the set of non-trivial exponents is still valid, provided one prescribes their signs.
\begin{prop}\label{prop:interpolation}
Let $0\le m^+\le m$, linear surjective maps $B_k \colon H\to H_k$, $1\le k\le m$ and a quadratic form $\mathcal Q \colon H\to \R$.
Assume that $\mathcal Q$ is positive definite on $\ker B_+$ and 
$$ \dim H\ge s^+(\mathcal Q)+\sum_{i=1}^{m^+} \dim H_i.$$
Let $c,d\in (0,+\infty)^{m^+}\times (-\infty,0]^{m-m^+}$ satisfy $\inf J_{\mathcal Q,B,c}>0$ and $\inf J_{\mathcal Q,B,d}>0$. 
Then for any $t\in [0,1]$,
 $$\inf J_{\mathcal Q,B,tc+(1-t) d}>0.$$
\end{prop}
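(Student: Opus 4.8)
The plan is to reduce the statement to a finite‑dimensional inequality about the quantity~\eqref{def:BL-constant}, and then to exploit a simple rescaling of the auxiliary operators.

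\textbf{Step 1: reduction to Gaussian data.} Put $e := tc+(1-t)d$. Since $c$ and $d$ share the same sign pattern, so does $e$, hence the map $B_+=(B_1,\dots,B_{m^+})$ and the number $s^+(\mathcal Q)$ are the same for $c$, $d$ and $e$; in particular the hypotheses of the proposition — which are exactly the non‑degeneracy condition~\eqref{eq:non-degeneracy2} — hold for all three exponent vectors. By Theorem~\ref{theo:gauss-mini} we therefore have, for $c'\in\{c,d,e\}$, that $\inf J_{\mathcal Q,B,c'}=\inf_{\mathcal{CG}}J_{\mathcal Q,B,c'}=D(c')^{-1/2}$, where $D(c')$ denotes the supremum~\eqref{def:BL-constant} associated with the exponents $c'$ (and $\Lambda(c')$ the corresponding set $\Lambda$). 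Thus $\inf J_{\mathcal Q,B,c'}>0$ is equivalent to $D(c')<+\infty$, and it suffices to show that $D(c)<+\infty$ and $D(d)<+\infty$ imply $D(e)<+\infty$. If $\Lambda(e)=\emptyset$ there is nothing to prove, so assume it is non‑empty. We may also assume $c_k\neq 0\neq d_k$ for every $k$: an index with $c_k=d_k=0$ (hence $e_k=0$) contributes nothing to any of the three functionals and can be deleted, while an index with exactly one of $c_k,d_k$ equal to $0$ is treated by the same argument below after dropping the corresponding summand in $M$ (which only enlarges $\det M$, as that summand is negative semi‑definite).

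\textbf{Step 2: the comparison trick.} Fix $(A_1,\dots,A_m)\in\Lambda(e)$ and set $M:=Q+\sum_{k=1}^m e_k B_k^\ast A_k B_k>0$. For $k\le m^+$ the numbers $c_k,d_k,e_k$ are positive; for $k>m^+$ they are negative. In either case $e_k/c_k=t+(1-t)\,d_k/c_k>0$ and $e_k/d_k=t\,c_k/d_k+(1-t)>0$, so $A_k':=(e_k/c_k)A_k$ and $A_k'':=(e_k/d_k)A_k$ are positive definite. Moreover
\[
  Q+\sum_{k=1}^m c_k B_k^\ast A_k' B_k \;=\; Q+\sum_{k=1}^m e_k B_k^\ast A_k B_k \;=\; M \;=\; Q+\sum_{k=1}^m d_k B_k^\ast A_k'' B_k ,
\]
so $(A_k')\in\Lambda(c)$ and $(A_k'')\in\Lambda(d)$. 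Comparing with the definition of $D(c)$ and $D(d)$ and using $\det\big((e_k/c_k)A_k\big)=(e_k/c_k)^{\dim H_k}\det A_k$, we obtain
\[
  \det M \le C_1\, D(c)\prod_{k=1}^m(\det A_k)^{c_k},
  \qquad
  \det M \le C_2\, D(d)\prod_{k=1}^m(\det A_k)^{d_k},
\]
where $C_1=\prod_k (e_k/c_k)^{c_k\dim H_k}$ and $C_2=\prod_k (e_k/d_k)^{d_k\dim H_k}$ are finite positive constants depending only on $c,d,t$ and the dimensions.

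\textbf{Step 3: geometric mean and conclusion.} Divide both inequalities by $\prod_k(\det A_k)^{e_k}$. Using $c_k-e_k=(1-t)(c_k-d_k)$ and $d_k-e_k=-t(c_k-d_k)$ this reads
\[
  \frac{\det M}{\prod_k(\det A_k)^{e_k}} \le C_1 D(c)\prod_{k}(\det A_k)^{(1-t)(c_k-d_k)},
  \qquad
  \frac{\det M}{\prod_k(\det A_k)^{e_k}} \le C_2 D(d)\prod_{k}(\det A_k)^{-t(c_k-d_k)} .
\]
Raising the first bound to the power $t$, the second to the power $1-t$, and multiplying, the factors $\prod_k(\det A_k)^{\pm t(1-t)(c_k-d_k)}$ cancel, whence
\[
  \frac{\det M}{\prod_k(\det A_k)^{e_k}} \le C_1^{\,t} C_2^{\,1-t}\, D(c)^{t} D(d)^{1-t}.
\]
Taking the supremum over $(A_1,\dots,A_m)\in\Lambda(e)$ yields $D(e)\le C_1^{t}C_2^{1-t}D(c)^{t}D(d)^{1-t}<+\infty$, and therefore $\inf J_{\mathcal Q,B,e}=D(e)^{-1/2}>0$.

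\textbf{Where the difficulty lies.} The naive idea of interpolating directly at the level of the functional $J$ fails: writing the integrand for the exponent $e$ as $U^{t}V^{1-t}$, with $U$ and $V$ the integrands for $c$ and $d$, only yields $\int U^{t}V^{1-t}\le(\int U)^{t}(\int V)^{1-t}$, the wrong direction, and no reverse Hölder lower bound is available for arbitrary positive $U,V$ (the quantities $\int f_k^{p}$ with $p\neq 1$ are not controlled, and can be infinite). The whole point is therefore to pass to the Gaussian problem, where these quantities are replaced by powers of $\det A_k$, and to realize that the correct ``comparison Gaussians'' are the trivial rescalings $A_k'=(e_k/c_k)A_k$ and $A_k''=(e_k/d_k)A_k$, which reproduce the middle operator $M$ exactly and whose determinant contributions cancel after the geometric mean. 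Everything else is bookkeeping.
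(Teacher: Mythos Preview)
Your proof is correct and is essentially the same argument as the paper's. Both reduce to the Gaussian problem via Theorem~\ref{theo:gauss-mini} and then exploit the same rescaling trick: the paper changes variables $M_k=|c_k|A_k$ so that the operator $Q+\sum_{i\le m^+}B_i^\ast M_iB_i-\sum_{j>m^+}B_j^\ast M_jB_j$ no longer depends on the exponent vector, while you keep $(A_k)$ fixed and rescale to $A_k'=(e_k/c_k)A_k$, $A_k''=(e_k/d_k)A_k$, which is the same substitution read in the other direction; in either presentation the geometric mean with weights $t,1-t$ then cancels the residual determinant factors. Your treatment of vanishing coordinates (dropping the corresponding negative semi-definite summand, which only increases $\det M$) is terse but sound, and parallels the paper's passage from~\eqref{eq:finite-sup1} to~\eqref{eq:finite-sup2}.
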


\begin{proof}
 We use Theorem \ref{theo:gauss-mini} (the infimum of $J$ can be computed on centered Gaussians) and the
 explicit calculations on centered Gaussian functions of Subsection \ref{subsec:centered-gaussians}. Let $Q$
 be a self-adjoint linear map such that for all $x\in H$, $\mathcal Q(x)=\pi \langle x,Qx \rangle$. 
 Let 
 $$\mathcal P=\mathcal P_{\mathcal Q,B,m^+}=\big\{ x\in  (0,+\infty)^{m^+}\times (-\infty,0]^{m-m^+}; \inf J_{\mathcal Q,B,x}>0\big\}$$
 denote the set of exponents with prescribed signs, for which a non-trivial inequality holds.
 Given  $c\in (0,+\infty)^{m^+}\times (-\infty,0]^{m-m^+}$, 
 Equation \eqref{def:BL-constant} ensures that $c\in \mathcal P$ 
if and only if 
$$ \sup_{A_k>0} \frac{\det \left( \Big(Q+\sum_k c_k B_k^* A_k B_k \Big)_+ \right)}{\prod_k (\det A_k)^{c_k}} <+\infty.$$
where the supremum is on $k$-tuples of definite positive self-adjoint operators $A_k$ on $H_k$ and for a self-adjoint operator $A$ we denote
\begin{equation}\label{eq:def-A-plus}
  (A)_+ = \begin{cases}
    A & \textup{if $A$ is positive semi-definite,} \\
    0 & \textup{otherwise.}
  \end{cases}
\end{equation}

When $c_k\neq 0$ (which is true at least for $k\le m^+$), we make a change of variables $M_k=|c_k| A_k$ (which is still positive definite). Hence $c\in \mathcal P$ is equivalent to 
\begin{equation}\label{eq:finite-sup1}
\sup_{M_k>0} \frac{\det \left( \Big(Q+\sum_{i\le m^+}  B_i^* M_iB_i-\sum_{j>m^+; \, c_j\neq 0}  B_j^* M_j B_j \Big)_+ \right)}{\prod_k (\det M_k)^{c_k}}<+\infty.
\end{equation}
We claim that the latter is equivalent to 
\begin{equation}\label{eq:finite-sup2}
\sup_{M_k>0} \frac{\det \left( \Big(Q+\sum_{i\le m^+}  B_i^* M_iB_i-\sum_{j>m^+}  B_j^* M_j B_j \Big)_+ \right)}{\prod_k (\det M_k)^{c_k}}<+\infty.
\end{equation}
The fact that \eqref{eq:finite-sup1} implies \eqref{eq:finite-sup2} is easy: $A\mapsto (A)_+$
is a non-decreasing map on self-adjoint operators and the operator in the determinant of \eqref{eq:finite-sup2} differs from the one of \eqref{eq:finite-sup1} by additional
negative definite terms.
 
 To show that \eqref{eq:finite-sup2} implies \eqref{eq:finite-sup1}, it is sufficient to let $M_j>0$ tend to $0$  for all indices $j>n$ such that $c_j=0$ (hence $(\det M_j)^{c_j}=1$).
 This requires continuity properties of the numerator. Observe that $A\mapsto (A)_+$ is not
 continuous (if $A$ is not positive definite but is semi-definite positive, then $(A)_+=A$
 but for $\varepsilon>0$, $A - \varepsilon \Id$ is not positive semi-definite, so that $\lim_{\varepsilon \to 0^+}  (A - \varepsilon \Id)_+=0$).
Fortunately, we may conclude by using the continuity of $A\mapsto \det(A_+)$ (which is 
easy to verify: let  $(A_n)$ be self-adjoint operators tending to $A$. If $A$ is positive definite, then so is $A_n$ for $n$ large enough, and continuity of the determinant allows to conclude.
If there is $v\neq 0$ with $\langle Av,v\rangle <0$ then this is eventually true for $A_n$
and $(A)_+=(A_n)_+=0$. If $A$ is semi-definite positive but not definite positive, then 
$\det A=0$. If $A_n$ is not positive
definite  then $\det(A_n)_+ = 0$, while if $A_n$ is positive definite then $\det (A_n)_+=
\det A_n$ tends to $\det A=0$ when $n$ increases).
 
\medskip
We are ready to show the convexity of $\mathcal P$. Let $c,d\in \mathcal P$. Using that \eqref{eq:finite-sup2} characterizes membership to $\mathcal P$, we know that there exists $K_c$ and $K_d$ in $\R^+$ such that
for all $M_k>0$,
$$\det \left( \Big(Q + \sum_{i\le m^+}  B_i^*M_iB_i -\sum_{j>m^+}  B_j^*M_jB_j  \Big)_+ \right)$$
is upper bounded by $K_c \prod_k (\det M_k)^{c_k}$ and by  $K_d \prod_k  (\det M_k)^{d_k}$.
Therefore, for any $t\in[0,1]$,
$$
\det \left( \Big(Q + \sum_{i\le m^+}  B_i^*M_iB_i -\sum_{j>m^+}  B_j^*M_jB_j  \Big)_+ \right)
\le K_c^{t}K_d^{1-t} \prod_k (\det M_k)^{tc_k+(1-t)d_k}.$$
The above arguments show that this implies that $tc+(1-t)d\in \mathcal P$. Hence $\mathcal P$ is convex.
\end{proof}

\section{Positivity in the rank one case}
\label{sec:finiteness-1}
We study the positivity of the optimal constant in inverse Brascamp-Lieb inequalities, when for all   $k=1,\ldots,m$,  $\dim H_k=1$, in terms of the coefficients $(c_k)_{k=1}^m$. In this case a very complete solution can be given, based on a rather straightforward argument.  
For concreteness, we may identify  each $H_k$ with $\R$,  
and  we may find non-zero vectors $u_k\in H$ such that $B_k x=\langle x, u_k\rangle$ for all $x\in H$.
As before, we consider 
exponents with prescribed signs: given $m^+\le m$ the first $m^+$ coefficients are positive, while the other ones are non-positive.
In addition, we work under the hypotheses of Theorem \ref{theo:gauss-mini}. 

For two integers $k, l$ let $[\![k,l]\!] = \{ k, k+1, \ldots, l\}$ and $]\!]k,l]\!] = \{ k+1, k+2, \ldots, l\}$. 
 
 \subsection{No kernel}
 We start with the case when  $\mathcal Q=0$. In the present setting, the hypotheses of Theorem \ref{theo:gauss-mini} simply mean that the map $B_+$ is one-to-one, that is $(u_1,\ldots, u_{m^+})$ is a basis of $H$ (which can be identified to $\R^{m^+}$).
 For every family $(f_k)_{k=1}^m$ of non-negative integrable functions on $\R$ with
 positive integrals, the functional of interest is
$$J_{u,c}(f_1,\ldots,f_m)= \frac{\int_{H} \prod_{k=1}^m f_k\big(\langle x, u_k\rangle\big)^{c_k} dx}{ \prod_{k=1}^m \left( \int_{\mathbb R} f_k\right)^{c_k}} \cdot$$

\begin{theorem}\label{theo:positivity-rank1}
Let $m\ge m^+\ge 1$ and $u_1,\ldots u_m$ be non-zero vectors in $H$. Assume also that $(u_1,\ldots, u_{m^+})$ is
a basis of $H$. For $i\le m^+<j$, we write that $i\sim j$ if $u_j$ has a non-zero $i$-th coordinate in the 
latter basis.
Consider the positivity domain 
$$\mathcal P_{m^+}((u_k)_{k=1}^m)=\big\{c\in (0,+\infty)^{m^+}\times (-\infty,0]^{m-m^+}; \; \inf J_{u,c}>0\big\}.$$
For any set $S \subseteq [\![1,m]\!]$, denote by $\II_S$ the vector in $\{0,1\}^m$ with $i$-th coordinate equal to 1 if and only if $i\in S$. 
Then
\begin{eqnarray*}
\mathcal P_{m^+}((u_k)_{k=1}^m)&=& \II_{[\![1,m^+]\!]}+\mathrm{Pos}\Big(\big\{\II_{\{i\}}-\II_{\{j\}};\; i\sim j \big\}\Big)\\
&=& \Big\{c\in [1,+\infty)^{m^+}\times (-\infty,0]^{m-m^+};\quad  \sum_k c_k=m^+ \; \mathrm{and} \\
&& \quad   \mathrm{ \, for\,  all} \;
 S\subset [\![1,m^+]\!], \quad \sum_{i\in S} (c_i-1) \le 
  \sum_{j;\; S\sim j} |c_j|\Big\}\\
  &=& \Big\{c\in [1,+\infty)^{m^+}\times (-\infty,0]^{m-m^+}; \quad \sum_k c_k=m^+ \; \mathrm{and}  \\
&& \quad  \mathrm{ \, for\,  all} \;
 T\subset ]\!]m^+,m]\!], \quad \sum_{j\in T} |c_j| \le 
  \sum_{i;\; i\sim T} (c_i-1)\Big\},  
\end{eqnarray*}
where $S\sim j$ means that there exists $i\in S$ with $i\sim j$, and $i \sim T$ means that there exists $j \in T$ with $i \sim j$, and $\mathrm{Pos}(A)$ is the positive hull of $A$.
\end{theorem}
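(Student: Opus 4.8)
The plan is to exploit Theorem~\ref{theo:gauss-mini}, which says that under our hypotheses $\inf J_{u,c}=\inf_{\mathcal{CG}} J_{u,c}$, so that by~\eqref{def:BL-constant} we have $c \in \mathcal P_{m^+}((u_k))$ if and only if the supremum
\[
  D = \sup\left\{ \frac{\det\big(\sum_{k=1}^m c_k u_k u_k^\ast\big)}{\prod_{k=1}^m t_k^{c_k}} \colon t_k > 0, \ \sum_{k=1}^m c_k t_k u_k u_k^\ast > 0 \right\}
\]
is finite (here each $A_k$ on the $1$-dimensional space $H_k \cong \R$ is just a positive scalar $t_k$, and $B_k^\ast A_k B_k = t_k u_k u_k^\ast$). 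Since $(u_1,\dots,u_{m^+})$ is a basis, I would first change coordinates so that $u_i = e_i$ for $i \le m^+$, writing each $u_j$ for $j > m^+$ in this basis; then $i \sim j$ means exactly that the $i$-th coordinate of $u_j$ is nonzero. Homogeneity of the ratio in the scaling $(t_k) \mapsto (\rho\, t_k)$ forces $\sum_k c_k = m^+ = \dim H$ (else $D \in \{0,+\infty\}$), which is the first listed equation; this parallels the necessity argument already used for Theorem~\ref{th:positivity-general-rank-no-kernel}.

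Next I would establish that on the positivity domain one necessarily has $c_i \ge 1$ for $i \le m^+$: this is Proposition~\ref{prop:c-i-ge-1} referred to in the introduction, and in the rank-one no-kernel case it follows by testing the supremum against $t_i \to \infty$ with all other variables fixed (or by restricting to the coordinate subspace $\R e_i$, where the inequality degenerates to inverse H\"older and requires $c_i \ge 1$). The heart of the proof is then the equivalence of finiteness of $D$ with the family of inequalities $\sum_{i \in S}(c_i - 1) \le \sum_{j:\, S \sim j} |c_j|$ over $S \subseteq [\![1,m^+]\!]$. For this I would take logarithms: with $x_k = \log t_k$, finiteness of $D$ is equivalent to
\[
  \log\det\Big(\sum_{k=1}^m c_k e^{x_k} u_k u_k^\ast\Big) - \sum_{k=1}^m c_k x_k
\]
being bounded above over the region where the argument is positive definite. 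One direction (necessity of the inequalities): for a given $S$, send $x_i \to +\infty$ for $i \in S$ at a common rate $R$, keep $x_i$ bounded for $i \in [\![1,m^+]\!]\setminus S$, and send $x_j \to +\infty$ for exactly those $j > m^+$ with $j \sim S$ (at rate $R$ as well, so as to keep positive-definiteness — the negative terms $-|c_j| e^{x_j} u_j u_j^\ast$ are needed to control the blow-up of the determinant in the directions $e_i$, $i\in S$). A Laplace/leading-order analysis of the determinant (using that $\det(\sum c_k e^{x_k}u_ku_k^\ast)$ is, by Cauchy--Binet, a positive-combination of products $\prod_{k\in B} e^{x_k}$ over bases $B$) shows the expression grows like $R\big(\sum_{i\in S}1 - \sum_{i\in S} c_i + \sum_{j:\,S\sim j,\ j>m^+}|c_j|\cdot(\text{something})\big)$; more care is needed here, so I would instead mimic the inductive/slicing argument of \cite{BCCT-finiteness} adapted to rank one. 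For sufficiency, I would argue that the polyhedral cone cut out by $\sum c_k = m^+$, $c_i \ge 1$, $c_j \le 0$ and the $S$-inequalities equals $\II_{[\![1,m^+]\!]} + \mathrm{Pos}(\{\II_{\{i\}} - \II_{\{j\}} : i \sim j\})$ — a purely combinatorial duality (a Farkas/flow-type argument: the inequalities over $S$ and over $T$ are the two descriptions of the same transportation polytope, one by facets and one by vertices) — and then check that each generator $\II_{\{i_0\}} - \II_{\{j_0\}}$ with $i_0 \sim j_0$ stays in the positivity domain, which is a one-parameter family easily handled by the explicit determinant formula, and invoke the convexity Proposition~\ref{prop:interpolation} together with the fact that $\II_{[\![1,m^+]\!]}$ itself lies in $\mathcal P$ (a Fubini/orthonormal-basis computation).

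The main obstacle I expect is the sharp asymptotic analysis of $\log\det\big(\sum_k c_k e^{x_k} u_k u_k^\ast\big) - \sum_k c_k x_k$ along rays going to infinity, precisely because of the negative terms: unlike the classical Brascamp--Lieb case (where $\sup$ finiteness reduces cleanly to the Cauchy--Binet positivity of minors, as recalled just before Proposition~\ref{prop:concavity-of-log-det}), here the determinant of a \emph{difference} of positive semidefinite pieces can be subtle, and one must track which configurations of $(x_k)$ keep the matrix positive definite while maximizing the objective. I would handle this by the induction-on-$\dim H$ scheme of Bennett--Carbery--Christ--Tao: condition the problem to a critical subspace $V \subseteq H$ (one for which equality is approached in an $S$-inequality), decompose $\det$ into the determinant of the restriction to $V$ times the determinant of a quotient problem, and run the induction — the rank-one hypothesis keeping all the linear algebra explicit. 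A secondary, purely combinatorial obstacle is verifying the three-way equality of the polyhedral descriptions; this I would dispatch by LP duality on the bipartite incidence structure $\{i \sim j\}$, noting that $\sum_{i\in S}(c_i-1)\le \sum_{j:S\sim j}|c_j|$ for all $S$ is exactly Hall's condition for the existence of a feasible flow matching the excesses $c_i - 1$ to the capacities $|c_j|$, which is the statement that $c - \II_{[\![1,m^+]\!]}$ lies in $\mathrm{Pos}(\{\II_{\{i\}}-\II_{\{j\}}: i\sim j\})$.
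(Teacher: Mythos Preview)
Your overall structure---reduce to centered Gaussians via Theorem~\ref{theo:gauss-mini}, prove necessity of the linear inequalities, prove sufficiency by checking the generators $\II_{[\![1,m^+]\!]}+\varepsilon(\II_{\{i\}}-\II_{\{j\}})$ via reverse H\"older and then invoking the convexity Proposition~\ref{prop:interpolation}, and identify the polyhedral descriptions via a Hall/flow argument---is exactly the paper's plan. The sufficiency and combinatorial parts of your proposal are essentially identical to the paper's; nothing to add there.

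The gap is in the necessity direction, and you have already flagged it yourself: the Cauchy--Binet expansion does \emph{not} give a positive combination when some $c_k<0$, so the Laplace/leading-order analysis you sketch does not go through, and falling back on the full BCCT induction is overkill for this rank-one setting. The paper avoids this difficulty entirely by \emph{not} trying to analyse the determinant sharply. Instead it proves the following dual statement: for every $a\in\R^m$ satisfying the monotonicity constraint $i\sim j\Rightarrow a_i\ge a_j$, one has
\[
  \sum_{i\le m^+}(c_i-1)a_i\ \ge\ \sum_{j>m^+}|c_j|\,a_j.
\]
This is obtained by the elementary observation that if one sets $\lambda_i=K'e^{qa_i}$ for $i\le m^+$ and $\lambda_j=e^{qa_j}$ for $j>m^+$ (with a suitable constant $K'$ depending only on the $u_k$ and $c_k$), then the monotonicity $a_i\ge a_j$ for $i\sim j$ together with the fact that each $u_j$ is supported on $\{u_i:i\sim j\}$ forces
\[
  \sum_{i\le m^+} c_i\lambda_i\, u_i\otimes u_i\ \ge\ 2\sum_{j>m^+}|c_j|\lambda_j\, u_j\otimes u_j,
\]
uniformly in $q\ge 0$. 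From $A\ge 2B\Rightarrow A-B\ge A/2$ one gets the crude lower bound $\det(\sum_k c_k\lambda_k u_k\otimes u_k)\ge 2^{-m^+}\det(\sum_{i\le m^+}c_i\lambda_i u_i\otimes u_i)$, the right-hand side being an explicit constant times $\prod_{i\le m^+}\lambda_i$. Plugging this into the finiteness of $D$ and sending $q\to+\infty$ yields the displayed inequality; Hahn--Banach then gives $c-\II_{[\![1,m^+]\!]}\in\mathrm{Pos}(\{\II_{\{i\}}-\II_{\{j\}}:i\sim j\})$ directly, without any induction. The point you missed is that one should \emph{lower}-bound the determinant by discarding all negative terms (after arranging for the positive part to dominate), rather than trying to track cancellations between positive and negative pieces.
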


The above description of $\mathcal{P}_{m^+}((u_k))$ as a positive hull can be phrased in terms of mass transportation. The following interpretation will be justified and applied in the course of the proof: consider a bipartite graph $G$ on the sets $I=[\![1, m^+]\!]$ 
and $J=]\!]m^+,m]\!]$, with an edge between $i\in I$ and $j\in J$ if 
$i\sim j$ (i.e. $u_j$ has a non-zero coordinate on $u_i$, when decomposed 
in the basis $(u_1,\ldots, u_{m^+})$). Then $c\in \mathcal{P}_{m^+}((u_k))$ if and only if
one can transport the measure $\sum_{i=1}^{m^+} (c_i-1)\delta_i$ onto 
$\sum_{j=1+m^+}^m |c_j| \delta_j$ by moving the mass along the graph $G$.

\begin{proof}
 For shortness, we  write $\mathcal P$ for the positivity domain $\mathcal{P}_{m^+}((u_k)_{k=1}^m)$.

First part: Let us start with $c\in \mathcal P$ and draw consequences of this fact.
Since $\inf J_{((u_k),(c_k))}>0$, in particular the infimum on centered Gaussian functions
is positive. Using the calculations of the previous section, this can be stated as follows:
there exists $D<+\infty$ such that for all $\lambda_1,\ldots,\lambda_m>0$, it holds
\begin{equation}\label{eq:ineg-gauss-rank1}
 D \prod_k \lambda_k^{c_k} \ge \det \left( \Big(\sum_k c_k \lambda_k \, u_k \otimes u_k \Big)_+ \right),
\end{equation}
where $(\cdot)_+$ is defined as in~\eqref{eq:def-A-plus}. Recall that $(u\otimes u)(x)=\langle x,u\rangle u$.
Our goal is to extract information on $c$ from  \eqref{eq:ineg-gauss-rank1}. Since
it is pointless when the map inside the determinant is non positive definite, we first 
look for values $(\lambda_k)$ for which the inequality has a non-trivial content.

It is convenient to work with $(\lambda_k)$ satisfying the following  inequality
\begin{equation}\label{eq:strong-pos-rank1}
\sum_{i\le m^+} c_i \lambda_i \, u_i \otimes u_i \ge  2 \sum_{j>m^+} |c_j| \lambda_j \, u_j \otimes u_j.
\end{equation}
Indeed, when the above is satisfied and  since $A\ge 2B$ implies $A-B\ge A/2$, it follows that 
$$\sum_k c_k \lambda_k \, u_k \otimes u_k \ge \frac12 \sum_{i\le m^+} c_i \lambda_i u_i\otimes u_i.$$
The map on the right-hand side is positive definite, therefore we may deduce from \eqref{eq:ineg-gauss-rank1}  that 
\begin{equation}\label{eq:ineg-gauss2-rank1}
2^{m^+} D \prod_k \lambda_k^{c_k} \ge \det  \Big(\sum_{i\le m^+} c_i \lambda_i \, u_i \otimes u_i \Big) = \det(u_1,\ldots,u_{m^+})^2 \prod_{i\le m^+} c_i \lambda_i.
\end{equation}
Keeping in mind that \eqref{eq:strong-pos-rank1}$\Longrightarrow\eqref{eq:ineg-gauss2-rank1}$, let us provide numbers $(\lambda_k)$ for which  \eqref{eq:strong-pos-rank1} is verified. For $j>m^+$, we denote by $\alpha_i(j)$ the $i$-th coordinate of $u_j$ in the basis
$(u_1,\ldots, u_n)$. Observe that by definition $i\sim j$ means $\alpha_i(j)\neq 0$. Hence for all $j>m^+$, 
$$ u_j=\sum_{i;\; i\sim j} \alpha_i(j)u_i.$$
For any vector $v\in H$, by Cauchy-Schwarz,
$$ \langle v,u_j\rangle^2 \le \left(\sum_{i;\; i\sim j} \alpha_i(j)^2 \right) \left(\sum_{i;\; i\sim j}  \langle v,u_i\rangle^2\right).$$
Set $K:=\max_j\sum_{i;\; i\sim j} \alpha_i(j)^2 $.
We have proved that for $j>m^+$,
$$u_j\otimes u_j \le K \sum_{i;\; i\sim j} u_i\otimes u_i.$$
Summing upon $j>m^+$ and interchanging summations in $j>m^+$ and in $i\le m^+$ yield
\begin{eqnarray}\label{eq:ineq-tensor-rank1}
2\sum_{j>m^+} |c_j| \lambda_j u_j\otimes u_j &\le & 2K \sum_{i\le m^+} \left(  \sum_{j; \; i\sim j} |c_j|\lambda_j \right) u_i\otimes u_i.
\end{eqnarray}

Let   $q\in \R^+$ and let  $a\in \R^m$ satisfy
\begin{equation}\label{eq:condition-proof-rank1}
 i\sim j \Longrightarrow a_i\ge a_j\,  .
\end{equation}  
Set $K':= 2K \max_i (\sum_{j; \; i\sim j} |c_j|)/c_i$, and 
$$ \lambda_j:= e^{qa_j} \quad\mathrm{for}\quad j>m^+ \qquad\mathrm{and}\qquad
 \lambda_i:= K' e^{qa_i} \quad\mathrm{for}\quad i\le m^+.$$
Then Inequality \eqref{eq:ineq-tensor-rank1} readily implies 
   that for this choice of $\lambda$,  \eqref{eq:strong-pos-rank1} is verified. As we have already seen,
  this implies that \eqref{eq:ineg-gauss2-rank1} applies and gives
  $$ 2^{m^+} D \Big(\prod_{i\le m^+} (K')^{c_i}\Big) e^{q \sum_k a_kc_k} \ge  \det(u_1,\ldots,u_{m^+})^2 \Big(\prod_{i\le m^+} (c_iK')\Big)  e^{q\sum_{i\le m^+}a_i}.$$
 For $q$ tending to $+\infty$ the last inequality implies that 
$
 \sum_k a_kc_k \ge \sum_{i\le m^+}a_i.
$ 
 Recall that this was proved assuming \eqref{eq:condition-proof-rank1} (and that $c$ is in the positivity domain of the functional $J$). Summarizing, for $c$ in the positivity domain, and all 
 $a\in \R^m$,
  \begin{equation}\label{eq:forms-rank1}
 \left(i\sim j \Longrightarrow a_i\ge a_j \right) \Longrightarrow \sum_{i\le  m^+} (c_i-1)a_i\ge \sum_{j>m^+} |c_j|a_j.
 \end{equation}
 Let us draw some consequences of this property of coefficients $c$ in the positivity domain,
 by making appropriate choices for $a$:
 \begin{itemize}
 \item  Choosing  vectors $a$ with all equal coordinates  gives $\sum_k c_k=m^+$, known as the homogeneity condition.
\item  For any $i\in [\![1,m^+]\!]$, we may choose  $a=\II_{\{i\}}$ and get  $c_i\ge 1$.
\item For $T\subset  ]\!]m^+,m]\!]$, we may define a vector $a$ as follows:
 $a_j=1$ for $j\in T$, $a_j=0$ for  $j \in ]\!]m^+,m]\!]\setminus T $,
and for $i\in  [\![1,m^+]\!]$, set $a_i=1$ if $i\sim T$ and $a_i=0$ otherwise. It is plain 
that $i\sim j \Longrightarrow a_i\ge a_j$, so this vector is admissible and we can deduce
that 
$ \sum_{j\in T} |c_j| \le 
  \sum_{i;\; i\sim T} (c_i-1)$.
\item In a symmetric way, for $S\subset [\![1,m^+]\!]$, we may define an admissible 
vector $a$ as follows: for $i\le m^+$, $a_i=-1$ if $i\in S$ and $a_i=0$ otherwise;
for $j>m^+$, $a_j=-1$ if $S\sim j$ and $a_j=0$ otherwise.
Plugging this vector in  \eqref{eq:forms-rank1} yields 
$\sum_{i\in S} (c_i-1) \le 
  \sum_{j;\; S\sim j} |c_j|$.
\end{itemize} 
 
 Next give a dual interpretation of \eqref{eq:forms-rank1} (where the right-hand side 
 inequality is taken in the form $\sum_k c_k a_k \ge \sum_{i\le m^+} a_i$): for every 
 vector $a\in \R^m$, if for all $i\le m^+<j$ such that $i\sim j$, it holds $\langle a, \II_{\{i\}}-
  \II_{\{j\}}\rangle \ge 0$, then $\langle a, c-\II_{ [\![1,m^+]\!]}\rangle\ge 0$. Equivalently,
  no linear hyperplane can separate the vector $c-\II_{ [\![1,m^+]\!]} $ from the family 
  of vectors $  (\II_{\{i\}}- \II_{\{j\}})_{i\sim j}$. By the Hahn-Banach theorem, this implies
  that $c-\II_{ [\![1,m^+]\!]}$  belongs to the convex cone generated by the vectors $  (\II_{\{i\}}- \II_{\{j\}})_{i\sim j}$. This concludes the first part of the proof.
  
  \bigskip
  Second part: let us show that $\II_{ [\![1,m^+]\!]}+\mathrm{Pos}( (\II_{\{i\}}- \II_{\{j\}})_{i\sim j})$ is included in the positivity domain $\mathcal{P}$ of the functional $J$. Since Proposition~\ref{prop:interpolation}  ensures that $\mathcal{P}$ is convex, it is enough to show that for all $i\le m^+<j$ such that $i\sim j$, it contains the half-line $\II_{ [\![1,m^+]\!]}+\R^+ (\II_{\{i\}}- \II_{\{j\}})$.
  
   Let us start with observing that $\II_{[\![1,m^+]\!]}\in \mathcal P$. Indeed, for all measurable
   $f_k:\R\to \R^+$, by the change of variables $y_i:=\langle x,u_i\rangle$, $i\in [\![1,m^+]\!]$
   and Fubini's theorem 
   $$\int_{H} \prod_{i=1}^{m^+} f_i(\langle x,u_i\rangle) \,dx =
    |\det((u_i)_{i=1}^{m^+}) |^{-1} \int_{\R^{m^+}} \prod_{i=1}^{m^+} f_i(y_i) \,dy
    = |\det((u_i)_{i=1}^{m^+}) |^{-1} \prod_{i=1}^{m^+} \int_{\R}  f_i. $$
  Another basic ingredient, which is actually the simplest instance of the reverse inequalities
  we are investigating, is the reverse H\"older inequality: for $\varepsilon\ge 0$ and $f,g$ non-negative measurable functions on $\R$, 
  $$\int  f^{1+\varepsilon} g^{-\varepsilon} \ge \left( \int f\right)^{1+\varepsilon}\left( \int g \right)^{-\varepsilon}.$$
 We are ready to show 
  that for $i_0\sim j$, $\II_{ [\![1,m^+]\!]}+\R^+ (\II_{\{i_0\}}- \II_{\{j\}})\subset \mathcal P$.
  Let $\varepsilon\ge 0$. Then, using that $u_j=\sum_i \alpha_i(j) u_i$ with $\alpha_{i_0}(j)\neq 0$, changing variables by $y_i:=\langle x,u_i\rangle$, $i\in [\![1,m^+]\!]$ as above
  \begin{eqnarray*}
 &&  \int_{H} \Big(\prod_{i\in [\![1,m^+]\!]\setminus \{i_0\} } f_i(\langle x,u_i\rangle) \Big) f_{i_0}(\langle x,u_{i_0}\rangle)^{1+\varepsilon} f_j(\langle x,u_j\rangle)^{-\varepsilon} dx \\
 &=&  \int_{H} \Big(\prod_{i\in [\![1,m^+]\!]\setminus \{i_0\} } f_i(\langle x,u_i\rangle) \Big) f_{i_0}(\langle x,u_{i_0}\rangle)^{1+\varepsilon} f_j\big(\sum_{i\le m^+} \alpha_i(j)\langle x,u_i\rangle\big)^{-\varepsilon} dx \\
  &=&  |\det((u_i)_{i=1}^{m^+}) |^{-1} \int_{\R^{m^+}} \Big(\prod_{i\in [\![1,m^+]\!]\setminus \{i_0\} } f_i(y_i) \Big) f_{i_0}(y_{i_0})^{1+\varepsilon} f_j\big(\sum_{i\le m^+} \alpha_i(j)y_i\big)^{-\varepsilon} dy.
  \end{eqnarray*} 
 Applying inverse H\"older in the variable $y_{i_0}$ and using  $\alpha_{i_0}(j)\neq 0$, we deduce that for any $(y_i)_{i\in [\![1,m^+]\!]\setminus \{i_0\} }$
 $$ \int_{\R}    f_{i_0}(y_{i_0})^{1+\varepsilon} f_j\big(\sum_{i\le m^+} \alpha_i(j)y_i\big)^{-\varepsilon} dy_{i_0} \ge \left( \int f_{i_0}\right)^{1+\varepsilon}  \left(\frac{1}{\alpha_{i_0}(j)} \int f_{j} \right)^{-\varepsilon} $$
  Plugging this estimate in the latter integral over $\R^n$ we arrive at
\begin{eqnarray*}  
 && \int_{H} \Big(\prod_{i\in [\![1,m^+]\!]\setminus \{i_0\} } f_i(\langle x,u_i\rangle) \Big) f_{i_0}(\langle x,u_{i_0}\rangle)^{1+\varepsilon} f_j(\langle x,u_j\rangle)^{-\varepsilon} dx \\
  &\ge &  |\det((u_i)_{i=1}^{m^+}) |^{-1} \alpha_{i_0}(j)^{-\varepsilon} \prod_{i\in [\![1,m^+]\!]\setminus \{i_0\} } \left(\int f_i\right) \times \left( \int f_{i_0}\right)^{1+\varepsilon}  \left(\int f_{j} \right)^{-\varepsilon} .
  \end{eqnarray*}
This inequality proves that   $\II_{ [\![1,m^+]\!]}+\varepsilon (\II_{\{i_0\}}- \II_{\{j\}})\in \mathcal P$.

\bigskip
Third part: Our final task is to show that the descriptions of $\mathcal P$ is terms of inequalities coincides with the one in terms of positive hull. This can be done ``by hands,'' but
we present a neat argument in terms of transportation plans.
We have shown that $c\in \mathcal P$ is equivalent to $c-\II_{[\![1,m^+]\!]}\in \mathrm{Pos}((\II_{\{i\}}-\II_{\{j\}})_{i\sim j}).$ The latter is equivalent to the existence of non-negative 
coefficients $(\gamma_{i,j})_{i\sim j}$ such that  $c-\II_{[\![1,m^+]\!]}=\sum_{i\sim j}\gamma_{i,j}
(\II_{\{i\}}-\II_{\{j\}})$, or in coordinates:
\begin{eqnarray*}
\mathrm{for} \; i\le m^+, && c_i-1=\sum_{j;\; i\sim j} \gamma_{i,j},\\
\mathrm{for} \; j> m^+, && |c_j|=\sum_{i;\; i\sim j} \gamma_{i,j}.
\end{eqnarray*}
This can be interpreted as a coupling, or transportation plan between  measures: $\gamma_{i,j}$ represents
the amount of mass which is transported from $i$ to $j$, and such a shipping is allowed
only if $i\sim j$. Therefore $c$ belongs to $\mathcal P$ if and only if it is possible to transport
the measure $\sum_{i\le m^+} (c_i-1)\delta_i$ to the measure  $\sum_{j>m^+} |c_j| \delta_j$ (or vice-versa), while 
carrying mass only between points which are in relation for $\sim$. This questions of existence of transport with constraints is well know. Its solution is given in the following classical lemma and it allows to complete the proof. Observe that the indices $i\le m^+$ and 
$j>m^+$ play symmetric role for the transportation problem, which leads to two different
description of $\mathcal P$ in terms of inequalities.
\end{proof}

\begin{lemma}\label{lem:transport}
Let $I$ and $J$ be disjoint finite sets. Let $E\subset I\times J$ and consider the bipartite graph $(I,J;E)$.  Let $(\alpha_i)_{i\in I}$ and $(\beta_j)_{j\in J}$ be non-negative numbers.
Then there exists a transportation plan along the graph  between $\sum_{i\in I} \alpha_i \delta_i$ and
 $\sum_{j\in J} \beta_i \delta_i$ if and only if:
\begin{equation}\label{eq:transport-constraint}
 \sum_{i\in I} \alpha_i=\sum_{j\in J} \beta_j  \mbox{ and for all } S\subset I;\; \sum_{i\in S} \alpha_i\le \sum_{j; \; S\sim j} \beta_j\, .
\end{equation}
\end{lemma}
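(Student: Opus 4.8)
The plan is to prove the two implications separately; \emph{necessity} is an immediate computation, while \emph{sufficiency} is the substance, and I would deduce it from the max-flow min-cut theorem applied to a suitable auxiliary network.

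\emph{Necessity.} Suppose a transportation plan $(\gamma_{i,j})_{(i,j)\in E}$ along the graph exists, so $\gamma_{i,j}\ge 0$, $\sum_{j\,:\,(i,j)\in E}\gamma_{i,j}=\alpha_i$ for all $i\in I$, and $\sum_{i\,:\,(i,j)\in E}\gamma_{i,j}=\beta_j$ for all $j\in J$. Summing the first identity over all $i\in I$ and the second over all $j\in J$ both give $\sum_{(i,j)\in E}\gamma_{i,j}$, so $\sum_i\alpha_i=\sum_j\beta_j$. Summing the first identity only over $i\in S$ and using $\gamma\ge 0$ together with the fact that each $(i,j)\in E$ with $i\in S$ satisfies $S\sim j$ gives $\sum_{i\in S}\alpha_i=\sum_{(i,j)\in E,\ i\in S}\gamma_{i,j}\le\sum_{j\,:\,S\sim j}\beta_j$, which is~\eqref{eq:transport-constraint}.

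\emph{Sufficiency.} I would introduce the auxiliary network on the vertex set $\{s,t\}\cup I\cup J$ with an arc $s\to i$ of capacity $\alpha_i$ for each $i\in I$, an arc $j\to t$ of capacity $\beta_j$ for each $j\in J$, and an arc $i\to j$ of capacity $M$ for each $(i,j)\in E$, where $M>\sum_i\alpha_i$ is fixed so that these last arcs are effectively uncapacitated. A transportation plan along the graph is exactly (the restriction to the arcs $i\to j$ of) an $s$-$t$ flow of value $\sum_i\alpha_i$: indeed any flow of that value must saturate every arc out of $s$ (their total capacity is $\sum_i\alpha_i$), hence by conservation at the vertices $i$ one gets $\sum_{j\,:\,(i,j)\in E}\Phi(i\to j)=\alpha_i$; the flow entering $t$ then equals $\sum_i\alpha_i=\sum_j\beta_j$ and is $\le\beta_j$ on each arc $j\to t$, forcing saturation there too, so $\sum_{i\,:\,(i,j)\in E}\Phi(i\to j)=\beta_j$, and $\gamma_{i,j}:=\Phi(i\to j)\ge 0$ is the desired plan. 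Thus it remains only to check that the minimum cut of this network has capacity $\sum_i\alpha_i$. Any $s$-$t$ cut of finite capacity is determined by the set $T\subseteq I$ of elements of $I$ on the source side: to avoid severing an arc $i\to j$ of capacity $M$, every $j$ with $T\sim j$ must also lie on the source side, so the cut capacity is at least $\sum_{i\in I\setminus T}\alpha_i+\sum_{j\,:\,T\sim j}\beta_j$, with equality for the cut whose source side meets $J$ in exactly $\{j:T\sim j\}$. Hence the minimum cut equals $\min_{T\subseteq I}\bigl(\sum_{i\in I\setminus T}\alpha_i+\sum_{j\,:\,T\sim j}\beta_j\bigr)$; by~\eqref{eq:transport-constraint} applied with $S=T$ this is $\ge\sum_{i\in I}\alpha_i$, while taking $T=\emptyset$ shows it is $\le\sum_i\alpha_i$. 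So the minimum cut equals $\sum_i\alpha_i$, and the max-flow min-cut theorem supplies a flow of that value.

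The only genuinely delicate points are the bookkeeping in the cut computation (deciding which cuts are finite and expressing their capacities through subsets of $I$), and the fact that, the capacities $\alpha_i,\beta_j$ being arbitrary nonnegative reals, one must invoke max-flow min-cut in its general form — valid for any finite network by linear programming duality, with the optimal flow attained by compactness of the flow polytope — rather than rely on termination of an augmenting-path procedure; truncating the infinite capacities at $M$ keeps the network finite so that this causes no difficulty. Alternatively, sufficiency can be established directly by induction on $|E|$, pushing mass $\min(\alpha_i,\beta_j)$ along a suitably chosen edge $(i,j)\in E$ and verifying that condition~\eqref{eq:transport-constraint} is inherited by the reduced data; I expect the flow argument to be the cleanest to write.
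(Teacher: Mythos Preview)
Your proof is correct and follows essentially the same route as the paper: both build an auxiliary flow network with a source connected to $I$ (capacities $\alpha_i$), a sink connected to $J$ (capacities $\beta_j$), and large capacities on the edges of $E$, then invoke max-flow min-cut and analyze the finite cuts via subsets of $I$. Your write-up is slightly more detailed (you spell out the necessity direction, which the paper omits, and you flag the real-valued capacity issue), but the argument is the same.
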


\begin{proof}
The condition means that the origin and target measures have same total mass, and that the mass of  any subset of the origin set is not larger than the mass for the target measure of the 
set of its neighbors.

Showing that the existence of a transport plan implies the above inequalities is straightforward, and actually not the direction we need for the previous theorem, so we omit it.

Assume that \eqref{eq:transport-constraint} is verified. Let us build a weighted graph 
$G$ by enriching $(I,J;E)$ as follows: we assign to every existing edge ($i\sim j$) a
weight $w:=1 + \sum_{i \in I} \alpha_i$; 
we also add a vertex $A$ and  connect it to each $i\in I$ with a weight $\alpha_i$
on the edge; eventually  we add another  vertex $B$ and  connect to each $j\in J$ with a weight $\beta_i$.

  Our goal is to show 
that the maximal flow between $A$ and $B$ is equal to  $\sum_{i\in I} \alpha_i$ (which means  that all the mass from $I$ can be transported to $B$ along the graph, and since
$\sum_i \alpha_i=\sum_j \beta_j$ all the target mass is reached). 
By the Max flow-Min cut theorem (see e.g. \cite{schrijver}), it is enough to show that the minimal weight of a cut
separating $A$ and $B$ is equal to $\sum_{i\in I} \alpha_i$ (with corresponds to cutting
all the edges incident to $A$).
 
 Let us study a minimal cut. First, since the edges between $I$ and $J$ have weight $w>\sum_{i\in I} \alpha_i$, they are not in a minimal cut.
 Such a cut is thus as follows: there are subsets $S\subset I$ and $T\subset J$ such that 
 $S^c=I\setminus S$ and $T^c=J\setminus T$ are not connected, and one cuts the edges
 between $A$ and $I$ and the ones between $B$ and $J$.
The weight of this cut is 
$$\sum_{i\in S} \alpha_i+\sum_{j\in T}\beta_j.$$
Our goal is to bound this weight from below as follows,
$$  \sum_{i\in S} \alpha_i+\sum_{j\in T}\beta_j\ge \sum_{i\in I} \alpha_i.$$
This is equivalent, after canceling the terms appearing twice, to 
 $\sum_{i\in S^c} \alpha_i\le \sum_{j\in T}\beta_j.$
 This is indeed true: by hypothesis $\sum_{i\in S^c} \alpha_i\le \sum_{j; \;S^c\sim j }\beta_j.$
 But since $S^c$ and $T^c$ are not connected, $S^c\sim j$ implies that $j\in T$, and the latter sum is at most $\sum_{j\in T}\beta_j$, as claimed.

\end{proof}

\subsection{With a kernel}
Here we consider in addition a kernel $e^{-\mathcal Q}$, with the restriction that $s^+(\mathcal Q)$, $s^-(\mathcal Q)\le 1$. As above we work under the assumptions $\mathcal Q_{|\ker B_+}$ positive definite and $\dim H\ge s^+(\mathcal Q)+\sum_{i=1}^{m^+} \dim H_i$, for which a convenient equivalent form is given in Lemma~\ref{lem:decompose-Q}.
In our setting, they can be rephrased as follows (we introduce a small twist with respect to the decomposition 
in the lemma, namely a dilation which allows for a more concrete decomposition):
there are vectors $u_0, u_{m+1}\in H$ such that for all $x\in H$,
$$\mathcal{Q}(x)=\pi\langle x,u_0\rangle^2-\pi \langle x,u_{m+1}\rangle^2.$$
Note that these two vectors may be equal to zero (e.g. if $Q$ is non-positive, $u_0=0$).
Moreover setting $B_0x=\langle x,u_0\rangle$ and $B_{m+1}x=\langle x,u_{m+1}\rangle$,
we know that $\ker B_+\subset \ker B_{m+1}$ and that 
$B_{0+}:H\to B_0H\times B_1H\times \cdots \times B_{m^+}H$ is one to one.
The former is equivalent to $\bigcap_{i=1}^{m^+} u_i^\bot \subset u_{m+1}^\bot$, that is 
\begin{equation}\label{hyp:m+1}
u_{m+1}\in \mathrm{vect}\{u_1,\ldots,u_{m^+}\},
\end{equation}
 while the latter means that:
\begin{itemize}
\item either $u_0=0$ and $(u_1, \ldots, u_{m^+})$ is a basis of $H$,
\item or $(u_0, u_1, \ldots, u_{m^+})$ is a basis of $H$.
\end{itemize}
In any of the above cases, we denote by $\mathbb U$ the corresponding basis of $H$.
Given $i\in I=[\![0,m^+]\!]$ and $j\in J = ]\!] m^+, m+1]\!]$, we write $i\sim j$
if $u_j$, once decomposed in the basis $\mathbb U$, has a positive coordinate on the vector $u_i$ of the basis. 
This relation creates a bipartite graph $G$ on $I$ and $J$. Note that  $m+1$ is an isolated
vertex of the graph when $u_{m+1}=0$, and so is $0$ when $u_0=0$. 
The functional of interest is 
$$J_{\mathcal Q,(u_k)_{k=1}^m,c}(f_1,\ldots,f_m)= \frac{\int_{H} e^{-\pi\langle x,u_0\rangle^2+\pi \langle x,u_{m+1}\rangle^2} \prod_{k=1}^m f_k\big(\langle x, u_k\rangle\big)^{c_k} dx}{ \prod_{k=1}^m \left( \int_{\mathbb R} f_k\right)^{c_k}} \cdot$$
Now comes a description of its positivity domain
$$\mathcal P_{m^+}(\mathcal Q,(u_k)_{k=1}^m)=\big\{c\in (0,+\infty)^{m^+}\times (-\infty,0]^{m-m^+}; \; \inf J_{\mathcal Q, (u_k)_{k=1}^m,c}>0\big\}.$$
\begin{theorem}\label{theo:positivity-rank1-Q}
With the above notation and  hypotheses, 
\begin{eqnarray*}
\mathcal P_{m^+}(\mathcal Q,(u_k)_{k=1}^m)
&=& \II_{[\![1,m^+]\!]}+\mathrm{Pos}\Big(
 \big\{\II_{\{i\}};\; i\sim m+1 \big\}\cup \big\{-\II_{\{j\}};\; 0\sim j \big\} \\
&&\qquad \qquad \qquad \cup \big\{\II_{\{i\}}-\II_{\{j\}};\; 1\le i\sim j\le m \big\}
\Big)\\
&=& \Big\{c\in [1,+\infty)^{m^+}\times (-\infty,0]^{m-m^+};  \\
&& \quad  \mathrm{ \, for\,  all} \;
 S\subset [\![1,m^+]\!] \mathrm{ \, with \, } S\not\sim m+1, \quad \sum_{i\in S} (c_i-1) \le 
  \sum_{j;\; S\sim j} |c_j|, \\
  && \quad \mathrm{and  \, for\,  all} \;
 T\subset ]\!]m^+,m]\!] \mathrm{\, with \, } 0\not\sim T, \quad \sum_{j\in T} |c_j| \le 
  \sum_{i;\; i\sim T} (c_i-1)\Big\},\\
&=& \mathrm{Proj}_{\R^{[\![1,m]\!]}} \left( \mathcal P_{1 + m^+}\big(u_0,u_1,\ldots,u_m,u_{m+1}\big)   \right).
\end{eqnarray*}
\end{theorem}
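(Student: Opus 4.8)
\textbf{Proof plan for Theorem~\ref{theo:positivity-rank1-Q}.} The strategy is to reduce the whole statement to the kernel-free Theorem~\ref{theo:positivity-rank1} applied to the \emph{enlarged} family of $m+2$ vectors $u_0,u_1,\ldots,u_m,u_{m+1}$, with $1+m^+$ positive indices, by regarding the Gaussian kernel $e^{-\mathcal Q(x)}=e^{-\pi\langle x,u_0\rangle^2}\,e^{\pi\langle x,u_{m+1}\rangle^2}$ as two extra Gaussian functions carrying the prescribed exponents $c_0=1$ and $c_{m+1}=-1$. Under the standing hypotheses $\mathbb U=(u_0,u_1,\ldots,u_{m^+})$ is a basis of $H$ (when $u_0\neq0$; if $u_0=0$ or $u_{m+1}=0$ one simply drops the corresponding isolated index throughout, and if $\mathcal Q=0$ the statement is exactly Theorem~\ref{theo:positivity-rank1}), so the enlarged family meets the hypotheses of Theorem~\ref{theo:positivity-rank1} and that theorem gives
\[
 \mathcal P_{1+m^+}(u_0,u_1,\ldots,u_m,u_{m+1})=\II_{[\![0,m^+]\!]}+\mathrm{Pos}\big(\{\II_{\{i\}}-\II_{\{j\}}\colon i\sim j,\ i\in[\![0,m^+]\!],\ j\in\,]\!]m^+,m+1]\!]\}\big),
\]
where $\sim$ is the same relation attached to the basis $\mathbb U$ as in the statement. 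Since $\mathrm{Proj}_{\R^{[\![1,m]\!]}}$ is linear it commutes with Minkowski sums and with $\mathrm{Pos}(\cdot)$, and $\mathrm{Proj}\,\II_{\{0\}}=\mathrm{Proj}\,\II_{\{m+1\}}=0$ while $\mathrm{Proj}\,\II_{[\![0,m^+]\!]}=\II_{[\![1,m^+]\!]}$; hence the projected generating set is exactly $\{\II_{\{i\}}-\II_{\{j\}}\colon 1\le i\sim j\le m\}\cup\{\II_{\{i\}}\colon i\sim m+1\}\cup\{-\II_{\{j\}}\colon 0\sim j\}$ (up to the irrelevant vector $0$). Consequently the last equality of the theorem is \emph{equivalent} to the first, and it remains to prove the first equality and deduce the description by inequalities from it.

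The passage from the positive-hull description to the description by inequalities is the constrained transportation argument of Part~3 of the proof of Theorem~\ref{theo:positivity-rank1}. One reads the membership $c-\II_{[\![1,m^+]\!]}\in\mathrm{Pos}(\cdots)$, after writing $c-\II_{[\![1,m^+]\!]}=\sum\gamma_{i,j}(\II_{\{i\}}-\II_{\{j\}})+\sum\gamma_i\II_{\{i\}}-\sum\gamma_j\II_{\{j\}}$ with non-negative coefficients, as the existence of a transport of $\sum_{i\le m^+}(c_i-1)\delta_i$ onto $\sum_{j>m^+}|c_j|\delta_j$ along the bipartite graph on $[\![1,m^+]\!]\times\,]\!]m^+,m]\!]$, in which moreover any source $i\sim m+1$ may discard arbitrary mass (to the external vertex $m+1$) and any sink $j\sim0$ may receive arbitrary mass (from the external vertex $0$). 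Applying Lemma~\ref{lem:transport} to the enriched graph containing $m+1$ and $0$ produces exactly the two families of inequalities in the statement, the conditions $S\not\sim m+1$ and $0\not\sim T$ being precisely the cases to which the discard/receive mechanism does not apply.

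The heart of the matter is the first equality, which I would obtain by retracing the two remaining parts of the proof of Theorem~\ref{theo:positivity-rank1}. \emph{Inclusion $\subseteq$}: if $c$ lies in the positivity domain then, by the standing hypotheses, Theorem~\ref{theo:gauss-mini} applies and the computation of Subsection~\ref{subsec:centered-gaussians} furnishes a finite $D$ with $D\prod_{k=1}^m\lambda_k^{c_k}\ge\det\big((u_0\otimes u_0-u_{m+1}\otimes u_{m+1}+\sum_{k=1}^m c_k\lambda_k\,u_k\otimes u_k)_+\big)$ for all $\lambda_k>0$. Restricting to $\lambda$'s satisfying the strong-positivity bound $u_0\otimes u_0+\sum_{i\le m^+}c_i\lambda_i\,u_i\otimes u_i\ge 2(u_{m+1}\otimes u_{m+1}+\sum_{j>m^+}|c_j|\lambda_j\,u_j\otimes u_j)$ turns the estimate into $2^{\dim H}D\prod_{k=1}^m\lambda_k^{c_k}\ge\det(\mathbb U)^2\prod_{i\le m^+}c_i\lambda_i$. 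Expanding each $u_j$ ($j>m^+$) and $u_{m+1}$ in the basis $\mathbb U$ and using Cauchy--Schwarz estimates $v\otimes v\le K\sum_{i\sim\bullet}u_i\otimes u_i$ (the sum over $i\in[\![0,m^+]\!]$, hence containing $u_0$ exactly when $0\sim\bullet$), one checks that strong positivity holds for $\lambda_j=e^{qa_j}$ ($j>m^+$) and $\lambda_i=K'e^{qa_i}$ ($i\le m^+$) as soon as the ``virtual exponents'' $(a_k)_{0\le k\le m+1}$ satisfy $a_0=a_{m+1}=0$ (this frozen value records that the kernel is a fixed, not a variable, Gaussian) and $i\sim j\Rightarrow a_i\ge a_j$ for $i\in[\![0,m^+]\!]$, $j\in\,]\!]m^+,m+1]\!]$. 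Letting $q\to\infty$ gives: for every $(a_k)_{k=1}^m$ with $i\sim j\Rightarrow a_i\ge a_j$ ($1\le i,j\le m$), $i\sim m+1\Rightarrow a_i\ge0$ and $0\sim j\Rightarrow a_j\le0$, one has $\sum_{i\le m^+}(c_i-1)a_i+\sum_{j>m^+}c_j a_j\ge0$. The cone of such $(a_k)$ is the dual cone of $\mathrm{Pos}(\{\II_{\{i\}}-\II_{\{j\}}\}_{i\sim j}\cup\{\II_{\{i\}}\}_{i\sim m+1}\cup\{-\II_{\{j\}}\}_{0\sim j})$, so the bipolar theorem yields $c-\II_{[\![1,m^+]\!]}\in\mathrm{Pos}(\cdots)$. \emph{Inclusion $\supseteq$}: by Proposition~\ref{prop:interpolation} the positivity domain is convex, so it suffices to show it contains $\II_{[\![1,m^+]\!]}$ and the half-line $\II_{[\![1,m^+]\!]}+\R^+v$ for each generator $v$. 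The point $\II_{[\![1,m^+]\!]}$ lies in it by Fubini (change of variables $y_i=\langle x,u_i\rangle$, the factor $e^{-\pi\langle x,u_0\rangle^2}$ integrating to a harmless constant and $e^{\pi\langle x,u_{m+1}\rangle^2}\ge1$). For $v=\II_{\{i\}}-\II_{\{j\}}$ ($1\le i\sim j\le m$) one uses reverse H\"older in the variable $y_i$, just as in Part~2 of the proof of Theorem~\ref{theo:positivity-rank1}; for $v=\II_{\{i\}}$ ($i\sim m+1$) one writes $e^{\pi\langle x,u_{m+1}\rangle^2}=\big(e^{-\tfrac{\pi}{\varepsilon}\langle x,u_{m+1}\rangle^2}\big)^{-\varepsilon}$ and applies reverse H\"older in $y_i$ against $f_i^{1+\varepsilon}$ (possible since $\langle x,u_{m+1}\rangle$ depends non-trivially on $y_i$); for $v=-\II_{\{j\}}$ ($0\sim j$) one integrates the variable $y_0=\langle x,u_0\rangle$ (which enters $\langle x,u_j\rangle$ since $0\sim j$) and applies reverse H\"older to $e^{-\pi y_0^2}$ against $f_j^{-\varepsilon}$. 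Each time one gets a strictly positive lower bound on the functional.

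\emph{Where the difficulty lies.} The delicate step is the inclusion $\subseteq$, and specifically the faithful accounting, inside the Gaussian inequality, of the fixed kernel $e^{-\mathcal Q}$ as two Gaussian functions whose scalings are frozen: one must verify that the strong-positivity choice of the $\lambda_k$'s absorbs the two unit-coefficient rank-one terms $u_0\otimes u_0$ and $u_{m+1}\otimes u_{m+1}$ in precisely the way that forces the virtual exponents $a_0,a_{m+1}$ to vanish, so that the limiting inequalities produce the extra generators $\{\II_{\{i\}}\}_{i\sim m+1}$ and $\{-\II_{\{j\}}\}_{0\sim j}$ and no others. The separate treatment of the degenerate configurations ($u_0=0$ and/or $u_{m+1}=0$, where the corresponding vertex of the bipartite graph is isolated and the enlarged family has fewer vectors) is routine but must be stated.
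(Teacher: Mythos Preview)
Your plan is essentially the paper's, and it is correct in outline. Two points deserve comment, one a genuine (though small) gap and one a difference in route.

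\textbf{The gap.} In the inclusion $\mathcal P\subset\mathcal P_1$ your choice $\lambda_j=e^{qa_j}$, $\lambda_i=K'e^{qa_i}$ does not yet give strong positivity. The obstruction is the row $i=0$ of the Cauchy--Schwarz estimate: the kernel forces $c_0\lambda_0=1$, while the right-hand contribution is $2K\sum_{j:\,0\sim j}|c_j|e^{qa_j}$, and even with $a_j\le a_0=0$ this sum is only bounded by the fixed number $2K\sum_{j:\,0\sim j}|c_j|$, which may exceed $1$. The paper fixes this by writing $\lambda_k=b_ke^{qa_k}$ with $b_0=b_{m+1}=1$ imposed, $b_i=M/c_i$ for $1\le i\le m^+$ and $|c_j|b_j\le 1/M$ for $m^+<j\le m$; then the sufficient condition $i\sim j\Rightarrow M|c_j|b_j\le c_ib_i$ holds for every edge \emph{except} possibly $(0,m+1)$, and this last edge is absent because the hypothesis $u_{m+1}\in\mathrm{span}\{u_1,\dots,u_{m^+}\}$ gives $0\not\sim m+1$. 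You identify the difficulty in your last paragraph but never invoke $0\not\sim m+1$, which is the structural point that makes the scheme close.

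\textbf{The difference.} For the reverse inclusion the paper argues cyclically $\mathcal P\subset\mathcal P_1\subset\mathcal P_2\subset\mathcal P_3\subset\mathcal P$: it derives $\mathcal P_2$ from $\mathcal P_1$ by testing specific $a$'s, proves $\mathcal P_2\subset\mathcal P_3$ by a transport on a graph $\tilde G$ obtained from $G$ by adding an edge $0$--$(m+1)$ with large auxiliary masses, and closes the cycle by specialising $f_0,f_{m+1}$ to Gaussians in the no-kernel inequality. You instead prove $\mathcal P_1\subset\mathcal P$ directly, checking each generator half-line with reverse H\"older (your three cases are correct; for $v=-\II_{\{j\}}$ one uses that $\langle x,u_{m+1}\rangle$ does not depend on $y_0$, again by $0\not\sim m+1$), and you obtain $\mathcal P_1=\mathcal P_3$ in one line by projecting the positive-hull description of $\mathcal P_{1+m^+}(u_0,\dots,u_{m+1})$. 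This is a neat shortcut: it replaces the paper's $\tilde G$ argument entirely. What remains is $\mathcal P_1=\mathcal P_2$, and your sketch there is a bit thin; the cleanest route is still the paper's: $\mathcal P_1\subset\mathcal P_2$ by evaluating the dual inequality at the obvious indicator vectors, and $\mathcal P_2\subset\mathcal P_1(=\mathcal P_3)$ by the $\tilde G$ transport (or, since you already have $\mathcal P_1=\mathcal P_3$, by Fourier--Motzkin elimination of $c_0,c_{m+1}$ from the inequality description of $\mathcal P_{1+m^+}$).
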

Let us comment on this statement before proving it. The notation of the last line, involving a projection
and an extended use of the notation of the positivity domain in the case of no kernel (if $u_0$ or $u_{m+1}$
is zero, just discard it), means the following: $\inf J_{\mathcal Q,(u_k)_{k=1}^m,c}>0$ if and only if there
exists $c_0\ge 1$ and $c_{m+1}\le 0$ and $\varepsilon>0$ such that for all $f_k:\R\to \R^+$ ($k = 0,1,\ldots,m+1$)
integrable and with positive integral:
$$ \int_H f_0(\langle x,u_0\rangle)^{c_0} f_{m+1}(\langle x,u_{m+1}\rangle)^{c_{m+1}} \prod_{k=1}^m f_k\big(\langle x, u_k\rangle\big)^{c_k} dx \ge \varepsilon 
\prod_{k=0}^{m+1} \left( \int_{H_k} f_k\right)^{c_k} \cdot$$
Here $H_k$ is the image of $H$ by $x\mapsto \langle x,u_k\rangle$. If for instance $u_0=0$ then $H_0=\{0\}$
and the term $f_0(0)=\int_{H_0} f_0>0$ appears on both sides, and can be discarded.
In other words, the positivity of the constant in the inequality with kernel can be deduced from an inequality
without kernel, by specifying one or two functions to be Gaussian.

The description of the positivity domain as a positive convex hull can also be interpreted in terms of a transportation problem: $c$ is in the positivity domain if and only if one can transport the measure
$\sum_{i=1}^{m^+} (c_i-1) \delta_i$ to $\sum_{j=1+m^+}^m |c_j|\delta_j$ along the bipartite graph $G$ defined 
above, with the help of a source at 0 and of a sink at $m+1$. 

\begin{proof}[Proof of Theorem \ref{theo:positivity-rank1-Q}]
The strategy is the same as for Theorem \ref{theo:positivity-rank1}, so we only explain the changes. We simply
write $\mathcal P$ for the positivity domain. Let us denote by $\mathcal P_1$, $\mathcal P_2$ and $\mathcal P_3$
the three sets appearing in the claim (in the same order).

\smallskip
Let $c\in \mathcal P$. By Theorem \ref{theo:gauss-mini} and Gaussian calculations, there exists $D>0$ such that for all $\lambda_1,\ldots,\lambda_m$, 
\begin{equation}\label{eq:ineg-gauss-rank1-Q}
 D \prod_k \lambda_k^{c_k} \ge \det \left( \Big(\sum_{k=0}^{m+1} c_k \lambda_k \, u_k \otimes u_k \Big)_+ \right),
\end{equation}
where we have set $c_0=1, c_{m+1}=-1, \lambda_0=\lambda_{m+1}=1$ in order to include the terms coming from the kernel.
Our first task is to infer that for every $a\in\R^{[\![0,m+1]\!]}$ satisfying $a_0=a_{m+1}=0$,
\begin{equation}\label{eq:forms-rank1-Q}
 \left(i\sim j \Longrightarrow a_i\ge a_j \right) \Longrightarrow \sum_{i\le  m^+} (c_i-1)a_i\ge \sum_{j>m^+} |c_j|a_j.
 \end{equation}
To do this we look for numbers $b_k$ such that for all $q\ge 0$, Inequality \eqref{eq:strong-pos-rank1}
is verified for $\lambda_k=\lambda_k(q)=b_ke^{qa_k}$. Letting $q$ tend to infinity in the determinant inequality
then yields \eqref{eq:forms-rank1-Q}. The main changes in the argument come from the ``boundary'' conditions
$\lambda_0(q)=\lambda_{m+1}(q)=1$ which force $a_0=a_{m+1}=0$ and $b_0=b_{m+1}=1$.
The strategy is again to choose the $\lambda_k$ such that \eqref{eq:ineq-tensor-rank1} holds. Observe that 
 \eqref{eq:ineq-tensor-rank1} is verified  when for all $i\le m^+$,
 $2K \sum_{j;\; i\sim j} |c_j|\lambda_j \le c_i\lambda_i$. Setting $M:=\max\big(1,2K(m+1)\big)$, we get that a sufficient condition
 to ensure the latter is to have:
 \begin{equation*}
  i\sim j \Longrightarrow M|c_j|\lambda_j \le c_i\lambda_i.
 \end{equation*}
 As already mentioned, we look for $\lambda_k=b_ke^{q a_k}$ and $a$ verifies $i\sim j\Longrightarrow a_i\ge a_j$.
 Hence it is enough to choose $b$ such that 
 \begin{equation}\label{cond:proof-positivity-rank1-Q}
  i\sim j \Longrightarrow M|c_j|b_j \le c_i b_i.
 \end{equation}
 Recall that $c_0b_0=|c_{m+1}|b_{m+1}=1$ so the latter inequality may fail, but thanks to \eqref{hyp:m+1} $0\not\sim m+1$. Eventually, if we choose $(b_k)$ such that $b_i=M/c_i$ for $i\in [\![1,m^+]\!]$ and $|c_j|b_j\le 1/M$ for 
 $j\in ]\!]m^+,m]\!]$, then \eqref{cond:proof-positivity-rank1-Q} is verified.
Thus $c\in \mathcal P$ implies \eqref{eq:forms-rank1-Q}. Using the  Hahn-Banach Theorem,  \eqref{eq:forms-rank1-Q} means $c\in \mathcal P_1$. So we have proved that $\mathcal P\subset \mathcal P_1$.

\medskip
Next we show that $\mathcal P_1\subset \mathcal P_2$ by drawing consequences of \eqref{eq:forms-rank1-Q};
 \begin{itemize}
\item  For any $i\in [\![1,m^+]\!]$, we may choose  $a=\II_{\{i\}}$ and get  $c_i\ge 1$.
\item For $T\subset  ]\!]m^+,m]\!]$ with $0\not\sim T$ we may define a vector $a$ as follows:
 $a_j=1$ for $j\in T$, $a_i=1$ if $i\sim T$ and $a_k=0$ otherwise. 
It readily verifies the hypothesis of \eqref{eq:forms-rank1-Q}, so  we can deduce
that 
$ \sum_{j\in T} |c_j| \le 
  \sum_{i;\; i\sim T} (c_i-1)$.
\item In a symmetric way, for $S\subset [\![1,m^+]\!]$ with $S\not \sim m+1$ we may define an admissible 
vector $a$ as follows:  $a_i=-1$ is $i\in S$;
$a_j=-1$ if $S\sim j$ and $a_k=0$ otherwise. This implies 
$\sum_{i\in S} (c_i-1) \le 
  \sum_{j;\; S\sim j} |c_j|$.
\end{itemize} 

\medskip
Now we prove that $\mathcal{P}_2\subset \mathcal{P}_3$. Let $c\in\mathcal{P}_2$, and set $\alpha_i=c_i-1$ 
for $1\le i\le m^+$ and $\beta_j=|c_j|$ for $m^+<j\le m$. Let us consider the bipartite graph $\tilde G$ on
$I=[\![0,m^+]\!]$ and $J=]\!]m^+,m+1]\!]$ obtained
by adding to $G$ an edge between $0$ and $m+1$.
Let us choose two numbers $\alpha_0$ and $\beta_{m+1}$ such that $\sum_{i\in I} \alpha_i=\sum_{j\in J} \beta_j$
and $\alpha_0, \beta_{m+1}> \sum_{i\in [\![1,m^+]\!]} \alpha_i+\sum_{j\in ]\!]m^+,m]\!]} \beta_j$.

Let us show that it is possible to transport along $\tilde G$ the measure $\sum_{i\in I} \alpha_i \delta_i$
to $\sum_{j\in J} \beta_j \delta_j$, by application of Lemma~\ref{lem:transport}.
The equality of masses holds by construction. It remains to prove that for every $S\subset I$,
$\sum_{i\in S} \alpha_i \le \sum_{j\in N(S)} \beta_j$, where $N(S)$ denotes the set of vertices which are connected
to $S$ in $\tilde G$. Let us consider several cases
\begin{itemize}
	\item If $0\not\in S$ and $S\not \sim m+1$ then the inequality comes from the hypothesis that $c\in \mathcal P_2$.
	\item If $0\not\in S$ and $S\sim m+1$, the inequality holds simply because the term $\beta_{m+1}$ is larger than
	  $\sum_{i=1}^{m^+} \alpha_i\ge \sum_{i\in S}\alpha_i$.
	\item If $0\in S$, then by construction $m+1\in N(S)$. Our aim is to show that 
	 $\sum_{i\in S} \alpha_i \le \sum_{j\in N(S)} \beta_j$. Subtracting from the equality of masses condition shows that the inequality is equivalent to $\sum_{i\in I\setminus S} \alpha_i \ge \sum_{j\in J\setminus N(S)} \beta_j$.
Define $T:=J\setminus N(S)$. Observe that $T\subset ]\!]m^+,m]\!]$ since $m+1\in N(S)$. Moreover by construction
$N(T)\subset I\setminus S$ does not contain 0 as $S$ does. So the fact that $c\in \mathcal P_2$ ensures that
$\sum_{j\in T} \beta_j \le \sum_{i\in N(T)} \alpha_i$. Since 	$N(T)\subset I\setminus S$ we obtain
$\sum_{i\in I\setminus S} \alpha_i \ge \sum_{j\in J\setminus N(S)} \beta_j$ as needed.
\end{itemize}
By Lemma~\ref{lem:transport} there is a transport along $\tilde G$. It may ship an amount $\gamma$ of mass
between the vertices $0$ and $m+1$. If we remove this amount from the initial mass at these two points, 
we get two distributions which admit a transport which does not use the edge between $0$ and $m+1$. In other
words if we set $c_0=1+\alpha_0-\gamma\ge 1$ and $c_{m+1}=-(\beta_{m+1}-\gamma)\le 0$, we have shown that 
there is a transportation plan along $G$ from $\sum_{i=0}^{m^+} (c_i-1)\delta_i$ to
$\sum_{j=1+m^+}^{m+1} |c_j|\delta_j$. According to Theorem~\ref{theo:positivity-rank1} and its interpretation in terms of transport, this means that $(c_0,c_1,\ldots,c_{m+1})$ is in the positivity domain $\mathcal P_{1 + m^+}(u_0,u_1,\ldots,u_{m+1})$ of an inverse Brascamp-Lieb inequality without kernel (but with more functions). 

So starting from $c=(c_1,\ldots,c_m)\in \mathcal P_2$ we have expressed it as the projection of a vector
$(c_0,c_1,\ldots,c_{m+1})$ in $\mathcal P_{1 + m^+}(u_0,u_1,\ldots,u_{m+1})$. This concludes the proof of $\mathcal{P}_2\subset \mathcal{P}_3$. The particular cases when $u_0$ (or $u_{m+1}$) is zero is also treated
by this argument because in this case $0$ is isolated in $G$ and thus it is not involved in the transport.

\medskip
The inclusion $\mathcal P_3\subset \mathcal P$ is immediate. It $c\in \mathcal P_3$, then by Theorem~\ref{theo:positivity-rank1} there exists $c_0>0$, $c_{m+1}\le 0$ and $\varepsilon>0$ such that for all non-negative integrable functions $f_k$, $k=0,\ldots,m+1$,
$$ \int_H f_0(\langle x,u_0\rangle)^{c_0} f_{m+1}(\langle x,u_{m+1}\rangle)^{c_{m+1}} \prod_{k=1}^m f_k\big(\langle x, u_k\rangle\big)^{c_k} dx \ge \varepsilon 
\prod_{k=0}^{m+1} \left( \int_{H_k} f_k\right)^{c_k} \cdot$$
It remains to choose adequate Gaussian functions $f_0$ and $f_{m+1}$ so that 
$$f_0(\langle x,u_0\rangle)^{c_0} f_{m+1}(\langle x,u_{m+1}\rangle)^{c_{m+1}}\le e^{-\pi \langle x,u_0\rangle^2 +\pi \langle x,u_{m+1}\rangle^2} =e^{-\mathcal Q(x)}$$ to get a non-trivial inequality for the initial functional.
It is possible to achieve equality when $c_{m+1}<0$; we use an inequality in case $c_{m+1}=0$. 
\end{proof}

\section{Positivity condition in the general case}
\label{sec:finiteness-n}
We turn to a positivity condition in the general case. 
Let $0\le m^+\le m$ and for $k=0,\ldots,m+1$, let $B_k:H\to H_k$ be a surjective linear map.
Recall that  $B_+$  denotes the map $(B_1, \ldots, B_{m^+}) \colon H \to H_1 \times \cdots \times H_{m^+}$. With this notation, $\ker B_+ = \bigcap_{k=1}^{m^+} \ker B_k$. Similarly we define $B_{0+}=(B_0,B_1,\ldots B_{m^+})\colon H \to H_0 \times \cdots \times H_{m^+}$.
Recall also the non-degeneracy conditions~\eqref{eq:injectivity-condition} and~\eqref{eq:surjectivity-condition}, which we assume from now on.

\subsection{Recursive structure of the problem}
Any linear subspace $V \subseteq H$, together with the quotient space  $\sfrac{H}{V}$,  yields a split of $H$, i.e. the following sequence is exact 
\[
\begin{CD} 0 @>>> V @>{i}>> H @>{\pi}>> \sfrac{H}{V} @>>> 0, \end{CD}
\]
where $i \colon V \to H$ is the  natural embedding and $\pi \colon H \to \sfrac{H}{V}$ is the natural quotient map. 

Next, for each $k = 0, \ldots, m+1$ denote
\[
  V_k = B_k V.
\]
We consider a split of $H_k$ induced from the split of $H$ by the map $B_k$, namely
\[
\begin{CD} 0 @>>> V_k @>{i_k}>> H_k @>{\pi_k}>> \sfrac{H_k}{V_k} @>>> 0, \end{CD}
\]
together with surjective linear maps $b_k \colon V \to V_k $ and $\beta_k \colon \sfrac{H}{V} \to \sfrac{H_k}{V_k}$ defined such that the diagram
\[
\begin{CD}
0 @>>>    V     @>{i}>>    H      @>{\pi}>> \sfrac{H}{V}     @>>> 0 \\
@.    @VV{b_k}V      @VV{B_k}V      @VV{\beta_k}V         @.\\
0 @>>>   V_k    @>{i_k}>>   H_k     @>{\pi_k}>> \sfrac{H_k}{V_k} @>>> 0 
\end{CD}
\]
commutes.
In other words, $b_k$ is the restriction of $B_k$ to $V$, while $\beta_k$ is the quotient of $B_k$ by $V$, which can be defined explicitly by
\[
  \beta_k (x + V) = B_k x + V_k.
\]
Similarly as for the maps $B_k$ we consider the maps
\[ \begin{split}
  b_+ &= (b_1, \ldots, b_{m^+}) \colon V \to V_1 \times \cdots \times V_{m^+}, \\
  b_{0+} &= (b_0, b_1, \ldots, b_{m^+}) \colon V \to V_0 \times V_1 \times \cdots \times V_{m^+}, \\
  \beta_+ &= (\beta_1, \ldots, \beta_{m^+}) \colon \sfrac{H}{V} \to \sfrac{H_1}{V_1} \times \cdots \times \sfrac{H_{m^+}}{V_{m^+}}, \\
  \beta_{0+} &= (\beta_0, \beta_1, \ldots, \beta_{m^+}) \colon \sfrac{H}{V} \to \sfrac{H_0}{V_0} \times \sfrac{H_1}{V_1} \times \cdots \times \sfrac{H_{m^+}}{V_{m^+}}.
\end{split} \]
In the sequel, the above construction of restriction and quotient of maps will be applied recursively to $V$ and the maps $b_k \colon V \to V_k$ as well as to $\sfrac{H}{V}$ and the maps $\beta_k \colon \sfrac{H}{V} \to \sfrac{H_k}{V_k}$. 

Note a simple fact concerning the kernel of a quotient  map.
\begin{lemma}\label{lem:kernel-of-beta}
Let $B \colon H \to H'$ be a linear map (not necessarily surjective), $V \subseteq H$ be a subspace and denote $V' = B V$. Consider the map $\beta \colon \sfrac{H}{V} \to \sfrac{H'}{V'}$ defined as a quotient of $B$, i.e. $\beta (x + V) = B x + V'$. Then its kernel verifies 
\begin{equation}\label{eq:kernel-of-beta}
  \pi^{-1}(\ker \beta) = V + \ker B,
\end{equation}
where $\pi:H\to \sfrac{H}{V}$ is the canonical projection.
\end{lemma}
\begin{proof}
The inclusions $V \subseteq \pi^{-1}(\ker \beta)$ and $\ker B \subseteq \pi^{-1}(\ker \beta)$ are obvious. For the other inclusion, if $x \in \pi^{-1}(\ker \beta)$, i.e. $B x \in V' = B V$, then there exists $v \in V$ such that $B x = B v$ and hence $x = v + (x-v) \in V + \ker B$.
\end{proof}

\medskip

The following notion will be crucial:
\begin{dfn}
Given the maps $B_k \colon H \to H_k$ for $k = 0, 1, \ldots, m^+$ and a linear subspace $V\subset H$, we call the  split $\begin{CD} 0 @>>> V @>>> H @>>> \sfrac{H}{V} @>>> 0\end{CD}$ \emph{admissible}  for $(H,B)$ if the map $b_{0+}$ is a linear isomorphism. For shortness we also say that $V$ is an admissible subspace, and omit to mention $(H,B)$ when there is no ambiguity.
\end{dfn}

The next  lemma  tells that Condition~\eqref{eq:isomorphism-condition} is inherited by  subspaces and  quotients
induced by admissible splits.

\begin{lemma}
\label{lem:admissibility-of-subspace-and-quotient}
Suppose here that the map $B_{0+}$ is a linear isomorphism and consider a split $\begin{CD} 0 @>>> V @>>> H @>>> \sfrac{H}{V} @>>> 0\end{CD}$. Then the map $b_{0+}$ is injective and the map $\beta_{0+}$ is surjective. Moreover, the following assertions are equivalent:
\begin{enumerate}
\item[(i)] the split is admissible (i.e. the map $b_{0+}$ is a linear isomorphism),
\item[(i')] the map $b_{0+}$ is surjective,
\item[(ii)] $\dim V = \sum_{i=0}^{m^+} \dim B_i V$,
\item[(iii)] the map $\beta_{0+}$ is a linear isomorphism,
\item[(iii')] the map $\beta_{0+}$ is injective,
\item[(iv)] $\bigcap_{i=0}^{m^+} (V + \ker B_i) = V$.
\end{enumerate}
\end{lemma}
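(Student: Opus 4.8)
The plan is to prove the chain of equivalences by exploiting the short exact sequence $0 \to V \to H \to H/V \to 0$ together with the sub-additivity/super-additivity of dimensions across such splits, and the fact that $B_{0+}$ being a linear isomorphism forces $\dim H = \sum_{i=0}^{m^+} \dim H_i$. First I would record the two structural facts that do not need admissibility: the map $b_{0+}$ is injective, because $\ker b_{0+} = V \cap \ker B_{0+} = V \cap \{0\} = \{0\}$ (using that $B_{0+}$ is injective); and $\beta_{0+}$ is surjective, because each $\beta_i$ is a quotient of the surjective map $B_i$, hence surjective, and surjectivity of the product map follows once one checks the kernel condition — but more directly, $\beta_{0+}$ is the quotient of the surjective map $B_{0+}$ by $V$ (the diagram commutes componentwise), and a quotient of a surjection is a surjection. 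These two observations are the backbone; everything else is dimension bookkeeping.

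Next I would set up the dimension identities. From the split, $\dim H = \dim V + \dim(H/V)$, and similarly $\dim H_i = \dim V_i + \dim(H_i/V_i)$ for each $i = 0, 1, \ldots, m^+$, where $V_i = B_i V$. Summing the latter over $i$ and using $\dim H = \sum_{i=0}^{m^+}\dim H_i$ (a consequence of $B_{0+}$ being an isomorphism) gives
\[
  \dim V + \dim(H/V) = \sum_{i=0}^{m^+} \dim V_i + \sum_{i=0}^{m^+} \dim(H_i/V_i).
\]
Since $b_{0+}$ is injective we always have $\dim V \le \sum_{i=0}^{m^+}\dim V_i$, with equality iff $b_{0+}$ is onto, i.e. iff (i) holds; and since $\beta_{0+}$ is surjective we always have $\dim(H/V) \ge \sum_{i=0}^{m^+}\dim(H_i/V_i)$, with equality iff $\beta_{0+}$ is injective, i.e. iff (iii) holds. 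The displayed identity then forces both inequalities to be equalities simultaneously, which immediately yields the equivalences (i) $\Leftrightarrow$ (i') $\Leftrightarrow$ (ii) $\Leftrightarrow$ (iii) $\Leftrightarrow$ (iii'): indeed (i) and (i') coincide given injectivity of $b_{0+}$; (ii) is exactly the equality $\dim V = \sum \dim V_i$; (iii) and (iii') coincide given surjectivity of $\beta_{0+}$; and the identity ties the $V$-side equality to the $H/V$-side equality.

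Finally I would handle (iv). By Lemma~\ref{lem:kernel-of-beta} applied to each $B_i$ and the subspace $V$, we have $\pi^{-1}(\ker \beta_i) = V + \ker B_i$, so $\pi^{-1}\big(\bigcap_{i=0}^{m^+}\ker\beta_i\big) = \bigcap_{i=0}^{m^+}(V + \ker B_i)$, and since $\ker \beta_{0+} = \bigcap_i \ker \beta_i$ and $\pi^{-1}(\{0\}) = V$, the condition $\ker \beta_{0+} = \{0\}$ (that is, (iii')) translates precisely into $\bigcap_{i=0}^{m^+}(V+\ker B_i) = V$, which is (iv). I do not expect a serious obstacle here; the only point requiring a little care is making sure the diagram genuinely commutes so that $\beta_{0+}$ really is the quotient of $B_{0+}$ (componentwise this is the definition $\beta_k(x+V) = B_k x + V_k$), and that the preimage computation is run on the correct subspace. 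The main ``difficulty'', such as it is, is simply organizing the logical flow so the single dimension identity does all the work rather than proving each implication separately.
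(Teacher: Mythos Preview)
Your proposal is correct and follows essentially the same approach as the paper: injectivity of $b_{0+}$ and surjectivity of $\beta_{0+}$ come from the corresponding properties of $B_{0+}$, the equivalences among (i), (i'), (ii), (iii), (iii') are obtained by dimension counting using $\dim H = \sum_i \dim H_i$, and (iii')$\iff$(iv) is handled via Lemma~\ref{lem:kernel-of-beta} exactly as in the paper. Your packaging of the dimension argument---deriving a single identity that forces the two inequalities $\dim V \le \sum_i \dim V_i$ and $\dim(H/V) \ge \sum_i \dim(H_i/V_i)$ to be equalities simultaneously---is marginally more streamlined than the paper's step-by-step chain, but the substance is the same.
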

\begin{proof}
Since $B_{0+}:H\to H_0\times\cdots\times H_{m^+}$ is a linear isomorphism,
it is clear that its restriction $b_{0+}:V\to V_0\times\cdots\times V_{m^+}$ is injective (recall the notation   $V_k=B_kV$).  Hence (i) $\iff$ (i'). 
The fact that  $B_{0+}$ is onto also ensures that
 its quotient $\beta_{0+}:\sfrac{H}{V}\to \sfrac{H_0}{V_0} \times \cdots \times \sfrac{H_{m^+}}{V_{m^+}}$ is surjective. Hence (iii) $\iff$ (iii').
 
Next we use the basic fact that a linear map between finite dimensional vector spaces $L:X\to Y$ is bijective if and only if $\dim X=\dim Y$ and $L$ is injective (which is also equivalent to $\dim X=\dim Y$ and $L$ is surjective). This directly yields (i) $\iff$ (ii).

Since $B_{0+}$ is an isomorphism, it holds $\dim H=\sum_{i=0}^{m^+} \dim H_i$.
Therefore, by subtraction, (ii) is equivalent to 
$$ \dim \sfrac{H}{V} = \sum_{i=0}^{m^+} \dim \sfrac{H_i}{V_i}.$$
Since $\beta_{0+}$ is automatically surjective, we deduce that (ii)$\iff$(iii).

It remains to show that (iii')$\iff$(iv). To do this, we start with observing 
that 
\[
  \pi^{-1}(\ker \beta_{0+})=\pi^{-1}\Big(\bigcap_{i=0}^{m^+} \ker \beta_i \Big) =
   \bigcap_{i=0}^{m^+} \pi^{-1}(\ker \beta_i)=  \bigcap_{i=0}^{m^+} (V+\ker B_i),
\]
where the last equality comes from~\eqref{eq:kernel-of-beta}.  Taking the preimage w.r.t.  the surjective map $\pi$ gives that $\ker \beta_{0+}=\{0\}$ is equivalent to 
$  \pi^{-1}(\ker \beta_{0+} ) = V$.  The equivalence (iii')$\iff$(iv) follows from the above formula.
\end{proof}

Eventually, we show that Condition~\eqref{eq:kerBplus-contained-in-kerBm1} is inherited by the maps induced by admissible splits.
\begin{lemma}\label{lem:kerBplus-inside-kerBm1}
Consider a linear subspace $V\subset H$ and the corresponding split. Suppose $\ker B_+ \subseteq \ker B_{m+1}$. Then
\begin{enumerate}
\item[(i)] $\ker b_+ \subseteq \ker b_{m+1}$,
\item[(ii)] if $b_+$ is surjective then $\ker \beta_+ \subseteq \ker \beta_{m+1}$.
\end{enumerate}
\end{lemma}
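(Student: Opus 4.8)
The statement has two parts, both essentially bookkeeping with kernels, and the plan is to treat them separately.

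For part (i) the plan is to use that $b_k$ is by construction the restriction of $B_k$ to $V$, so that $\ker b_+ = V \cap \bigcap_{i=1}^{m^+}\ker B_i = V\cap\ker B_+$ and likewise $\ker b_{m+1} = V\cap\ker B_{m+1}$. The hypothesis $\ker B_+\subseteq\ker B_{m+1}$ then immediately gives $\ker b_+ = V\cap\ker B_+ \subseteq V\cap\ker B_{m+1} = \ker b_{m+1}$. No surjectivity assumption enters here.

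For part (ii) I would argue at the level of preimages under the quotient map $\pi\colon H\to\sfrac{H}{V}$. Applying Lemma~\ref{lem:kernel-of-beta} (formula~\eqref{eq:kernel-of-beta}) to each $\beta_i$ with $i\le m^+$ gives $\pi^{-1}(\ker\beta_+) = \bigcap_{i=1}^{m^+}\pi^{-1}(\ker\beta_i) = \bigcap_{i=1}^{m^+}(V+\ker B_i)$, and the same lemma gives $\pi^{-1}(\ker\beta_{m+1}) = V+\ker B_{m+1}$. Hence it suffices to show $\bigcap_{i=1}^{m^+}(V+\ker B_i)\subseteq V+\ker B_{m+1}$, and in view of $\ker B_+\subseteq\ker B_{m+1}$ this will follow once one establishes the identity $\bigcap_{i=1}^{m^+}(V+\ker B_i) = V+\ker B_+$. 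The inclusion $\supseteq$ is trivial; for $\subseteq$, given $x$ in the left-hand side one has $B_i x\in B_i V = V_i$ for every $i\le m^+$, so the tuple $(B_1 x,\ldots,B_{m^+}x)$ lies in $V_1\times\cdots\times V_{m^+}$, and surjectivity of $b_+\colon V\to V_1\times\cdots\times V_{m^+}$ produces a \emph{single} $v\in V$ with $B_i v = B_i x$ for all $i\le m^+$, whence $x-v\in\ker B_+$ and $x\in V+\ker B_+$. Finally, applying $\pi$ to the resulting inclusion $\pi^{-1}(\ker\beta_+)\subseteq\pi^{-1}(\ker\beta_{m+1})$ yields $\ker\beta_+\subseteq\ker\beta_{m+1}$, since $\pi$ is surjective and both sets are $\pi$-saturated.

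The only place where the hypothesis of (ii) is used — and the only step one might call delicate — is the amalgamation of preimages: for each index $i$ separately, $B_i x\in B_iV$ only yields some $v_i\in V$ with $B_i(x-v_i)=0$, and landing in $\ker B_+$ requires replacing all the $v_i$ by one common $v$; surjectivity of the product map $b_+$ is exactly what makes this possible (alternatively one could invoke Lemma~\ref{lem:kerS-kerT-H} inductively). I do not anticipate any further difficulty, and in particular the whole argument is purely linear-algebraic.
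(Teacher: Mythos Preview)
Your proof is correct and follows essentially the same approach as the paper. The only organizational difference is in part (ii): the paper applies Lemma~\ref{lem:kernel-of-beta} once to the combined map $B_+$ (obtaining $\pi^{-1}(\ker\beta)=V+\ker B_+$ directly, then using surjectivity of $b_+$ to identify $\ker\beta$ with $\ker\beta_+$ via the isomorphism $\sfrac{H_1\times\cdots\times H_{m^+}}{B_+V}\cong \prod_i \sfrac{H_i}{V_i}$), whereas you apply the lemma to each $\beta_i$ separately and then collapse the intersection $\bigcap_i(V+\ker B_i)$ to $V+\ker B_+$ by hand---but this is the same content, and surjectivity of $b_+$ plays exactly the same role in both arguments.
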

\begin{proof}
\par{(i) This part is obvious since $\ker b_+ = V \cap \ker B_+$ and similarly for $\ker b_{m+1}$.}
\par{(ii) By Lemma~\ref{lem:kernel-of-beta} 
\[
  \pi^{-1}(\ker \beta_{m+1}) = V + \ker B_{m+1}.
\]
Applying Lemma~\ref{lem:kernel-of-beta} once again, this time to the map $B := B_+$ and the subspaces $V \subseteq H$ and $V' = B_+ V \subseteq H' := H_1 \times \cdots \times H_{m^+}$ we obtain that the map $\beta \colon \sfrac{H}{V} \to \sfrac{H'}{V'}$ being the quotient of $B$ satisfies
\[
  \pi^{-1}(\ker \beta) = V + \ker B.
\]
Since $b_+$ is surjective, i.e. $V' = B_1 V \times \cdots \times B_{m^+} V$, the map $\beta_+$ coincides with $\varphi \circ \beta$, where $\varphi \colon \sfrac{H'}{V'} = \sfrac{H_1 \times \cdots \times H_{m^+}}{B_1 V \times \cdots \times B_{m^+} V} \to \sfrac{H_1}{B_1V} \times \cdots \times \sfrac{H_{m^+}}{B_{m^+}V}$ is the natural isomorphism, hence $\ker \beta$ and $\ker \beta_+$ coincide. Therefore,
\[
  \pi^{-1}(\ker \beta_+) = V + \ker B_+
\]
and using the hypothesis $\ker B_+ \subseteq \ker B_{m+1}$ we obtain
\[
  \pi^{-1}(\ker \beta_+) \subseteq V + \ker B_{m+1} = \pi^{-1}(\ker \beta_{m+1}).
\]
To conclude, it remains to apply the surjective map $\pi$.}

\end{proof}

\begin{corollary}\label{cor:inheritance-of-kernel-conditions}
Suppose the map $B_{0+}$ is a linear isomorphism and $\ker B_+ \subseteq \ker B_{m+1}$. If a split $\begin{CD} 0 @>>> V @>>> H @>>> \sfrac{H}{V} @>>> 0\end{CD}$ is admissible then
\begin{enumerate}
  \item[(i)] $b_{0+}$ is a linear isomorphism and  $\ker b_+ \subseteq \ker b_{m+1}$,
  \item[(ii)] $\beta_{0+}$ is a linear isomorphism and $\ker \beta_+ \subseteq \ker \beta_{m+1}$.  
\end{enumerate}
\end{corollary}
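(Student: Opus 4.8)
The plan is to derive Corollary~\ref{cor:inheritance-of-kernel-conditions} as an immediate consequence of the three lemmas that precede it, by combining them appropriately. The two items simply assemble facts already proved, so the proof is a matter of citing the right statements in the right order; the only real content is checking that the hypotheses needed for each cited lemma are available at each stage.

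For item (i): admissibility of the split means by definition that $b_{0+}$ is a linear isomorphism, which is the first half of the claim. For the inclusion $\ker b_+ \subseteq \ker b_{m+1}$, I would invoke Lemma~\ref{lem:kerBplus-inside-kerBm1}(i), whose only hypothesis is $\ker B_+ \subseteq \ker B_{m+1}$, which is assumed. (As noted in the proof of that lemma, this part is trivial since $\ker b_+ = V \cap \ker B_+ \subseteq V \cap \ker B_{m+1} = \ker b_{m+1}$.)

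For item (ii): that $\beta_{0+}$ is a linear isomorphism follows from Lemma~\ref{lem:admissibility-of-subspace-and-quotient}, specifically the equivalence (i)~$\iff$~(iii), whose standing hypothesis is that $B_{0+}$ is a linear isomorphism — exactly what we assume. To get $\ker \beta_+ \subseteq \ker \beta_{m+1}$ from Lemma~\ref{lem:kerBplus-inside-kerBm1}(ii), I need $b_+$ to be surjective; this is where item (i) is used, since by Lemma~\ref{lem:admissibility-of-subspace-and-quotient} the equivalence (i)~$\iff$~(i') tells us admissibility forces $b_{0+}$ — hence its coordinate restriction $b_+$ — to be surjective. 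With $b_+$ surjective and $\ker B_+ \subseteq \ker B_{m+1}$ in hand, Lemma~\ref{lem:kerBplus-inside-kerBm1}(ii) yields the desired inclusion.

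There is no genuine obstacle here: the statement is a bookkeeping corollary whose whole purpose is to package the inheritance of the non-degeneracy conditions~\eqref{eq:isomorphism-condition} and~\eqref{eq:kerBplus-contained-in-kerBm1} under an admissible split, so that the recursive argument in the following subsections can be run. The one point worth stating explicitly in the write-up is the logical order — item (i) (really, the surjectivity of $b_+$ that comes with it) must be established before item (ii), since Lemma~\ref{lem:kerBplus-inside-kerBm1}(ii) requires surjectivity of $b_+$ as a hypothesis.

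\begin{proof}
By definition, admissibility of the split means that $b_{0+}$ is a linear isomorphism; in particular $b_+$ is surjective. Since $\ker B_+ \subseteq \ker B_{m+1}$, Lemma~\ref{lem:kerBplus-inside-kerBm1}(i) gives $\ker b_+ \subseteq \ker b_{m+1}$, which proves~(i). For~(ii), since $B_{0+}$ is a linear isomorphism, the equivalence (i)~$\iff$~(iii) in Lemma~\ref{lem:admissibility-of-subspace-and-quotient} shows that $\beta_{0+}$ is a linear isomorphism. Finally, as $b_+$ is surjective and $\ker B_+ \subseteq \ker B_{m+1}$, Lemma~\ref{lem:kerBplus-inside-kerBm1}(ii) yields $\ker \beta_+ \subseteq \ker \beta_{m+1}$.
\end{proof}
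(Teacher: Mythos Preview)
Your proof is correct and takes essentially the same approach as the paper, which simply says the corollary is a direct consequence of Lemma~\ref{lem:kerBplus-inside-kerBm1} and of the equivalent forms of admissibility given in Lemma~\ref{lem:admissibility-of-subspace-and-quotient}. Your write-up merely unpacks these citations explicitly, including the observation that surjectivity of $b_+$ (needed for Lemma~\ref{lem:kerBplus-inside-kerBm1}(ii)) follows from $b_{0+}$ being an isomorphism.
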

\begin{proof} This is a direct consequence of Lemma~\ref{lem:kerBplus-inside-kerBm1} 
and of the equivalent forms of the admissibility property given in Lemma~\ref{lem:admissibility-of-subspace-and-quotient}.
\end{proof}

\subsection{Formulation of the characterization result}
Recall $0\le m^+\le m$. In addition to the linear surjective maps $B_k \colon H \to H_k$, $k = 0, \ldots, m+1$, we consider real numbers $c_0 = 1$, $c_1, \ldots, c_{m^+} > 0$, $c_{m^+ +1}, \ldots, c_m \le 0$ and  $c_{m+1} = -1$. In this context, for any positive definite quadratic forms $\mathcal Q_+ \colon H_0 \to \R$ and $\mathcal Q_- \colon H_{m+1} \to \R$, define a functional $J_{\mathcal Q_+, \mathcal Q_-}$ acting on non-negative integrable functions $f_k \colon H_k \to \R$ ($k = 1, \ldots, m$) satisfying $\int_{H_k} f_k > 0$:
\begin{equation}\label{eq:J-Qplus-Qminus}
  J_{\mathcal Q_+, \mathcal Q_-}(f_1, \ldots, f_m) = \frac{\int_H \prod_{k=0}^{m+1} f_k^{c_k} (B_k x) \,dx}{\prod_{k=1}^m \Big( \int_{H_k} f_k \Big)^{c_k}},
\end{equation}
where
\begin{equation}\label{eq:def-f0-fm1}
f_0 = e^{-\mathcal Q_+} \quad \text{and} \quad f_{m+1} = e^{-\mathcal Q_-}.
\end{equation}

Assuming~\eqref{eq:isomorphism-condition} and~\eqref{eq:kerBplus-contained-in-kerBm1}, the condition defined below turns out to be equivalent to the  positivity of the infimum of $J_{\mathcal Q_+, \mathcal Q_-}$ over all functions $f_1, \ldots, f_m$.

\begin{dfn}
We say that $H$ together with the maps $B_k$ and the exponents $c_k$ ($k=0, \ldots, m+1$) satisfies \emph{Condition (C)} if for every admissible split
\[
  \begin{CD} 0 @>>> V @>>> H @>>> \sfrac{H}{V} @>>> 0\end{CD},
\]
the following two conditions are satisfied:
\begin{enumerate}
\item[(i)] if $b_{m+1}$ is a trivial map (i.e. $V \subseteq \ker B_{m+1}$ thus $V_{m+1} = \{0\}$) then $V$ is a \emph{supercritical subspace} of $H$, i.e.
\[ \dim V \ge \sum_{k=1}^m c_k \dim V_k; \]
\item[(ii)] if $\beta_0$ is a trivial map (i.e. $B_0 V = B_0 H$ thus $\sfrac{H_0}{V_0} = \{0\}$) then $\sfrac{H}{V}$ is a \emph{subcritical quotient} of $H$, i.e.
\[ \dim \sfrac{H}{V} \le \sum_{k=1}^m c_k \dim \sfrac{H_k}{V_k}. \]
\end{enumerate}
\end{dfn}
Later on we will also use a similar notion which we call \emph{criticality}:
\begin{dfn}
Suppose $V\subset H$ induces an admissible split. We say that $V$ is a \emph{critical subspace} of $H$ if
\[ b_{m+1} \text{ is trivial and } \dim V = \sum_{k=1}^m c_k \dim V_k. \]
Similarly, we say that $\sfrac{H}{V}$ is a \emph{critical quotient} of $H$ if
\[ \beta_0 \text{ is trivial and } \dim \sfrac{H}{V} = \sum_{k=1}^m c_k \dim \sfrac{H_k}{V_k}. \]
\end{dfn}

\begin{theorem}\label{thm:positivity-of-constant-general-case}
In the setting described above, suppose that~\eqref{eq:isomorphism-condition} and~\eqref{eq:kerBplus-contained-in-kerBm1} hold.
\begin{enumerate}
\item[(i)] If for some positive definite quadratic forms $\mathcal Q_+ \colon H_0 \to \R$ and $\mathcal Q_- \colon H_{m+1} \to \R$,
\[
  \inf_{f_1, \ldots, f_m} J_{\mathcal Q_+, \mathcal Q_-}(f_1, \ldots, f_m) > 0
\]
then $(H,B,c)$ satisfies Condition (C).
\item[(ii)] If $(H,B,c)$ satisfies Condition (C) then for all positive definite quadratic forms $\mathcal Q_+$ and $\mathcal Q_-$,
\[
  \inf_{f_1, \ldots, f_m} J_{\mathcal Q_+, \mathcal Q_-}(f_1, \ldots, f_m) > 0.
\]
\end{enumerate}
\end{theorem}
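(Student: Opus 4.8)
The plan is to prove both implications simultaneously by induction on $\dim H$, after first passing to the Gaussian reformulation. Since~\eqref{eq:isomorphism-condition} forces the non-degeneracy condition~\eqref{eq:non-degeneracy2} (the rôle of $B_+$ being played by $B_{0+}$, and $\mathcal Q_+\circ B_0$ providing a positive definite contribution on $\ker B_+$), Theorem~\ref{theo:gauss-mini} applies to $J_{\mathcal Q_+,\mathcal Q_-}$ and yields $\inf_{f_1,\ldots,f_m}J_{\mathcal Q_+,\mathcal Q_-}=\inf_{\mathcal{CG}}J_{\mathcal Q_+,\mathcal Q_-}=D^{-1/2}$, where $D$ is the quantity~\eqref{def:BL-constant} built from $Q=B_0^\ast Q_+B_0-B_{m+1}^\ast Q_-B_{m+1}$. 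Thus the theorem reduces to the purely finite-dimensional statement that $D<+\infty$ \emph{if and only if} $(H,B,c)$ satisfies Condition~(C); moreover, as in the proof of Proposition~\ref{prop:interpolation}, replacing $\det(\,\cdot\,)$ by $\det((\,\cdot\,)_+)$ in the numerator allows one to take the supremum over \emph{all} positive definite $(A_k)$, which is convenient for degenerate test choices. In particular $D$ does not depend on the choice of $\mathcal Q_\pm$ (only on their signatures), which explains why part~(ii) is asserted for all $\mathcal Q_\pm$.

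For necessity (part~(i)), I would fix an admissible split $\begin{CD}0@>>>V@>>>H@>>>\sfrac HV@>>>0\end{CD}$ and test $D$ against a one-parameter family of positive definite operators $A_k=A_k(\mu)$ whose eigenvalues grow like prescribed powers $\mu^{a_k(V)}$, the exponents $a_k(V)$ being chosen according to the combinatorial position of $V$ relative to the maps $B_k$ — exactly as the admissible vector $a$ is chosen in the rank one argument of Theorem~\ref{theo:positivity-rank1} (and Theorem~\ref{theo:positivity-rank1-Q}), where the constraints $a_0=a_{m+1}=0$ encode that the kernel factors $A_0=Q_+$, $A_{m+1}=Q_-$ are frozen. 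When $b_{m+1}$ is trivial one boosts $A_i$ along $V_i$ for $i\le m^+$ while damping the negative terms; using that $(b_0,b_+)\colon V\to V_0\times\cdots\times V_{m^+}$ is an isomorphism (Corollary~\ref{cor:inheritance-of-kernel-conditions}) together with positivity of $\mathcal Q$ on $\ker B_+$, one checks that the matrix $Q+\sum_k c_kB_k^\ast A_k(\mu)B_k$ stays positive definite and that its determinant grows like $\mu^{\dim V}$ while $\prod_k(\det A_k(\mu))^{c_k}$ grows like $\mu^{\sum_{k=1}^mc_k\dim V_k}$; finiteness of $D$ then forces $\dim V\ge\sum c_k\dim V_k$. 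The constraint on quotients is obtained dually, boosting along a complement of $V$ and using that triviality of $\beta_0$ makes the $\mathcal Q_+$-contribution harmless on $\sfrac HV$.

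For sufficiency (part~(ii)) I would induct on $\dim H$, the base case $\dim H\le 1$ being the rank one situation covered by Theorems~\ref{theo:positivity-rank1} and~\ref{theo:positivity-rank1-Q} (after discarding any $H_k=\{0\}$), whose positivity domains one checks to coincide with Condition~(C) in that case. Assume Condition~(C). If $H$ possesses a \emph{proper} critical admissible subspace $V$ (or, dually, a proper critical quotient), then by Corollary~\ref{cor:inheritance-of-kernel-conditions} the induced data on $V$ and on $\sfrac HV$ again satisfy~\eqref{eq:isomorphism-condition} and~\eqref{eq:kerBplus-contained-in-kerBm1}; Condition~(C) is inherited as well, because admissible splits of $V$ and of $\sfrac HV$ are exactly those induced, via Lemma~\ref{lem:admissibility-of-subspace-and-quotient}, by admissible splits of $H$ refining $V$, and the criticality $\dim V=\sum c_k\dim V_k$ makes the triviality of the relevant boundary maps and the dimension counts add up. The induction hypothesis gives positivity of the infimum for both pieces (with suitable induced quadratic forms, $\mathcal Q_-^V$ being trivial since $V_{m+1}=\{0\}$), and a factorization-type \emph{lower} bound $\inf J_H\ge\kappa\,\inf J_V\cdot\inf J_{\sfrac HV}$ with $\kappa>0$ concludes; this bound is proved by slicing $H$ along $V$, applying the sub-inequality fibrewise and the quotient-inequality to the base, criticality being precisely what makes the competing Jacobian powers cancel. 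If instead $H$ has no proper critical admissible subspace and no proper critical quotient, then all the inequalities in Condition~(C) for proper $V$ are strict, so the exponent $c$ lies in the interior of the polyhedron cut out by Condition~(C); one then writes $c$ as a convex combination of finitely many exponent vectors with the same sign pattern lying on the boundary of that polyhedron — each of which satisfies Condition~(C) and possesses a proper critical subspace or quotient, hence has positive infimum by the previous case — and invokes the convexity of the positivity domain (Proposition~\ref{prop:interpolation}) to obtain $\inf J_H>0$.

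I expect the main obstacle to be the factorization lower bound at a critical subspace: unlike in the direct Brascamp--Lieb theory one needs a \emph{lower} bound on an integral, so the slicing must be arranged so that the fibrewise and base inequalities multiply without loss of mass — the same surjectivity difficulty confronted in Section~\ref{sec:proof-main} — while simultaneously tracking how the frozen Gaussian factors $e^{-\mathcal Q_+}$ and $e^{-\mathcal Q_-}$ distribute between $V$ and $\sfrac HV$. Secondary difficulties are the careful choice of the scaling exponents $a_k(V)$ in the necessity direction so that the perturbed matrix remains positive definite throughout, and the bookkeeping, via Lemma~\ref{lem:admissibility-of-subspace-and-quotient} and Lemma~\ref{lem:kerBplus-inside-kerBm1}, that Condition~(C) genuinely passes to the induced data on subspaces and quotients (including the marginal cases where the tight subspace or quotient fails to be proper, which may require invoking the dual condition or a direct estimate).
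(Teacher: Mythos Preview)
Your overall architecture for part~(ii) — induction, inheritance of Condition~(C) through a critical split, a tensorization lower bound, and interpolation via Proposition~\ref{prop:interpolation} — matches the paper's, and your identification of the factorization lower bound as the main technical step is correct (this is Lemma~\ref{lem:tensorization} in the paper). Your approach to part~(i) via Gaussians and the constant $D$ is a legitimate alternative to the paper's direct rescaling of test functions, though the details of keeping $Q+\sum c_kB_k^\ast A_k(\mu)B_k$ positive definite in higher rank are nontrivial and you have not supplied them.

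However, there is a genuine gap in your treatment of the case ``no proper critical subspace and no proper critical quotient''. You assert that $c$ then lies in the interior of the polyhedron $\mathcal C$ and can be written as a convex combination of boundary points, each possessing a proper critical subspace or quotient. This fails for two reasons. First, $\mathcal C$ is in general \emph{unbounded} (the paper says so explicitly), so an interior point need not be a convex combination of boundary points at all. Second, and more seriously, the boundary of $\mathcal C$ is not made up solely of the hyperplanes $\partial S_V$ and $\partial S^{\sfrac HV}$ for \emph{proper} $V$: it also contains the facets $\{c_j=0\}$ for $j>m^+$, and, when $B_0$ or $B_{m+1}$ is trivial, the hyperplane $\partial S_H$ corresponding to criticality of $H$ itself. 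Boundary points on these facets have no proper critical piece, so your inductive reduction on $\dim H$ alone cannot handle them.

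The paper repairs this by inducting on the pair $(\dim H,m)$ rather than $\dim H$ alone, and by intersecting $\mathcal C$ with a well-chosen compact affine hyperplane $P$ so that $\mathcal C\cap P$ is a compact polytope. One then need only treat \emph{vertices} of $\mathcal C\cap P$; each such vertex lies on $m-1$ of the defining hyperplanes, and one argues case by case: a hyperplane $\{c_j=0\}$ allows one to drop the $j$-th function and decrease $m$; a hyperplane from a proper $V$ gives a critical split and decreases $\dim H$; and if \emph{all} $m-1$ hyperplanes come from $\partial S_H$ (which is a single hyperplane), one is forced into $m=2$ with $B_0$ or $B_{m+1}$ trivial, handled by a separate base case (Lemma~\ref{lem:initialization-2functions-no-kernel}). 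Correspondingly the founding cases are not just $\dim H\le 1$ but also $m\le 1$ (Lemma~\ref{lem:initialization-1function}) and $m=2$ with no kernel. Your scheme as written does not provide any mechanism for the $c_j=0$ boundary or for the $\partial S_H$ boundary, and the single base case $\dim H\le 1$ is insufficient to close the induction.
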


The above theorem easily implies the characterization of positivity of the functional $J$, for which we present now an intrinsic formulation (in terms of $\mathcal Q$ only, and not
of its decomposition involving $B_0, B_{m+1}$).

\begin{theorem}\label{th:characterization-in-general-case}
Consider the functional $J$ as defined in~\eqref{def:J} along with a quadratic form $\mathcal Q \colon H \to \R$, surjective linear maps $B_k \colon H \to H_k$ ($k=1,\ldots,m$) and exponents $c_1, \ldots, c_{m^+} > 0, c_{m^+ +1}, \ldots, c_m \le 0$. Suppose the non-degeneracy condition~\eqref{eq:injectivity-condition} and~\eqref{eq:surjectivity-condition} hold. Then $\inf J > 0$ if and only if for every subspace $V \subseteq H$ such that
\begin{equation}\label{eq:admissibility-in-terms-of-Q-via-surjectivity-of-b0plus}
  \dim\big(V \cap (\ker B_+)^{\perp_{\mathcal Q}}\big) = \sum_{i=1}^{m^+} \dim B_i V,
\end{equation}
 the following two implications hold true:
\begin{enumerate}
\item[(i)] if $V \subseteq \rad \mathcal Q + \ker B_+$ then $$\dim V \ge \sum_{k=1}^m c_k \dim B_k V;$$
\item[(ii)] if $V + (\ker B_+)^{\perp_{\mathcal Q}} = H$ then $$\dim H - \dim V \le \sum_{k=1}^m c_k (\dim H_k - \dim B_k V).$$
\end{enumerate}
\end{theorem}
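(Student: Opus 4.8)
The plan is to derive Theorem~\ref{th:characterization-in-general-case} from Theorem~\ref{thm:positivity-of-constant-general-case} by translating the ``admissible split'' language into intrinsic statements about $\mathcal Q$. First I would invoke Lemma~\ref{lem:decompose-Q}: under the non-degeneracy hypotheses~\eqref{eq:injectivity-condition} and~\eqref{eq:surjectivity-condition} we may use the particular decomposition built in its proof, for which $H_0 = \ker B_+$, the map $B_0 = P$ is the projection onto $H_0$ along $H_0^{\perp_{\mathcal Q}} = (\ker B_+)^{\perp_{\mathcal Q}}$, and $B_{m+1}$ is the quotient map $H_0^{\perp_{\mathcal Q}} \to H_0^{\perp_{\mathcal Q}}/\rad\mathcal Q$ precomposed with $\Id - P$; this yields $\mathcal Q(x) = \mathcal Q_+(B_0 x) - \mathcal Q_-(B_{m+1} x)$ with $\mathcal Q_\pm$ positive definite, with $(B_0, B_+)$ a linear isomorphism (so~\eqref{eq:isomorphism-condition} holds) and with $\ker B_+ \subseteq \ker B_{m+1}$ (so~\eqref{eq:kerBplus-contained-in-kerBm1} holds). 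Since $f_0^{c_0}(B_0 x)\, f_{m+1}^{c_{m+1}}(B_{m+1} x) = e^{-\mathcal Q_+(B_0 x) + \mathcal Q_-(B_{m+1} x)} = e^{-\mathcal Q(x)}$ for $f_0 = e^{-\mathcal Q_+}$, $f_{m+1} = e^{-\mathcal Q_-}$, $c_0 = 1$, $c_{m+1} = -1$, the functional $J$ of~\eqref{def:J} equals $J_{\mathcal Q_+,\mathcal Q_-}$ of~\eqref{eq:J-Qplus-Qminus}. Thus Theorem~\ref{thm:positivity-of-constant-general-case} already gives $\inf J > 0 \iff (H, B, c)$ satisfies Condition~(C), and the remaining task is purely a change of vocabulary.

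To carry out that change of vocabulary I would first record the two identities describing the kernels of the maps produced by Lemma~\ref{lem:decompose-Q}, namely $\ker B_0 = (\ker B_+)^{\perp_{\mathcal Q}}$ and $\ker B_{m+1} = \rad\mathcal Q + \ker B_+$, both of which are explicit in its proof. Given a subspace $V \subseteq H$, admissibility of the induced split means that $b_{0+}$ is a linear isomorphism, which by Lemma~\ref{lem:admissibility-of-subspace-and-quotient} amounts to $\dim V = \sum_{i=0}^{m^+} \dim B_i V$; subtracting the rank--nullity identity $\dim B_0 V = \dim V - \dim(V \cap \ker B_0)$ turns this into $\dim\big(V \cap (\ker B_+)^{\perp_{\mathcal Q}}\big) = \sum_{i=1}^{m^+} \dim B_i V$, i.e.\ exactly~\eqref{eq:admissibility-in-terms-of-Q-via-surjectivity-of-b0plus}. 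Next, $b_{m+1}$ is trivial precisely when $V \subseteq \ker B_{m+1} = \rad\mathcal Q + \ker B_+$, and $\beta_0$ is trivial precisely when $B_0 V = B_0 H$, that is $V + \ker B_0 = V + (\ker B_+)^{\perp_{\mathcal Q}} = H$. Finally, using $\dim B_k V = \dim V_k$ and $\dim H_k - \dim B_k V = \dim(\sfrac{H_k}{V_k})$, and recalling that $c_0$ and $c_{m+1}$ do not enter the sums in Condition~(C), the supercriticality inequality $\dim V \ge \sum_{k=1}^m c_k \dim V_k$ of Condition~(C)(i) is the inequality in~(i) of the theorem, and the subcriticality inequality $\dim(\sfrac{H}{V}) \le \sum_{k=1}^m c_k \dim(\sfrac{H_k}{V_k})$ of Condition~(C)(ii) is the inequality in~(ii). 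Matching hypotheses and conclusions on both sides finishes the proof.

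I do not expect a real obstacle: the argument is a dictionary between two equivalent formulations. The one point that genuinely requires care is checking that the decomposition provided by Lemma~\ref{lem:decompose-Q} has the stated kernels --- in particular that $\ker B_{m+1}$ is \emph{equal to} $\rad\mathcal Q + \ker B_+$ and $\ker B_0$ \emph{equal to} $(\ker B_+)^{\perp_{\mathcal Q}}$, rather than these being mere inclusions --- since it is these identities, and not any property of a generic decomposition~\eqref{eq:any-decomposition-of-Q}, that make ``$b_{m+1}$ trivial'' and ``$\beta_0$ trivial'' land exactly on the subspace conditions in~(i) and~(ii). It is also worth noting, for coherence of the statement, that $\rad\mathcal Q \subseteq (\ker B_+)^{\perp_{\mathcal Q}}$, so that the hypothesis $V \subseteq \rad\mathcal Q + \ker B_+$ of~(i) is compatible with~\eqref{eq:admissibility-in-terms-of-Q-via-surjectivity-of-b0plus}.
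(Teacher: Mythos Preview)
Your proposal is correct and follows essentially the same route as the paper: construct $B_0$ and $B_{m+1}$ via the specific decomposition in the proof of Lemma~\ref{lem:decompose-Q}, identify $J$ with $J_{\mathcal Q_+,\mathcal Q_-}$, apply Theorem~\ref{thm:positivity-of-constant-general-case}, and then translate admissibility and the triviality conditions on $b_{m+1}$, $\beta_0$ using $\ker B_0 = (\ker B_+)^{\perp_{\mathcal Q}}$ and $\ker B_{m+1} = \rad\mathcal Q + \ker B_+$ together with Lemma~\ref{lem:admissibility-of-subspace-and-quotient}. Your explicit flag that these kernel identities must be equalities (not mere inclusions) is exactly the point the paper relies on.
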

\begin{remark}
When no kernel is involved (i.e. $\mathcal Q=0$) we recover, in a slightly different form, the condition of Theorem~\ref{th:positivity-general-rank-no-kernel} from the introduction. To see the connection, observe that $(i)$ for $V=H$ and $(ii)$ for $V=\{0\}$ yield $\dim H=\sum_{k=1}^m c_k \dim H_k$.  Then, it is clear that the inequalities in $(i)$ and $(ii)$ are equivalent.
\end{remark}

\begin{proof}[Proof of Theorem \ref{th:characterization-in-general-case}]
First we construct the maps $B_0 \colon H \to H_0$ and $B_{m+1} \colon H \to H_{m+1}$ as in the proof of the implication (1) $\implies$ (2) from Lemma~\ref{lem:decompose-Q}. Recall from that proof that $$\ker B_0=H_0^{\perp_{\mathcal Q}} = (\ker B_+)^{\perp_{\mathcal Q}}$$ and $$\ker B_{m+1} = \rad \mathcal Q + \ker B_+.$$

Next, we apply Theorem~\ref{thm:positivity-of-constant-general-case}, and reformulate it in terms of the quadratic form $\mathcal Q$ as follows. By~\eqref{eq:isomorphism-condition} and  Lemma~\ref{lem:admissibility-of-subspace-and-quotient}, a subspace $V\subset H$ is admissible if and only if 
\[
  \dim V = \sum_{i=0}^{m^+} \dim B_i V.
\]
The last equation is equivalent to~\eqref{eq:admissibility-in-terms-of-Q-via-surjectivity-of-b0plus}, thanks to the following relation
\[
  \dim B_0 V = \dim V - \dim (V \cap \ker B_0) = \dim V - \dim \big(V \cap (\ker B_+)^{\perp_{\mathcal Q}}\big).
\]
Finally note that the following equivalences hold true:
\[ \begin{split}
  V \subseteq \ker B_{m+1} \quad &\iff \quad V \subseteq \rad \mathcal Q + \ker B_+, \\
  B_0 V = B_0 H \quad &\iff \quad V + \ker B_0 = H \quad \iff \quad V + (\ker B_+)^{\perp_{\mathcal Q}} = H.
\end{split} \]
\end{proof}

Eventually, let us note that Lemma~\ref{lem:admissibility-of-subspace-and-quotient} (i) $\iff$ (iv) shows that  \eqref{eq:admissibility-in-terms-of-Q-via-surjectivity-of-b0plus} is  equivalent to 
\[
  \big(V + (\ker B_+)^{\perp_{\mathcal Q}}\big) \cap \bigcap_{i=1}^{m^+} (V + \ker B_i) = V.
\]

\subsection{Useful notation for the proof of Theorem~\ref{thm:positivity-of-constant-general-case}}\label{sec:prelim-results-characterization-theorem}

Consider any split
\[
  \begin{CD} 0 @>>> V @>>> H @>>> \sfrac{H}{V} @>>> 0\end{CD}.
\]
We fix any linear injective maps
\[ \begin{split}
  j &\colon \sfrac{H}{V} \to H, \\
  j_k &\colon \sfrac{H_k}{V_k} \to H_k \quad \textup{for $k = 0, \ldots, m+1$}
\end{split} \]
such that $j$ (respectively $j_k$) composed with the canonical quotient map $H \to \sfrac{H}{V}$ (resp. $H_k \to \sfrac{H_k}{V_k}$) is the identity on $\sfrac{H}{V}$ (resp. $\sfrac{H_k}{V_k}$). For example, $j$ can be chosen so that its range is the orthogonal complement of $V$ in $H$ (and similarly $j_k$).

For any $x \in V$ and $y \in \sfrac{H}{V}$ we write
\[
  B_k (x + j(y)) = B_k x + B_k j(y) = b_k x + \rho_k y + j_k(\beta_k y),
\]
where
\[
  \rho_k y = B_k j(y) - j_k(\beta_k y) \colon \sfrac{H}{V} \to V_k.
\]
To see that $\rho_k$ has range in $V_k$, compose it with the quotient map $\pi_k \colon H_k \to \sfrac{H_k}{V_k}$ to see that $\pi_k \rho_k(y) = \pi_k B_k j(y) - \beta_k \pi j(y) = (\pi_k B_k - \beta_k \pi)(j(y)) = 0$, by definition of $\beta_k$. 

Fix any positive definite quadratic forms $\mathcal Q_+$ on $H_0$ and $\mathcal Q_-$ on $H_{m+1}$ and thus fix $f_0$ and $f_{m+1}$ as in~\eqref{eq:def-f0-fm1}. Next,
let $f_k \colon H_k \to \R$ ($k = 1, 2, \ldots, m$) be non-negative, integrable functions with $\int_{H_k} f_k > 0$. By identifying each function $f_k \colon H_k \to \R$ (for $k = 0, 1, \ldots, m + 1$) with a function $f_k \colon V_k \times \sfrac{H_k}{V_k} \to \R$ and using Fubini theorem we rewrite~\eqref{eq:J-Qplus-Qminus} as 
\begin{equation}\label{eq:J-iterative-integral}
  J_{\mathcal Q_+, \mathcal Q_-}(f_1, \ldots, f_m) = C \frac{\int_{\sfrac{H}{V}} \int_V \prod_{k=0}^{m+1} f_k^{c_k}(b_k x + \rho_k y, \beta_k y) \, dx \, dy}{\prod_{k=1}^m \left(\int_{\sfrac{H_k}{V_k}} \int_{V_k} f_k(x, y) \, dx \, dy \right)^{c_k}},
\end{equation}
where $C \in (0, +\infty)$ is a constant resulting from changes of variables $V \times \sfrac{H}{V} \ni (x, y) \mapsto x + j(y) \in H$ and $V_k \times \sfrac{H_k}{V_k} \ni (x, y) \mapsto x + j_k(y) \in H_k$ (for $k=1,\ldots,m$). (If $j$ and $j_k$ are chosen according to Euclidean structures of $H$ and $H_k$ as in the example mentioned above, then $C = 1$. However, in what follows the exact value of $C$ has no importance).

\subsection{Necessity of Condition (C)}
Here we prove the first assertion of Theorem~\ref{thm:positivity-of-constant-general-case}.
\begin{proof}[Proof of Theorem~\ref{thm:positivity-of-constant-general-case}, part (i)]
Recall the discussion from Section~\ref{sec:prelim-results-characterization-theorem}. Assume that the subspace $V$  is admissible and that we choose the functions $f_1, \ldots, f_{m^+}$ so that they are bounded of compact support and the functions $f_{m^+ +1}, \ldots, f_m$ which are strictly positive with polynomial decay at infinity.

First consider the case $V \subseteq \ker B_{m+1}$ (i.e. $V_{m+1} = \{0\}$ and thus $b_{m+1}$ and $\rho_{m+1}$ are trivial). We aim at showing that $V$ is a supercritical subspace of $H$.
To this end, for any $R \in [1, \infty)$, set
\[
  f^{(R)}_k(x,y) = f_k(x/R,y) \quad \textup{for $k=1,\ldots,m$.}
\]
By the hypothesis, for all $R \ge 1$, $J_{\mathcal Q_+, \mathcal Q_-}(f^{(R)}_1, \ldots, f^{(R)}_m)$ is uniformly bounded from below by a positive constant. On the other hand, using \eqref{eq:J-iterative-integral}  for $(f^{(R)}_1,\ldots,f^{(R)}_m) $ and  rescaling the variables of integration $x$ in the numerator and the denominator  (i.e. replacing $x$ with $Rx$) gives
\begin{equation}\label{eq:J-interative-integral-with-R-big} 
\begin{split}
  & J_{\mathcal Q_+, \mathcal Q_-}(f^{(R)}_1, \ldots, f^{(R)}_m) = C \times R^{\dim V - \sum_{k=1}^m c_k \dim V_k} \\
  & \times \frac{\int_{\sfrac{H}{V}} \int_V f_0(R b_0 x + \rho_0 y, \beta_0 y) f_{m+1}^{-1}(0, \beta_{m+1} y) \prod_{k=1}^m f_k^{c_k}(b_k x + \frac{1}{R} \rho_k y, \beta_k y) \, dx \, dy}{\prod_{k=1}^m \left(\int_{\sfrac{H_k}{V_k}} \int_{V_k} f_k(x, y) \, dx \, dy \right)^{c_k}}.
\end{split}
\end{equation}
Now it is enough to show that the double integral in the numerator is uniformly bounded from above as $R \to \infty$. Doing so, the positive lower bound on the l.h.s. of~\eqref{eq:J-interative-integral-with-R-big} implies that $R^{\dim V - \sum_{k=1}^m c_k \dim V_k}$ is bounded away from $0$ as $R \to \infty$ and thus $\dim V - \sum_{k=1}^m c_k \dim V_k \ge 0$.

Due to our choice of the functions $f_k$, for $k=1, \ldots, m^+$,
\[
  \supp f_k \subseteq F_k \times G_k,
\]
for some compact, star-shaped sets $F_k \subseteq V_k$ and $G_k \subseteq \sfrac{H_k}{V_k}$ (by star-shaped we mean that if $x$ is in a set then so does $\lambda x$ for any $\lambda \in [0, 1]$).

Thanks to the assumption~\eqref{eq:isomorphism-condition} and the admissibility of the split of $H$, the maps $b_{0+}$ and $\beta_{0+}$ are linear isomorphisms (see Lemma~\ref{lem:admissibility-of-subspace-and-quotient}). 

Observe that we can restrict the domain of the outer integral in the numerator of~\eqref{eq:J-interative-integral-with-R-big} to the set
\[
  G := \beta_+^{-1}(G_1 \times \cdots \times G_{m^+}) \subseteq \sfrac{H}{V},
\]
because outside $G$ the terms $f_k^{c_k}$ with $k \in \{1, \ldots, m^+\}$ make the integrand vanish.
Although $G$ is not necessarily compact, this allows us to bound the exponentially large term $f_{m+1}^{-1}$ in~\eqref{eq:J-interative-integral-with-R-big}. Indeed, the first assertion of Corollary~\ref{cor:inheritance-of-kernel-conditions}(ii) is
\[
  \ker \beta_+ \subseteq \ker \beta_{m+1},
\]
hence by Lemma~\ref{lem:compact-image}, $\beta_{m+1}(G)$ is a compact subset of $\sfrac{H_{m+1}}{V_{m+1}}$ and thus we can bound from above the integrand by replacing $f_{m+1}^{-1}$ with
\[
  \sigma:=\sup_{\{0\} \times \beta_{m+1}(G)} f_{m+1}^{-1} < \infty.
\]

In order to deal with the terms $f_k^{c_k}$ for $k \in \{ m^+ + 1, \ldots, m\}$ that grow (at most) polynomially at infinity, we take advantage of the exponential decay of  $f_0$. In order to use a compactness argument we decompose $f_0$ into slices. Namely, note that for some compact, star-shaped sets $F_0 \subseteq V_0$, $G_0 \subseteq \sfrac{H_0}{V_0}$, which depend on $\mathcal Q_+$ and the map $j_0$ only, we have
\[ \begin{split}
  f_0(x_0, y_0) &= \int_0^1 \ind{(x, y) \in V_0 \times \sfrac{H_0}{V_0} \colon \exp(-\mathcal Q_+(x + j_0 y)) \ge u}(x_0, y_0) \, du \\
      &= \int_0^\infty t e^{-t^2/2} \ind{(x, y) \in V_0 \times \sfrac{H_0}{V_0} \colon \exp(-\mathcal Q_+(x + j_0 y)) \ge \exp(-t^2/2)}(x_0, y_0) \, dt \\
      &= \int_0^\infty t e^{-t^2/2} \ind{(x, y) \in V_0 \times \sfrac{H_0}{V_0} \colon \mathcal Q_+(x + j_0 y) \le t^2/2}(x_0, y_0) \, dt \\
      &\le \int_0^\infty t e^{-t^2/2} \Ind{t F_0}(x_0) \Ind{t G_0}(y_0) \, dt
\end{split} \]
for all $(x_0, y_0) \in V_0 \times \sfrac{H_0}{V_0}$. Using Fubini, we can thus bound the numerator of~\eqref{eq:J-interative-integral-with-R-big} by
\begin{equation}\label{eq:J-interative-integral-with-R-big-numerator}
 \sigma  \int_0^\infty t e^{-t^2/2} \int_{\sfrac{H}{V}} \int_V \Ind{t F_0}(R b_0 x + \rho_0 y) \Ind{t G_0}(\beta_0 y) \prod_{k=1}^m f_k^{c_k}\Big(b_k x + \frac{1}{R} \rho_k y, \beta_k y\Big) \, dx \, dy \, dt.
\end{equation}

\medskip

Now we argue that for some polynomials $p$ and $q$, for any $t > 0$  and all $R\ge 1$, the integrand of the double integral w.r.t $x$ and $y$ in~\eqref{eq:J-interative-integral-with-R-big-numerator} is bounded from above by $q(t)$ and is supported in a compact set of measure at most   $p(t)$.

To this end, fix any $R \ge 1$ and $t > 0$. The integrand in question vanishes if $y$ is outside the set $\beta_{0+}^{-1}(tG_0 \times G_1 \times \cdots \times G_{m^+})$. Clearly we have
\[
  \beta_{0+}^{-1}(tG_0 \times G_1 \times \cdots \times G_{m^+}) \subseteq (t+1) \beta_{0+}^{-1}(G_0 \times G_1 \times \cdots \times G_{m^+}).
\]
Since $\beta_{0+}$ is an isomorphism, the set
\[
  \mathbf{G} = \beta_{0+}^{-1}(G_0 \times G_1 \times \cdots \times G_{m^+})
\]
is a compact (and star-shaped) subset of $\sfrac{H}{V}$. Thus we can restrict the domain of  integration w.r.t. $y$ to $(t+1)\mathbf{G}$.

Next, fix $y \in (t+1) \mathbf{G}$ and $R \ge 1$. Take any $x \in V$ such that
\[ \begin{split}
  R b_0 x + \rho_0 y &\in t F_0, \\
  b_k x + \frac{1}{R} \rho_k y &\in F_k \quad \text{for all $k = 1,\ldots,m^+$}
\end{split} \]
(otherwise the integrand is zero). Then we have
\[ \begin{split}
  b_0 x &\in \frac{t}{R} F_0 + \Big(- \frac{1}{R} \rho_0((t+1)\mathbf{G}) \Big) \subseteq (t+1)(F_0 + \rho_0(-\mathbf{G})), \\
  b_k x &\in F_k + \Big(-\frac{1}{R} \rho_k((t+1) \mathbf{G})\Big) \subseteq (t+1)(F_k + \rho_k(-\mathbf{G})) \quad \textup{for $k = 1,\ldots,m^+$,}
\end{split} \]
where the inclusion follows from the fact that $F_0, F_1, \ldots, F_{m^+}$ and $-\mathbf{G}$ are star-shaped. Consider compact sets
\[
  \tilde{F}_k = F_k + \rho_k(-\mathbf{G}) \subseteq V_k, \quad \textup{for $k = 0,1,\ldots,m^+$}.
\]
Put
\[
  \mathbf{F} =  b_{0+}^{-1}(\tilde{F_0} \times \tilde{F}_1 \times \cdots \times \tilde{F}_{m^+}).
\]
Clearly $x \in (t+1) \mathbf{F}$ for all $y \in (t+1) \mathbf{G}$ and all $R \ge 1$ and hence one can restrict the integral w.r.t. $x$ to the domain $(t+1) \mathbf{F}$ which is compact, because $b_{0+}$ is an isomorphism. Therefore we have shown that for all $R \ge 1$, the domain of the double integral in~\eqref{eq:J-interative-integral-with-R-big-numerator} can be restricted to the compact set $(t+1) (\mathbf{F} \times \mathbf{G})$. Moreover, the measure of this set is a  polynomial 
function of $t$.

\smallskip

Now we proceed with bounding the integrand inside $(t+1) (\mathbf{F} \times \mathbf{G})$. The functions $f_1, \ldots, f_{m^+}$ are bounded, so we may focus on the terms involving $f_k$ for $k \in \{m^+ + 1, \ldots m\}$. Set
\begin{equation}
\label{eq:def-F-k-G-k}
\begin{split}
  F_k &= b_k(\mathbf{F}) + \rho_k(\mathbf{G}), \\
  G_k &= \beta_k(\mathbf{G})
\end{split}
\end{equation}
for $k = m^+ +1, \ldots, m$. Then for all $(x,y) \in (t+1) (\mathbf{F} \times \mathbf{G})$, all $R \ge 1$ and $k = m^+ +1, \ldots, m$,
\[
  b_k x + \frac{1}{R} \rho_k y \in b_k\big((t+1) \mathbf{F}\big) + \frac{1}{R} \rho_k\big((t+1) \mathbf{G}\big) \subseteq (t+1) F_k
\]
since $\mathbf{G}$ is star-shaped, and, of course,
\[
  \beta_k y \in (t+1) G_k.
\]
Therefore the integrand can be bounded from above by
\[
  \prod_{k=1}^{m^+} \Big( \sup_{H_k} f_k \Big)^{c_k} \times \prod_{k=m^+ +1}^m \Big( \sup_{(t+1)(F_k \times G_k)} f_k^{-1} \Big)^{-c_k},
\]
where the first product is finite by boundedness of the functions $f_1, \ldots, f_{m^+}$ and the second product is bounded by a polynomial in $t$ due to polynomial decay of the functions $f_{m^+ +1}, \ldots, f_m$. Consequently, \eqref{eq:J-interative-integral-with-R-big} is upper bounded independently of $R$, as claimed.

\bigskip

Now we pass to the case when $B_0 V = B_0 H$ (i.e. $\sfrac{H_0}{V_0} = \{0\}$ and thus $\beta_0$ is trivial). Using a similar reasoning to the one used in the first case, we will show that $\sfrac{H}{V}$ is a subcritical quotient of $H$. For any $r \in (0, 1]$ set
\[
  f_k^{(r)}(x,y) = f_k(x,y/r) \quad \textup{for $k=1, \ldots, m$}.
\]
We apply ~\eqref{eq:J-iterative-integral} for $(f_1^{(r)} ,\ldots,f_m^{(r)} )$ and then
we rescale the variables $y$ in the numerator and the denominator (i.e. we replace 
$y$ with $ry$ in all integrals with respect to $y$). We get
\begin{equation}\label{eq:J-interative-integral-with-r-small} 
\begin{split}
  & J_{\mathcal Q_+,\mathcal  Q_-}(f^{(r)}_1, \ldots, f^{(r)}_m) = C \times r^{\dim \sfrac{H}{V} - \sum_{k=1}^m c_k \dim \sfrac{H_k}{V_k}} \\
  & \times \frac{\int_{\sfrac{H}{V}} \int_V f_0(b_0 x + r \rho_0 y, 0) f_{m+1}^{-1}(b_{m+1} x + r \rho_{m+1} y, r \beta_{m+1} y) \prod_{k=1}^m f_k^{c_k}(b_k x + r \rho_k y, \beta_k y) \, dx \, dy}{\prod_{k=1}^m \left(\int_{\sfrac{H_k}{V_k}} \int_{V_k} f_k(x, y) \, dx \, dy \right)^{c_k}}.
\end{split}
\end{equation}
As before, it is enough to show that the double integral in the numerator is uniformly bounded from above as $r \to 0$.

First we deal with the term $f_{m+1}^{-1}$. The map $\beta_{0+}$ is a linear isomorphism, and since the map $\beta_0$ is trivial, also the map $\beta_+$ is an isomorphism. Therefore, the set
\[
  \mathbf{G} := \beta_+^{-1}(G_1 \times \cdots \times G_{m^+}) \subseteq \sfrac{H}{V}
\]
to which we can restrict the integral w.r.t. $y$ in~\eqref{eq:J-interative-integral-with-r-small} is compact (and star-shaped). Now, fix any $y \in \mathbf{G}$ and take any $x \in V$ such that $b_k x + r \rho_k y \in F_k$ for all $k = 1, \ldots, m^+$. Then we have
\[
  b_k x \in F_k + \big(-r \rho_k(\mathbf{G})\big) \subseteq F_k + \rho_k(-\mathbf{G}) := \tilde{F}_k
\]
and the sets $\tilde{F}_k \subseteq V_k$ are compact and star-shaped. Put
\[
  F = b_+^{-1}(\tilde{F}_1 \times \cdots \times \tilde{F}_{m^+}) \subseteq V.
\]
The set $F$ is not necessarily compact, but the first assertion of Corollary~\ref{cor:inheritance-of-kernel-conditions} says that $\ker b_+ \subseteq \ker b_{m+1}$ and hence by Lemma~\ref{lem:compact-image}, $b_{m+1}(F)$ is a compact subset of $V_{m+1}$. Therefore for all $r \in (0,1]$ we have the bound
\[
  f_{m+1}^{-1}(b_{m+1} x + r \rho_{m+1} y, r \beta_{m+1} y) \le
  \sup_{(b_{m+1}(F) + \rho_{m+1}(\mathbf{G})) \times \mathbf{G}} f_{m+1}^{-1} < \infty.
\]

\medskip

In order to deal with the terms $f_k^{c_k}$ for $k \in \{ m^+ + 1, \ldots, m\}$ we  decompose $f_0(\cdot, 0)$ into slices. Namely, defining the compact, star-shaped set $F_0=\{ x\in V_0 \colon \mathcal Q_+(x)\le 1/2\} $, we have that for all $x_0 \in V_0 = H_0$,
\[ \begin{split}
  f_0(x_0, 0) &= \int_0^1 \ind{x \in V_0 \colon \exp(-\mathcal Q_+(x)) \ge u}(x_0) \, du \\
      &= \int_0^\infty t e^{-t^2/2} \ind{x \in V_0 \colon \exp(-\mathcal Q_+(x)) \ge \exp(-t^2/2)}(x_0) \, dt \\
      &= \int_0^\infty t e^{-t^2/2} \Ind{t F_0}(x_0) \, dt.
\end{split} \]
Using Fubini, we can thus bound the numerator of~\eqref{eq:J-interative-integral-with-r-small} by a constant (our bound on the terms involving $f_{m+1}^{-1}$) times
\begin{equation}\label{eq:J-interative-integral-with-r-small-numerator}
  \int_0^\infty t e^{-t^2/2} \int_{\sfrac{H}{V}} \int_V \Ind{t F_0}(b_0 x + r \rho_0 y) \prod_{k=1}^m f_k^{c_k}(b_k x + r \rho_k y, \beta_k y) \, dx \, dy \, dt.
\end{equation}

As discussed above, the domain of the integration w.r.t. $y$ can be restricted to the compact set $\mathbf{G} \subseteq \sfrac{H}{V}$. Now fix $t > 0$, $y \in \mathbf{G}$ and $r \in (0, 1]$. Suppose that $x \in V$ is such that the integrand in~\eqref{eq:J-interative-integral-with-r-small-numerator} does not vanish. Then we must have
\[ \begin{split}
  b_0 x + r \rho_0 y &\in t F_0, \\
  b_k x + r \rho_k y &\in F_k \quad \textup{for $k = 1, \ldots, m^+$,}
\end{split} \]
which implies
\[ \begin{split}
  b_0 x &\in t F_0 + \big(-r \rho_0(\mathbf{G})\big) \subseteq (t+1) (F_0 + \rho_0(-\mathbf{G})) =: (t+1) \tilde{F}_0, \\
  b_k x &\in F_k + \big(-r \rho_k(\mathbf{G})\big) \subseteq F_k + \rho_k(-\mathbf{G}) =: \tilde{F}_k \quad \textup{for $k = 1, \ldots, m^+$.}
\end{split} \]
Set
\[
  \mathbf{F} = b_{0+}^{-1}(\tilde{F}_0 \times \tilde{F}_1 \times \cdots \times \tilde{F}_{m^+}).
\]
Clearly $\mathbf{F}$ is a compact ($b_{0+}$ is an isomorphism), star-shaped subset of $V$ and $x \in (t+1) \mathbf{F}$.

Finally define the sets $F_k$ and $G_k$ for $k = m^+ + 1, \ldots, m$ as in~\eqref{eq:def-F-k-G-k}. Then for all $(x, y) \in ((t+1) \mathbf{F}) \times \mathbf{G}$ and all $r \in (0,1]$, the arguments of the functions $f_k$ for $k = m^+ + 1, \ldots, m$ are in $\big((t+1) F_k\big) \times G_k$ and therefore the integrand can be bounded from above by
\[
  \prod_{k=1}^{m^+} \Big( \sup_{H_k} f_k \Big)^{c_k} \times \prod_{k=m^+ +1}^m \Big( \sup_{((t+1) F_k) \times G_k} f_k^{-1} \Big)^{-c_k}.
\]
We conclude as in the first case.
\end{proof}

\subsection{Sufficiency of Condition (C)}
The inductive proof of the second part of Theorem~\ref{thm:positivity-of-constant-general-case} relies on the following lemma. It shows that under \eqref{eq:isomorphism-condition}, if one of the components of an admissible split (the subspace or the quotient) is critical, then Condition (C) is inherited by  both components of the split.

\begin{lemma}[Inheritance of Condition (C) through a critical split]
\label{lem:inheritence-of-condition-C}
Suppose $H$ together with the maps $B_k$ and the exponents $c_k$ satisfy Condition (C) and
\[
  \begin{CD} 0 @>>> V @>>> H @>>> \sfrac{H}{V} @>>> 0\end{CD}
\]
is an admissible split. If $V$ is a critical subspace of $H$ or $\sfrac{H}{V}$ is a critical quotient of $H$ then $V$ with the maps $b_k$ and the exponents $c_k$, as well as $\sfrac{H}{V}$ with the maps $\beta_k$ and the exponents $c_k$ satisfy Condition (C).
\end{lemma}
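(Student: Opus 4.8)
The plan is to reduce every instance of Condition (C) that has to be verified — for $(V,b,c)$ and for $(\sfrac{H}{V},\beta,c)$ — to one of the two clauses of Condition (C) for $(H,B,c)$, applied to a suitably chosen admissible subspace of $H$. The only real content is a dictionary translating admissibility, triviality of the $b_{m+1}$- and $\beta_0$-parts of a split, and the relevant ``defect'' across the split; once that dictionary is in place the rest is substitution. Concretely, I would first introduce the bookkeeping: for a subspace $W\subseteq H$ set $d(W)=\dim W-\sum_{k=1}^m c_k\dim B_kW$. Rearranging the inequalities in the definition, Condition (C) for $(H,B,c)$ is equivalent to the statement that every admissible $W\subseteq H$ satisfies $d(W)\ge 0$ whenever $B_{m+1}W=\{0\}$, and $d(W)\ge d(H)$ whenever $B_0W=B_0H$ (indeed $\dim\sfrac{H}{W}\le\sum_k c_k\dim\sfrac{H_k}{B_kW}$ is equivalent to $d(H)\le d(W)$, and $\dim W\ge\sum_k c_k\dim B_kW$ to $d(W)\ge 0$). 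Two elementary remarks do the transfer. (a) If $W\subseteq V$ then $b_kW=B_kW$ for all $k$, so by Lemma~\ref{lem:admissibility-of-subspace-and-quotient} (comparing $\dim W$ with $\sum_{i=0}^{m^+}\dim b_iW=\sum_{i=0}^{m^+}\dim B_iW$) $W$ is admissible for $(V,b)$ iff it is admissible for $(H,B)$, and the defect of $W$ computed inside $V$ equals $d(W)$. (b) If $\bar W\subseteq\sfrac{H}{V}$ and $W=\pi^{-1}(\bar W)\supseteq V$, then $\beta_k\bar W=\sfrac{B_kW}{V_k}$, hence $\dim B_kW=\dim V_k+\dim\beta_k\bar W$; combined with $\dim V=\sum_{i=0}^{m^+}\dim V_i$ (admissibility of $V$) this gives that $\bar W$ is admissible for $(\sfrac{H}{V},\beta)$ iff $W$ is admissible for $(H,B)$, that the defect of $\bar W$ inside $\sfrac{H}{V}$ equals $d(W)-d(V)$, and (take $\bar W=\sfrac{H}{V}$, i.e.\ $W=H$) that the defect of $\sfrac{H}{V}$ equals $d(H)-d(V)$. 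I would also record the reformulations of criticality: $V$ is a critical subspace iff $V\subseteq\ker B_{m+1}$ and $d(V)=0$, and $\sfrac{H}{V}$ is a critical quotient iff $B_0V=B_0H$ and $d(V)=d(H)$. Finally, Corollary~\ref{cor:inheritance-of-kernel-conditions} guarantees that $(V,b,c)$ and $(\sfrac{H}{V},\beta,c)$ inherit \eqref{eq:isomorphism-condition} and \eqref{eq:kerBplus-contained-in-kerBm1}, so that Condition (C) is well-posed for them.

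Next I would treat the case where $V$ is a critical subspace, so $V\subseteq\ker B_{m+1}$ and $d(V)=0$. For an admissible $W\subseteq V$ one has $B_{m+1}W=\{0\}$, so Condition (C)(i) for $(H,B,c)$ gives $d(W)\ge 0$; by remark (a) this is exactly clause (i) for $(V,b,c)$, and since $d(V)=0$ it is also clause (ii) for $(V,b,c)$. For an admissible $\bar W\subseteq\sfrac{H}{V}$ with preimage $W$: if the $\beta_{m+1}$-part of the split is trivial then $B_{m+1}W\subseteq V_{m+1}=\{0\}$, so $d(W)\ge 0$ by Condition (C)(i), whence the defect of $\bar W$, namely $d(W)-d(V)=d(W)$, is $\ge 0$ (clause (i)); if the $\beta_0$-part is trivial then $B_0W=B_0H$, so $d(W)\ge d(H)$ by Condition (C)(ii), whence the defect of $\bar W$ equals $d(W)\ge d(H)=$ defect of $\sfrac{H}{V}$ (clause (ii)).

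Then I would treat the case where $\sfrac{H}{V}$ is a critical quotient, so $B_0V=B_0H$ and $d(V)=d(H)$. For an admissible $W\subseteq V$: if the $b_{m+1}$-part is trivial then $B_{m+1}W=\{0\}$, so Condition (C)(i) for $(H,B,c)$ gives $d(W)\ge 0$ (clause (i)); if the $b_0$-part is trivial then $B_0W=B_0V$, which equals $B_0H$ since $\sfrac{H}{V}$ is a critical quotient, so Condition (C)(ii) for $(H,B,c)$ gives $d(W)\ge d(H)=d(V)$ (clause (ii)). For an admissible $\bar W\subseteq\sfrac{H}{V}$ with preimage $W\supseteq V$: since $B_0W\supseteq B_0V=B_0H$ we always have $B_0W=B_0H$, so Condition (C)(ii) for $(H,B,c)$ gives $d(W)\ge d(H)=d(V)$; hence the defect of $\bar W$, which is $d(W)-d(V)$, is both $\ge 0$ (clause (i)) and $\ge d(H)-d(V)=$ defect of $\sfrac{H}{V}$ (clause (ii), whose hypothesis is vacuous here, $\beta_0$ being trivial). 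This exhausts all cases.

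I expect the main obstacle to be purely administrative: carefully establishing the identification $\beta_k\bar W=\sfrac{B_kW}{V_k}$ for $W=\pi^{-1}(\bar W)$, and, through Lemma~\ref{lem:admissibility-of-subspace-and-quotient}, matching which admissible splits of $V$ and of $\sfrac{H}{V}$ correspond to which admissible splits of $H$, together with the precise correspondence of the triviality conditions ``$b_{m+1}$ trivial'' / ``$\beta_0$ trivial''. There is no analytic difficulty whatsoever: every inequality that must be checked is literally one of the two clauses of Condition (C) for $(H,B,c)$ evaluated at a subspace that the hypotheses force to be admissible with the required triviality property.
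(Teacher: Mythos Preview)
Your proposal is correct and follows essentially the same route as the paper's proof: in both arguments, every clause of Condition~(C) for $(V,b,c)$ or $(\sfrac{H}{V},\beta,c)$ is reduced to a clause of Condition~(C) for $(H,B,c)$ applied either to the subspace $W\subseteq V\subseteq H$ itself or to its preimage $\pi^{-1}(\bar W)\subseteq H$, using admissibility of $V$ and the criticality hypothesis to match dimensions. Your introduction of the defect $d(W)=\dim W-\sum_k c_k\dim B_kW$ and the identities $d_V(W)=d(W)$, $d_{\sfrac{H}{V}}(\bar W)=d(\pi^{-1}(\bar W))-d(V)$ is a convenient bookkeeping device that compresses what the paper does by repeatedly subtracting dimension equalities and inequalities (e.g.\ subtracting \eqref{eq:proof-inheritenceC-critical-subspace} or \eqref{eq:proof-inheritenceC-critical-quotient} from the appropriate clause), but the logical skeleton of the case analysis is identical.
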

\begin{proof}
First we present the  scheme of the proof:
\begin{enumerate}
\item[\emph{Part I.}] We suppose that $V$ is a critical subspace of $H$.
\begin{itemize}
\item Condition (C) for $V$:
\begin{itemize}
  \item \emph{Subspace of $V$:} supercriticality of a subspace $U$ of $V$ is inherited directly from supercriticality of $U$ as a subspace of $H$.
  \item \emph{Quotient of $V$:} subcriticality of the quotient $\sfrac{V}{U}$ of $V$ follows from criticality of $V$ in $H$ and supercriticality of $U$ in $V$ just proved.
\end{itemize}
\item Condition (C) for $\sfrac{H}{V}$:
\begin{itemize}
  \item \emph{Subspace of $\sfrac{H}{V}$:} supercriticality of a subspace $U$ of $\sfrac{H}{V}$ follows from supercriticality of a subspace $\tilde{U} = \pi^{-1}(U)$ (where $\pi \colon H \to \sfrac{H}{V}$ is a natural quotient map) in $H$ and criticality of $V$ in $H$.
  \item \emph{Quotient of $\sfrac{H}{V}$:} subcriticality of the quotient $\sfrac{\sfrac{H}{V}}{U}$ of $\sfrac{H}{V}$ follows from subcriticality of the quotient $\sfrac{H}{\tilde{U}}$ of $H$.
\end{itemize}
\end{itemize}

\item[\emph{Part II.}] We suppose that $\sfrac{H}{V}$ is a critical quotient of $H$. After dualizing, i.e. interchanging subspaces with quotient and supercriticality with subcriticality, the arguments are analogous to the ones from Part I.
\begin{itemize}
\item Condition (C) for $\sfrac{H}{V}$:
\begin{itemize}
  \item \emph{Quotient of $\sfrac{H}{V}$:} subcriticality of a quotient $\sfrac{\sfrac{H}{V}}{U}$ of $\sfrac{H}{V}$ is inherited directly from subcriticality of $\sfrac{H}{\tilde{U}}$ as a quotient of $H$, where $\tilde{U} = \pi^{-1}(U)$.
  \item \emph{Subspace of $\sfrac{H}{V}$:} supercriticality of the subspace $U$ of $\sfrac{H}{V}$ follows from criticality of $\sfrac{H}{V}$ in $H$ and subcriticality of $\sfrac{\sfrac{H}{V}}{U}$ in $\sfrac{H}{V}$ just proved.
\end{itemize}
\item Condition (C) for $V$:
\begin{itemize}
  \item \emph{Quotient of $V$:} subcriticality of a quotient $\sfrac{V}{U}$ of $V$ follows from subcriticality of a quotient $\sfrac{H}{U}$ in $H$ and criticality of $\sfrac{H}{V}$ in $H$.
  \item \emph{Subspace of $V$:} supercriticality of the subspace $U$ of $V$ follows from supercriticality of the subspace $U$ of $H$.
\end{itemize}
\end{itemize}
\end{enumerate}

\medskip

Next we give the arguments in details.
In the first part we assume that $V$ is a critical subspace:  $V \subseteq \ker B_{m+1}$, and thus the map $b_{m+1}$ is trivial, and the following equality holds
\begin{equation}\label{eq:proof-inheritenceC-critical-subspace}
\dim V = \sum_{k=1}^m c_k \dim B_kV.
\end{equation}
Let us  check Condition (C) for $V$ equipped with the maps $(b_k)$ and
 the coefficients $(c_k)$ (for shortness, we will write $(V,b)$, since the coefficients
 $c$ are the same for all sub-structures).
 
  Suppose that
\[ \begin{CD} 0 @>>> U @>>> V @>>> \sfrac{V}{U} @>>> 0\end{CD} \]
is an admissible split of $(V,b)$. Since $U \subseteq V=\ker b_{m+1} $, we must check supercriticality of $U$ as a subspace of $(V,b)$. 
  The admissible split of $(V,b)$ induced by $U$ obviously leads to an admissible
  split of $(H,B)$. Moreover,
  $U \subseteq V \subseteq \ker B_{m+1}$, so that Condition (C) for $(H,B)$ 
  yields
  $$ \dim U\ge \sum_{k=1}^m c_k \dim B_k U.$$
  Using $B_iU=b_iU$, we conclude that $U$ is a supercritical subspace of $(V,b)$. 


For the same $U$, $\sfrac{V}{U}$ is a subcritical quotient of $(V,b)$ (even regardless whether $b_0(U) = b_0(V)$ or not) because $V$ is a critical subspace of $H$ and $U$ is a supercritical subspace of $V$ (subtract the inequality $ \dim U\ge \sum_{k=1}^m c_k\dim b_kU$ from \eqref{eq:proof-inheritenceC-critical-subspace}).

\medskip

Now we check Condition (C) for $(\sfrac{H}{V},\beta)$. Suppose
\[ \begin{CD} 0 @>>> U @>>> \sfrac{H}{V} @>>> \sfrac{\sfrac{H}{V}}{U} @>>> 0\end{CD} \]
is an admissible split of $(\sfrac{H}{V},\beta)$, which by Lemma~\ref{lem:admissibility-of-subspace-and-quotient} (i) $\iff$ (iv) means that
\[
  \bigcap_{i=0}^{m^+} (U + \ker \beta_i) = U.
\]
Taking the preimage w.r.t. $\pi \colon H \to \sfrac{H}{V}$ we get
\[
  \bigcap_{i=0}^{m^+} (\pi^{-1}(U) + V + \ker B_i) = \pi^{-1}(U),
\]
where we used~\eqref{eq:kernel-of-beta} and the relation $\pi^{-1}(A+B)=\pi^{-1}(A)+\pi^{-1}(B)+\ker \pi$ which is valid for any linear surjective map. Denote $\tilde{U} = \pi^{-1}(U)$. Of course $\tilde{U}$ contains $V$, hence the above assertion means that 
\[ \begin{CD}
0 @>>> \tilde{U} @>>> H @>>> \sfrac{H}{\tilde{U}} @>>> 0
\end{CD} \]
is an  admissible split of $(H,B)$ (again use Lemma~\ref{lem:admissibility-of-subspace-and-quotient} (i) $\iff$ (iv)).

We need to check supercriticality of $U$ as a subspace of $\sfrac{H}{V}$ whenever $U \subseteq \ker \beta_{m+1}$. 
 By criticality of $V$ in $H$ we have $V \subseteq \ker B_{m+1}$, which combined with the assertion $U \subseteq \ker \beta_{m+1}$ and~\eqref{eq:kernel-of-beta} yields
\[
  \tilde{U} = \pi^{-1}(U) \subseteq \pi^{-1}(\ker \beta_{m+1}) = V + \ker B_{m+1} = \ker B_{m+1}.
\]
Applying Condition (C) for $(H,B)$, we know that:
\[\dim \tilde{U}\ge \sum_{k=1}^m c_k \dim B_k\tilde{U}.\]
If we subtract from the last inequality  the relationship \eqref{eq:proof-inheritenceC-critical-subspace} corresponding to criticality of $V$ in $(H,B)$,
we get  the supercriticality of $U$ in $(\sfrac{H}{V},\beta)$.
Indeed, up to isomorphism 
\begin{equation}\label{eq:identification-quotients}
 U\approx \sfrac{\tilde U}{V} \quad \mathrm{and} \quad\beta_k U\approx \sfrac{B_k\tilde U}{B_kV},
\end{equation} 
as one readily checks by considering the ranges and kernels of the maps $\pi:\tilde{U}\to U$
and $\phi: B_k\tilde{U} \to \sfrac{B_kH}{B_kV}$ defined by $\phi(x)=x+B_kV$. 
\medskip

Now we check subcriticality of the quotient $\sfrac{\sfrac{H}{V}}{U}$ of $\sfrac{H}{V}$ whenever $\beta_0(U) = \beta_0(\sfrac{H}{V})$, i.e. $U + \ker \beta_0 = \sfrac{H}{V}$.
Notice that using~\eqref{eq:kernel-of-beta}, we have
\[
  H = \pi^{-1}(U + \ker \beta_0) = \tilde{U} + V + \ker B_0 = \tilde{U} + \ker B_0
\]
(the last equality follows from the fact that $V \subseteq \tilde{U}$), that is $B_0(\tilde{U}) = B_0(H)$. Therefore, by Condition (C) for $(H,B)$, the quotient $\sfrac{H}{\tilde{U}}$  must be subcritical in $(H,B)$, that is 
\[
\dim  \sfrac{H}{\tilde{U}} \le \sum_{k=1}^m c_k \dim \sfrac{B_k H}{B_k \tilde U}.
\]
Using again \eqref{eq:identification-quotients} together with the relation $\beta_k \sfrac{H}{V}=\sfrac{B_kH}{B_kV}$, we may rewrite the above inequality as
\[
\dim  \sfrac{\sfrac{H}{V}}{U} \le \sum_{k=1}^m c_k \dim \sfrac{\beta_k\sfrac{H}{V}}{\beta_k U}.
\]
In other words, $\sfrac{\sfrac{H}{V}}{U}$ is a subcritical quotient of $(\sfrac{H}{V},\beta)$.

\bigskip

In the second part, suppose $\sfrac{H}{V}$ is a critical quotient of $(H,B)$. More specifically, $B_0(V) = B_0(H)$, i.e. $\beta_0$ is trivial, and
\begin{equation}\label{eq:proof-inheritenceC-critical-quotient}
\dim \sfrac{H}{V} = \sum_{k=1}^m c_k \dim \sfrac{B_kH}{B_kV}.
\end{equation}
 
 The reasoning below is analogous to the first part after interchanging subspaces with quotient and supercriticality with subcriticality.

We check Condition (C) for $(\sfrac{H}{V},\beta)$. Consider a split of $\sfrac{H}{V}$ via its subspace $U$ and suppose this split is admissible. We need to check subcriticality of $\sfrac{\sfrac{H}{V}}{U}$ in $\sfrac{H}{V}$ whatever $U$ is, because $\beta_0$ is trivial and so $\beta_0(U) = \beta_0(\sfrac{H}{V})$ always holds. To this end, consider a split of $H$ via $\tilde{U} = \pi^{-1}(U)$, which is admissible, as we already showed. Moreover,
\[
  B_0(\tilde{U}) \supseteq B_0(V) = B_0(H),
\]
so we can use subcriticality of $\sfrac{H}{\tilde{U}}$ in $(H,B)$. From the latter, subcriticality of 
$\sfrac{\sfrac{H}{V}}{U}$ in $(\sfrac{H}{V},\beta)$ follows, as we have already explained.

For the same $U$, $U$ is a supercritical subspace of $\sfrac{H}{V}$ (regardless whether $U \subseteq \ker \beta_{m+1}$ or not), because $\sfrac{H}{V}$ is a critical quotient of $H$ and $\sfrac{\sfrac{H}{V}}{U}$ is a subcriticial quotient of $\sfrac{H}{V}$ (write the corresponding equality and inequality relations and subtract them).

\medskip

Next, we check Condition (C) for $(V,b)$. Consider a split of $V$ via a subspace $U$ of $V$ which is admissible. We use an induced split of $H$ via $U$, which is also admissible.

We need to check subcriticality of $\sfrac{V}{U}$ in $V$ whenever $b_0(U) = b_0(V)$. Since we know that
\[
  B_0(U) = b_0(U) = b_0(V) = B_0(V) = B_0(H)
\]
(the last equality is due to criticality of $\sfrac{H}{V}$ in $H$), we can use subcriticality of $\sfrac{H}{U}$ in $H$. Writing the corresponding inequality for dimensions and subtracting from it the equality \eqref{eq:proof-inheritenceC-critical-subspace} related to criticality of $\sfrac{H}{V}$ in $(H,B)$, we get subcriticality of $\sfrac{V}{U}$ in $(V,b)$.

Finally we check supercriticality of $U$ in $(V,b)$ whenever $U \subseteq \ker b_{m+1}$. Since the latter implies $U \subseteq \ker B_{m+1}$, we can invoke the fact that $U$ is a supercritical subspace of $H$ to conclude.
\end{proof}

Our next result is about tensorization through a split. We say that $(H,B,c)=\big(H,(B_k)_{k=0}^{m+1},(c_k)_{k=1}^{m}\big)$ admits the \emph{strong positivity property} if for every positive definite quadratic forms $\mathcal Q_+$ on $H_0$ and $\mathcal Q_-$ on $H_{m+1}$,
 \[ 
 \inf_{(f_1,\ldots,f_m)} \frac{\int_H e^{-\mathcal Q_+(B_0x)+\mathcal Q_-(B_{m+1}x)} \prod_{k=1}^m f_k^{c_k}(B_kx)\, dx}{\prod_{k=1}^m \left(\int_{H_k} f_k\right)^{c_k}}>0.
 \]

\begin{lemma}[Tensorization]\label{lem:tensorization}
Assume that $(H,B)$ satisfies~\eqref{eq:isomorphism-condition} and~\eqref{eq:kerBplus-contained-in-kerBm1}.
Let $V$ be a linear subspace of $H$ which induces an admissible split. 

If $(V,b,c)$ and $(\sfrac{H}{V},\beta,c)$ have the strong positivity property, then $(H,B,c)$ has it too.
\end{lemma}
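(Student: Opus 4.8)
The plan is to use the iterated integral representation~\eqref{eq:J-iterative-integral} of the functional $J_{\mathcal Q_+,\mathcal Q_-}$ attached to $(H,B,c)$: it suffices to bound the double integral in its numerator from below by a positive constant times $\prod_{k=1}^m\big(\int_{H_k}f_k\big)^{c_k}$, and this will be obtained by applying the strong positivity of $(V,b,c)$ to the inner integral over $V$ for each fixed $y\in\sfrac{H}{V}$, and then the strong positivity of $(\sfrac{H}{V},\beta,c)$ to the resulting integral over $\sfrac{H}{V}$. The whole scheme depends on choosing the sections $j_0\colon\sfrac{H_0}{V_0}\to H_0$, $j_{m+1}\colon\sfrac{H_{m+1}}{V_{m+1}}\to H_{m+1}$ and $j\colon\sfrac{H}{V}\to H$ so that the Gaussian kernel factorizes through the split. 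I would take $j_0$ with range $V_0^{\perp_{\mathcal Q_+}}$ and $j_{m+1}$ with range $V_{m+1}^{\perp_{\mathcal Q_-}}$ (genuine complements, as $\mathcal Q_+$ and $\mathcal Q_-$ are positive definite), and then $j$ with range inside $B_0^{-1}\big(V_0^{\perp_{\mathcal Q_+}}\big)\cap B_{m+1}^{-1}\big(V_{m+1}^{\perp_{\mathcal Q_-}}\big)$.

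The main obstacle is the existence of such a section $j$, i.e.\ the fact that this intersection contains a complement of $V$ in $H$. A dimension count reduces this to the surjectivity of $(b_0,b_{m+1})\colon V\to V_0\times V_{m+1}$. This in turn follows from admissibility of the split together with Corollary~\ref{cor:inheritance-of-kernel-conditions}: admissibility forces $b_0|_{\ker b_+}$ to be an isomorphism onto $V_0$, while the inclusion $\ker b_+\subseteq\ker b_{m+1}$ allows one to realize every preimage of $V_0$ under $b_0$ inside $\ker b_{m+1}$; combining this with surjectivity of $b_{m+1}$ (or with Lemma~\ref{lem:kerS-kerT-H}) gives the claim. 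With the sections chosen in this way one has $\rho_0=\rho_{m+1}=0$, and since $b_0x\in V_0$ is $\mathcal Q_+$-orthogonal to $j_0\beta_0y\in V_0^{\perp_{\mathcal Q_+}}$ (and likewise for $\mathcal Q_-$), the kernel splits cleanly as
\[
  \mathcal Q_+\big(B_0(x+jy)\big)=\mathcal Q_+^V(b_0x)+\mathcal R_+(\beta_0y),\qquad
  \mathcal Q_-\big(B_{m+1}(x+jy)\big)=\mathcal Q_-^V(b_{m+1}x)+\mathcal R_-(\beta_{m+1}y),
\]
where $\mathcal Q_+^V=\mathcal Q_+|_{V_0}$ and $\mathcal Q_-^V=\mathcal Q_-|_{V_{m+1}}$ are positive definite on $V_0$ and $V_{m+1}$, and $\mathcal R_+(z)=\mathcal Q_+(j_0z)$, $\mathcal R_-(z)=\mathcal Q_-(j_{m+1}z)$ are positive definite on $\sfrac{H_0}{V_0}$ and $\sfrac{H_{m+1}}{V_{m+1}}$.

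Granting this, the rest is bookkeeping. Set $F_k(y_0)=\int_{V_k}f_k(x,y_0)\,dx$ and, for fixed $y$, $g_{k,y}(z)=f_k(z+\rho_ky,\beta_ky)$. After pulling out the $x$-independent factor $e^{-\mathcal R_+(\beta_0y)+\mathcal R_-(\beta_{m+1}y)}$, the inner integrand in~\eqref{eq:J-iterative-integral} is exactly the integrand of the strong positivity inequality for $(V,b,c)$ with kernel forms $\mathcal Q_+^V,\mathcal Q_-^V$ applied to the $g_{k,y}$; since $\int_{V_k}g_{k,y}=F_k(\beta_ky)$ by translation invariance, the inner integral is at least $\kappa\,e^{-\mathcal R_+(\beta_0y)+\mathcal R_-(\beta_{m+1}y)}\prod_{k=1}^mF_k(\beta_ky)^{c_k}$ for some $\kappa>0$. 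Integrating in $y$ and applying the strong positivity of $(\sfrac{H}{V},\beta,c)$ with kernel forms $\mathcal R_+,\mathcal R_-$ to the $F_k$ (which are integrable with $\int_{\sfrac{H_k}{V_k}}F_k$ proportional to $\int_{H_k}f_k$, by Fubini) produces a further constant $\kappa'>0$, so that $J_{\mathcal Q_+,\mathcal Q_-}(f_1,\dots,f_m)\ge C\kappa\kappa'>0$ uniformly in $(f_1,\dots,f_m)$. The sets of $y$ where some $F_k(\beta_ky)$ fails to lie in $(0,+\infty)$ are handled, exactly as in the main text, by the convention $0\cdot\infty=0$ (either the corresponding estimate reads $0\ge0$, or the numerator is already $+\infty$), so they do not obstruct the final bound.
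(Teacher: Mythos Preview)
Your argument is correct and closely parallels the paper's, but with one genuine difference in execution. The paper keeps the sections $j,j_0,j_{m+1}$ arbitrary; to compensate, it first establishes surjectivity of $(b_0,b_{m+1})\colon V\to V_0\times V_{m+1}$ (exactly as you do, via Corollary~\ref{cor:inheritance-of-kernel-conditions} and Lemma~\ref{lem:kerS-kerT-H}), uses it to perform a $y$-dependent translation $x\mapsto x-v_y$ in the inner integral so as to eliminate the cross terms $\rho_0y,\rho_{m+1}y$, and then crudely bounds each Gaussian factor $e^{-\mathcal Q_\pm(x_0+j_0y_0)}$ above or below by a product $e^{-\mathcal Q_{\pm,V}(x_0)}e^{-\mathcal Q_{\pm,\sfrac{H}{V}}(y_0)}$. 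You instead invest the same surjectivity fact upstream, choosing $j_0,j_{m+1}$ along the $\mathcal Q_\pm$-orthogonal complements and $j$ with range in $B_0^{-1}\big(V_0^{\perp_{\mathcal Q_+}}\big)\cap B_{m+1}^{-1}\big(V_{m+1}^{\perp_{\mathcal Q_-}}\big)$; your dimension count (that this intersection complements $V$ in $H$ precisely when $(b_0,b_{m+1})$ is onto) is correct, and with these choices $\rho_0=\rho_{m+1}=0$ and the kernel factors \emph{exactly} rather than up to an inequality. This is a mild but genuine streamlining: you trade the paper's change of variable plus two ad hoc quadratic-form bounds for a single geometric choice of sections, at the cost of letting the sections (hence the constant $C$ in~\eqref{eq:J-iterative-integral}) depend on $\mathcal Q_\pm$, which is harmless since strong positivity is verified for each fixed pair $(\mathcal Q_+,\mathcal Q_-)$. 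The remainder of your argument---applying the strong positivity of $(V,b,c)$ fiberwise, using translation invariance to identify $\int_{V_k}g_{k,y}=F_k(\beta_ky)$, and then the strong positivity of $(\sfrac{H}{V},\beta,c)$---is identical to the paper's, as is the implicit handling of the null sets where some $F_k(\beta_ky)\notin(0,+\infty)$.
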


\begin{proof}
Fix any non-negative, integrable functions $f_k \colon H_k \to \R$ ($k=1, \ldots, m$) satisfying $\int_{H_k} f_k > 0$.
Set $f_0=e^{-\mathcal Q_+}$, $f_{m+1}=e^{-\mathcal Q_-}$, $c_0=1$ and $c_{m+1}=-1$. 
Our goal is to bound from below the quantity $J_{\mathcal Q_+,\mathcal Q_-}(f_1,\ldots,f_m)$ defined in \eqref{eq:J-Qplus-Qminus} by a positive
constant (not depending on $(f_1,\ldots,f_m)$). Recall the discussion from Section~\ref{sec:prelim-results-characterization-theorem} and in particular Formula~\eqref{eq:J-iterative-integral}. Our aim is to bound from below the quantity
\begin{equation}\label{eq:proof-tensorization1}
I:=\int_{\sfrac{H}{V}} \int_V  \prod_{k=0}^{m+1} f_k^{c_k}(b_k x + \rho_k y, \beta_k y) \, dx \, dy,
\end{equation}
by application of an inequality of inverse Brascamp-Lieb type on $V$ and on $\sfrac{H}{V}$.

 By Corollary \ref{cor:inheritance-of-kernel-conditions}, $b_{0+}=(b_0,b_+)$ is surjective and $\ker b_+\subseteq \ker b_{m+1}$.
 Hence, by Lemma~\ref{lem:kerS-kerT-H}
\[ V=\ker b_0+\ker b_+\subseteq \ker b_0+\ker b_{m+1} \subseteq V.\]
Using Lemma~\ref{lem:kerS-kerT-H} once again, we obtain that $(b_0,b_{m+1})$ is surjective. This allows us to remove the cross-terms
from the Gaussian kernel: indeed for every $y\in \sfrac{H}{V}$, there exists $v_y\in V$ such that $b_0v_y=\rho_0 y$ and 
$b_{m+1}v_y=\rho_{m+1}y$.  Using the translation invariance of the Lebesgue measure on $V$, we apply the change of variable $V \ni x \mapsto x - v_y \in V$ to the inner integral of \eqref{eq:proof-tensorization1}, and get that it is equal to 
\begin{equation}\label{eq:proof-tensorization2}
  \int_{\sfrac{H}{V}} \int_V f_0(b_0 x, \beta_0 y) f_{m+1}^{-1}(b_{m+1}x, \beta_{m+1} y) \prod_{k=1}^m f_k^{c_k}(b_k x - b_k v_y + \rho_k y, \beta_k y) \, dx \, dy.
\end{equation}
Next, we bound from below the Gaussian kernel by a product kernel. 
Since $f_0=\exp(-\mathcal Q_+)$ where  $\mathcal Q_+$ is viewed as a quadratic form on $V_0 \times \sfrac{H_0}{V_0}$, we can bound $f_0$ from below by
\[ 
  f_0(x_0, y_0) \ge f_{0, V}(x_0) f_{0, \sfrac{H}{V}}(y_0),
\] 
where $f_{0, V} = \exp(-\mathcal Q_{+, V})$ and $f_{0, \sfrac{H}{V}} = \exp(-\mathcal Q_{+, \sfrac{H}{V}})$ for some positive definite quadratic forms $\mathcal Q_{+, V} \colon V_0 \to \R$ and $\mathcal Q_{+, \sfrac{H_0}{V_0}} \colon \sfrac{H}{V} \to \R$.

 For $f_{m+1}$ we use a reverse bound, namely for some positive definite quadratic forms $\mathcal Q_{-, V} \colon V_{m+1} \to \R$ and $\mathcal Q_{-, \sfrac{H_{m+1}}{V_{m+1}}} \colon \sfrac{H}{V} \to \R$ we have
\[   
  f_{m+1}^{-1}(x_0, y_0) \ge f_{m+1, V}^{-1}(x_0) f_{m+1, \sfrac{H}{V}}^{-1}(y_0),
\] 
where $f_{m+1, V} = \exp(-\mathcal Q_{-, V})$ and $f_{m+1, \sfrac{H}{V}} = \exp(-\mathcal Q_{-, \sfrac{H}{V}})$.
Observe that we have used here the fact that $\mathcal Q_-$ is positive definite. We get that $I$ from \eqref{eq:proof-tensorization1} is at least 
\[
  \int_{\sfrac{H}{V}} f_{0,\sfrac{H}{V}}(\beta_0 y) f_{m+1,\sfrac{H}{V}}^{-1}( \beta_{m+1} y)\int_V   f_{0,V}(b_0x) f_{m+1,V}^{-1}(b_{m+1}x)\prod_{k=1}^m f_k^{c_k}(b_k x - b_k v_y + \rho_k y, \beta_k y) \, dx \, dy.
\]
By the strong positivity property for $(V,b,c)$ there exists a constant $C_V>0$ such that for all $y \in \sfrac{H}{V}$,
\[ \begin{split}
\int_V f_{0, V}(b_0 x) & f_{m+1,V}^{-1}(b_{m+1}x)\prod_{k=1}^m f_k^{c_k}(b_k x - b_k v_y + \rho_k y, \beta_k y) \, dx\\
 &\ge
C_V \prod_{k=1}^m \left(\int_{V_k} f_k(\cdot - b_k v_y + \rho_k y, \beta_k y) \right)^{c_k} \\
&= C_V \prod_{k=1}^m \left(\int_{V_k} f_k(\cdot, \beta_k y) \right)^{c_k},
\end{split} \]
where the equality follows from translation invariance of the Lebesgue measure on each $V_k$. 
Denoting $f_{k, \sfrac{H}{V}}(y) := \int_{V_k} f_k(\cdot, y)$ for $y \in \sfrac{H_k}{V_k}$ ($k=1, \ldots, m$),  we obtain
\begin{equation}\label{eq:J-iterative-integral-V-critical-2}
 I \ge
  C_V \int_{\sfrac{H}{V}} f_{0, \sfrac{H}{V}}(\beta_0 y) f_{m+1, \sfrac{H}{V}}^{-1}(\beta_{m+1} y) \prod_{k=1}^m f_{k, \sfrac{H}{V}}^{c_k}(\beta_k y) \,dy.
\end{equation}
Now it remains to apply the strong positivity property for $(\sfrac{H}{V}, \beta, c)$ and the functions $f_{k, \sfrac{H}{V}}$ in order to get
\[
 I \ge
  C_V C_{\sfrac{H}{V}} \prod_{k=1}^m \left( \int_{\sfrac{H_k}{V_k}} f_{k, \sfrac{H}{V}} \right)^{c_k} =
  C_V C_{\sfrac{H}{V}} \prod_{k=1}^m \left(\int_{\sfrac{H_k}{V_k}} \int_{V_k} f_k(x,y) \,dx\,dy \right)^{c_k}
\]
for some constant $C_{\sfrac{H}{V}} > 0$ (which depends on $\mathcal Q_{+, \sfrac{H}{V}}$ and $\mathcal Q_{-, \sfrac{H}{V}}$).
\end{proof}

\begin{prop}\label{prop:c-i-ge-1}
Let $0 \le m^+ \le m$ be integers and consider surjective maps $B_k \colon H \to H_k$ for $k = 0, 1, \ldots, m+1$ and real numbers $c_k$ such that $c_k > 0$ for $k = 1, \ldots, m^+$ and $c_k \le 0$ for $k = m^+ + 1, \ldots, m$. Assume that Conditions~\eqref{eq:isomorphism-condition} and  (C) hold.  Then
\begin{itemize}
\item for all $i= 1, \ldots, m^+$, $
  \dim H_i > 0 \implies c_i \ge 1$
\item if $H$ is a critical subspace, then $B_0=B_{m+1}=0$.
\end{itemize}
\end{prop}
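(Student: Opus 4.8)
The plan is to test Condition (C) against two carefully chosen admissible splits, both manufactured from the isomorphism~\eqref{eq:isomorphism-condition}. The underlying observation I would use is that if $B_{0+}=(B_0,B_1,\dots,B_{m^+})$ is a linear isomorphism, then for any subspaces $W_k\subseteq H_k$ ($k=0,\dots,m^+$) the subspace $V:=B_{0+}^{-1}(W_0\times\cdots\times W_{m^+})$ satisfies $B_kV=W_k$ for $0\le k\le m^+$ and $\dim V=\sum_{k=0}^{m^+}\dim W_k$, so by Lemma~\ref{lem:admissibility-of-subspace-and-quotient} it induces an admissible split; in particular $\ker B_0,\ker B_1,\dots,\ker B_{m^+}$ are all admissible. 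Informally, via $B_{0+}$ one views $H$ as the product $H_0\times\cdots\times H_{m^+}$ with the $B_k$ ($1\le k\le m^+$) being coordinate projections, which makes all the dimension bookkeeping below immediate.

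For the first bullet, fix $i\le m^+$ with $\dim H_i>0$ and take $V=\ker B_i$. Then $B_0V=H_0$, so $\beta_0$ is trivial and Condition (C)(ii) forces $\sfrac{H}{V}$ to be a subcritical quotient, i.e.\ $\dim\sfrac{H}{V}\le\sum_{k=1}^m c_k\dim\sfrac{H_k}{V_k}$. Here $\dim\sfrac{H}{V}=\dim H_i$; for $1\le k\le m^+$ one has $V_k=H_k$ when $k\neq i$ and $V_i=\{0\}$, so the only nonzero term among $1\le k\le m^+$ is $c_i\dim H_i$; and for $j>m^+$ the term $c_j\dim\sfrac{H_j}{V_j}$ is $\le 0$ since $c_j\le 0$. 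Thus $\dim H_i\le c_i\dim H_i$, and dividing by $\dim H_i>0$ yields $c_i\ge 1$.

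For the second bullet, note first that, by definition of a critical subspace, $H$ being critical already means $b_{m+1}$ (which for the split $V=H$ is just $B_{m+1}$) is trivial, i.e.\ $B_{m+1}=0$; it remains to prove $B_0=0$. Criticality of $H$ also yields $\dim H=\sum_{k=1}^m c_k\dim H_k$, while~\eqref{eq:isomorphism-condition} gives $\dim H=\dim H_0+\sum_{k=1}^{m^+}\dim H_k$. Now take $V=\ker B_0$. Since $B_{m+1}=0$, the map $b_{m+1}$ is trivial, so Condition (C)(i) applies: $\dim V\ge\sum_{k=1}^m c_k\dim B_kV$, where $\dim V=\sum_{k=1}^{m^+}\dim H_k$ and $B_kV=H_k$ for $1\le k\le m^+$, while $c_j\dim B_jV\le 0$ for $j>m^+$. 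Subtracting this inequality from the two expressions for $\dim H$ leaves $\dim H_0\le\sum_{j>m^+}c_j\bigl(\dim H_j-\dim B_j(\ker B_0)\bigr)$, and the right-hand side is $\le 0$ since each $\dim H_j-\dim B_j(\ker B_0)\ge 0$ and $c_j\le 0$. Hence $H_0=\{0\}$, i.e.\ $B_0=0$.

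I do not expect any real obstacle here: once the splits $\ker B_i$ and $\ker B_0$ are identified, the whole argument reduces to matching dimensions. The only points that need a little attention are checking that $\beta_0$ is trivial for $V=\ker B_i$ (so that (C)(ii) is legitimately available) and that the sign convention $c_j\le 0$ for $j>m^+$ pushes all the auxiliary terms in the favorable direction; it is also worth remarking that the proof of the second bullet is independent of the first.
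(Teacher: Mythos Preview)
Your proof is correct and follows essentially the same strategy as the paper: test Condition~(C) on the admissible subspace $V=\ker B_i$ (using part~(ii), subcriticality of the quotient) for the first item, and on $V=\ker B_0$ (using part~(i), supercriticality of the subspace, available since $B_{m+1}=0$) for the second. Your packaging via the isomorphism $B_{0+}$ is slightly slicker than the paper's: where the paper repeatedly invokes Lemma~\ref{lem:kerS-kerT-H} to check that pairs $(B_0,B_i)$, $(B_i,B_k)$ are surjective and hence $B_kV=H_k$, you read off admissibility and all the images $B_kV$ ($0\le k\le m^+$) at once from $V=B_{0+}^{-1}(W_0\times\cdots\times W_{m^+})$. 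In the second item the paper rearranges to $\dim\sfrac{H_0}{V_0}\le\sum_{i=1}^{m^+}(c_i-1)\dim\sfrac{H_i}{V_i}$ and then kills each term on the right by showing $B_iV=H_i$, whereas you subtract directly and bound by the negative-exponent tail; the two computations are equivalent.
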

\begin{proof}
Fix $1 \le i\le m^+$ such that $\dim H_i > 0$, i.e. $\ker B_i \neq H$. Consider $V = \ker B_i$ and a related split of $H$ by $V$. Since clearly
\[
  \bigcap_{k=0}^{m^+} (V + \ker B_k) = V,
\]
by Lemma~\ref{lem:admissibility-of-subspace-and-quotient}, the split is admissible. Moreover, since $B_{0+}$ is surjective, the map $(B_0, B_i)$ is surjective too, hence Lemma~\ref{lem:kerS-kerT-H} yields
\[
  H = \ker B_0 + \ker B_i = \ker B_0 + V,
\]
i.e. $B_0 V = B_0 H$. Therefore we can use the fact that the quotient $\sfrac{H}{V}$ of $H$ is subcritical, from which it follows that 
\begin{equation}\label{eq:consequence-of-subcriticality}
  \dim \sfrac{H}{V} \le \sum_{k=1}^m c_k \dim \sfrac{H_k}{V_k} \le \sum_{k=1}^{m^+} c_k \dim \sfrac{H_k}{V_k}.
\end{equation}
For all $1 \le k \le m^+$ with $k \neq i$, the map $(B_i, B_k)$ is surjective, hence again by Lemma~\ref{lem:kerS-kerT-H},
\[
  H = \ker B_i + \ker B_k = V + \ker B_k,
\]
which means that $B_k V = B_k H$, i.e. $\sfrac{H_k}{V_k} = \{0\}$. Thus \eqref{eq:consequence-of-subcriticality} boils down to  
$$\dim \sfrac{H}{V} \le c_i 
\dim \sfrac{H_i}{V_i}.$$
Recall that $V_i=B_iV$ is reduced to $\{0\}$ since by definition $V=\ker B_i$.  Moreover
\[
  \dim \sfrac{H}{V}= \dim H- \dim  \ker B_i = \dim B_iH=\dim H_i,
\]
so the last inequality can be rewritten as $\dim H_i\le c_i \dim H_i$. Therefore
 $c_i\ge 1$ if $\dim H_i > 0$
 
 \medskip
 The proof of the second item  follows the same lines. Firstly, $H$ is admissible by hypothesis. Since it is assumed to be a critical subspace, we know that $H\subset \ker B_{m+1}$ hence $B_{m+1}=0$, and that 
 \[ \dim H=\sum_{k=1}^m c_k \dim B_kH .\]
We set $V=\ker B_0$. As above, we can check that $V$ is admissible. Since $V\subset H=\ker B_{m+1}$, it is a supercritical subspace thanks to Condition (C). Therefore, using the above dimension equality, we get after subtraction
\begin{equation}\label{eq:consequence-of-subcriticality2}
  \dim \sfrac{H}{V} \le \sum_{k=1}^m c_k \dim \sfrac{B_kH}{B_kV} \le \sum_{k=1}^{m^+} c_k \dim \sfrac{B_kH}{B_kV}  
\end{equation}
again. Since $B_{0+}$ is a bijection and $V$ is admissible, we know by Lemma~\ref{lem:admissibility-of-subspace-and-quotient} that 
$\dim H=\sum_{i=0}^{m^+} \dim B_iH$ and $\dim V=\sum_{i=0}^{m^+} \dim B_iV$. Hence $\dim \sfrac{H}{V}=\sum_{i=0}^{m^+} \dim \sfrac{B_iH}{B_iV}$.
Plugging this equality into \eqref{eq:consequence-of-subcriticality2} yields after rearranging
\begin{equation}\label{eq:consequence-of-subcriticality3}
\dim \sfrac{B_0H}{B_0V}\le  \sum_{i=1}^{m^+} (c_i-1) \dim \sfrac{B_iH}{B_iV}.
\end{equation}
Since $B_{0+}$ is surjective, the map $(B_0, B_i)$ is surjective too for any $1\le i\le m^+$. Hence Lemma~\ref{lem:kerS-kerT-H} yields
$ H = \ker B_0 + \ker B_i = V+\ker B_i$, which ensures that $B_iH=B_iV$. Therefore, \eqref{eq:consequence-of-subcriticality3} becomes
$ \dim \sfrac{B_0H}{B_0V}\le 0$. Recall that by definition $B_0V=\{0\}$. We can conclude that $\dim B_0H=0$, that is $B_0=0$. 
\end{proof}

The next statements  will help to initialize the inductive proof of Theorem~\ref{thm:positivity-of-constant-general-case} (ii).

\begin{lemma}\label{lem:initialization-dimension1}
Assertion (ii) of Theorem   \ref{thm:positivity-of-constant-general-case}
is true when $\dim H=1$.
\end{lemma}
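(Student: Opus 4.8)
The plan is to exploit the one-dimensionality of $H$ to collapse the problem to an explicit scalar inequality that follows from the elementary inverse H\"older inequality recalled in the Introduction ($\int f^\lambda g^{1-\lambda}\ge(\int f)^\lambda(\int g)^{1-\lambda}$ for $\lambda\ge 1$ and $f,g\ge 0$ with $\int g>0$, under the convention $0\cdot\infty=0$). First I would identify $H$ with $\R$. Since $B_{0+}=(B_0,B_1,\ldots,B_{m^+})$ is a linear isomorphism onto $H_0\times\cdots\times H_{m^+}$ by~\eqref{eq:isomorphism-condition} and $\dim H=1$, exactly one of $H_0,H_1,\ldots,H_{m^+}$ is one-dimensional and the rest are trivial; likewise $\dim H_k\le 1$ for $m^+<k\le m$ and $\dim H_{m+1}\le 1$. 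Any index $k\in\{1,\ldots,m\}$ with $H_k=\{0\}$ contributes the factor $1$ to both the numerator and the denominator of $J_{\mathcal Q_+,\mathcal Q_-}$ (Dirac-mass convention on $\{0\}$), and $f_0$ (resp.\ $f_{m+1}$) reduces to the constant $1$ when $H_0=\{0\}$ (resp.\ $H_{m+1}=\{0\}$). Discarding these factors, identifying each surviving $H_k$ with $\R$ so that $B_k$ becomes multiplication by a scalar $\beta_k\neq 0$, and passing to $g_k(x)=f_k(\beta_k x)/\int f_k$ (which ranges over all non-negative measurable functions with the prescribed value $\int_\R g_k=1/|\beta_k|=:C_k$), the functional takes the form
\[
  J_{\mathcal Q_+,\mathcal Q_-}=\int_\R \Phi(x)\,g_{i_0}(x)^{c_{i_0}}\prod_{j\in J'}g_j(x)^{c_j}\,dx,
\]
where $J'=\{\,j>m^+:\dim H_j=1,\ c_j\neq 0\,\}$, the factor $g_{i_0}^{c_{i_0}}$ is present only when the one-dimensional space among $H_0,\ldots,H_{m^+}$ is some $H_{i_0}$ with $i_0\ge 1$, and $\Phi(x)=e^{-ax^2}$ with $a>0$ when $H_0\cong\R$ (in which case $H_{m+1}=\{0\}$ necessarily, by~\eqref{eq:kerBplus-contained-in-kerBm1}), while $\Phi(x)=e^{ax^2}$ with $a\ge 0$ when $H_0=\{0\}$.

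Next I would read off what Condition~(C) says in this situation. On $H=\R$ there are only two subspaces, hence only two candidate admissible splits, and both are admissible by~\eqref{eq:isomorphism-condition} and Lemma~\ref{lem:admissibility-of-subspace-and-quotient}. Spelling out the definitions (in particular which of $b_{m+1}$, $\beta_0$ is trivial), one finds that Condition~(C) imposes nothing when $H_0\cong\R$, whereas when $H_0=\{0\}$ it forces
\[
  c_{i_0}\ \ge\ 1+\sum_{j\in J'}|c_j|,
\]
with equality in the additional case $H_{m+1}=\{0\}$ (equivalently $a=0$).

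It then remains to prove $\inf J_{\mathcal Q_+,\mathcal Q_-}>0$ in each case. When $H_0\cong\R$, set $\lambda:=1+\sum_{j\in J'}|c_j|\ge 1$ (if $\lambda=1$, i.e.\ $J'=\emptyset$, then $J_{\mathcal Q_+,\mathcal Q_-}=\int_\R e^{-ax^2}\,dx>0$ directly); otherwise write $\Phi=\bigl(e^{-ax^2/\lambda}\bigr)^\lambda$ and apply the inverse H\"older inequality with base function $e^{-ax^2/\lambda}$ and exponent $\lambda$, obtaining $J_{\mathcal Q_+,\mathcal Q_-}\ge\bigl(\int_\R e^{-ax^2/\lambda}\,dx\bigr)^{\lambda}\bigl(\int_\R v\bigr)^{1-\lambda}$, where $v=\prod_{j\in J'}g_j^{|c_j|/(\lambda-1)}$ is a weighted geometric mean of the $g_j$ whose weights $|c_j|/(\lambda-1)$ sum to $1$, so $\int_\R v\le\sum_{j\in J'}\frac{|c_j|}{\lambda-1}C_j<\infty$ by the ordinary AM--GM inequality; since $1-\lambda\le 0$, this is a strictly positive bound depending only on the fixed data (and if $\int_\R v=0$ the integrand is $+\infty$ a.e., so $J_{\mathcal Q_+,\mathcal Q_-}=+\infty$). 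When $H_0=\{0\}$, use $e^{ax^2}\ge 1$ and apply the inverse H\"older inequality with base function $g_{i_0}$ and exponent $\lambda:=c_{i_0}\ge 1$ (the case $\lambda=1$, which by Condition~(C) forces $J'=\emptyset$, gives $J_{\mathcal Q_+,\mathcal Q_-}\ge C_{i_0}>0$ at once): this yields $J_{\mathcal Q_+,\mathcal Q_-}\ge C_{i_0}^{\lambda}\bigl(\int_\R v\bigr)^{1-\lambda}$ with $v=e^{-ax^2/(\lambda-1)}\prod_{j\in J'}g_j^{|c_j|/(\lambda-1)}$, and the crucial point is that the weights $|c_j|/(\lambda-1)$ now sum to \emph{at most} $1$ precisely because Condition~(C) gives $\sum_{j\in J'}|c_j|\le c_{i_0}-1$; hence $\int_\R v$ is bounded above by an explicit finite constant by AM--GM (combined with H\"older against the Gaussian $e^{-ax^2/(\lambda-1)}$, which is integrable, when the weights sum to strictly less than $1$, a situation that can only occur when $a>0$), and $\int_\R v=0$ again forces $J_{\mathcal Q_+,\mathcal Q_-}=+\infty$. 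In every case $\inf J_{\mathcal Q_+,\mathcal Q_-}$ is bounded below by a positive constant, as required.

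The analytic core (inverse H\"older together with AM--GM) is short; the step I expect to be most delicate is the bookkeeping of the reduction — correctly determining which factors cancel, identifying the reduced geometric data, and, above all, translating Condition~(C) through the admissible-split formalism without slips about when $b_{m+1}$ or $\beta_0$ is trivial. The one genuinely necessary use of the hypothesis is that Condition~(C) supplies the inequality $\sum_{j\in J'}|c_j|\le c_{i_0}-1$, which is exactly what makes $\int_\R v$ finite and hence the lower bound uniform over all admissible $g_k$.
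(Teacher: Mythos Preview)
Your proof is correct and takes essentially the same approach as the paper's: both reduce to a scalar integral on $\R$, read off Condition~(C) from the only two subspaces $\{0\}$ and $\R$, and conclude via the inverse H\"older inequality. The paper organizes the case split by whether $B_0$ and $B_{m+1}$ vanish and invokes a multi-function inverse H\"older directly, while you split on whether $H_0$ or some $H_{i_0}$ carries the one-dimensional piece and use two-function inverse H\"older plus AM--GM---these are equivalent; one cosmetic point is that in the $H_0=\{0\}$ case your phrase ``use $e^{ax^2}\ge 1$'' is a leftover, since your (correct) formula for $v$ in fact retains the Gaussian factor.
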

\begin{proof}
 The main tool here  is the reverse H\"older inequality for several functions:
 Let $c_1\ge 0\ge c_2,\ldots c_m$ with $\sum_k c_k=1$ then 
 \begin{equation}\label{eq:inverse-Holder-m-functions}
 \int_{\R^d} \prod_k f_k^{c_k}\ge \prod_k \left( \int_{\R^d}  f_k\right)^{c_k}
  \end{equation}
 holds for all integrable non-negative functions with $\int_{\R^d} f_k\in (0,+\infty)$.
This inequality follows from its version for two functions applied with $\lambda=c_1\ge 1$:
$$ \int_{\R^d} \prod_k f_k^{c_k}\ge \left( \int f_1\right)^{c_1} \left( \int \prod_{j=2}^m f_j^{\frac{c_j}{c_2+\cdots+c_m}}\right)^{c_2+\cdots+c_m},$$
 and from the classical H\"older inequality applied to the second integral
 (observe that the inner exponents sum up to 1 and are all non-negative, while the
 outer exponent $c_2+\cdots+c_m$ is non-positive).
 
 \medskip
 Since $\dim H=1$ and for $1\le k\le m$, $B_k:H\to H_k$ is surjective and $H_k$ is non-trivial, it follows that the maps $B_k$, $k\ge 1$ are bijections.
 Therefore we may reduce to the case $H=H_k=\R$ and $B_k=\mathrm{Id}$ for $1\le k\le m$. In this simple setting, the only possible subspaces $V$ are ${0}$ and $\R$. The former is trivially admissible, while the latter is admissible by hypothesis. 
Hence Condition (C) rewrites as:
\begin{itemize}
	\item if $\R \subset \ker B_{m+1}$ (i.e. $B_{m+1}=0$), then $ 1\ge \sum_{k=1}^m c_k$,
	\item if $B_0\{0\}=B_0\R$ (i.e. $B_0=0$) then  $ 1\le \sum_{k=1}^m c_k$,
\end{itemize}
Also the hypothesis of bijectivity of $(B_0,B_+)$ reduces to two cases:
either $B_0=0$, $B_+=B_1$ and $c_1\ge0\ge c_2,\ldots, c_m$,
or $B_0\neq 0$, $B_+=0$  and $0\ge c_1,\ldots,c_m$. 

In order to prove the lemma, we consider several cases:

Case 1: If $B_0=B_{m+1}=0$, then Condition (C) rewrites as $\sum_{k=1}^m c_k=1$.
Moreover there is no kernel and, as explained above $c_1\ge0\ge c_2,\ldots, c_m$.
The positivity of the Brascamp-Lieb functional is a direct consequence of the reverse H\"older inequality \eqref{eq:inverse-Holder-m-functions}.

Case 2: if $B_0\neq 0$ and $B_{m+1}=0$, then $0\ge c_1,\ldots, c_m$ and Condition (C) amounts to $1\ge \sum_{k=1}^m c_k$. We define $c_0:=1- \sum_{k=1}^m c_k\ge 1$ and we are ready to apply the inverse H\"older inequality
with $m+1$ functions:
$$ \int e^{-\mathcal Q_+(B_0x)}\prod_{j=1}^m f_j(x)^{c_j} dx
\ge \left(\int e^{-\frac{1}{c_0}\mathcal Q_+(B_0x)}\right)^{c_0} \prod_{j=1}^m \left(\int  f_j\right)^{c_j}.$$
 
 Case 3: if $B_0=0$ and $B_{m+1}\neq 0$, then $c_1\ge0\ge c_2,\ldots, c_m$ and
 Condition (C) reads as $1\le \sum_{k=1}^m c_k$ (actually the inequality is strict. If it where an equality then $H=\R$ would be a critical space, which is not compatible with $B_{m+1}\neq 0$ as explained by Proposition \ref{prop:c-i-ge-1}). We define $c_{m+1}:=1- \sum_{k=1}^m c_k<0$ and we  apply the inverse H\"older inequality
with $m+1$ functions:
$$ \int e^{\mathcal Q_-(B_{m+1}x)}\prod_{k=1}^m f_k(x)^{c_k} dx
\ge \left(\int e^{\frac{1}{c_{m+1}}\mathcal Q_-(B_0x)}\right)^{c_{m+1}} \prod_{k=1}^m \left(\int  f_k\right)^{c_k}.$$
Since $c_{m+1}<0$ and $\mathcal Q_-$ is positive definite, the first integral of the right-hand side term is finite.

Case 4: $B_0\neq 0$ and $B_{m+1}\neq 0$ does not happen. Indeed it implies that 
$B_+=0$ but then the condition $\ker B_+\subset \ker B_{m+1}$ is violated. 
A more conceptual explanation is that a quadratic form on $\R$ is either zero, definite positive or definite negative, so that the above three cases cover all possibilities.

\end{proof}

\begin{lemma}\label{lem:initialization-2functions-no-kernel}
Assertion (ii) of Theorem   \ref{thm:positivity-of-constant-general-case}
is true when $m=2$ and $B_0=B_{m+1}=0$.
\end{lemma}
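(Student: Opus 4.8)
The plan is to reduce the statement to an inequality in a very concrete, low-dimensional situation and then analyze the finitely many shapes that Condition (C) can take. Since there is no kernel ($B_0 = B_{m+1} = 0$, so $\mathcal{Q}_+$ and $\mathcal{Q}_-$ act on trivial spaces and contribute nothing), we are dealing with
\[
  J(f_1, f_2) = \frac{\int_H f_1^{c_1}(B_1 x) f_2^{c_2}(B_2 x) \, dx}{\Big(\int_{H_1} f_1\Big)^{c_1} \Big(\int_{H_2} f_2\Big)^{c_2}},
\]
where $B_1 \colon H \to H_1$ and $B_2 \colon H \to H_2$ are surjective, $c_1 > 0 \ge c_2$ (or $c_1, c_2 \ge 0$, but the case with both non-negative forces one of them to be $0$ when $B_0 = B_{m+1} = 0$; more precisely, since $m^+$ could be $2$ we keep $c_1, c_2 > 0$ as a subcase), and $B_{0+} = (B_0, B_1, \ldots, B_{m^+})$ is a bijection by~\eqref{eq:isomorphism-condition}. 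With $B_0 = 0$ trivial, this means $B_+ = (B_1, \ldots, B_{m^+})$ is a bijection. So first I would split into the two sign patterns: (a) $m^+ = 1$, i.e. $c_1 > 0 \ge c_2$, in which case $B_+ = B_1$ is a bijection $H \to H_1$; (b) $m^+ = 2$, i.e. $c_1, c_2 > 0$, in which case $(B_1, B_2)$ is a bijection $H \to H_1 \times H_2$.

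In case (b) the situation is rigid: $H \cong H_1 \times H_2$ via $(B_1, B_2)$, and Condition (C) applied to the admissible subspaces $V = \{0\}$ and $V = H$ (both admissible; supercriticality of $\{0\}$ is vacuous, subcriticality of $H$ gives $\dim H \le c_1 \dim H_1 + c_2 \dim H_2$, and criticality considerations at $V = H$, using $B_{m+1} = 0$ so $b_{m+1}$ trivial, give $\dim H \ge c_1 \dim H_1 + c_2 \dim H_2$) forces $\dim H = c_1 \dim H_1 + c_2 \dim H_2$. Running Condition (C) over all coordinate subspaces $V = H_1 \times \{0\}$ and $\{0\} \times H_2$ then forces $c_1 = c_2 = 1$ and $\dim H_1 = \dim H_2 = \dim H$, and the inequality becomes an equality by Fubini. (This is exactly the degenerate geometric case; one can also note $c_i \ge 1$ from Proposition~\ref{prop:c-i-ge-1} and then the homogeneity equality pins everything down.) So case (b) is immediate.

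The content is in case (a): $c_1 > 0 \ge c_2$, $B_1 \colon H \to H_1$ a bijection, $B_2 \colon H \to H_2$ surjective with possibly nontrivial kernel. After composing with $B_1^{-1}$ I may assume $H = H_1$ and $B_1 = \Id$, so that
\[
  J(f_1, f_2) = \frac{\int_{H_1} f_1^{c_1}(x) f_2^{c_2}(B_2 x) \, dx}{\Big(\int_{H_1} f_1\Big)^{c_1} \Big(\int_{H_2} f_2\Big)^{c_2}}.
\]
The admissible subspaces $V \subseteq H_1$ are those with $\dim V = \dim V + \dim B_2 V$ (since $\dim B_1 V = \dim V$ and $B_0 = 0$), i.e. $B_2 V = \{0\}$, i.e. $V \subseteq \ker B_2$ — together with the subspaces where the general admissibility condition from Lemma~\ref{lem:admissibility-of-subspace-and-quotient}(iv) holds; but with $B_1 = \Id$ injective, $V + \ker B_1 = V$, so admissibility reads $\bigcap (V + \ker B_i) = V$, i.e. $(V + \ker B_2) \cap V = V$, which is automatic — wait, that makes every subspace admissible. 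Let me recompute: admissibility is $b_{0+} = (b_0, b_1, \ldots, b_{m^+})$ a bijection; with $b_0$ trivial and $b_1 = \Id|_V$, this is $\dim V = \dim b_1 V = \dim V$, automatically true. So every subspace of $H_1$ is admissible here. Then Condition (C)(i) says: whenever $b_{m+1}$ trivial — but $B_{m+1} = 0$ so $b_{m+1}$ is always trivial — every subspace $V$ satisfies $\dim V \ge c_1 \dim V + c_2 \dim B_2 V$; equivalently $(c_1 - 1)\dim V \le |c_2| \dim B_2 V$. And (C)(ii) says: whenever $\beta_0$ trivial — $B_0 = 0$ so always — every quotient satisfies $\dim H_1 - \dim V \le c_1(\dim H_1 - \dim V) + c_2(\dim H_2 - \dim B_2 V)$, i.e. $(c_1 - 1)(\dim H_1 - \dim V) \le |c_2|(\dim H_2 - \dim B_2 V)$. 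Taking $V = \{0\}$ in (i) gives nothing; $V = H_1$ in (i) gives $(c_1 - 1)\dim H_1 \le |c_2|\dim H_2$; $V = \{0\}$ in (ii) gives the same; and by varying $V$ these are the full set of constraints. So the task reduces to: \emph{given} $(c_1 - 1)\dim V \le |c_2|\dim B_2 V$ for all $V$ and the matching quotient inequalities, prove $\inf J > 0$.

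For the proof that Condition (C) suffices in case (a), the plan is to build the lower bound by hand using the reverse Hölder inequality as the engine, exactly as in Lemma~\ref{lem:initialization-dimension1}. Decompose $H_1 = \ker B_2 \oplus W$ with $W \cong H_2$ via $B_2$; write $x = (v, w)$ accordingly, so $B_2 x = B_2 w$ depends only on $w$. Then
\[
  \int_{H_1} f_1^{c_1}(v, w) f_2^{c_2}(B_2 w) \, dv \, dw = \int_W f_2^{c_2}(B_2 w) \Big( \int_{\ker B_2} f_1^{c_1}(v, w) \, dv \Big) dw.
\]
The inner integral I would bound below using Hölder in reverse: since $c_1 \ge 1$, writing $f_1^{c_1} = f_1 \cdot f_1^{c_1 - 1}$ and applying reverse Hölder with exponents tuned by the dimension count — here is where the constraint $(c_1 - 1)\dim(\ker B_2) \le |c_2| \dim H_2$ enters, as it guarantees that the "deficit" $c_1 - 1$ can be absorbed against $f_2$ without running out of room. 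Concretely I expect to introduce auxiliary Gaussian factors or a reverse-Hölder split of the form $\int f_1^{c_1} f_2^{c_2} \ge (\int f_1)^{c_1}(\int g)^{-(c_1-1)} \cdots$ and then reconcile the $g$-integral with $\int f_2$. The cleanest route may be to invoke the already-proven rank-one positivity result, Theorem~\ref{theo:positivity-rank1} (no kernel), after reducing to the case where all $H_k$ are one-dimensional by a further tensorization/slicing — but since we only have two functions and $B_1 = \Id$, a direct reverse-Hölder argument as in Lemma~\ref{lem:initialization-dimension1} is more transparent. The main obstacle I anticipate is bookkeeping the exponents in the reverse-Hölder step so that the dimension inequalities from Condition (C) translate exactly into the admissibility of the chosen exponents (non-negativity of the "inner" exponents and non-positivity of the "outer" one), and handling the boundary cases $c_1 = 1$ (trivial, Fubini) and $c_2 = 0$ (then $f_2$ disappears, need $\dim H_1 \le c_1 \dim H_1$, forcing $c_1 = 1$ unless $H_1 = \{0\}$) separately. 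Once the exponent arithmetic is set up correctly, each step is a one-line application of classical Hölder or its reverse, and the positive constant produced is independent of $f_1, f_2$, which is what is required.
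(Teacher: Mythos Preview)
Your case (b) ($m^+=2$) is fine and matches the paper's argument (the claim ``$\dim H_1=\dim H_2=\dim H$'' is a slip, but you already note the correct route via $c_i\ge 1$ and the homogeneity equality, which is exactly what the paper does).

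In case (a) ($m^+=1$) there is a genuine gap. You correctly observe that every subspace $V\subseteq H_1$ is admissible and that Condition~(C)(i) reads $(c_1-1)\dim V\le |c_2|\dim B_2 V$, but you never plug in $V=\ker B_2$. Doing so gives $(c_1-1)\dim\ker B_2\le |c_2|\cdot 0=0$, which forces the dichotomy: either $c_1=1$, or $\ker B_2=\{0\}$. This is the paper's key step, and both branches are then one line. If $\ker B_2=\{0\}$ then $B_2$ is an isomorphism, homogeneity gives $c_1+c_2=1$, and the inequality is a single application of reverse H\"older on $H_1$ (followed by a change of variables). If $c_1=1$, homogeneity gives $c_2\dim H_2=0$, so $c_2=0$ and the inequality is trivial.

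Without this dichotomy your proposed decomposition $H_1=\ker B_2\oplus W$ does not go through. The inner integral $\int_{\ker B_2} f_1^{c_1}(v,w)\,dv$ has no second function to play against; when $c_1>1$ and $\dim\ker B_2>0$ there is no universal lower bound of the form $\big(\int_{\ker B_2} f_1(\cdot,w)\big)^{c_1}$, and introducing auxiliary Gaussians would be introducing a kernel you do not have. The constraint you quote, ``$(c_1-1)\dim(\ker B_2)\le |c_2|\dim H_2$'', is strictly weaker than what Condition~(C) actually gives at $V=\ker B_2$ and is not enough to rescue the argument. Once you see the dichotomy, the decomposition, the auxiliary factors, and the appeal to Theorem~\ref{theo:positivity-rank1} all become unnecessary.
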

\begin{proof}
   Our goal is to prove the positivity of the Brascamp-Lieb functional for two functions and no kernel. Our hypothesis is that $B_+$ is bijective and that Condition (C) holds. Since $1\le m^+\le m=2$ we can consider two cases:

   Case 1: $m=m^+=2$. Proposition \ref{prop:c-i-ge-1} yields $c_1, c_2\ge 1$.
   Condition (C) ensures that $H$ is a critical space, hence
   $$ \dim H=c_1\dim H_1+c_2\dim H_2\ge \dim H_1+\dim H_2=\dim H,$$
   where the latter inequality comes from the fact that $B_+=(B_1,B_2):H\to H_1\times H_2$ is a linear isomorphism.  The intermediate inequality cannot be strict, therefore $c_1=c_2=1$. The inverse Brascamp-Lieb inequality in this case follows from Fubini theorem, after changing variables:
\begin{eqnarray*}   
    \int_H f_1(B_1x)f_2(B_2x) \,dx&=& |\det ((B_1,B_2))|^{-1} \int_{H_1\times H_2}
   f_1(y)f_2(z) \,dy\,dz\\
   &= &
   |\det ((B_1,B_2))|^{-1} \int_{H_1} f_1  \int_{H_2} f_2.
\end{eqnarray*}  
  Case 2: $m^+=1$ and therefore $B_1$ is bijective. Any linear subspace is admissible in this case ($\dim V=\dim B_1V$). Thus, for any subspace $V$, Condition (C) yields 
  $$ \dim V\ge c_1 \dim B_1V+c_2 \dim B_2 V= c_1 \dim V+  c_2 \dim B_2 V,$$
  and after rearranging the terms
  $$ (c_1-1) \dim V\le |c_2| \dim B_2V.$$
  Choosing $V=\ker B_2$, we get that $(c_1-1) \dim \ker B_2=0$.
  
  Subcase 1:  If $\ker B_2=0$, then $B_2$ is an isomorphism. Since $B_1$ is also 
  an  isomorphism, the relation $\dim H=c_1\dim H_1+c_2\dim H_2$ implies
  $c_1+c_2=1$. Recall that $c_1\ge 0\ge c_2$. We can  conclude with the inverse  H\"older inequality:
  \begin{eqnarray*} \int f_1(B_1x)^{c_1} f_2(B_2 x)^{c_2} dx &\ge& \left(\int f_1(B_1x) dx \right)^{c_1}
   \left(\int f_2(B_2x) dx \right)^{c_2}\\
  & =& \left( |\det B_1|^{-1}\int_{H_1} f_1 \right)^{c_1} \left( |\det B_2|^{-1}\int_{H_2} f_2 \right)^{c_2}.
  \end{eqnarray*} 
  
  Subcase 2:  $c_1=1$. Using also that $\dim H=\dim H_1$, the equality $\dim H=c_1\dim H_1+c_2\dim H_2$ implies that $c_2 \dim H_2=0$, hence $c_2=0$. The inverse Brascamp-Lieb inequality is trivial in this case:
$\int f_1(B_1x) dx= |\det B_1|^{-1} \int f_1$.

\end{proof}

\begin{lemma}\label{lem:initialization-1function}
Assertion (ii) of Theorem   \ref{thm:positivity-of-constant-general-case}
is true when $m=0$ and when $m=1$.
\end{lemma}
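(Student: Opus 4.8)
The plan is to handle the two cases by direct reduction to elementary inequalities. For $m=0$ one has $m^+=0$, so~\eqref{eq:isomorphism-condition} says $B_0$ is a linear isomorphism while~\eqref{eq:kerBplus-contained-in-kerBm1} forces $B_{m+1}=0$, hence $H_{m+1}=\{0\}$ and $f_{m+1}(B_{m+1}\,\cdot\,)\equiv1$. The functional then reduces to the single value $\int_H e^{-\mathcal Q_+(B_0x)}\,dx=|\det B_0|^{-1}\int_{H_0}e^{-\mathcal Q_+}$, which is finite and strictly positive because $\mathcal Q_+$ is positive definite; Condition~(C) is not even used.

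For $m=1$ with $m^+=0$ we still have $B_0$ an isomorphism and $B_{m+1}=0$ (so the kernel is just $e^{-\mathcal Q_+(B_0x)}$) and $c_1\le0$. Transporting the integral by $B_0$ and then integrating out the fibre directions of $C_1:=B_1B_0^{-1}\colon H_0\to H_1$ — a Gaussian marginalization that turns $\mathcal Q_+$ into a centered positive definite Gaussian density $g$ on $H_1$ — one is left with $J_{\mathcal Q_+,\mathcal Q_-}(f_1)=\kappa\int_{H_1}g\,f_1^{c_1}\big/\big(\int_{H_1}f_1\big)^{c_1}$ for a positive constant $\kappa$ depending only on the data. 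Writing $g\,f_1^{c_1}=(g^{1/\lambda})^{\lambda}f_1^{\,1-\lambda}$ with $\lambda:=1-c_1\ge1$ and applying the inverse Hölder inequality $\int\phi^{\lambda}\psi^{1-\lambda}\ge(\int\phi)^{\lambda}(\int\psi)^{1-\lambda}$ yields $J_{\mathcal Q_+,\mathcal Q_-}(f_1)\ge\kappa\big(\int_{H_1}g^{1/(1-c_1)}\big)^{1-c_1}>0$, uniformly in $f_1$ (the subcase $c_1=0$ being trivial).

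The substantial case is $m=1$ with $m^+=1$: then $c_1>0$, the pair $(B_0,B_1)$ is an isomorphism $H\to H_0\times H_1$, and $\ker B_1\subseteq\ker B_2$ lets us factor $B_2=\tilde B_2B_1$ with $\tilde B_2\colon H_1\to H_2$ surjective. If $\dim H_1=0$ the functional is again a pure Gaussian integral; otherwise Proposition~\ref{prop:c-i-ge-1} gives $c_1\ge1$. In the coordinates furnished by $(B_0,B_1)$ the $\mathcal Q_+$-factor separates off as a finite positive constant, leaving $J_{\mathcal Q_+,\mathcal Q_-}(f_1)=\kappa_0\int_{H_1}e^{\mathcal Q_-(\tilde B_2z)}f_1^{c_1}(z)\,dz\big/\big(\int_{H_1}f_1\big)^{c_1}$ with $\kappa_0>0$. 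If $c_1=1$, the bound $e^{\mathcal Q_-(\tilde B_2z)}\ge1$ gives $J\ge\kappa_0$. If $c_1>1$ we first check that $\tilde B_2$ is injective: if $L\subseteq\ker\tilde B_2$ were a line, then by Lemma~\ref{lem:admissibility-of-subspace-and-quotient} the subspace $V=\{0\}\times L$ would induce an admissible split with $V\subseteq\ker B_2=\ker B_{m+1}$, so Condition~(C)(i) would force $1=\dim V\ge c_1\dim B_1V=c_1$, a contradiction. Hence $\tilde B_2$ is an isomorphism and $\tilde{\mathcal Q}:=\mathcal Q_-\circ\tilde B_2$ is positive definite on $H_1$; Hölder's inequality with exponents $c_1$ and $c_1/(c_1-1)$ applied to $f_1=(e^{\tilde{\mathcal Q}/c_1}f_1)\cdot e^{-\tilde{\mathcal Q}/c_1}$ gives $\int_{H_1}e^{\tilde{\mathcal Q}}f_1^{c_1}\ge\big(\int_{H_1}e^{-\tilde{\mathcal Q}/(c_1-1)}\big)^{-(c_1-1)}\big(\int_{H_1}f_1\big)^{c_1}$, with a positive coefficient since $\tilde{\mathcal Q}$ is positive definite. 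Thus $J_{\mathcal Q_+,\mathcal Q_-}$ is bounded below by a positive constant independent of $f_1$.

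The one conceptual point — and the only place Condition~(C) is genuinely invoked — is the dual split of the positive-exponent case into the regimes $c_1\le1$ and $c_1>1$, which demand respectively the reverse and the forward Hölder inequality, each made non-trivial by the appropriate Gaussian weight ($\mathcal Q_+$ marginalized over $\ker B_1$ when $c_1\le1$, and $\mathcal Q_-$ pulled back through $\tilde B_2$ when $c_1>1$). Everything else is forced by~\eqref{eq:isomorphism-condition}, \eqref{eq:kerBplus-contained-in-kerBm1} and Proposition~\ref{prop:c-i-ge-1}, so there is no serious obstacle beyond bookkeeping the changes of variables.
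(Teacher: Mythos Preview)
Your proof is correct and follows essentially the same route as the paper. The only notable difference is in the $m^+=1$ subcase: where the paper analyzes the admissible subspace $V=\ker B_0\cap\ker B_{m+1}$ and splits according to whether $B_1V$ is trivial, you argue the contrapositive directly---assuming $c_1>1$ and testing Condition~(C) on a line inside $\{0\}\times\ker\tilde B_2$ to force $\tilde B_2$ injective---which is the same idea presented a bit more economically.
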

\begin{proof}
Since $0\le m^+\le m\le 1$, we consider three cases.

Case 0: when $m^+=m=0$, i.e. there are no functions $f_k$. By hypothesis $\mathcal Q$ is positive definite. Condition (C) is empty. The conclusion holds as $\int e^{-\mathcal Q}>0$.

Case 1: when $m^+=0$, $c_1\le 0$, $B_+=0$ and by hypothesis $B_0$ is a linear isomorphism.
Moreover the condition $\ker B_+\subset \ker B_{m+1}$ implies that $B_{m+1}=0$.
In this setting, Condition (C) is empty. Indeed the inequality  $\dim V\ge c_1 \dim B_1V$ is valid for every subspace since $c_1\le 0$. In addition, $B_0$ being and isomorphism, the only subspace $V$ such that $B_0V=B_0H$ is $H$ (and the quotient dimension condition is empty). Consequently, our task is to show that
$$\inf_{f_1} \frac{\int_H e^{-\mathcal Q_+(B_0x)} f_1(B_1x)^{c_1} dx}{\left(\int_{H_1}f_1\right)^{c_1}}>0.$$
The case $c_1=0$ is obvious since $\mathcal Q_+\circ B_0$ is positive definite. Next, we assume that $c_1<0$. By definition $B_1:H\to H_1$ is surjective. We complete it 
to a bijective map $\Phi:H\to H_1\times \tilde{H}$ of the form $\Phi(x)=(B_1(x),\tilde B(x))$. Using the bijective change of variables $x=\Phi^{-1}(y,\tilde y)$, there exits $\alpha\in (0,+\infty)$ such that for any $f_1$,
$$ 
\int_H e^{-\mathcal Q_+(B_0x)} f_1(B_1x)^{c_1} dx = \alpha \int_{H_1\times \tilde H} e^{-\mathcal Q_+\circ B_0 \circ \Phi^{-1}(y,\tilde y)} f_1(y)^{c_1} dy d\tilde y.
$$
There exists positive definite quadratic forms $\mathcal Q_1$ on $H_1$ and  $\tilde{\mathcal Q}$ on $\tilde H$ such that for all $(y,\tilde y)$, 
$$\mathcal Q_+\circ B_0 \circ \Phi^{-1}(y,\tilde y) \le \mathcal Q_1(y)+\tilde{\mathcal  Q}(\tilde y).$$
Therefore the latter integral is at most 
$$ \int_{\tilde H} e^{-\tilde{\mathcal Q}(\tilde y)} d\tilde y  \int_{H_1} e^{-\mathcal Q_1(y)} f^{c_1}(y) dy
\ge \left(\int_{\tilde H} e^{-\tilde{\mathcal  Q}(\tilde y)} d\tilde y\right)
\times \left(   \int_{H_1} e^{-\frac{1}{1-c_1}\mathcal Q_1(y)}dy\right)^{1-c_1} 
\left( \int_{H_1} f_1\right) ^{c_1},
$$
where the latter inequality is a consequence of the inverse H\"older inequality.

\medskip
Case 2: $m^+=m=1$. In this case $c_1\ge 0$, $(B_0,B_1)$ is bijective and $\ker B_+=\ker B_1\subset \ker B_{m+1}$.
We may assume that $\ker B_1\neq H$, otherwise $H_1=\{0\}$ and we can discard the function $f_1$ and we are back to Case 0.

Condition (C) asserts that every admissible subspace $V$  verifies:
\begin{itemize}
	\item if $V\subset \ker B_{m+1}$ then $\dim V\ge c_1\dim B_1V$
	\item if $B_0V=B_0H$ then $\dim \sfrac{H}{V}\le c_1 \dim \sfrac{B_1H}{B_1V}$
\end{itemize}
Observe that the latter ``quotient condition'' boils down to $c_1\ge 1$:
Indeed by hypothesis $\dim H= \dim B_0H+ \dim B_1H$,  and $V$ is admissible
if and only if $\dim V=\dim B_0V+\dim B_1V$. Therefore, when $V$ also satisfies
that $B_0V=B_0H$, taking the difference of the latter two dimension equalities yields
$ \dim \sfrac{H}{V}= \dim \sfrac{B_1H}{B_1V}$.  Hence the condition
$\dim \sfrac{H}{V}\le c_1 \dim \sfrac{B_1H}{B_1V}$   becomes $(c_1-1)\dim\sfrac{H}{V}\ge 0$ which can be an empty condition (when $H=V$) or equivalent to
$c_1\ge 1$ e.g. for $V=\ker B_1$ (see the argument of Proposition \ref{prop:c-i-ge-1}).

Given $(B_0, B_1, B_{m+1}=B_2)$, the set $\mathcal C_1$ of indices $c_1\ge 0$ satisfying Condition 
(C) is clearly a closed convex subset of $[1,+\infty)$. Indeed, it is defined by the inequality  $c_1\ge 1$ and conditions of the form $\dim V\ge c_1\dim B_1V$ (and there are finitely many of them since the dimensions are bounded).
Obviously $1\in \mathcal C_1$. For $c_1=1$, the corresponding Brascamp-Lieb inequality holds with a positive constant. 
Indeed for every non-negative function $f_1$,
\begin{eqnarray*}
\int_H e^{-\mathcal Q_+(B_0x)+\mathcal Q_-(B_2 x)} f_1(B_1x)\, dx&\ge& \int_H e^{-\mathcal Q_+(B_0x)} f_1(B_1x) \, dx\\
&=& |\det((B_0,B_1))|^{-1} \int_{H_0} e^{-\mathcal Q_+} \int_{H_1} f_1,
\end{eqnarray*}
where we have used the bijection $(B_0,B_1)$ in order to change variables.

Consider the subspace $V=\ker B_0\cap \ker B_{m+1}$. Using our hypothesis $\ker B_1\subset \ker B_{m+1}$, we get that
$$ V\subset (V+\ker B_0)\cap (V+\ker B_1)\subset \ker B_0\cap \ker B_{m+1}=V.$$
Hence, by \eqref{lem:admissibility-of-subspace-and-quotient}, $V\subset \ker B_{m+1}$ is admissible. Consequently $\dim V=\dim B_0V+\dim B_1V=\dim B_1V$
and any $c_1\in \mathcal C_1$ verifies $\dim V\ge c_1 \dim B_1V$ which can be rewritten as $0\ge (c_1-1)\dim B_1V$.

Subcase 1: if $B_1V\neq\{0\}$ then the latter inequality implies that $c_1\le 1$. We have shown that  $\mathcal C_1=\{1\}$
and we have established a non-trivial inverse Brascamp-Lieb inequality for $c_1=1$.

Subcase 2: if $B_1V=\{0\}$. This condition can be rephrased as $V\subset \ker B_1$.  From this, we deduce that
$$ V=\ker B_0\cap \ker B_{m+1} \subset \ker B_0\cap \ker B_1=\{0\},$$
where the last equality comes from the injectivity of $(B_0,B_1)$. The latter is actually  bijective so that 
$$ \ker B_0 \oplus \ker B_1=H.$$
 Since $\ker B_1\subset \ker B_{m+1}$, and $V=\ker B_0\cap \ker B_{m+1}=\{0\}$,
 we also have 
  $$ \ker B_0 \oplus \ker B_{m+1}=H.$$ 
The previous two decompositions of $H$ into direct sums, and the inclusion $\ker B_1\subset \ker B_{m+1}$ imply that $\ker B_1=\ker B_{m+1}$. Because of this equality, the subspace constraint in Condition (C) is empty: indeed, if $V\subset \ker B_{m+1}=\ker B_1$ then $B_1V=0$ and $\dim V\ge c_1 \dim B_1V=0$ is true.
Therefore the set of numbers $c_1$ verifying Condition (C) is $[1,+\infty)$ and
our task is to prove a non-trivial inverse Brascamp-Lieb inequality for all exponents
$c_1\ge 1$. We have already dealt with $c_1=1$, so we may restrict our attention to $c_1>1$.
 Observe that the equality $\ker B_1=\ker B_{m+1}$ ensures the existence of a linear isomorphism $\Psi:H_1\to H_{m+1}$ such that $B_{m+1}=\Psi\circ B_1$. Thus for every non-negative function $f_1$:
 \begin{eqnarray*}
  \lefteqn{\int_H e^{-\mathcal Q_+(B_0x)-\mathcal Q_-(B_{m+1}x)} f_1^{c_1}(B_1 x)\, dx }\\
    &=& \int_H  e^{-\mathcal Q_+(B_0x)}e^{\mathcal Q_-\circ \Psi (B_1x)} f_1^{c_1}(B_1 x)\, dx \\
    &=& |\det((B_0,B_1))|^{-1} \int_{H_0} e^{-\mathcal Q_+(y)} \int_{H_1} e^{\mathcal Q_-\circ \Psi( z)} f_1^{c_1}(z) dz\\
    &\ge& |\det((B_0,B_1))|^{-1} \int_{H_0} e^{-\mathcal Q_+(y)}  
    \left(\int_{H_1} e^{\frac{1}{1-c_1}\mathcal Q_-\circ \Psi( z)} dz\right)^{1-c_1}\left( \int_{H_1} f_1\right)^{c_1},
\end{eqnarray*}  
where we have used the change of variables $(y,z)=(B_0x,B_1x)$ and the inverse H\"older inequality. The proof is complete.
\end{proof}

\begin{proof}[Proof of Theorem~\ref{thm:positivity-of-constant-general-case} (ii)]
First of all, we can assume $\dim H_k \ge 1$ for all $k \in \{1, \ldots, m\}$, otherwise one can reduce the problem by discarding all functions $f_k$ for which $\dim H_k = 0$ while Condition (C) and the strong positivity property related to the reduced problem remain equivalent to those related to the original problem.

Let   $D_1 := [1,+\infty)^{m^+} \times (-\infty, 0]^{m-m^+} \subseteq \R^m$. 
Our main interest is in the following set:
\begin{align*}
 \mathcal C :&= \big\{ c\in (0,+\infty)^{m^+}\times (-\infty,0]^{m-m^+};\; (H,B,c)\, \mathrm{satisfies}\, \mathrm{Condition\, (C)} \big\}\\
 &= \big\{ c\in D_1;\; (H,B,c)\, \mathrm{satisfies}\, \mathrm{Condition\, (C)} \big\},
\end{align*}
where the latter equality comes from Proposition~\ref{prop:c-i-ge-1}.
Since Condition (C) means that the vector $c$ verifies several closed linear inequalities, the second expression of $\mathcal C$ proves that it
is closed and convex.  More specifically, the triplet $(H,B,c)$ satisfies Condition (C) if and only if $c$ belongs to all sets in  the following two families:
\begin{itemize}
\item 
For any non-trivial subspace $V \subseteq H$ which induces an admissible split and satisfies $V \subseteq \ker B_{m+1}$ consider
\[
  S_V = \Big\{ x \in \R^m \colon \sum_{k=1}^m x_k \dim B_k V  \le \dim V \Big\}.
\]
Typically, $S_V$ is a closed half-space of $\R^m$, but it can happen that $S_V$ is the whole $\R^m$.
\item
Similarly, for any proper subspace $V \subsetneq H$ which induces an admissible split and satisfies $B_0 V = B_0 H$ consider
\[
  S^{\sfrac{H}{V}} = \Big\{ x \in \R^m \colon \sum_{k=1}^m  x_k \dim \sfrac{B_k H}{B_k V} \ge \dim \sfrac{H}{V} \Big\}.
\]
The set $S^{\sfrac{H}{V}}$ is always a half-space of $\R^m$, since at least for some $1 \le k \le m^+$, $B_k V \neq B_k H$ (otherwise for each $k = 0, 1, \ldots, m^+$, $B_k V = B_k H$, i.e. $V + \ker B_k = H$, which by Lemma~\ref{lem:admissibility-of-subspace-and-quotient} would contradict admissibility of the split).
\end{itemize}
Even though there are infinitely many subspaces $V$, the coefficients $\dim B_kV$ and $\dim \sfrac{B_kH}{B_kV}$ take finitely many values. Hence there are finitely many different  half-spaces in the above families, and the set $\mathcal C$ is a closed convex polyhedron (which may be unbounded).

Since $\mathcal C$ is closed, it follows from its original definition that its boundary is covered by the union of the affine hyperplanes in the following three families:
\[ \begin{split}
  \mathcal{P} &= \{ \partial S_V \colon \{0\} \neq V \subsetneq H \textup{ induces an admissible split and  } V \subseteq \ker B_{m+1} \} \\
  &\cup 
  \{ \partial S^{\sfrac{H}{V}} \colon \{0\} \neq V \subsetneq H \textup{ induces an admissible split and  } B_0 V = B_0 H \},
\end{split} \]
$\mathcal{P}_0 = \{ \partial S_H \}$ if $B_{m+1}=0$  or $B_0=0$  (note that $\partial S_H = \partial S^H$) otherwise $\mathcal{P}_0 = \emptyset$ and
\[
  \mathcal{B} = \Big\{ \{ x  \in \R^m \colon x_k = 0 \} \colon k = m^+ + 1, \ldots, m \Big\}.
\]

Our aim is to show that $c \in \mathcal{C}$  (i.e. Condition (C)) implies the strong positivity property for $(H, B, c)$. 
We proceed by induction in $(\dim H, m)$ with a partial order on $(n, m) \in \mathbb{Z}_+^2$ given by $(n_1, m_1) \preceq (n_2, m_2)$ if and only if $n_1 \le n_2$ and $m_1 \le m_2$. The founding cases are $m\in\{0,1\}$ with any $\dim H$ (this is treated in Lemma~\ref{lem:initialization-1function}) and $\dim H = 1$ with any $m \ge 1$ (see Lemma~\ref{lem:initialization-dimension1}).

The induction step, in which $\dim H \ge 2$ and $m \ge 2$, goes as follows. Take any vector $p = (p_1, \ldots, p_m) \in \R^m$ such that $p_1, \ldots, p_{m^+} > 0$ and $p_{m^+ + 1}, \ldots, p_m < 0$ and let $b \in \R$ be a constant such that the affine hyperplane
\[
  P = \Big\{ x \in \R^m \colon \sum_{k=1}^m p_k x_k = b \Big\}
\]
contains $c$.
Then $D_1 \cap P$ is a compact subset of $\R^m$ and thus $\mathcal{C}_P := \mathcal{C} \cap P$ is a compact convex set (actually it is a compact convex polytope). In what follows, we restrict our considerations to the hyperplane $P$.

Due the convexity property established in Proposition~\ref{prop:interpolation}, it is enough to prove the strong positivity property for the vectors of exponents being vertices of $\mathcal{C}_P$. Therefore assume $c$ is a vertex of $\mathcal{C}_P$. Consequently, $c$ belongs to an intersection of $P$ and some $m-1$ distinct affine hyperplanes from the family $\mathcal{B} \cup \mathcal{P} \cup \mathcal{P}_0$. Consider three cases:
\begin{itemize}
  \item[Case 1.] Among these $m-1$ affine hyperplanes there is at least one which belongs to $\mathcal{B}$. This means that for some $k \in \{ m^+ + 1, \ldots, m\}$, $c_k = 0$. Then we can discard the function $f_k$ and in this way reduce the number of functions considered from $m$ to $m-1$. Since neither Condition (C) nor the strong positivity property is affected by this reduction (both assertions remain equivalent for the original and the reduced problem), we are done by the induction hypothesis.
  \item[Case 2.] Among these $m-1$ affine hyperplanes there is at least one which belongs to $\mathcal{P}$. Hence for some $\{0\} \neq V \subsetneq H$, $V$ is a critical subspace or $\sfrac{H}{V}$ is a critical quotient (this may happen only when $\dim H\ge 2$). Since both $V$ and $\sfrac{H}{V}$ have dimension strictly smaller than $\dim H$ and thanks to Lemma~\ref{lem:inheritence-of-condition-C}, Condition (C) is satisfied for $(V, b, c)$ and $(\sfrac{H}{V}, \beta, c)$, we can apply the induction hypothesis and get the strong positivity property for $(V, b, c)$ and $(\sfrac{H}{V}, \beta, c)$. Now the strong positivity property for $(H, B, c)$ follows from Lemma~\ref{lem:tensorization}.
  \item[Case 3.] Neither Case 1 nor Case 2 holds, i.e. all $m-1$ distinct affine hyperplanes are in $\mathcal{P}_0$. This is possible only when $m = 2$ and $\mathcal{P}_0 = \{ \partial S_H \}$, i.e. $B_0$ or $B_{m+1}$ is trivial. Then $c \in \partial S_H \cap P$. If $B_{m+1}$ is trivial then $H$ is a critical subspace and Proposition~\ref{prop:c-i-ge-1} implies that $B_0$ is also trivial and we can conclude using Lemma~\ref{lem:initialization-2functions-no-kernel}. If $B_0$ is trivial then $\sfrac{H}{\{0\}}$ is a critical quotient, i.e.
  \begin{equation}\label{eq:criticality-of-quotient-H}
    \dim H = \sum_{k=1}^m c_k \dim B_k H.
  \end{equation}
Condition (C) tells us that for every subspace $V \subseteq H$ which induces an admissible split, the quotient $\sfrac{H}{V}$ is subcritical. Subtracting the corresponding inequality from~\eqref{eq:criticality-of-quotient-H} gives that $V$ satisfies
\[
  \dim V \ge \sum_{k=1}^m c_k \dim B_k V,
\]
regardless whether $V \subseteq \ker B_{m+1}$ or not. Therefore Condition (C) for our problem implies Condition (C) which corresponds to the problem with the same vector of exponents $c$ and the same maps $B_0, \ldots, B_m$ and with $B_{m+1} = 0$. Lemma~\ref{lem:initialization-2functions-no-kernel}  ensures the strong positivity property for the modified problem which in turn clearly implies the strong positivity property for the original problem.
\end{itemize}

\end{proof}

\providecommand{\bysame}{\leavevmode\hbox to3em{\hrulefill}\thinspace}
\providecommand{\MR}{\relax\ifhmode\unskip\space\fi MR }
\providecommand{\MRhref}[2]{%
  \href{http://www.ams.org/mathscinet-getitem?mr=#1}{#2}
}
\providecommand{\href}[2]{#2}


\end{document}